\newtheorem{theorem}{Theorem}[section]
\newtheorem{lemma}[theorem]{Lemma}
\newtheorem{prop}[theorem]{Proposition}
\newtheorem{claim}[theorem]{Claim}
\newtheorem{mtheorem}{Theorem}
\newtheorem{mcorollary}[mtheorem]{Corollary}
\theoremstyle{definition}
\newtheorem{definition}[theorem]{Definition}
\newtheorem{remark}[theorem]{Remark}
\newtheorem{assumption}[theorem]{Assumption}
\numberwithin{figure}{section}
\numberwithin{equation}{section}
\newcommand{\blackboard}[1]{\ensuremath{\mathbb{#1}}}
\newcommand{\complexes}{\blackboard{C}}
\newcommand{\hyperbolic}{\blackboard{H}}
\newcommand{\integers}{\blackboard{Z}} %
\newcommand{\reals}{\blackboard{R}}
\newcommand{\naturals}{\blackboard{N}}
\newcommand{\teich}{\mathcal{T}}
\newcommand{\ie}{i.e.\ }
\newcommand{\pr}{\mathrm{pr}}
\newcommand{\Fr}{\mathrm{Fr}}
\newcommand{\PSL}{\ensuremath{\mathrm{PSL}}}
\newcommand{\length}{\mathrm{length}}
\newcommand{\EL}{\mathcal{EL}}
\newcommand{\pretop}{\prec_\mathrm{top}}
\newcommand{\base}{\mathbf{base}}
\newcommand{\pre}{\mathrm{pred}}
\newcommand{\suc}{\mathrm{succ}}
\newcommand{\subord}{\overset{d}{\searrow}}
\newcommand{\supord}{\overset{d}{\swarrow}}
\newcommand{\stab}{\mathrm{stab}}
\def\v{\mathrm{v}}
\def\hh{\hyperbolic}
\def\zz{\integers}
\def\nn{\naturals}
\def\rr{\reals}
\def\cc{\complexes}
\def\pants{\Sigma_{(0,3)}}
\def\ca{\mathcal{A}}
\def\cc{\mathcal{C}}
\def\cb{\mathcal{B}}
\def\ck{\mathcal{K}}
\def\cf{\mathcal{F}}
\def\cu{\mathcal{U}}
\def\cv{\mathcal{V}}
\def\eset{\emptyset}
\def\part{\partial}
\def\wh{\widehat}
\def\wt{\widetilde}
\def\fd{\pi_1}
\def\Int{\mathrm{Int}}
\def\sto{\rightarrow}
\def\ve{\varepsilon}
\def\L{\Lambda}
\def\O{\Omega}
\def\G{\Gamma}
\def\Sg{\Sigma}
\def\sg{\sigma}
\def\dist{\mathrm{dist}}
\def\ve{\varepsilon}
\def\L{\Lambda}
\def\O{\Omega}
\def\G{\Gamma}
\def\Sg{\Sigma}
\def\length{\mathrm{length}}
\title{Geometry and topology of geometric limits I}
\author{Ken'ichi Ohshika}
\address{Department of Mathematics, Graduate School of Science, Osaka University, Toyonaka, Osaka 560-0043, Japan}
\email{ohshika@math.sci.osaka-u.ac.jp}
\author{Teruhiko Soma}
\address{Department of Mathematics and Information Sciences,
Tokyo Metropolitan University,
Minami-Ohsawa 1-1, Hachioji, Tokyo 192-0397, Japan}
\email{tsoma@tmu.ac.jp}
\subjclass[2000]{Primary 57M50; Secondary 30F40}
\keywords{Kleinian groups, geometric limits, hyperbolic $3$-manifolds, Ending Lamination Conjecture}
\date{\today}
\begin{document}
\maketitle

\begin{abstract}
In this paper, we classify completely hyperbolic 3-manifolds corresponding to geometric limits of Kleinian surface groups isomorphic to $\pi_1(S)$ for a finite-type hyperbolic surface $S$.
In the first of the three main theorems, we construct  bi-Lipschitz model manifolds for such hyperbolic 3-manifolds,  which have  a structure called brick decomposition and  are embedded in $S \times (0,1)$.
In the second theorem, we  show that conversely, any such model manifold  admitting a brick decomposition  with reasonable conditions is bi-Lipschitz homeomorphic to a hyperbolic manifold corresponding to some geometric limit of quasi-Fuchsian groups.
In the third theorem, it is shown that we can define end invariants for hyperbolic 3-manifolds appearing as geometric limits of Kleinian surface groups, and that the homeomorphism type and the end invariants determine the isometric type of a manifold, which is analogous to the ending lamination theorem for the case of finitely generated Kleinian groups.
These constitute an attempt to give an answer to the 8th question  among the 24 questions raised by Thurston in \cite{th2}. 
\end{abstract}

\setcounter{section}{-1}
\section{Introduction}

There are two notions of convergence in the theory of Kleinian group: algebraic convergence and   geometric convergence.
Algebraic convergence is a convergence with respect to the topology induced from the natural topology on the space of representations of a group into $\mathrm{PSL}_2\complexes$. 
On the other hand, geometric convergence corresponds to a convergence of the  quotient hyperbolic $3$-manifolds with respect to the pointed Gromov-Hausdorff topology.
One of the main topics in the theory of Kleinian groups is studying the topological structure of deformation spaces.
Deformation spaces have topologies induced from the algebraic convergence.
Still, their singularities, for instance, those which are called self-bumping points, are caused by difference between algebraic and the geometric convergences, as was shown by work of Anderson-Canary \cite{ac0} and McMullen \cite{Mc2}.
This suggests that studying geometric limits is important for understanding the deformation spaces.

For an algebraically convergent sequence of Kleinian groups, its geometric limit, which always exists by passing to a subsequence, contains the algebraic limit, but may be larger than that in general.
The difference between algebraic limit and geometric one is first observed by J{\o}rgensen and Marden.
In \cite{jm}, they gave an example of algebraically convergent sequence of infinite cyclic groups in $\PSL_2 \complexes$ which converges to a rank-2 parabolic group. 
This is a typical phenomenon for geometric limits, and is a cause of geometric limits larger than algebraic ones in more complicated situations such as an example of Kerckhoff-Thurston \cite{kt}.

Kerckhoff and Thurston considered a sequence in the Bers slice of a quasi-Fuchsian space of a surface $S$, parametrised as $(m_0, \tau^n n_0) \in \teich(S) \times \teich( \bar S)$  for a Dehn twist $\tau$ along an essential simple closed curve $c$ on $S$, where $m_0$ and $n_0$ are arbitrary points in the Teichm\"{u}ller spaces $\teich(S)$ and $\teich(\bar S)$.
They took a sequence of quasi-Fuchsian groups representing $(m_0, \tau^n n_0)$ so that it converges algebraically, which can always be done by Bers's compactness theorem, and  showed that such a sequence converges geometrically to a group $G$ such that $\hyperbolic^3/G$ is homeomorphic to $S\times (0,1) \setminus c\times \{\frac12\}$.
Here a tubular neighbourhood of $c\times \{\frac12\}$ in $S\times (0,1)$ corresponds to a $\zz\times \zz$-cusp of $\hh^3/G$ where a phenomenon as in the case of J{\o}rgensen-Marden  occurs.
By iterating this kind of procedure, it is also possible to construct an example of a geometric limit $G'$ of quasi-Fuchsian groups such that $\hh^3/G'$ has infinitely many $\zz\times \zz$-cusps as was shown by Bonahon-Otal \cite{BO}, (see also Ohshika \cite{oh1}).
In particular, this shows that the geometric limit of  quasi-Fuchsian groups of isomorphic to  $\pi_1(S)$ with a finite type surface $S$ can be  infinitely generated.

Another important example of geometric limits of quasi-Fuchsian groups is given by Brock \cite{br}.
He considered a homeomorphism $\phi: S \rightarrow S$ which is pseudo-Anosov on some essential subsurface $H$ of $S$ and is the identity outside, and a sequence parametrised as $(m_0, \phi^n n_0)$ in the Bers slice as in the case of Kerckhoff-Thurston.
He showed that the geometric limit of such a sequence is a Kleinian group $G''$ such that $\hh^3/G''$ is homeomorphic to $S\times (0,1)\setminus H\times \{\frac12\}$.

A natural problem arising from these examples is to determine what kind of Kleinian groups can appear as geometric limits of quasi-Fuchsian group, or more in general as a geometric limit of a sequence in the deformation space of a Kleinian group.
The purpose of this series of papers is to answer this question.
In the present paper, we shall consider only geometric limits of Kleinian groups isomorphic to surface groups preserving the parabolicity, which are sometimes called Kleinian surface groups.
In Theorems A, which is the first of the three main theorems of this paper, we shall give (bi-Lipschitz) model manifolds for geometric limits of Kleinian surface groups and determine the conditions which the model manifolds should satisfy.
In Theorem C we shall show that these conditions are in fact sufficient, \ie that any model manifold satisfying the conditions in Theorem A is homeomorphic to some geometric limit of quasi-Fuchsian groups.
Combining these two theorems, we characterise completely Kleinian group which can appear as geometric limits of Kleinian surface groups.

Another problem is to classify hyperbolic manifolds corresponding to geometric limits up to isometries completely, which is the subject of Theorem D.
The classification problem of finitely generated Kleinian groups, which was called the ending lamination conjecture and is now the ending lamination theorem, was solved by Minsky, collaborating with Brock, Canary and Masur (\cite{mm1}, \cite{mm2}, \cite{mi2}, \cite{bcm}).
(An alternative approach can be found in Bowditch \cite{bow4}.)
Since geometric limits of isomorphic non-elementary finitely generated Kleinian groups can be infinitely generated in general as explained above, the ending lamination theorem is not sufficient for our situation.
Using our model manifolds constructed in Theorem A, we shall  prove that the homeomorphism type and (generalised) end invariants are enough to determine the isometry type of geometric limits.
Indeed this is what Theorem D claims for geometric limits of Kleinian surface groups.

In \cite{th2}, Thurston listed 24 questions in the field of hyperbolic 3-manifolds and Kleinian groups which were open at that time.
The question 8 reads \lq\lq Analyse limits of quasi-fuchsian groups with accidental parabolics".
Otal  in \cite{otalj}, which is his very informative and well-written review on this Thurston's paper, interpreted this problem as the one for analysing geometric limits of algebraically convergent quasi-Fuchsian groups.
The results of this paper give a complete answer to Thurston's question 8 interpreted this way. 

There are applications of the results of this paper, which appeared in \cite{OhD} and \cite{OhR}.
In \cite{OhD}, we used Theorem A to analyse which points on the boundary of the quasi-Fuchsian space can be bumping points.
In \cite{OhR}, we studied a quotient space of the Bers boundary of Teichm\"{u}ller space, called the reduced  Bers boundary, on which the mapping class group action on the Teichm\"{u}ller space extends continuously.
We refer the reader also to \cite{OhD} for the overall picture of geometric limits, including the results of these two papers.

\section{Main results}
\label{main results}
In this section, we shall state main results of this paper.
We shall also give definitions of terms which are necessary for stating the main results, and a short outline of their proofs.

For a hyperbolic 3-manifold $N$, we denote by $N_0$  the complement 
 of the open $\ve$-cusp neighbourhoods in $N$ for $\ve >0$ less than the three-dimensional Margulis constant.
Its homeomorphism type does not depend on the choice of a constant $\ve$.
By the resolution of Marden's tameness conjecture by Agol \cite{ag} and Calegari and Gabai \cite{cg}, 
any relative end $e$ of a hyperbolic 3-manifold with finitely generated fundamental group 
is topologically tame, \ie it has a neighbourhood homeomorphic to $F \times (0,\infty)$, where $F=F\times \{0\}$ corresponds to the frontier component of a relative compact core of $N_0$ facing $e$.
It follows then from the results of Bonahon \cite{bon} and Canary \cite{ca1}, that $e$ is either geometrically finite 
or simply degenerate: the latter  means that there is a sequence of closed geodesics tending to the end which are projected in $F \times \reals$ to simple closed curves on $F$ whose projective classes  converge in the projectivised Masur domain.
However, in general, a hyperbolic 3-manifold $N$ with infinitely generated fundamental group may have infinitely many relative ends which are neither geometrically finite nor simply degenerate.
We call such ends \emph{wild}.
To our knowledge, suitable invariants of wild ends which play the role of end invariants for tame ends have  not been known up to now.
Still, we shall show for hyperbolic 3-manifolds corresponding to geometric limits of surface Kleinian groups, wild  ends are controlled in some way and they are determined only by the homeomorphism types, as we shall see in Theorem C.

Now, we are going to state our main results.
The first theorem, Theorem A, says that every geometric limit of Kleinian surface groups isomorphic to $\pi_1(S)$  has  a bi-Lipschitz model which admits a decomposition into standard blocks, and can be embedded topologically into $S \times (0,1)$.
This gives also  necessary conditions which hyperbolic 3-manifolds corresponding to geometric limits of Kleinian surface groups must satisfy.
Before stating the theorem, we shall explain terms which will be used in the statement.
A detailed account of these notions can be found in Section \ref{SS_EIBM}.
A \emph{brick} $B$ is a 3-manifold homeomorphic to $F\times J$ for a compact connected essential subsurface $F$ and an 
interval $J$ which is either closed or  half-open.
A \emph{brick manifold} is a union of countably many bricks $F_n\times J_n$ which are glued to each other along essential subsurfaces on their fronts 
$F_n\times \part J_n$.

In a brick manifold, we consider to attach to the end of each half-open brick either a conformal structure at infinity or an ending lamination (\ie a filling geodesic lamination).
We call the brick geometrically finite in the former case and simply degenerate in the latter.
Each half-open end of a brick constitutes an end of $M$, and the end is called geometrically finite or simply degenerate accordingly.
The ending lamination or the conformal structure attached there is called the end invariant.
The union of ideal boundaries on which conformal structures are given is called the boundary at infinity of $M$, and is denoted by $\partial_\infty M$.
A brick manifold endowed with these end invariants is called {\em a labelled brick manifold}.

We say that a labelled brick manifold admits a block decomposition when the manifold is decomposed into blocks in the sense of Minsky  and solid tori in such a way that each block has horizontal and vertical directions coinciding with those of bricks.
We also require the block decomposition for a half-open brick to accord with its end invariant.
The blocks have standard metrics and we can choose gluing maps to be isometries.
By identifying a solid torus with a Margulis tube which is determined by information coming from the block decomposition, we can put a metric on the labelled brick manifold.
We call such a metric  {\em a model metric}.
(See \S\S \ref{SS_block} and \ref{SS_block_metric} for details.)


\begin{mtheorem}\label{thm_a}
Let $S$ be an orientable connected hyperbolic surface of finite type.
Let $\{G_n\}$ be a sequence of Kleinian surface groups isomorphic to $\pi_1(S)$ preserving the parabolicity, converging geometrically 
to a non-elementary Kleinian group $G$.
Then there are a labelled brick manifold $M$  with the following properties, which admits  a block decomposition,   and  a $K$-bi-Lipschitz homeomorphism from $M$ with the model metric to the non-cuspidal part $N_0$ of the hyperbolic $3$-manifold $N=\hyperbolic^3/G$, with the constant $K$ depending only on $\chi(S)$.
\begin{enumerate}[\rm (i)]
\item
Each component of $\part M$ is either a torus or an open annulus.
\item
There is no properly embedded incompressible annulus in $M$ whose boundary components lie on distinct boundary components of $M$.
\item
If there is an embedded, incompressible half-open annulus $S^1 \times [0,\infty)$ in $M$ such that $S^1 \times \{t\}$ tends to a wild end $e$ of $M$ as $t \rightarrow \infty$, then its core curve is homotopic into an open annulus component of $\partial M$ tending to $e$.
\item The manifold $M$ is (not necessarily properly) embedded in $S \times (0,1)$ in such a way that each brick has a form $F \times J$ with an interval $J$ and an essential subsurface $F$ of $S$ with respect to the product structure of $S \times (0,1)$ and the ends of  geometrically finite bricks lie $S \times \{0,1\}$.
(We shall say that geometrically finite ends are peripheral to refer to the last condition.)
\end{enumerate}
\end{mtheorem}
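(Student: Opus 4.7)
The plan is to reduce to the bi-Lipschitz model theorem of Minsky, Brock and Canary \cite{bcm} for each term of the sequence and then pass to a pointed Gromov--Hausdorff limit. First, I would apply the model theorem to each $N_n = \hh^3/G_n$, which is a hyperbolic manifold corresponding to a Kleinian surface group, to obtain a labelled brick manifold $M_n$ of a particularly simple type (one brick homeomorphic to $S\times \rr$ minus some solid tori) admitting a block decomposition, together with a $K$-bi-Lipschitz homeomorphism $\Phi_n\colon M_n \to (N_n)_0$ where $K$ depends only on $\chi(S)$.

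Second, I would fix basepoints $y_n \in (N_n)_0$ realising the given geometric convergence $(N_n,y_n) \to (N,y)$, pull them back to $x_n = \Phi_n^{-1}(y_n)\in M_n$, and use the uniform bi-Lipschitz bound together with Gromov's compactness to pass (along a subsequence) to a pointed Gromov--Hausdorff limit $(M_\infty, x_\infty)$, with $\Phi_n$ converging to a $K$-bi-Lipschitz homeomorphism $\Phi\colon M_\infty \to N_0$.

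Third, I would promote this metric convergence to a convergence of combinatorial structures. The blocks in $M_n$ come from a finite list of standard pieces with standard metrics, and each Margulis tube piece is determined by a single modulus parameter; in particular, on any compact set $C \subset M_\infty$ the blocks and tubes of $M_n$ intersecting a slight thickening of $\Phi_n^{-1}\Phi(C)$ stabilise combinatorially for large $n$. This produces a block decomposition and hence a brick decomposition of $M_\infty$. End invariants are then attached in the limit: conformal structures on boundary components which arise as limits of geometrically finite ends, and geodesic laminations on those half-open bricks whose boundary curves accumulate onto a filling lamination in the associated subsurface (using the hierarchy underlying Minsky's model to identify these as ending laminations). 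Condition (i) is immediate from the structure of the standard pieces, because all boundary components of the $M_n$ are already tori or open annuli.

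Finally, for (ii)--(iv) I would argue as follows. Each $M_n$ embeds in $S\times \rr$ in a brick-respecting way, and composing with a homeomorphism $\rr \to (0,1)$ compatible with the basepoints yields embeddings $j_n\colon M_n \hookrightarrow S\times(0,1)$. A diagonal argument on compact pieces extracts a limit topological embedding $j\colon M \hookrightarrow S\times (0,1)$, proving (iv). A properly embedded incompressible annulus joining distinct components of $\partial M$, or a half-open annulus tending to a wild end whose core is not peripheral, would, when pulled back into the $M_n$ for $n$ large, contradict the simple Scott core topology of $M_n = S\times \rr$ minus tubes (which contains no such annuli with non-peripheral core); this gives (ii) and (iii). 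The main obstacle will be the third step: controlling the combinatorial convergence of the block decompositions under geometric limits, because blocks and tubes can escape to infinity or accumulate to produce the wild ends that have no counterpart in any $M_n$, and coherently assigning end invariants to these ends requires a careful hierarchy-based analysis of which curves become short or remain short in the limit.
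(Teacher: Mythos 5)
Your high-level strategy matches the paper's: apply the Brock--Canary--Minsky bi-Lipschitz model theorem to each $N_n$, pass to a pointed geometric limit of the model manifolds, and read off the brick/block structure from the limit. However, the step where you ``extract a limit topological embedding $j\colon M\hookrightarrow S\times(0,1)$'' by a diagonal argument is precisely the subtle point, and as stated it does not work. The issue is not just to pick a convergent subsequence on each compact piece: for a fixed finite union of bricks $M_m\subset M$, the embeddings $j_n\vert_{M_m}$ for $n\to\infty$ can differ by arbitrarily large vertical rearrangements and by composing with Dehn twists in the complementary slits, so there need not exist a subsequence along which $j_n\vert_{M_m}$ stabilises up to isotopy in a way coherent over all $m$. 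Concretely, a brick $B$ not in $M_n$ can be inserted between two bricks of $M_n$ at level $c$, forcing a twist along the slit $\Sigma_c$ when one compares $j_n$ and $j_{n+1}$; if the accumulation set $T'_\infty$ of levels is infinite, infinitely many such twists can hit a fixed brick, so no naive limit exists. This is exactly what the paper's Lemma~\ref{l_21} is designed to fix: after Rearrangements~I and~II, one chooses for each twist a ``partner'' level $c(a)\in T'_\infty$ and modifies the affected region so that it shrinks to lie between $a$ and $c(a)$; one then proves that the composition of twists stabilises on every fixed brick. Without this mechanism your step for (iv) is a gap, not a routine limit argument.

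A secondary weakness is in (iii). An incompressible half-open annulus tending to a wild end is an infinite object with no preimage in any single $M_n$ (the $M_n$ are tame and have no wild ends), so ``pulling it back into $M_n$'' is not directly meaningful. The paper's argument instead intersects the annulus with a sequence of horizontal leaves $H_n$ tending to the end, realises each $H_n$ by a pleated surface, and uses compactness of pleated surfaces without accidental parabolics (Canary--Epstein--Green 5.2.18) together with the already-established embedding into $S\times(0,1)$ to build a product neighbourhood $R'\times[0,1)$ of the end; iterating enlarges $R'$ until the end is shown to be tame, a contradiction. This is genuinely more delicate than a pullback argument. Your identification of the combinatorial-convergence difficulties in step~3 is reasonable but you have put the main technical weight on the wrong step; the hard content of (iv) is the twisting argument, and the hard content of (iii) is the pleated-surface compactness argument.
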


We call the labelled brick manifold $M$ in this theorem a \emph{model manifold} for the geometric limit $N$. 
It should be noted that a result similar to this was announced in the introduction of the first version of Brock-Canary-Minsky \cite{bcm}.
By (iv), we see that the geometric limit manifold $N_0$ has at most $-2\chi(S)$ geometrically finite ends.


The following corollary is easily deduced from Theorem \ref{thm_a}.
It guarantees that we can make use of  a generalised version of Sullivan's rigidity theorem proved by McMullen \cite{Mc}, which is crucial in the proof of Theorem \ref{thm_d}.

\begin{mcorollary}\label{cor_b}
Let $G$ be a non-elementary geometric limit  of quasi-Fuchsian groups isomorphic to $\pi_1(S)$ preserving the parabolicity for $S$ as in Theorem \ref{thm_a}.
Then $N=\hh^3/G$ has at most countably 
many relative ends.
Furthermore, there is an upper bound depending only on $\chi(S)$ for the injectivity radii at points in the convex core of $N$.
\end{mcorollary}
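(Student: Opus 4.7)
The plan is that both assertions follow rather directly from Theorem~\ref{thm_a}. Apply that theorem to obtain a labelled brick manifold $M$ with model metric together with a $K$-bi-Lipschitz homeomorphism $f\colon M\to N_0$, where $K$ depends only on $\chi(S)$.

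For countability of relative ends, observe that the relative ends of $N$ are by definition the topological ends of $N_0$, so via $f$ they correspond to the topological ends of $M$. Since $M$ is a countable union of bricks $F_n\times J_n$, its ends split into two families: the half-open ends of individual bricks, which are clearly countable, and the wild ends, at which infinitely many bricks accumulate. For the latter I would argue as follows. By condition~(iv) of Theorem~\ref{thm_a}, $M$ embeds in $S\times(0,1)$ in a product-compatible way; since $S$ has finite type and admits only finitely many pairwise-disjoint isotopy classes of essential subsurfaces, only finitely many bricks can meet any given level $S\times\{t\}$. Combined with condition~(iii), which pairs each wild end accessible by an incompressible half-open annulus with an open-annulus component of $\partial M$ (and hence with a rank-one parabolic conjugacy class of the countable group $G\subset\PSL_2\complexes$), this forces the collection of wild ends to be at most countable.

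For the upper bound on the injectivity radius, let $p\in C(N)$ be a point in the convex core. If $p$ lies in an $\ve$-cusp neighbourhood of $N$, then the Margulis lemma bounds $\mathrm{injrad}_N(p)$ above by a universal constant derived from the three-dimensional Margulis constant. Otherwise $p=f(q)$ for some $q\in M$, and the bi-Lipschitz property yields $\mathrm{injrad}_N(p)\le K\cdot\mathrm{injrad}_M(q)$. The point $q$ lies either in a Minsky block or in a solid-torus Margulis-tube piece of the block decomposition. Minsky blocks are drawn from a finite list of model pieces whose diameters are bounded in terms of $\chi(S)$, while Margulis-tube pieces have injectivity radius bounded universally by the Margulis constant. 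Hence $\mathrm{injrad}_M(q)$, and therefore $\mathrm{injrad}_N(p)$, admits a bound depending only on $\chi(S)$.

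The main technical point is the countability statement. Countability of the brick collection alone does not force countably many ends---a countable locally finite tree can have a Cantor set of ends---so the argument must exploit the product-compatible embedding of~(iv), which constrains the combinatorial complexity of the brick arrangement at each level, in conjunction with~(iii) and the discreteness of~$G$, which tie wild ends to a countable family of cusp data.
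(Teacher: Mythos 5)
Your injectivity-radius argument is sound and close in spirit to the paper's (the paper takes a shortest essential loop through the preimage point on a horizontal leaf, using that horizontal leaves outside $\cv_\infty[k_u]$ have uniformly bounded moduli, rather than the block-diameter count you use — but either gives a length bound that pushes forward under the $K$-Lipschitz model map). The countability argument, however, has a genuine gap.

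You correctly flag that a countable brick decomposition alone does not yield countably many ends, and that the embedding $\iota_M\colon M\hookrightarrow S\times(0,1)$ from condition (iv) must be exploited. But your proposed fix — pairing wild ends with open-annulus components of $\partial M$ via condition (iii), hence with parabolic classes of the countable group $G$ — only covers wild ends that are actually approached by an embedded incompressible half-open annulus. Condition (iii) is a one-way implication: \emph{if} such an annulus exists, \emph{then} its core is homotopic into a boundary annulus tending to that end. It does not assert that every wild end admits such an annulus, and nothing in (i)--(iv) guarantees this. Wild ends at which bricks accumulate ``incoherently'' (with no persistent essential annulus) would slip through your dichotomy, and you offer no count of them. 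Your observation that only finitely many bricks meet each level $S\times\{t\}$ is the right ingredient, but you stop short of converting it into a bound.

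The paper's route is Lemma \ref{l_21}(ii), applied as a black box inside the proof of Corollary B. There one shows that for each end $e$ the arc $\alpha_e$ tending to $e$ has $\pr_{\mathrm h}(\alpha_e)$ converging to a single point $b(e)$ in the set $T_\infty'$ of accumulation points of brick-front levels; that $T_\infty'$ is countable because for each $s$ the set $T_{\infty,s}'=\{c: -\chi_{\stab}(\Sg_c)=s\}$ is discrete (the slit Euler characteristic is strictly smaller near $c$ on the accumulating side, by Claim \ref{delta-region}); and that at most $-2\chi(S)$ ends can share the same value of $b(e)$, since arcs to distinct ends eventually pass through disjoint bricks and each $\delta$-region of a level admits at most $-\chi(S)$ such arcs on either side. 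This handles tame and wild ends uniformly, without any reference to annuli or to discreteness of $G$. To repair your argument you would either need to prove that every wild end admits an embedded incompressible half-open annulus tending to it (not asserted by (iii)), or replace the annulus-counting with a level-counting argument of the type just sketched.
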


The next theorem claims the existence of a geometric limit which is bi-Lipschitz equivalent to a brick manifold with the properties in Theorem \ref{thm_a} provided that there are no two simply degenerate ends with homotopic ending laminations.

\begin{mtheorem}\label{thm_c}
Suppose that $M$ is a labelled brick manifold satisfying the conditions {\rm (i)}-{\rm (iv)} in Theorem \ref{thm_a}  
 such that 
the ending laminations of  two simply degenerate ends of $M$ are not homotopic to each other in $M$.
(This condition is necessary only when $M$ is homeomorphic to $F \times (0,1)$, for a compact essential subsurface $F$ of $S$ since ending laminations are filling.)
Then $M$ has a block decomposition, and if we put on $M$ the model metric associated with the decomposition, then
there exists a non-elementary geometric limit $G$ of quasi-Fuchsian groups isomorphic to $\pi_1(S)$ 
such that $N=\hh^3/G$ admits a $K$-bi-Lipschitz homeomorphism $f:M\rightarrow N_0$ which can be extended continuously
to a conformal map $\part_\infty M\rightarrow \part_\infty N$ between the boundaries at infinity for a  
 constant $K\geq 1$ depending only on $\chi(S)$.
\end{mtheorem}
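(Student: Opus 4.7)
The overall plan is to exhaust $M$ by an increasing sequence of finite sub-unions of its bricks, realize each as the thick part of a quasi-Fuchsian manifold, and pass to a geometric limit. Enumerate the bricks of $M$ as $B_1, B_2, \dots$. For each $n \geq 1$, let $M_n \subset M$ be the compact submanifold obtained from $B_1 \cup \cdots \cup B_n$ by truncating each half-open brick $B_i = F_i \times J_i$ with $i \leq n$ at a deep level of its block decomposition. Then $M_1 \subset M_2 \subset \cdots$ exhausts $M$, and by condition (iv) each $M_n$ embeds essentially in $S \times (0,1)$ with its truncated ends in the interior. For each such truncated end whose original brick $B_i$ was simply degenerate with ending lamination $\mu_i$, record a simple closed curve on $F_i$ whose projective class in $\mathcal{PML}(F_i)$ lies close to $\mu_i$.

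The embedding of $M_n$ in $S \times (0,1)$, together with the recorded pinching curves, determines a pair of marked conformal structures $m_n^-, m_n^+$ on $S$ (highly twisted along the pinching curves). By Bers's simultaneous uniformization this gives a quasi-Fuchsian group $G_n \cong \pi_1(S)$, and the Brock--Canary--Minsky bi-Lipschitz model theorem produces a $K_0$-bi-Lipschitz embedding $f_n \colon M_n \hookrightarrow N_{n,0}$, where $N_n = \hyperbolic^3/G_n$ and $K_0 = K_0(\chi(S))$. Choose a basepoint $p_0$ in the interior of $B_1$ and set $p_n = f_n(p_0)$. By Gromov's compactness, a subsequence of $(N_n, p_n)$ converges geometrically to a pointed hyperbolic manifold $(N, p)$, and correspondingly $G_n \to G$ geometrically. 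A standard diagonal argument using the uniform constant $K_0$ then yields a $K_0$-bi-Lipschitz embedding $f \colon M \to N_0$ agreeing with $f_n$ on $M_n$ for every $n$.

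It remains to show that $f$ is surjective onto $N_0$ and extends continuously and conformally to a map $\partial_\infty M \to \partial_\infty N$. Apply Theorem~A to the geometric limit $G$: it produces a labelled brick manifold $M'$ with block decomposition that is $K$-bi-Lipschitz to $N_0$, and $f(M) \subset N_0$ pulls back to an essentially embedded sub-brick-manifold of $M'$. Conditions (i)--(iii) on $M$ exclude the appearance of additional bricks in $M'$ beyond those coming from $M$: (i) and (ii) rule out extra boundary components and accidental parabolic annuli, while (iii) constrains the topology near wild ends. Meanwhile, the non-homotopy hypothesis on ending laminations is used to prevent two simply degenerate ends of $M$ from being identified with each other in $M'$ through an accidental coincidence in the geometric limit. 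Hence $M' = M$, so $f$ is surjective. The conformal extension to $\partial_\infty M$ follows by matching $\partial_\infty M_n$ with $\partial_\infty N_n$ at the geometrically finite ends and passing to the limit, using convergence of conformal structures in the Bers slice.

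The main obstacle is precisely the identification $M' = M$ in the last step: one must control the geometric limit of the $G_n$ tightly enough to exclude the appearance of extra bricks, wild ends, or identifications of existing ends. The conditions (i)--(iv) and the non-homotopy hypothesis are designed to encode exactly what is needed, but verifying that they suffice requires a careful analysis of how blocks and Margulis tubes in $M_n$ persist under the geometric convergence $G_n \to G$, and of how end invariants and the topology of wild ends behave in that limit.
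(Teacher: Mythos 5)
The central step of your argument is the claim that Bers uniformization of a pair $(m_n^-, m_n^+)$ of ``highly twisted'' conformal structures yields a quasi-Fuchsian manifold $N_n$ together with a \emph{uniformly bi-Lipschitz embedding} $f_n\colon M_n\hookrightarrow (N_n)_0$.  This step has a genuine gap.  The Brock--Canary--Minsky theorem gives a bi-Lipschitz \emph{homeomorphism} from the model manifold of $G_n$ (whose brick/block structure is dictated by a hierarchy running between the short markings of $m_n^\pm$) to $(N_n)_0$; it does not assert that the compact truncated brick manifold $M_n$ occurs as a sub-brick-manifold of that model.  The complement of $M_n$ in $S\times(0,1)$ may have arbitrarily complicated topology (nested slits, many levels), and there is no mechanism by which merely choosing $m_n^\pm$ to twist along a few recorded pinching curves forces the hierarchy for $G_n$ to reproduce the interior bricks of $M_n$.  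Absent that, the map $f_n$ you want does not exist with a uniform constant, and the whole geometric-limit construction fails at the first step.  A secondary difficulty is your surjectivity argument: asserting that ``$M'=M$'' from conditions (i)--(iv) applied to the geometric limit is not a proof, since nothing in the construction so far precludes the limit manifold from having extra ends, extra $\zz\times\zz$-cusps, or accidental identifications beyond what $M$ predicts.

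The paper proceeds by a fundamentally different route that avoids both problems.  Instead of trying to embed $M_n$ inside a quasi-Fuchsian manifold, it builds a compact, acylindrical, atoroidal brick manifold $Z_n\subset S\times(0,1)$ containing the truncation $W_n$ of $M_n$ (adding extra tubes $V_1',\dots,V_m'$ chosen to obstruct accidental annuli so as to make $Z_n$ acylindrical), and then applies \emph{Thurston's uniformization theorem for atoroidal Haken manifolds} to obtain a geometrically finite hyperbolic $3$-manifold $N_n\cong Z_n$.  The generalised bi-Lipschitz model theorem (Theorem \ref{blm}) gives a uniform bi-Lipschitz \emph{homeomorphism} $f_n\colon Z_n\to (N_n)_0$, and since $d_{Z_n}(x_0, Z_n\setminus W_n)\to\infty$, the geometric limit of $(Z_n,x_0)$ is $M^0$ itself; Ascoli--Arzel\`a then yields a bi-Lipschitz homeomorphism $f\colon M^0\to N_0$ directly, with surjectivity automatic.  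The remaining work is to approximate the non-quasi-Fuchsian groups $G_n$ by quasi-Fuchsian ones: first removing $\zz$-cusps by strong approximation (using \cite{OhE} and \cite{jm}), then removing $\zz\times\zz$-cusps by hyperbolic Dehn filling (following \cite{BO}, \cite{oh1}), and finally diagonalising.  Your proposal skips this two-stage approximation entirely because it starts from quasi-Fuchsian groups, but at the cost of the unjustified embedding step.  Note also that the non-homotopy hypothesis (EL) is used in the paper not to prevent ``accidental coincidences in the limit'' but earlier, to guarantee that a tight geodesic ray (hence a tight tube union) exists across an open brick when constructing the block decomposition of $M$.
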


We shall often use the term \lq\lq uniform bi-Lipschitz map" to mean that its bi-Lipschitz constant depends only on 
$\chi(S)$, and hence is independent of the end invariants.

By applying Theorem \ref{thm_c}, we can construct various examples of geometric limits $G$ of quasi-Fuchsian 
groups isomorphic to $\pi_1(S)$  such that $N_0$ has infinitely many simply degenerate ends and infinitely many wild ends simultaneously.

The last theorem is a classification theorem which is analogous to the ending lamination theorem for finitely generated case.

\begin{mtheorem}\label{thm_d}
Suppose that $G_1$ and $G_2$ are non-elementary geometric limits of Kleinian surface groups isomorphic to $\pi_1(S)$ preserving the parabolicity.
If $f:\hh^3/G_1\rightarrow \hh^3/G_2$ is a homeomorphism preserving their end invariants, then $f$ is properly homotopic to an isometry.
\end{mtheorem}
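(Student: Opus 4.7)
The plan is to follow the general strategy of the ending lamination theorem, adapted to the brick-manifold setting furnished by Theorem \ref{thm_a}. Let $N_i=\hh^3/G_i$, and let $\Phi_i:M_i\to (N_i)_0$ be the uniform $K$-bi-Lipschitz model maps provided by Theorem \ref{thm_a}, with $K$ depending only on $\chi(S)$. After a preliminary proper homotopy arranging that $f$ sends $\ve$-cusp neighbourhoods to $\ve$-cusp neighbourhoods, the composite $\tilde f:=\Phi_2^{-1}\circ f\circ \Phi_1:M_1\to M_2$ becomes a homeomorphism of labelled brick manifolds, and the assumption that $f$ preserves end invariants translates into the statement that $\tilde f$ carries the label of each end of $M_1$ onto the label of the matching end of $M_2$.

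First I would perform a combinatorial comparison of the two brick decompositions. Properties (iv) and (iii) of Theorem \ref{thm_a} pin down the bricks as essential subsurfaces of $S\times(0,1)$ and constrain wild ends to sit near open-annulus boundary components, while filling ending laminations determine the subsurfaces carrying simply degenerate ends, and conformal structures determine peripheral geometrically finite ones. Applying the hierarchy and subsurface-projection machinery of Masur-Minsky brick by brick, one produces a homeomorphism $g_0:M_1\to M_2$ properly homotopic to $\tilde f$ and carrying the block/Margulis-tube decomposition of $M_1$ to that of $M_2$.

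Second, I would upgrade $g_0$ to a uniform bi-Lipschitz homeomorphism $g:M_1\to M_2$ of model manifolds. Since Minsky's standard blocks have universal geometry, one replaces $g_0$ on each matched pair of blocks by the corresponding isometry; on each pair of matched solid tori one uses a uniformly bi-Lipschitz homeomorphism of the associated Margulis tubes, whose moduli are determined by the combinatorics of the surrounding blocks and therefore agree under $g_0$. Gluing these pieces along their essential subsurface fronts yields $g$. The composition $F:=\Phi_2\circ g\circ \Phi_1^{-1}$ is then a uniform bi-Lipschitz homeomorphism $(N_1)_0\to (N_2)_0$ properly homotopic to $f|_{(N_1)_0}$, and standard extensions over rank-one and rank-two cusps produce a uniform bi-Lipschitz homeomorphism $F:N_1\to N_2$ properly homotopic to $f$.

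Finally, I would invoke McMullen's generalisation of Sullivan's rigidity theorem \cite{Mc} to promote $F$ to an isometry. The required hypotheses are supplied by Corollary \ref{cor_b}: $N_1$ has at most countably many relative ends and a uniform upper bound on injectivity radius inside its convex core. The extension $\partial_\infty M_1\to\partial_\infty M_2$ built into $g$ supplies the necessary conformal agreement on the components of the domain of discontinuity, and McMullen's theorem then yields an isometry properly homotopic to $F$, hence to $f$. I expect the main obstacle to be the second step: achieving uniform bi-Lipschitz constants while matching block decompositions across possibly infinitely many bricks and wild ends. It is precisely conditions (iii) and (iv) of Theorem \ref{thm_a}, together with the countability and injectivity-radius control provided by Corollary \ref{cor_b}, which prevent the combinatorial mismatch between $M_1$ and $M_2$ from inflating the bi-Lipschitz constant.
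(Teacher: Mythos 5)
Your overall strategy---produce a uniform bi-Lipschitz homeomorphism between $(N_1)_0$ and $(N_2)_0$ and then invoke McMullen's generalisation of Sullivan's rigidity, with Corollary~\ref{cor_b} supplying the required injectivity-radius bound and countability of ends---is exactly the strategy of the paper, and your final rigidity step is correct. The gap is in how you propose to obtain the bi-Lipschitz homeomorphism. You build two separate model manifolds $M_1$ and $M_2$ for $N_1$ and $N_2$ and then want to ``combinatorially compare'' their brick/block/Margulis-tube decompositions to produce a homeomorphism $g_0:M_1\to M_2$ respecting all this structure, and then upgrade it to a uniform bi-Lipschitz $g$. But that combinatorial matching is precisely the hard part: the two block decompositions are built from independently chosen hierarchies of tight geodesics, and there is no reason a priori that they align brick-by-brick, tube-by-tube, under $\tilde f$. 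The phrase ``applying the hierarchy and subsurface-projection machinery of Masur-Minsky brick by brick'' is standing in for a substantial argument that you have not given, and it is essentially equivalent to re-proving the heart of the paper's Theorem~\ref{blm} (the Bi-Lipschitz Model Theorem for brick manifolds), which you never invoke.

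The paper sidesteps this difficulty by a different and much cleaner arrangement. One constructs a \emph{single} labelled brick manifold $M$ together with $\eta_1:M\to (N_1)_0$ from Theorem~\ref{thm_a}, then sets $\eta_2:=f\circ\eta_1:M\to(N_2)_0$. Because $f$ preserves end invariants, $\eta_2$ is again a homeomorphism from the \emph{same} labelled brick manifold $M$ to $(N_2)_0$ preserving end invariants, and Theorem~\ref{blm} then properly homotopes each of $\eta_1$ and $\eta_2$ to a uniformly $K$-bi-Lipschitz homeomorphism directly, with no need to compare two hierarchies at all. The composition $\eta_2\circ\eta_1^{-1}$ is the desired uniform bi-Lipschitz map between the non-cuspidal parts. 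If you want to salvage your two-model approach, you would first have to prove, in effect, a uniqueness statement for the block decomposition up to the appropriate equivalence, which is exactly what Theorem~\ref{blm} packages; it is simpler to route through a single model as the paper does.
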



\begin{remark}
In the beginning of the present work, we tried to use more classical topological approach involving only hyperbolic geometry to study topological properties of geometric limits of quasi-Fuchsian groups.
Subsequently we found that, by invoking the bi-Lipschitz model theorem by Brock-Canary-Minsky,  it is possible to  
simplify proofs of some results and moreover to obtain a deeper result on geometric properties of geometric limits.
Therefore, we have changed our original plan and adopted   the method relying upon work of \cite{mi2} and \cite{bcm}.
 On the other hand, we have noticed that our original approach on geometric limits 
gives rise to  a rather short proof of the bi-Lipschitz model theorem.
See Soma \cite{so}.
\end{remark}

Now, we shall outline the proofs of the main theorems.
To prove Theorem A, we shall first apply Minsky's bi-Lipschitz model theorem to each $\hyperbolic^3/G_n$ and get a model manifold $M_n$ which can be decomposed into blocks with a bi-Lipschitz homeomorphism $g_n$ from $M_n$ to $(\hyperbolic^3/G_n)_0$.
We define $M$ and a bi-Lipschitz homeomorphism from $M$ to $N_0$ to be the geometric limits of $M_n$ and $g_n$.
We shall verify that these satisfy the required conditions (i)-(iv) one by one, among which the most difficult is (iv).
Since $M$ is the geometric limit of $\{M_n\}$, each union of finite bricks can be proved to be embedded in $S \times (0,1)$ preserving the product structures, but this does not imply immediately that the entire $M$ can also be embedded.
We shall need to  rearrange the embeddings of sub-bricks by twisting them in such a way the twisting stabilises on each brick as will be shown in Lemma \ref{l_21}.

Next we turn to Theorem C.
We shall first consider an ascending exhausting sequence of sub-brick-manifolds $W_n$ consisting of finite bricks within the given labelled brick manifold $M$.
These $W_n$ may have very complicated homeomorphism types, but we shall construct from the $W_n$   brick manifolds $Z_n$ corresponding  to   geometrically finite Kleinian surface groups by applying Thurston's uniformisation theorem for compact irreducible atoroidal 3-manifolds with boundary, whose geometric limit is also $M$.
We shall approximate these Kleinian groups corresponding to $Z_n$ by quasi-Fuchsian groups, which are groups as we wanted.

Finally, we shall outline the proof of  Theorem D.
We are given two geometric limits $G_1$ and $G_2$ such that $N_1=\hyperbolic^3/G_1$ and $N_2=\hyperbolic^3/G_2$ share the same topological type and end invariants.
By Theorem A, we can construct a labelled model manifold $M$ of $(N_1)_0$.
By our assumption, there is a homeomorphism from $M$ to $(N_2)_0$ preserving the end invariants.
In Theorem \ref{blm}, which is a generalisation of the bi-Lipschitz model theorem by Brock-Canary-Minsky \cite{bcm}, we shall prove that such a homeomorphism can be homotoped to a uniform bi-Lipschitz homeomorphism.
This shows that $G_1$ and $G_2$ are quasi-conformally conjugate by a quasi-conformal homeomorphism which is conformal on the domain of discontinuity.
The second statement of Corollary B makes it possible to apply McMullen's generalisation of Sullivan's rigidity theorem and we shall be able to show that $G_1$ and $G_2$ are conformally conjugate.

\section{Preliminaries}\label{S_1}

We refer the reader to Thurston \cite{th1}, Benedetti and Petronio \cite{bp}, 
Matsuzaki and Taniguchi \cite{mt}, and Marden \cite{ma2} for the general theory on hyperbolic manifolds and Kleinian groups, and to 
Hempel \cite{he} for the 3-manifold topology.

Throughout this paper, all manifolds are assumed to be oriented, and all homeomorphisms between 
manifolds are assumed to be orientation-preserving.
When we talk about a surface $S$, we always assume that it is a connected surface of finite type possibly with punctures and $\chi(S) <0$.
Sometimes, we fix a hyperbolic structure of finite area on it for convenience.
We denote by $\Sg_{0,3}$, $\Sg_{0,4}$, $\Sg_{1,1}$ compact surfaces homeomorphic respectively to a three-holed sphere, a four-holed sphere and a one-holed torus.

\subsection{The curve graph and tight geodesics}\label{curve}
%
In this subsection we shall review the basic terminology and results on curve graphs and tight geodesics.
Most of these are due  to Masur-Minsky and can be found in  \cite{mm1, mm2}.

A  subsurface $\Sigma$ of $S$ is called \emph{essential} if no component of the frontier of $\Sigma$ is null-homotopic in $S$.
We regard $S$ itself also as an essential subsurface of $S$.
When $\Sigma$ is an open annulus we further assume that the frontier of $\Sigma$ is not homotopic to a puncture of $S$.
We consider both closed essential subsurfaces and open ones.
When we consider two essential subsurfaces, we assume that they do not have inessential intersection.
If two essential subsurfaces are isotopic, they are assumed to coincide.

Let $\Sigma$ be a connected surface of finite type, possibly  with punctures.
In this paper, when we talk about curve graphs, we only consider the situation where $\Sigma$ is an open essential subsurface of some fixed surface $S$, including the case when $\Sigma=S$.
The complexity of $\Sigma$ is defined by $\xi(\Sigma)=3g+p$, where $g$ is the genus 
of $\Sigma$ and $p$ is the number of punctures of $\Sigma$.
(This is more convenient than the Euler characteristic $\chi(S)$ for our purpose.)
A surface $\Sigma$ with $\xi(\Sigma)=3$ (resp.\ $\xi(\Sigma)=4$) is homeomorphic to the interior of $\Sg_{0,3}$ 
(resp.\  the interior of either $\Sg_{0,4}$ or $\Sg_{1,1}$).

When $\xi(\Sigma)>4$, we define the \emph{curve graph} $\cc(\Sigma)$ of $\Sigma$ to be a simplicial graph whose vertices are 
homotopy classes of non-contractible  simple closed curves on $\Sigma$ which are not homotopic to punctures such that two vertices are connected by an edge if and only if they have disjoint representatives.
We call a vertex of $\cc(\Sigma)$ or its representative a \emph{curve} on $\Sigma$.
For our convenience, we fix a complete hyperbolic structure on $\Sigma$ of finite area and take a uniquely determined closed geodesic as a representative for any curve in $\cc(\Sigma)$.
The notion of curve graph was first introduced by Harvey \cite{har} and extended and modified in \cite{mm1,mm2,mi1}.
In the case when $\xi(\Sigma)=4$, the curve graph $\cc(\Sigma)$ is defined so that  
the vertices are 
curves on $\Sigma$ and two curves $v,w$ are joined by an edge if and only if they have the 
minimum geometric intersection, 
\ie $i(v,w)=1$ when $\Sigma$ is $\Int \Sg_{1,1}$ and $i(v,w)=2$ when $\Sigma$ is $\Int \Sg_{0,4}$.
When $\Sigma$ is an open annulus embedded in $S$, we consider the covering $\tilde{\Sigma}$ of $S$ (with a fixed hyperbolic structure) associated to $\pi_1(\Sigma)$ and compactify $\tilde \Sigma$ to $\bar \Sigma$ by attaching the ideal boundary.
The vertices of curve complex $\cc(\Sigma)$ are homotopy classes of essential arcs on $\bar \Sigma$ fixing the endpoints.
Two vertices are connected by an edge if and only if they can be homotoped fixing the endpoints to arcs whose interiors are disjoint.

We put the path metric $d=d_{\cc(\Sigma)}$ on $\cc(\Sigma)$ by setting the length of  each edge to be $1$.
In the case when $\xi(\Sigma) >4$, a finite subset $v$ of vertices in $\cc(\Sigma)$ is said to constitute a \emph{simplex} if any two curves of $v$ are represented by  disjoint and non-parallel simple closed curves on $\Sigma$.
This naming comes from the fact that they actually span a simplex in the curve complex of $\Sigma$.
We only use this term and do not need to consider the curve complex itself.
The graph $\cc(\Sigma)$ is not locally finite but is proved to be Gromov hyperbolic as a metric space by Masur and Minsky 
\cite{mm1}.
 (See also Bowditch \cite{bow1} for an alternative approach.)

Let $\mathcal{ML}(\Sigma)$ be the space of compact measured laminations on $\Sigma$ and $\mathcal{UML}(\Sigma)$ the quotient space of $\mathcal{ML}(\Sigma)$ obtained by forgetting the measures, and 
let $\mathcal{EL}(\Sigma)$ be the subspace of $\mathcal{UML}(\Sigma)$ consisting of filling laminations, which we call the ending lamination space of $\Sigma$.
Here a lamination $\mu$ in $\mathcal{UML}(\Sigma)$ is said to be \emph{filling} if, for any $\mu'\in \mathcal{UML}(\Sigma)$, either $\mu'=\mu$ or $\mu'$ intersects $\mu$ non-trivially and transversely.
(The term \lq \lq arational lamination" is used in some literature in the same meaning.)
Refer to \cite{flp} and \cite{bc} for the definition and basic facts about measured lamination space.

Gromov showed that there is a natural boundary at infinity for a Gromov hyperbolic space.
According to Klarreich \cite{kla} (see also Hamenst\"{a}dt \cite{ham}), there exists a homeomorphism $k$ from the 
Gromov boundary $\part \cc(\Sigma)$ of $\cc(\Sigma)$ to $\mathcal{EL}(\Sigma)$ such that a sequence $\{v_i\}$ of vertices 
of $\cc(\Sigma)$ converges to $\beta \in \part \cc(\Sigma)$ if and only if it converges to $k(\beta)$ in $\mathcal{UML}(\Sigma)$.

\begin{definition}
A sequence $g=\{v_i\}_{i\in J}$ of simplices in $\cc(\Sigma)$ is called a \emph{tight sequence} if it satisfies 
one of the following conditions depending on whether $\xi(\Sigma)$ is greater than $4$ or not, where $J$ is a finite or an infinite interval of $\zz$.
\begin{enumerate}[\rm (i)]
\item
When $\xi(\Sigma)>4$, for any vertices $w_i$ of $v_i$ and $w_j$ of $v_j$ with $i\neq j$, it holds that $d(w_i,w_j)=|i-j|$.
Moreover, if $\{i-1,i,i+1\}\subseteq J$, then $v_i$ is represented by the union of all components of $\part \Sigma_{i-1}^{i+1}$ 
that are non-peripheral in $\Sigma$, where $\Sigma_{i-1}^{i+1}$ is a  subsurface smallest up to isotopy in $\Sigma$ with essential boundary 
 containing the geodesic representatives of all the vertices of $v_{i-1}$ and $v_{i+1}$.
\item
When $\xi(\Sigma)=4$, each $v_i$ is a vertex in $\cc(\Sigma)$ and  $d(v_i,v_j)=|i-j|$. 
\end{enumerate}

The sequence $g$ is said to connect $v_{\inf J}$ with $v_{\sup J}$, where we define $v_{\inf J}$ to be $\lim_{i \rightarrow -\infty} v_i$ when $\inf J=-\infty$ and $v_{\sup J}$ to be $\lim_{i \rightarrow \infty} v_i$ when $\sup J=\infty$.
The surface $\Sigma$ is called the support of $g$ and is denoted by $D(g)$.
The length of $g$ is defined to be $\# J-1$, where $\#J-1$ is defined to be $\infty$ when $\# J=\infty$.
\end{definition}

\smallskip
We regard a single vertex as a tight sequence of length $0$.
It follows from the definition that  for any tight sequence $\{v_i\}$, if a vertex $w$ of $\cc(\Sigma)$ meets $v_i$ transversely, then 
$w$ meets at least one of $v_{i-1}$ and $v_{i+1}$ transversely.

\smallskip

For an open essential surface $F$ of  $\Sigma$ and a tight geodesic $g$ in $\cc(\Sigma)$, we denote by $\phi_g(F)$ the union of simplices on $g$ which are disjoint from $F$.
Here being disjoint means that they can be made disjoint by an isotopy.
For a curve $c$ on $F$, we use the symbol $\phi_g(c)$ to denote $\phi_g(A(c))$, where $A(c)$ is an annular neighbourhood of $c$.

The following theorem is Lemma 5.14 in \cite{mi2} (see also Theorem 1.2 in \cite{bow2}), which was crucial in the proof of the ending lamination conjecture.

\begin{theorem}\label{tight}
Let $u,w$ be distinct vertices or laminations in $\cc(\Sigma)\cup \mathcal{EL}(\Sigma)$.
Then there exists a tight sequence connecting 
$u$ with $w$.
\end{theorem}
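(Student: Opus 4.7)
The plan is to split the argument by the complexity $\xi(\Sigma)$ and, within each complexity range, by whether each endpoint lies in $\cc(\Sigma)$ or in $\EL(\Sigma)$. The proof rests on two ingredients: producing an honest geodesic (finite, half-infinite, or bi-infinite) between the prescribed endpoints in $\cc(\Sigma)$, and then \emph{tightening} it so that the auxiliary boundary condition in the definition of a tight sequence is satisfied.

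When $\xi(\Sigma)=4$ the curve graph is a Farey-type graph, and the notion of tightness reduces to bare distance additivity. A finite geodesic between two vertices exists by connectedness of $\cc(\Sigma)$. If an endpoint belongs to $\EL(\Sigma)$, I would invoke Klarreich's homeomorphism $\partial\cc(\Sigma)\cong\EL(\Sigma)$ (together with a standard Arzel\`a--Ascoli argument on geodesic rays in the Gromov hyperbolic space $\cc(\Sigma)$) to obtain a ray or bi-infinite geodesic realising the prescribed ideal endpoints, with distance additivity holding automatically.

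When $\xi(\Sigma)>4$ and both endpoints are vertices, I start with any finite geodesic $v_0,\dots,v_n$ between them and tighten as follows: for each interior index $i$, replace $v_i$ by the simplex $v_i'$ of all non-peripheral components of $\partial\Sigma_{i-1}^{i+1}$, where $\Sigma_{i-1}^{i+1}$ is the isotopy-smallest essential subsurface whose boundary contains geodesic representatives of $v_{i-1}\cup v_{i+1}$. Three checks are needed: $v_i'\neq\emptyset$, because $d(v_{i-1},v_{i+1})=2$ forces $v_{i-1}\cup v_{i+1}$ not to fill $\Sigma$ and so $\Sigma_{i-1}^{i+1}$ is a proper essential subsurface with non-peripheral boundary; each vertex of $v_i'$ is disjoint from both neighbours by construction; and performing the substitutions in, say, increasing order of $i$ and iterating stabilises after finitely many rounds, yielding distance additivity and the required tightness condition simultaneously. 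For endpoints in $\EL(\Sigma)$, I would approximate by vertices $w_n\to w$ under Klarreich's identification, apply the previous step to obtain tight sequences $g_n$ from $u$ to $w_n$, and extract a termwise-stable subsequence by a Cantor diagonal argument; the case of two laminations is handled by approximating on both sides.

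The main obstacle will be justifying the diagonal extraction, which rests on a local finiteness theorem: for fixed $u\in\cc(\Sigma)$ and integer $i$, only finitely many simplices can occur as the $i$-th simplex of a tight geodesic emanating from $u$. This finiteness is decidedly not a consequence of Gromov hyperbolicity of $\cc(\Sigma)$, which is locally infinite; it is precisely the tightness constraint together with subsurface-projection bounds that enforces it, and establishing this control is the technical heart of the argument (carried out in \cite{mi2} and \cite{bow2}). Once local finiteness is in hand, the verification that the limit simplices inherit the smallest-subsurface characterisation, and hence form a bona fide tight sequence converging to the specified ideal endpoint, is routine.
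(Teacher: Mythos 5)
The paper does not prove this statement at all: immediately above the theorem it says ``The following theorem is Lemma 5.14 in \cite{mi2} (see also Theorem 1.2 in \cite{bow2})'', so Theorem~\ref{tight} is invoked as a black-box citation. There is thus no in-paper argument to compare your proposal against; what you have written is a reconstruction of the proof one finds in those sources, and indeed you end up leaning on exactly those same references for the local-finiteness step. In that sense you and the paper are on the same footing.

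As a free-standing sketch, though, your argument has one step that is presented as obvious but is not. You claim that iterating the replacement $v_i \mapsto v_i' = \partial\Sigma_{i-1}^{i+1}$ (taking non-peripheral components) ``stabilises after finitely many rounds, yielding distance additivity and the required tightness condition simultaneously.'' Neither half of that sentence is automatic. After replacing $v_i$, the surface $\Sigma_{i}^{i+2}$ used to tighten $v_{i+1}$ changes, so there is a priori a cascading effect, and one needs a monovariant — in Masur--Minsky's argument this is a careful count involving intersection numbers — to guarantee termination. Separately, one must check that the new simplex $v_i'$ still has every vertex at distance exactly $|i-j|$ from every vertex of $v_j$ for all $j$, not just $j=i\pm1$: disjointness from the two neighbours gives distance~$\le 1$ to them, but the global distance condition requires an argument (essentially that a shortcut through $v_i'$ would contradict the geodesicity of the remaining portion). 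These points are the content of Lemma~4.5 of Masur--Minsky~\cite{mm2}, and brushing them under ``routine'' is the one genuine gap in an otherwise faithful outline. The rest — the $\xi=4$ case via Farey-graph considerations, the use of Klarreich's identification to realise ideal endpoints, and the diagonal extraction hinging on the local finiteness of tight geodesics established in~\cite{mi2} and~\cite{bow2} — is the standard route and is correctly laid out.
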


A marking on $\Sigma$ is a simplex in $\cc(\Sigma)$ some of its vertices (possibly none) have transversals.
Here a transversal of a curve $c$ is defined to be a vertex of the curve complex of an annular neighbourhood of $c$.
For a marking $\boldsymbol I$, we denote by $B(\boldsymbol I)$ its  vertices with the transversals forgotten, and call it the base curves.
Suppose that each of $\boldsymbol{I},\boldsymbol{T}$ is either a marking on $\Sigma$   or a lamination in 
$\mathcal{EL}(\Sigma)$.
Then a tight sequence $g=\{v_i\}_{i\in I}$ on $\Sigma$ is said to be a \emph{tight geodesic} with the 
\emph{initial marking} $I(g)=\boldsymbol{I}$ and the \emph{terminal marking} $T(g)=
\boldsymbol{T}$ if it satisfies the following conditions.
\begin{itemize}
\item
If $i_0=\inf J>-\infty$, then $v_{i_0}$ is a vertex of $\cc(\Sigma)$ contained in $B(\boldsymbol{I})$. 
Otherwise, $\boldsymbol{I}=\lim_{i\sto-\infty}v_i\in \mathcal{EL}(\Sigma)$.
\item
If $j_0=\sup J<\infty$, then $v_{j_0}$ is a vertex of $\cc(\Sigma)$ contained in $B(\boldsymbol{T})$. 
Otherwise $\boldsymbol{T} =\lim_{j\sto \infty}v_j\in \mathcal{EL}(\Sigma)$.
\end{itemize}

For a simplex $v$ of a geodesic $g$ supported on $\Sigma$, a component of $\Sigma \setminus v$ and an annulus with core curve in $v$  is called a {\em component domain} of $v$, and also a component domain of $g$.
For a simplex $v_j$ of $g=\{v_j\}$, we define its predecessor $\mathrm{pred} (v_j)$ to be $v_{j-1}$ if $j\neq 1$, and $I(g)$ if $j=1$.
Similarly we define the successor $\suc(v_j)$.
For a component domain $Y$ of $v_j$, we denote $\pre(v_j)|Y$ by $I(Y,g)$ and $\suc(v_j)|Y$ by $T(Y,g)$.
Here in the case when $Y$ is an annulus $\pre(v_j)|Y$ denotes a vertex in $\cc(Y)$ which $\pre(v_j)$ determines when $j\neq 1$ and the transversal of the vertex $v_j$ determines if $j=1$.
The same definition applies for $\suc(v_j)|Y$.
If $T(Y,g)\neq \emptyset$, then we write $Y \subord g$ and says that $Y$ is forward subordinate to $g$ at $v_j$.
Similarly we write $g \supord Y$ and says that $Y$ is backward subordinate to $g$ at $v_j$ if $I(Y,g)\neq \emptyset$.
If a tight geodesic $k$ is supported on $Y$, the domain $Y$ is forward subordinate to $g$ at $v_j$, and $T(k)=T(Y,g)$, we say that $k$ is forward subordinate to $g$ at $v_j$ and denote by $k \subord g$.
Similarly, we define $g \supord k$.

\begin{definition}
A {\em hierarchy} $H$ of geodesics on $S$ is a family of tight geodesics on essential open subsurfaces of $S$ with the following properties.
\begin{enumerate}
\item There is a unique geodesic $g_H$ in $H$ with $D(g_H)=S$, which we call the {\em main geodesic}.
\item Let $Y$ be a component domain of both a simplex $v$ of $g \in H$  and $w$ of $g' \in H$ such that $g \supord Y \subord g'$.
(The geodesic $g$ and $g'$ may be the same.)
Then there exists a unique geodesic $h$ in $H$ such that $D(h)=Y$ and $g \supord h \subord g'$.
\item For any geodesic $g$ in $H$ other than $g_H$, there exist geodesics $h, k \in H$ such that $h \supord g \subord k$.
\end{enumerate}
For a hierarchy $H$, we define $|H|$ to be the sum of the lengths of the geodesics constituting $H$.
\end{definition}
A hierarchy $H$ is said to be {\em complete} if for each component domain $X$ of $\xi(X) \neq 3$, there is a geodesic in $H$ supported on $X$.
A geodesic $g$ in a hierarchy in $H$ whose domain $D(g)$ satisfies $\xi(D(g))=4$ is called a {\em $4$-geodesic}.
A sub-hierarchy of a complete hierarchy $H$ consisting of all the geodesics in $H$ supported on domains with $\xi \geq 4$ is called the $4$-sub-hierarchy.


\begin{definition}
\label{slice}
Let $H$ be a hierarchy of geodesics on $S$.
A {\em slice} of $H$ is a set of pairs $\sigma=\{(g,v)\}$ of a geodesic $g \in H$ and a simplex $v$ on $g$ which has the following properties.
\begin{enumerate}
\item If $(g,v_1)$ and $(g,v_2)$ are contained in $\sigma$, then $v_1=v_2$.
\item There is a pair $(g_\sigma, v_\sigma)$ called the bottom pair, and except for the bottom pair every pair $(h,w) \in \sigma$ is supported in a component domain of some other $(k, u) \in \sigma$.
\end{enumerate}
We also call $g_\sigma$ the bottom geodesic and $v_\sigma$ the bottom simplex of $\sigma$.

A slice $\sigma$ is said to be {\em saturated} if for any $(g,v) \in \sigma$ and its component domain $D$ for which there is a geodesic $h$ in $H$ supported on $D$, there is some simplex $w$ of $h$ such that $(h, w) \in \sigma$.
We say that $\sigma$ is {\em non-annular saturated} if the above holds provided that $D$ is not an annulus.
For a slice $\sigma$, $\mathbf{base}(\sigma)$ denotes the union of all vertices contained in simplices which appear in $\sigma$, which forms a simplex of $\cc(D(g_\sigma))$.
\end{definition} 
\subsection{Hyperbolic 3-manifolds and geometric limits}\label{hyp}
%
%
%
%
%
%
%
%
%
%
%
%
%
A \emph{Kleinian group} $\G$ is a discrete subgroup of $\PSL_2\complexes$.
When $\G$ contains an abelian subgroup of finite index, it is called \emph{elementary}.
In this paper, we always assume that Kleinian groups are torsion-free, or equivalently that they contain no elliptic elements.
Under this assumption, a Kleinian group is elementary if and only if it is isomorphic to a free abelian group of rank at most two.
For a Kleinian group $\Gamma$, the quotient space $N=\hyperbolic^3/\Gamma$ is called the \emph{hyperbolic $3$-manifold corresponding to} $\Gamma$.

The \emph{limit set} $\L_\G$ of $\G$ is the set of accumulation points of the orbit space $\G x_0$ in the 
closed 3-ball $\hh^3\cup \hat \complexes$ for a fixed point $x_0\in \hh^3$.
It should be noted that $\L_\G$ is contained in $\hat \complexes$ since $\G$ acts on $\hh^3$ properly discontinuously.
The complement of $\L_\G$ in $\hat \complexes$ is called the \emph{region of discontinuity} of $\G$, and is denoted by $\O_\G$.
We can regard $N$ as the interior of the manifold $(\hh^3\cup \O_\G)/\G$, which is called 
the \emph{Kleinian manifold} corresponding to $\G$.
The boundary at infinity $\O_\G/\G$ is also denoted by $\part_\infty N$.
The Nielsen convex hull $H_\G$ is the smallest closed convex set in $\hh^3$ containing all geodesics with endpoints on $\L_\G$, which is also $\G$-invariant.
Its quotient $C_\G=H_\G/\G$ is called the \emph{convex core} of $N$.
The Kleinian group $\G$ is said to be \emph{geometrically finite} if the volume of the $\delta$-neighbourhood of $C_\G$ in $N$ 
is finite for  some $\delta>0$.

For a positive number $\varepsilon$, the $\varepsilon$-\textit{thin part} 
$N_{(0,\varepsilon]}$ of $N$ is the set consisting of all points 
$x\in N$ such that there exists a non-contractible loop $l$ of length $\leq \varepsilon$ based at $x$.
The complement of its interior $N_{[\varepsilon,\infty)}=N\setminus \mathrm{Int} N_{(0,\varepsilon]}$ is called the $\varepsilon$-\emph{thick part} of $N$.
A \emph{Margulis tube} is an embedded, equidistant, tubular neighbourhood of a simple closed geodesic in $N$.
A $\integers$ or a $\integers\times \integers$-\emph{cusp neighbourhood } $P$ is a subset of $N$ such that each component of $p^{-1}(P)$ is a horoball whose stabiliser in $\Gamma$ is isomorphic to either $\integers$ or $\integers \times \integers$, where $p:\hh^3\rightarrow N$ is the universal covering.
By  Margulis' lemma \cite[Corollary 5.10.2]{th1}, there exists a constant $\epsilon_0>0$ independent 
of $\G$, called the \emph{Margulis constant}, such that, for any 
$0<\ve< \epsilon_0$, each component of 
$N_{(0,\ve]}$ is either a Margulis tube or a $\integers$ or a $\integers\times \integers$-cusp neighbourhood.
Let $N_0=N_0^\ve$ be the union of $N_{[\varepsilon,\infty)}$ and all the Margulis tube components of $N_{(0,\varepsilon]}$, which we call the {\it non-cuspidal part} of $N$.
For any $\ve_1 <\ve_2< \epsilon_0$, there exists a  $K$-bi-Lipschitz deformation retraction 
$N_0^{\ve_2}\rightarrow N_0^{\ve_1}$ for some constant $K\geq 1$ depending only on $\ve_1$ and $\ve_2$. 
It should also be noted that that $N_0$ is a deformation retract of  $N$.
The end of $N_0$ is called the \emph{relative end} of $N$.
Each component of the boundary $\part N_0$ is 
either a Euclidean torus or a Euclidean open annulus. 
Since any parabolic cusp neighbourhood  of $N$ is covered by a horoball in $\hh^3$ based at a single point of $\wh{\mathbf{C}}$, the boundary at infinity $\part_\infty N_0$ of $N_0$ is equal to $\part_\infty N$.

A sequence $\{(X_n,x_n)\}$ of complete metric spaces with base points \emph{converges geometrically} (in the sense of Gromov) to a complete 
metric space $(Y,y)$ if there exist $(K_n, L_n)$-quasi-isometric, $L_n$-dense map $g_n:B_{R_n}(X_n, x_N)\rightarrow B_{K_nR_n}(Y,y)$ with $K_n\searrow 1, L_n \searrow 0$ and $R_n\rightarrow \infty$, where $B_R(X,x)$ denotes the $R$-metric ball in $X$ centred at $x$.
A sequence of Kleinian groups $\{G_n\}$ is said to converge \emph{geometrically} to a Kleinian group $G$ if (i) each $\gamma\in G$ is the limit of a sequence $\{\gamma_n\}$ with $\gamma_n\in \Gamma_n$ and (ii) the limit of any convergent sequence $\{\gamma_{n_i}\}$ with $\gamma_{n_i}\in \Gamma_{n_i}$ is an element of $G$.
It is well known that $\{\hh^3/G_n\}$ converges geometrically to $\hh^3/G$ if we set basepoints to be the projections of a common basepoint point $x_0$ in $\hh^3$ if and only if $\{G_n\}$ converges to $G$ geometrically.
Refer to \cite{jm}, \cite[Chapter E]{bp} for more details on properties of geometric limits.

Suppose that $\Sigma$ is  an open  essential subsurface of $S$, possibly $S$ itself.
The Teichm\"{u}ller space of $\Sigma$ is denoted by $\teich(\Sigma)$, for which we assume that every frontier or puncture corresponds to a parabolic cusp.
For a point $\sigma \in \teich(\Sigma)$, the surface $\Sigma$ with a hyperbolic metric representing $\sigma$ is denoted by $\Sigma(\sigma)$.
A proper map $f$ from $\Sigma(\sigma)$ to a hyperbolic 3-manifold $N$ with $\sigma\in \teich(\Sigma)$ is called a 
\emph{pleated surface} realising a geodesic lamination $\lambda$ in $\Sigma(\sigma)$ if $f$ satisfies the following conditions.
\begin{enumerate}[\rm (i)]
\item $f$ maps each parabolic cusp of $\Sigma(\sigma)$ to a parabolic cusp in $N$.
\item The path-metric induced from $N$ by $f$ coincides with $\sigma$, that is, for any rectifiable path $\alpha$ in $\Sigma(\sigma)$, its image $f(\alpha)$ is also a rectifiable path in $N$ with $\mathrm{length}_{\Sigma(\sigma)}(\alpha)=\mathrm{length}_N(f(\alpha))$.
\item $f(l)$ is a geodesic in $N$ for each leaf $l$ of $\lambda$.
\item For each component $\Delta$ of $\Sigma\setminus \lambda$, the restriction $f|\Delta$ is a totally geodesic immersion into $N$
\end{enumerate}

A relative end $e$ of  hyperbolic 3-manifold $N$ is said to be  \emph{topologically tame} if  there is a properly embedded compact surface $F$ in $N_0$ which separates a submanifold containing $e$ which is  homeomorphic to $F\times [0,\infty)$.
All topologically tame ends of hyperbolic 3-manifolds considered in this paper are assumed to be \emph{incompressible}, 
\ie  the inclusion $F\subset N$ is $\fd$-injective.
A topologically tame relative end $e$ is called \emph{geometrically finite} if $e$ has a neighbourhood which intersects no closed geodesics.
(Here we need to assume $e$ to be topologically tame since we are considering also the case when $\pi_1(N)$ is infinitely generated.)
For a geometrically finite end, the conformal structure $\nu(e)$ on the component of $\part_\infty N$ corresponding to $e$ is defined to be the end invariant of $e$.
If $\G$ itself is geometrically finite, then every relative end of $N$ is geometrically finite.

As was shown by Bonahon \cite{bon}, if $e$ is topologically tame and incompressible but not geometrically finite, then 
there exists a sequence of closed geodesics tending to $e$ in a neighbourhood $E \cong F \times [0,\infty)$ of $e$ which are homotopic in $E$ to essential simple closed curves $c_n$ on $F$. 
Moreover, it is shown in \cite{th1} that $\{c_n\}$ converges in $\mathcal{UML}(\Int F)$ to a lamination $\nu(e)$ contained in $\mathcal{EL}(\Int F)$ which is determined uniquely, independently of the choice of closed geodesics tending to $e$.
This $\nu(e)$ is  called the \emph{ending lamination} of $e$.
In this situation, we say that the relative end $e$ is \emph{simply degenerate} and define the end invariant of $e$ to be the ending lamination $\nu(e)$.
An end which is not topologically tame is called \emph{wild}.
(Recall that we are {\em not} assuming the fundamental group of $N$ is finitely generated.)
Any reasonable invariant for a wild end is not know up to now.
This forces us to define the \emph{end invariants} of $N$ to be only those of topologically tame relative ends of $N$.


%

\section{Brick manifolds}\label{S_2}

\subsection{Embeddings of  brick manifolds with infinite bricks}\label{SS_EIBM}
We first introduce some notation for denoting the union of sets in a family which is convenient in the following discussion on brick manifolds.
Let $\mathcal{Y}=\{Y_\alpha\}_{\alpha\in A}$ be a family of subsets of  some set $X$.
We denote by $\bigvee\mathcal{Y}$  the subset $\bigcup_{\alpha\in A}Y_\alpha$ of $X$.
It should be noted that even when we are considering a sequence of families $\{\mathcal Y_n\}$ of subsets of $X$,  the union $\bigvee \mathcal{Y}_n$ is taken for each $n$.

Now we shall give a precise definition of  brick manifolds, upon which we have touched lightly before stating the main results in \S\ref{main results}. 
Model manifolds of geometric limits which we shall use to prove our main results have structures of brick manifolds as we explained there.

Throughout this subsection, $S$ denotes some fixed surface with $\xi(S) \geq 4$.
A brick is a $3$-manifold homeomorphic to $F \times J$ for a compact essential subsurface $F$ of $S$ with $\xi(F) \geq 3$ and $J$ is either $[0,1]$ or $[0,1)$ or $(0,1]$.
In the latter two cases of $J$, the  brick is said to be \emph{half open}.
We define $\xi(B)$ to be $\xi(F)$.
For a brick $B$, we set $\part_- B=  F\times \{0\}$ and $\part_+ B= F\times \{1\}$ and called the {\em upper front} and  the {\em lower front} respectively, even when $B$ is half open.
When $B$ is half open, a front which is not contained in $B$ is called the {\em ideal front} of $B$.
On the other hand, $\partial F \times J$ is called the vertical boundary of $B$, and is denoted by $\partial_\v B$.
A brick $B=F \times J$ has two foliations: the horizontal (codimension-1) foliation whose leaves consist of $F \times \{t\}$ and vertical (codimension-2) foliation whose leaves consist of $\{x\} \times J$.
A map from a brick to  $S \times I$ (where $I$ is an interval in $\reals$) is said to be {\em leaf-preserving} when leaves of the horizontal and the vertical foliations are mapped to leaves of the corresponding foliation of the range.
Here, for $S \times I$,  the horizontal foliation consists of $S \times \{t\}$ whereas the vertical foliation consists of $\{x\} \times I$.


Before defining  brick complexes and brick manifolds in general, we shall first define finite brick complexes and finite  brick manifolds.
A finite brick complex is a family of finitely many bricks $\ck=\{B_1, \dots , B_m\}$ realised as subsets of a 3-manifold  with pairwise disjoint interiors satisfying the following two conditions:
\begin{enumerate}
\item $\bigcup_{i=1}^m  B_i$ is connected.
\item For any two bricks $ B_i, B_j$ in $\ \ck$ with $ F_{ij}=  B_i\cap   B_j\neq \eset$,  there exists a leaf-preserving embedding 
$\eta: B_i\cup   B_j\rightarrow S\times [-1,1]$ with $\eta(  B_i)\subset S\times [-1,0]$, $\eta (B_j)\subset 
S\times [0,1]$ such that $\eta(F_{ij})$ is an   essential subsurface of $S\times \{0\}$.
\end{enumerate}
The union $\bigvee  \ck$ is called a \emph{finite brick manifold} with brick decomposition $\ck$.
We call $F_{ij}$  in the second condition above the \emph{joint} of $ B_i$ and $ B_j$.
A joint $F_{ij}$ is said to be \emph{inessential} if 
$\part_-  B_i=  F_{ij}=\part_+   B_j$.


Now we define brick complexes and brick manifolds.
Let $\{\ck_n\}_{n=1}^\infty$ be an ascending sequence of finite brick complexes.
Then the union $ \ck=\bigcup_{n=1}^\infty  \ck_n$ is called a  brick complex, and 
$\bigvee \ck$ is said to be a  brick manifold with  brick decomposition $\ck$.
In the situation where a leaf-preserving embedding $\eta:M\rightarrow S\times (0,1)$ of a brick manifold is given, a half-open brick $B$ in $\ck$ is  said to be
\emph{peripheral} with respect to $\eta$ if the ideal front of $\eta(B)$ is contained in 
$S\times \{0\}\cup S\times \{1\}$.

The following lemma is a key step in the proof of Theorem \ref{thm_a}, to whose proof the rest of this subsection is devoted.
In the setting of Theorem \ref{thm_a}, the model manifold $M$ for $N$ is a brick manifold which is a geometric limit of model manifolds for $(\hh^3/G_n)_0$.
It follows that $M$ contains an ascending exhausting sequence of finite brick manifolds which admit leaf-preserving embeddings into $S \times (0,1)$.
The following lemma then implies that there is a leaf-preserving embedding of $M$ itself into $S \times (0,1)$.

\begin{lemma}\label{l_21}
Let $\{M_n\}$ be a sequence of finite brick manifolds with   brick complexes $\ck_n$ such that $\ck_n\subsetneqq \ck_{n+1}$.
If there exists a leaf-preserving embedding $\eta_n:M_n\rightarrow S\times (0,1)$ for each $n\in \nn$, then 
the brick manifold $M =\bigcup_{n=1}^\infty M_n$ has the following properties.
\begin{enumerate}[\rm (i)]
\item
There exists a leaf-preserving embedding $\eta_\infty:M\rightarrow S\times (0,1)$.
\item
The ends of $M$ are countable.
\item
If $B\in \ck_m$ is peripheral with respect to $\eta_n$ for all $n\geq m$, then 
$B$ is also peripheral with respect to $\eta_\infty$.
\end{enumerate}
\end{lemma}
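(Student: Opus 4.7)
The obstruction is that the embeddings $\eta_n$ are chosen independently: $\eta_{n+1}|_{M_n}$ need not be leaf-preservingly isotopic to $\eta_n$, since they may differ by Dehn twists along annular joints or by a rechoice of isotopy class of the essential embedding of a brick. My plan is threefold. First, for each fixed $m$, I would show that leaf-preserving embeddings $M_m \hookrightarrow S\times(0,1)$, taken up to ambient isotopies of $S\times(0,1)$ respecting the product structure, fall into only finitely many classes: each brick $F\times J$ must embed as a product $F'\times J' \subset S\times(0,1)$ with $F'\subset S$ essential, and there are only finitely many isotopy classes of essential embeddings $F\hookrightarrow S$ since $S$ has finite type, while the vertical stacking of the finitely many bricks of $M_m$ admits only finitely many combinatorial patterns. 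Second, by a standard diagonal extraction applied to the finite-target maps $\eta_n\mapsto [\eta_n|_{M_m}]$, I pass to a subsequence $\{n_k\}$ so that for every $m$ the isotopy class $\alpha_m := [\eta_{n_k}|_{M_m}]$ is constant for all $k$ sufficiently large; the coherence $\alpha_m|_{M_{m'}}=\alpha_{m'}$ for $m'\leq m$ is automatic. Third, I construct $\eta_\infty$ inductively: fix $\eta_\infty^{(1)}\in\alpha_1$; given $\eta_\infty^{(m-1)}\in\alpha_{m-1}$, pick any $\eta'\in\alpha_m$, and use that $\eta'|_{M_{m-1}}$ is leaf-preservingly isotopic to $\eta_\infty^{(m-1)}$ to extend this isotopy to a leaf-preserving ambient isotopy of $S\times(0,1)$ which, applied to $\eta'$, yields $\eta_\infty^{(m)}$ restricting to $\eta_\infty^{(m-1)}$ on $M_{m-1}$. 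The direct limit $\eta_\infty := \lim_m \eta_\infty^{(m)}$ is the required leaf-preserving embedding of $M=\bigcup M_m$.

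\textbf{Properties (ii) and (iii).} For (ii), each finite brick manifold $M_n$ has finitely many boundary components, so $\pi_0(M\setminus M_n)$ is finite, and the set of ends of $M$ is the inverse limit $\varprojlim \pi_0(M\setminus M_n)$. Using the embedding $\eta_\infty$ and the finite complexity $\xi(S)$, I would show that each end accumulates onto a definite locus in $(S\times\{0,1\})\cup(\partial S\times(0,1))$ or onto a horizontal stratum in the interior; because pairwise disjoint essential subsurfaces of $S$ exist only in finitely many configurations at any single level, the brick combinatorics admit only countably many such loci, ruling out Cantor-like end structures and giving countability of ends. For (iii), peripherality of a half-open brick $B$ -- the condition that the ideal front of $B$ lies in $S\times\{0,1\}$ -- is invariant under leaf-preserving ambient isotopies of $S\times(0,1)$, and hence is a property of the isotopy class $\alpha_n|_B$. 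If every $\eta_n$ with $n\geq m$ makes $B$ peripheral, then the stable class $\alpha_N|_B$ for $N\geq m$ inherits this property, and so does the constructed representative $\eta_\infty|_B$.

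\textbf{Main obstacle.} The delicate step is the ambient extension of isotopies in the third part of (i): it must preserve the product structure of $S\times(0,1)$ and, crucially, it must not introduce new self-intersections on $\eta'(M_{n_k}\setminus M_{m-1})$, which could destroy the compatibility already established at later stages. This is precisely the ``rearrangement of embeddings by twisting'' flagged in the outline of Theorem~A, where one coherently absorbs the finitely many Dehn twist parameters along annular joints in a brick-by-brick fashion so that the sequence of modifications stabilises on each fixed $M_m$. The same circle of issues underpins the finiteness claim in the first step, where the twist data attached to annular joints together with isotopy classes of essential embeddings of the bricks must be bounded in number.
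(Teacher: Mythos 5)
The finiteness claim underlying your first step is false: for $\xi(S)$ large enough, there are infinitely many isotopy classes of essential subsurfaces of $S$ of a given topological type (for instance, applying powers of a Dehn twist along a curve that meets $\partial P$ essentially to a fixed essential pair of pants $P$ produces infinitely many pairwise non-isotopic ones). Hence the set of leaf-preserving ambient isotopy classes of leaf-preserving embeddings $M_m\hookrightarrow S\times(0,1)$ is already infinite for a single brick, and the diagonal extraction in your second step cannot force an eventually constant class $\alpha_m$. What \emph{is} finite is the set of such embeddings up to the action of the full mapping class group of $S$ --- this is what the paper's Rearrangement~II achieves (``the same embedding up to isotopies and changes of markings''); but after that rearrangement the restrictions $\eta_m|_{R^j_n}$ agree only up to post-composition with an orientation-preserving homeomorphism $\gamma_{m,n}$ of the slab, possibly a nontrivial twist, so the honest leaf-preserving isotopy from $\eta'|_{M_{m-1}}$ to $\eta_\infty^{(m-1)}$ that your third step requires is simply not available. (Incidentally, had the finiteness held, the rest of your construction would go through: the ambient-isotopy extension you flag as delicate is not the actual difficulty, since $\Phi_1$ is a homeomorphism of $S\times(0,1)$ and cannot create self-intersections on $\eta'(M_m\setminus M_{m-1})$.)

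Taming that residual twist ambiguity is the real content of the lemma and occupies most of the paper's proof. After Rearrangements~I and~II one builds candidate embeddings $h_n$ inductively, correcting the mismatch between $h_{n-1}$ and the slab-wise embeddings coming from $\eta_n$ by $\varphi_{a_j}$-twists supported on the slits $\Sigma_{a_j}^{(n-1)}$; the crucial point, via Claim~\ref{delta-region} and the analysis of the accumulation set $T_\infty'$, is that the affected region of each twist can be pushed into $S\times\langle a_j,c(a_j)\rangle$ for a well-chosen $c(a_j)\in T_\infty'$, so that any fixed brick $B$ meets only finitely many affected regions and $h_n|B$ eventually stabilises. This stabilisation mechanism --- together with the countability of $T_\infty'$, which is also what drives part (ii) --- has no counterpart in your proposal, and your sketches for (ii) and (iii), while compatible in spirit with the paper's argument, rest on the stabilised $\eta_\infty$ and therefore inherit the gap.
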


We use the symbols $\pr_\mathrm{h}: S \times [0,1] \rightarrow [0,1]$ to denote the projection to the second factor, and $\pr_\v: S \times [0,1] \rightarrow S$ to denote that to the first factor.
For any brick $B_i \in \ck_n$, we set $\pr_\mathrm{h} \circ \eta_n(\part_-B_i)=\alpha_{i,n}$ and $\pr_\mathrm{h} \circ\eta_n(\part_+B_i)=\beta_{i,n}$.
(Here we regard $\eta_n$ as extended to ideal fronts continuously.)
A half-open brick $B_i$ is peripheral with respect to $\eta_n$ if and only if either $\alpha_{i,n}=0$ or $\beta_{i,n}=1$.
For integers $n,m$ with $1\leq n\leq m$, let $T_{n,m}$ be the subset of $[0,1]$ consisting of the $\alpha_{i,m}, \beta_{i,m}$ for $B_i\in \ck_n$, and set $T_n=T_{n,n}$.
Consider the correspondence 
$\tau_{n,m}:T_{n}\rightarrow T_{m}$ which transfers $\alpha_{i,n},\beta_{i,n}$ respectively to $\alpha_{i,m},\beta_{i,m}$.
Note that $\tau_{n,m}$ is not necessarily a map.
In fact, it may occur that  $\alpha_{i,n}=\alpha_{j,n}$ (resp.\ $\alpha_{i,n}=\beta_{j,n}$) but $\alpha_{i,m}\neq \alpha_{j,m}$ (resp.\ $\alpha_{i,m}\neq \beta_{j,m}$) etc.

To prove Lemma \ref{l_21}, we shall make use of  the following two kinds of rearrangement for $\{\ck_n\}$.
In Rearrangement I, by taking a subsequence and modifying the embeddings $\eta_n$, we shall make $\alpha_{i,n}$ and $\beta_{i,n}$ independent of $n$.

\subsection*{Rearrangement I}
Fix $n \in \naturals$.
Then by passing to a subsequence, we can make $\tau_{m, m'}|T_{n, m}$ is a map for $m' > m \geq n$.
Moreover, since there are only finitely many bricks in $\ck_n$, there are only finitely many ways to give them an order.
Therefore,  we can take a subsequence $\{\ck_{n_k}\}$ of $\{\ck_m\}_{m\geq n}$ so that the restriction $\tau_{n_k,n_l}|T_{n,n_k}:T_{n,n_k}\rightarrow T_{n,n_l}$ is an order-preserving bijection whenever  
$n_k\leq n_l$. 
For any $k \geq n$, we define a new embedding  $\eta_k$ to be the old $\eta_{n_k}|M_k$.
Repeating the same argument, we can assume that $\tau_{m_1,m_2}|T_{n,m_1}:T_{n,m_1}\rightarrow T_{n,m_2}$ is 
an order-preserving bijection for any $n\leq m_1\leq m_2$.
Since $\eta_n$ and $\eta_m$ embed    $\{\partial_- B_i,\partial_+ B_i \mid B_i \in \ck_n\}$ in the same order, we can deform the new $\eta_n$  by ambient isotopies of $S\times I$ in such a way that we have
$\alpha_{i,n}=\alpha_{i,m}$ and $\beta_{i,n}=\beta_{i,m}$ for any $n\leq m$ and any $i$ with $B_i\in \ck_n$.
In particular,  $T_n$ can be made a subset of $T_m$.

\bigskip


\subsection*{Rearrangement II}
Set $T_n=\{a_0,a_1,\dots,a_t\}$, where elements are arrayed in the increasing order, and $R^j_n=\eta_n^{-1}(S\times [a_{j-1},a_j])$.
See Figure\ \ref{fig2_1}.
Passing again to a subsequence of $\{\eta_n\}$ if necessary, we may assume that, for any $j=1,\dots,t$, 
all $\eta_m|R^j_n$ $(m\geq n)$ define 
the same embedding up to isotopies and changes of  the markings of $S\times [a_{j-1},a_j]$, \ie there exists an orientation-preserving 
homeomorphism $\gamma_{m,n}:S \times [a_{j-1},a_j]\rightarrow S\times [a_{j-1},a_j]$ with 
$\gamma_{m,n}\circ (\eta_m|R^j_n)=\eta_{n}|R^j_n$.
For, if we fix a topological type of a compact essential subsurface $F$ of $S$, there are only finitely many embeddings, up to isotopies and  changes of markings, of $F$ into $S$ as an essential subsurface.

We note that this $\gamma_{m,n} \circ (\eta_m|R^j_n)$ may not extend to the entire $M_m$.
In fact even for a brick $B$ in $\ck_m \setminus \ck_n$ with both $\partial_+B$ and $\partial_- B$ contained in $M_n$, it may be possible that $\gamma_{m,n} \circ \eta_m(\partial_- B)$ and $\gamma_{m,n} \circ \eta_m(\partial_+ B)$ are not isotopic.
\begin{figure}[hbtp]
\centering
\scalebox{0.5}{\includegraphics[clip]{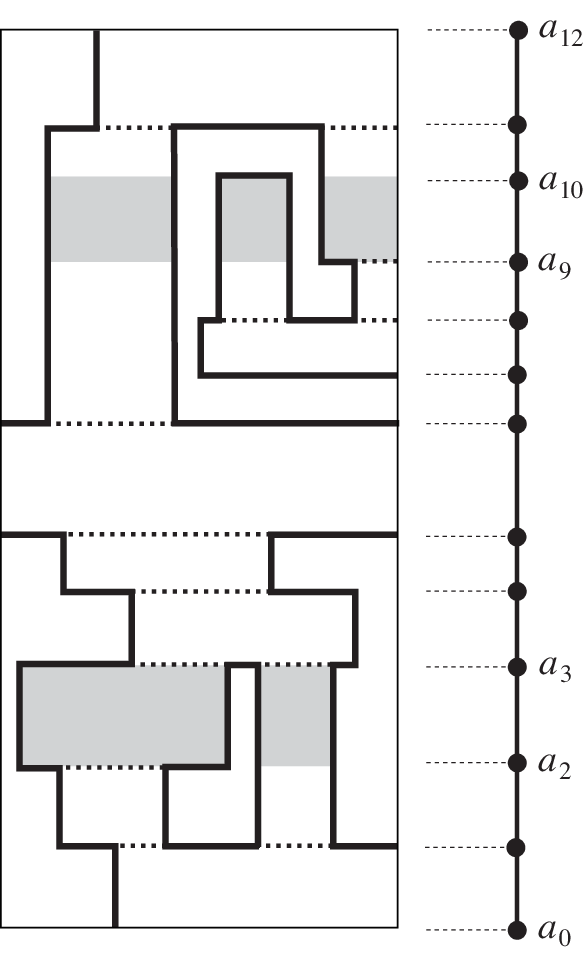}}
\caption{The union of the shaded regions in the lower (resp.\ higher) level 
is $\eta_n(R^3_n)$ (resp.\ $\eta_n(R^{10}_n)$).}
\label{fig2_1}

\end{figure}

\bigskip

To construct embeddings of the $M_n$ which stabilise on each brick after finite steps, we need to modify the embeddings $\eta_n$ as above by composing \lq\lq twists" which will be defined below.
Before the definition, we shall observe the local structure of the embeddings $\eta_n(M_n)$ at horizontal levels near the accumulation points of $\cup_m T_m$.

For each $c \in I$ and $n \in \naturals$, we call $\Sg_c^{(n)}:=(S\times \{c\})\setminus \Int (\eta_n(M_n))$  the \emph{slit} for $\eta_n(M_n)$ at $c$.
By Rearrangement I and II,  for all sufficiently large $n$, the topological type of  $\Sg_c^{(n)}$ does not vary with $n$.
The slit $\Sg_c^{(n)}$ is said to be \emph{stable} if all the $\Sg_c^{(m)}$ $(m\geq n)$ are homeomorphic.
For $c \in I$, we define $\chi_{\stab}(\Sg_c)$ to be $\chi(\Sg_c^{(n)})$ for stable $\Sg_c^{(n)}$.
Since the embedding of every brick intersects $S \times \{c\}$ at an essential subsurface with negative Euler characteristic, we see that $\chi(\Sg_c^{(n)})$ is monotone increasing and once the equality $\chi(\Sg_c^{(n)})=\chi_{\stab}(\Sg_c)$ holds, $\Sg_c^{(n)}$ is stable.

Let $T_\infty'$ be the set of accumulation points of $T_\infty :=\bigcup_{n\geq 1}T_n$.
For $c \in T_{\infty}'$, consider a sufficiently large $n$ such that  $\Sg_c^{(n)}$ is stable.
Suppose that $B_1^{(n)},\dots,B_k^{(n)}$ are the bricks in $\ck_n$ with $\eta_n(B_i^{(n)})\cap S \times \{c\} \neq \eset\, (i=1, \dots , k)$.
Take a sufficiently small $\delta>0$ so that $S \times ([c-\delta,c) \cup(c,c+\delta])$ meets none of the images  under $\eta_n$ of the fronts 
of $B_i^{(n)}$ $(i=1,\dots,k)$.   
Then we call the set 
$$Q_\delta(\Sg_c^{(n)}):=\bigl(S \times ([c-\delta,c)\cup (c,c+\delta])\setminus \eta_n(B_1^{(n)})\cup\dots\cup \eta_n(B_k^{(n)})\bigr)\cup \Sg_c^{(n)}$$
 the \emph{$\delta$-region} of the slit $\Sg_c^{(n)}$ for $\eta_n(M_n)$.
 See Figure \ \ref{fig2_2}.
 When $c$ is $0$ or $1$, we need to modify the definition a little: we define $Q_\delta(\Sg_c^{(n)})$ to be $S \times (0,\delta]$ when $c=0$ and  $S \times [1-\delta, 1)$ when $c=1$.
\begin{figure}[hbtp]
\centering
\scalebox{0.5}{\includegraphics[clip]{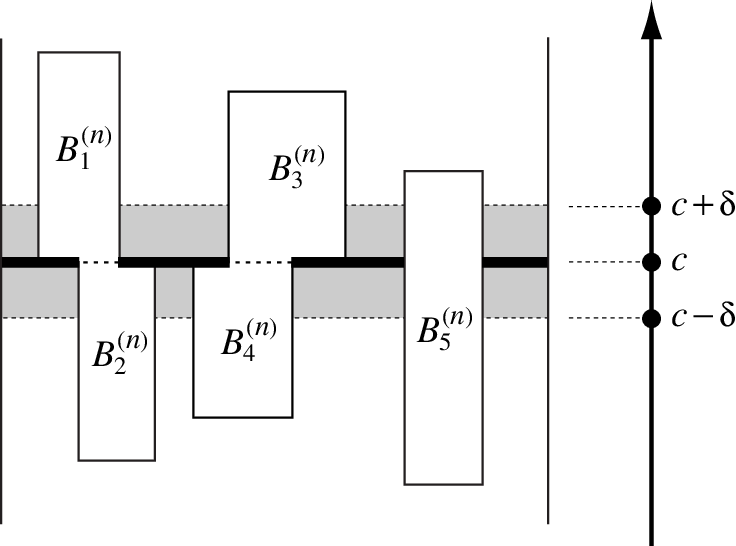}}
\caption{The union of the bold horizontal segments represents $\Sg_c^{(n)}$.
The union of $\Sg_c^{(n)}$ and the shaded regions is the $\delta$-region $Q_\delta(\Sg_c^{(n)})$.}
\label{fig2_2}
\end{figure}



For $m \geq n$, if $\Sg_d^{(m)}$ $(d\in I)$ is contained in $Q_\delta(\Sg_c^{(m)})\setminus \Sg_c^{(m)}$ then $\chi(\Sg_d^{(m)})\geq \chi_{\stab}(\Sg_c)$.
If  the equality holds, then $\Sg_d^{(m)}$ is parallel to $\Sg_c^{(m)}$ in $S\times [0,1]\setminus \eta_m(M_m)$ (for, since $M_m$ is connected, there cannot be a brick obstructing the parallelism), and even if the strict inequality holds, $\pr_v(\Sg_d^{(m)})$ is contained in $\pr_v(\Sg_c^{(m)})$ (up to isotopy).
Therefore, in particular if $d$ lies on a side of $c$ from which $T_\infty$ accumulates to $c$, the strict inequality $\chi(\Sg_d^{(m)}) > \chi(\Sg_c^{(m)})$ holds.
Since the only bricks that contribute to increase $\chi(\Sg_c^{(m)})$ are those with one of their fronts lying on $S \times \{c\}$, and their other fronts lie outside the $\delta$-region, we see that even for  $m$ smaller than $n$, we have the inequality $\chi(\Sg_d^{(m)}) \geq \chi(\Sg_c^{(m)})$.
Thus we have shown the following claim.
\begin{claim}
\label{delta-region}
For $c\in T_{\infty}'$, there exists $\delta(c)>0$ depending only on $c$ such that $\chi_{\stab}(\Sg_d)\geq \chi_{\stab}(\Sg_c)$ if $d$ lies in $[c-\delta(c), c+\delta(c)]$.
In particular, if $d$ lies on a side of $c$ from which $T_\infty$ accumulates to $c$, we have $\chi_{\stab}(\Sg_d)>\chi_{\stab}(\Sg_c)$.

In general, for every $n$, the inequality $\chi(\Sg_d^{(n)})\geq  \chi(\Sg_c^{(n)})$ holds provided that $d$ lies in $[c-\delta(c), c+\delta(c)]$, and $\pr_v(\Sg_d^{(n)})$ is contained in $\pr_v(\Sg_c^{(n)})$ up to isotopy.
\end{claim}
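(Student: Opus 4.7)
The plan is to package the preceding $\delta$-region analysis into the three assertions. First I would choose $n_0 = n_0(c)$ large enough that $\Sg_c^{(n_0)}$ is stable; this is possible because $\chi(\Sg_c^{(n)})$ is monotone nondecreasing in $n$ (each new brick carves out an essential subsurface of $\xi \geq 3$, hence of strictly negative Euler characteristic, from the slit) and is bounded above by $\chi(S)$. Let $B_1, \ldots, B_k \in \ck_{n_0}$ be the bricks whose $\eta_{n_0}$-images meet $S \times \{c\}$, and choose $\delta(c) > 0$ so small that (a) the open slab $S \times ((c-\delta(c), c) \cup (c, c+\delta(c)))$ is disjoint from every front of $B_1, \ldots, B_k$, and (b) every brick of $\ck_{n_0}$ not meeting $S \times \{c\}$ has $\eta_{n_0}$-image disjoint from $S \times [c-\delta(c), c+\delta(c)]$. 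Both conditions are achievable by finiteness of $\ck_{n_0}$, and $\delta(c)$ depends only on $c$.

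For the general assertion, fix $n$ and $d \in [c-\delta(c), c+\delta(c)]$. The bricks of $\ck_n$ meeting $S \times \{c\}$ form a subset of $\{B_1, \ldots, B_k\}$: this is trivial for $n \leq n_0$, and for $n > n_0$ it follows from stability of $\Sg_c^{(n_0)}$, since a new brick in $\ck_n \setminus \ck_{n_0}$ with essential cross-section at level $c$ would strictly increase $\chi(\Sg_c^{(n)})$ beyond $\chi_{\stab}(\Sg_c)$. By the choice of $\delta(c)$, each such brick meets $S \times \{d\}$ with the same $\pr_\v$-image up to isotopy, and any other brick in $\ck_n$ meeting $S \times \{d\}$ must lie in $\ck_n \setminus \ck_{n_0}$ and excises an additional essential subsurface from $\Sg_d^{(n)}$. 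This yields both $\chi(\Sg_d^{(n)}) \geq \chi(\Sg_c^{(n)})$ and $\pr_\v(\Sg_d^{(n)}) \subseteq \pr_\v(\Sg_c^{(n)})$ up to isotopy, and passing to large $n$ gives $\chi_{\stab}(\Sg_d) \geq \chi_{\stab}(\Sg_c)$.

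For the strict inequality on the accumulation side, I would argue by contradiction. Suppose $d$ lies on a side of $c$ from which $T_\infty$ accumulates and $\chi_{\stab}(\Sg_d) = \chi_{\stab}(\Sg_c)$. Then for all sufficiently large $m$, no new brick of $\ck_m$ contributes to $\Sg_d^{(m)}$ beyond those contributing to $\Sg_c^{(m)}$; equivalently $\Sg_d^{(m)}$ is parallel to $\Sg_c^{(m)}$ in $S \times [0,1] \setminus \eta_m(M_m)$, so the intermediate slab has interior disjoint from $\eta_m(M_m)$ (as noted in the discussion preceding the claim, using connectedness of $M_m$ to exclude an obstructing brick). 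But the accumulation of $T_\infty$ at $c$ from the side of $d$ forces, for large $m$, a front of some brick in $\ck_m$ to have horizontal coordinate strictly between $c$ and $d$, contradicting this disjointness.

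The main subtlety, and the step I would verify most carefully, is the assertion that each new brick in $\ck_n \setminus \ck_{n_0}$ meeting the slab really changes the slit strictly: this relies on the definition of brick complex, in which every joint is an essential subsurface, together with $\xi \geq 3$, to ensure every horizontal cross-section of a brick is an essential subsurface of $S$ of negative Euler characteristic. Once this and the parallelism observation are in place, all parts of the claim follow by direct inspection of the $\delta$-region as above.
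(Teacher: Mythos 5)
Your proof follows essentially the same route as the paper's: you pick a stability threshold $n_0$, restrict to a $\delta$-region small enough that only the bricks $B_1,\dots,B_k$ meeting $S\times\{c\}$ can intervene, observe monotonicity of the projected slit, and then use parallelism plus accumulation for the strict inequality. The first two parts of the argument are in good shape, and condition (b), while stronger than what the paper states explicitly, is achievable by compactness and correctly forces bricks of $\ck_n$ meeting $S\times\{d\}$ into $\{B_1,\dots,B_k\}\cup(\ck_n\setminus\ck_{n_0})$. One point to be careful about: bricks among $B_1,\dots,B_k$ that touch $S\times\{c\}$ only at a front have no interior cross-section at $c$ but do at $d$ on one side, so it is not quite true that ``each such brick meets $S\times\{d\}$ with the same $\pr_\v$-image''; what actually holds, and suffices, is the containment $\pr_\v\bigl(\Int(\eta_n(M_n))\cap(S\times\{c\})\bigr)\subseteq\pr_\v\bigl(\Int(\eta_n(M_n))\cap(S\times\{d\})\bigr)$, plus the fact that the carved-out excess consists of essential subsurfaces and annuli.

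The strict-inequality step is where the phrasing needs repair. You write ``the intermediate slab has interior disjoint from $\eta_m(M_m)$,'' but the full slab $S\times(d,c)$ certainly meets $\eta_m(M_m)$ (the $B_i$ pass through it); what is disjoint is only the product region in the complement of $\eta_m(M_m)$ bounded by the two slits, which is trivially so. A new brick of $\ck_m$ whose front level lies in $(d,c)$ could, a priori, sit entirely inside the $\eta_m(M_m)$ part of the slab, and so would not contradict that disjointness. The correct continuation is: since the slit is unchanged between $d$ and $c$ and the fronts of $B_1,\dots,B_k$ avoid $(c-\delta,c)\cup(c,c+\delta)$, any new brick with a front in $(d,c)$ would be forced to have both its fronts inside $(d,c)$ (else it carves an extra essential piece from $\Sg_d^{(m)}$ or $\Sg_c^{(m)}$, contradicting stability), hence would be joined only to other bricks of the same kind, producing a component of $M_m$ disconnected from $B_1,\dots,B_k$. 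This is the connectedness contradiction the paper points to with its parenthetical. With that substitution the argument closes, and the rest of your proposal is sound.
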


For an integer $s\geq 1$, we define $T_{\infty,s}'$ to be the subset of $T_\infty'$ consisting of elements $c\in T_\infty'$ for which 
$-\chi_{\stab}(\Sg_c)=s$.
Suppose that $c$ is contained in $T_{\infty, s}'$.
Then by the claim above, if $d$ lies on a side of $c$ from which $T_\infty$ accumulates to $c$, and $|d-c|< \delta(c)$, then $-\chi_{\stab}(\Sg_d) < s$.
Taking into account also the side from which $T_\infty$ does not accumulate to $c$, we can take possibly smaller $\delta(c)$ such that for {\em any $\Sigma_d^{(n)}$ with $d \in T_\infty \cup T_\infty'$ contained in $Q_{\delta(c)}(\Sg_c^{(n)})\setminus \Sg_c^{(n)}$, we have $-\chi_{\stab}(\Sg_d)<s$}.
This implies that $(c-\delta(c), c+\delta(c)) \cap T_{\infty, s}'=\{c\}$.
It follows that  $T_{\infty,s}'$ is a countable subset of $[0,1]$ for every $s$, and hence so is $T_\infty'$.

\smallskip

By making $\delta(c)$ smaller if necessary, 
we can assume 
that  for any $c,c'
\in T_\infty'$, either $[c-\delta(c),c+\delta(c)]$ 
and $[c'-\delta(c'),c'+\delta(c')]$ are  disjoint or one of them contains the other.
Since $T_\infty\cup T_\infty'$ is compact, there exists a finite subset $\{c_1,\dots,c_k\}$ of 
$T_\infty'$ such that $T_\infty\setminus \bigcup_{i=1}^k[c_i-\delta(c_i),c_i+\delta(c_i)]$ covers $T_\infty \cup T_\infty'$ except for finitely many elements $b_1,\dots,b_u$ of $T_\infty$.
See Figure \ref{fig2_4}.

For a point $a \in T_\infty$ we define $c(a)$ to be a point in $T_\infty'$ such that $[c(a)-\delta(c(a)), c(a)) \cup (c(a), c(a)+\delta(c(a))]$ contains $a$ and is the smallest among such sets with respect to the inclusion.
In the case when there is no such set, \ie if $a$ is among $b_1, \dots, b_u$,  we define $c(a)$ to be $1$ by convention.

\begin{figure}[hbtp]
\centering
\scalebox{0.5}{\includegraphics[clip]{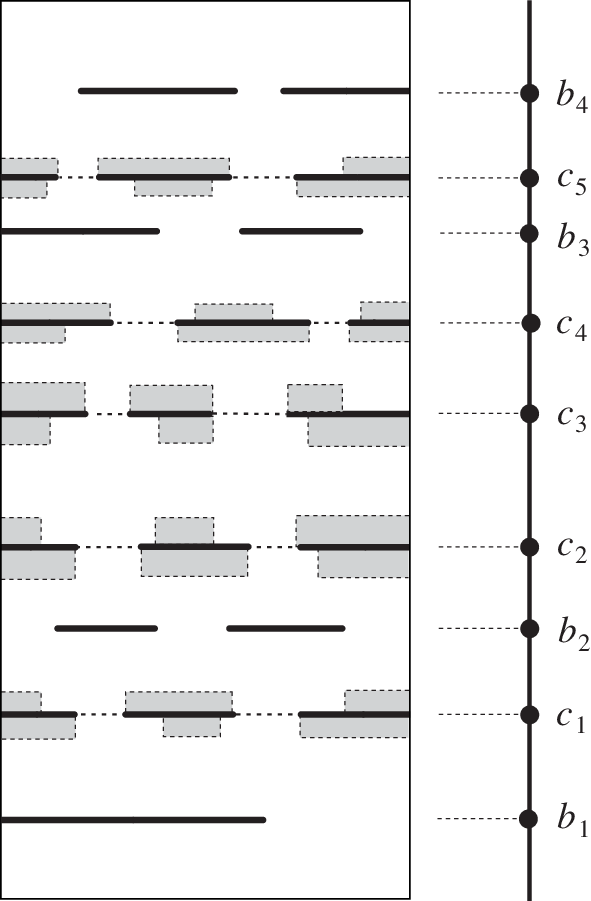}}
\caption{}
\label{fig2_4}
\end{figure}

Now we shall define maps called   twists, which will be used to modify embeddings.
Let $F$ be a compact essential subsurface of $S\times \{a\}$ with $0<a<1$ and $\varphi:F\rightarrow F$ an orientation-preserving homeomorphism 
such that $\varphi|\part F$ is the identity.
Consider a 3-manifold $N_\varphi$ obtained from $S\times [0,1]\setminus \Int F$ by identifying the $(\pm)$-sides 
$F^{(\pm)}$ of $F$ by $\varphi:F^{(-)}\rightarrow F^{(+)}$ instead of the identity.
The original $S\times [0,1]\setminus \Int F$ is naturally regarded as a subset of $N_\varphi$. 
We say that $N_\varphi$ is the manifold obtained from $S\times [0,1]\setminus \Int F$ by the \emph{$\varphi$-twist along} $F$.
Thus obtained manifold is homeomorphic to $S \times [0,1]$, by a homeomorphism which we specify as follows.
Let $C_0$ be either $F \times [0,a)$ or $F \times (a,1]$.
Then we have a homeomorphism $\xi_0:N_\varphi\rightarrow S\times [0,1]$ such that   $\xi_0|(N_\varphi\setminus 
C_0)$ is the identity, whereas $\xi_0|C_0$ is $\varphi ^{-1} \times \mathrm{id}_{[0,a)}$ if $C_0$ is $F \times [0,a)$, and $\varphi \times \mathrm{id}_{(a,1]}$ if $C_0$ is $F \times (a,1]$.
The part of $N_\phi$ where the homeomorphism is not the identity is called the \emph{affected region} of the twist.
In this case, $C_0$ is the affected region.
See Figure \ref{fig2_3}\,(a).
\begin{figure}[hbtp]
\centering
\scalebox{0.5}{\includegraphics[clip]{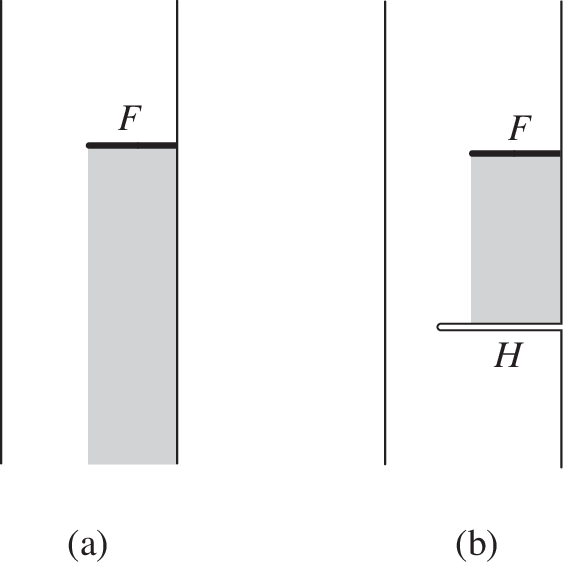}}
\caption{The shaded parts represent the affected regions.}
\label{fig2_3}
\end{figure}

For the proof of  Lemma \ref{l_21}, we need to reduce the affected region using the following trick.
Let $H$ be a non-peripheral horizontal essential subsurface in $S\times [0,1]$ with $\mathrm{pr}_\mathrm{v}(H)\supset \mathrm{pr}_\mathrm{v}(F)$ which  lies in $S \times \{b\}$ for some 
$b$ with $F \times\{b\} \subset C_0$.
Then there exists a homeomorphism $\xi_1: N_\varphi\setminus H\rightarrow S\times [0,1]\setminus H$ whose affected 
region is $C_1=F\times \langle b,a\rangle$, where $\langle a,b \rangle$ denotes $(a,b)$ if $b>a$ and $(b,a)$ if $a> b$, \ie $\xi_i|N_\varphi \setminus C_1$ is the identity.
 See Figure \ref{fig2_3}\,(b).
 In the following proof of Lemma \ref{l_21}, we shall use this trick letting $b$ be $c(a) \in T_\infty'$ defined above. 
%

Now we are ready to formally start the proof of Lemma \ref{l_21}.

\begin{proof}[Proof of Lemma \ref{l_21}]
First we shall show the part (i).
We shall define inductively a leaf-preserving embedding $h_n: M_n \rightarrow S \times [0,1]$ with $h_n^{-1}(S \times [a_{j-1}, a_j])=\eta_n^{-1}(S \times [a_{j-1}, a_j])$ for $T_n=\{a_0,a_1,\dots, a_t\}$.
Here $\eta_n$ denotes the one which we obtained after applying Rearrangements I and II for the original $\eta_n$.
We set $h_1=\eta_1$.
We assume that $h_{n-1}$ has already been defined, and shall define $h_n$ inductively so that the $h_n$ retain the properties obtained by Rearrangements I and II.

Recall that we defined $R_n^j$ to be $\eta_n^{-1}(S \times [a_{j-1}, a_j])$.
By Rearrangement I, we have $R_n^j \cap M_{n-1}=R_{n-1}^j$ for any $j=1,\dots,t$.
By applying Rearrangement II for $h_{n-1}$ and $\eta_n$, we see that there exists an embedding $\hat h_n^j:R_n^j\rightarrow S\times [a_{j-1},a_j]$ such that $\hat h_n^j \circ \eta_n|R_n^j\cap M_{n-1} 
= h_{n-1}|R_n^j\cap M_{n-1}$.
We note that the union of $\hat h_n^j \circ \eta_n$ does not necessarily match up on the boundaries of the $R_n^h$ to define an entire embedding from $M_n$ to $S\times [0,1]$.
Let $\hat T_n$ be the subset of $T_n$ consisting of elements $a_j\in T_n$ for which $-\chi(\Sg_{a_j}^{(n-1)})>-\chi(\Sg_{a_j}^{(n)})$, 
where $\Sg_{a_j}^{(n-1)}=S\times \{a_j\}\setminus \Int(h_{n-1}(M_{n-1}))$ and $\Sg_{a_j}^{(n)}=S\times \{a_j\}\setminus  \Int(\bigcup_{j=1}^t \hat h^j_n \circ \eta_n(R_n^j))$ are slits for $h_{n-1}$ and $\hat h^j_n \circ \eta_n$.
In particular, $c\in \hat T_n$ implies that $\Sg_c^{(n-1)}$ is unstable.

To construct an embedding on the entire $M_n$ from this $\hat h_n^j \circ \eta_n$, we need to perform twist as defined before.
For each $a_j \in \hat T_n$, we  choose an orientation-preserving homeomorphism $\varphi_{a_j}:\Sg_{a_j}^{(n-1)}\rightarrow \Sg_{a_j}^{(n-1)}$  with $\varphi_{a_j}|\part \Sg_{a_j}^{(n-1)}$ being the identity  so that $\bigcup_{j=1}^t \hat h^j_n \circ \eta_n$ extends to an embedding $\hat h_n:M_n\rightarrow N_n$, where $N_n$ is the manifold obtained from $S\times [0,1]\setminus \bigcup_{a_j\in \hat T_n}\Sg_{a_j}^{(n-1)}$ by the composition of the $\varphi_{a_j}$-twists.
By our definition of $\hat h^j_n$, if we identify $N_n$ with $S\times [0,1]$ so that the non-affected regions do not move as was explained before, then the difference between $\hat h_n|M_{n-1}$ and $h_{n-1}$ is the composition of the $\varphi_{a_j}$-twists.


Now we consider to make the affected region of the $\varphi_{a_j}$-twist smaller.
Recall that for $a_j$, there is a point $c(a_j) \in T_\infty'$ defined above such that $[c(a_j)-\delta(c(a_j)),c(a_j)) \cup (c(a_j), c(a_j)+\delta(c(a_j))]$ contains $a_j$ and  is the smallest among such sets.
By Claim \ref{delta-region}, we see that $\pr_v(\Sg_{a_j}^{(n-1)})$  is contained in $\pr_v(\Sg_{c(a_j)}^{(n-1)})$ for the embedding $h_{n-1}$.
In general, there might be other $a_k$ among $a_1, \dots, a_t$ between $a_j$ and $c(a_j)$.
By our definition of the function $c$, in this case we have $\langle a_k, c(a_k)\rangle \subset \langle a_j, c(a_j)\rangle$.
This implies that the $\varphi_{a_k}$-twist does not change the condition that $\pr_v(\Sg_{a_j}^{n-1})$  is contained in $\pr_v(\Sg_{c(a_j)}^{(n-1)})$.
(This is valid even when $c(a_j)=1$.)
Therefore, there is a homeomorphism $\xi_n:N_n\setminus \bigcup_{a_j \in \hat T_n} \Sg_{c(a_j)}^{(n)}\rightarrow S\times [0,1]\setminus \bigcup_{a_j \in \hat T_n} {\Sg_{c(a_j)}^{(n)}}'$ such that the affected region of $\varphi_{a_j}$-twist is $S \times \langle a_j, c(a_j)\rangle$, 
where ${\Sg_{c(a_j)}^{(n)}}'$ is the  horizontal essential subsurfaces in $S\times [0,1]$ corresponding to the slit $\Sg_{c(a_j)}^{(n)}$ and we regard $\langle a_j, c(a_j)\rangle$ as $(a_j, 1]$ when $c(a_j)=1$.
Then $\xi_n\circ \hat h_n$ extends to a leaf-preserving embedding $h_n :M_n\rightarrow S\times [0,1]$, whose restriction to $M_{n-1}$ coincides with $h_{n-1}$ outside the affected regions,
but $h_n$ may not be an extension of $h_{n-1}$ on the affected regions.
For thus defined sequence of embeddings $h_n$,  we shall show  that the restriction $h_n|B$ to each brick $B \in \ck$ is eventually  the same map even as $n$ varies.

\sloppy
Let $B$ be a brick of $\ck$.
Then, there is $m$ such that $M_m$ contains $B$.
Take a sufficiently large $w_0\in \nn$  so that  $w_0 >m$, and all the $\Sg_{j}^{(w_0)}$ are stable for $j\in\{b_1,\dots,b_u\}$.
This also means that all the twists along slits at $b_1,\dots,b_u$,  which are contained in no $[c-\delta(c), c+\delta(c)]$, are already done by the $w_0$-th step.
For  $n > w_0$ consider a twist  performed in the construction of $h_n$ at $a$.
If $S \times \langle a, a(c) \rangle$ is disjoint from $h_m(B)$, hence from   $h_{n-1}(B)$, the image of $B$ under $h_n$ is the same as that of $h_{n-1}(B)$.
Also, since for each $a \in T_\infty$, there are only finitely many $n$ such that $a$ is contained in $\hat T_n$, if there are only finitely many $n$ and twists at $a^n$ for  which $S \times \langle a^n, c(a^n)\rangle$ intersects $h_m(B)$, then the image of $B$ stabilises after finite steps.

Suppose that there are infinitely many  regions $S \times \langle a_j^{n(j)}, c(a_j^{n(j)})\rangle\, (n(j) \geq w_0)$ with $a_j^{n(j)} \in \hat T_{n(j)}$ intersecting $h_{n-1}(B)$.
We claim that then the $\langle a_j^{n(j)}, c(a_j^{n(j)})\rangle $ are contained in $(\pr_h(\partial_- h_m(B)), \pr_h(\partial_+ h_m(B)))$ except for finitely many of them.
Suppose, on the contrary, that  infinitely many of them, which we denote again by $\langle a_j^{n(j)}, c(a_j^{n(j)})\rangle$, are not contained in $(\pr_h(\partial_- h_m(B), \pr_h(\partial_+ h_m(B)))$.
Passing to a subsequence, we can assume that $\{a_j^{n(j)}\}$ converges to a point $b \in T_\infty'$.
This implies that $[b-\delta(b), b+\delta(b)]$ contains $a_j^{n(j)}$ for sufficiently large $j$, and that $c(a_j^{n(j)})$ is not greater than $b$ and converges to $b$ as $j \rightarrow \infty$.
Since $S \times \langle a_j^{n(j)}, c(a_j^{n(j)})\rangle$ intersects $h_{n-1}(B)$, the only possibility is that $a_j^{n(j)}$ is contained in $(\pr_h(\partial_- h_m(B), \pr_h(\partial_+ h_m(B)))$ for all large $j$.
Therefore $\langle a_j^{n(j)}, c(a_j^{n(j)})\rangle$ must be contained in $(\pr_h(\partial_- h_m(B), \pr_h(\partial_+ h_m(B)))$, which is a contradiction.

Therefore we have only to consider $\phi_{a_j^{n(j)}}$-twists such that the $\langle a_j^{n(j)}, c(a_j^{n(j)})\rangle$ are contained in $(\pr_h(\partial_- h_m(B), \pr_h(\partial_+ h_m(B)))$.
Then $\phi_{a_j}^{n(j)}$ is supported on $\Sg_{a_j}^{n(j)}$, which is disjoint from $S \times \{a_j\} \cap h_{n(j)-1}(B)$.
Therefore the embedding $h_{n(j)-1}(B)$ does not change after performing the $\phi_{a_j}^{n(j)}$-twist.
Thus we have shown that the embedding of $B$ stabilises after finite steps.
It follows that a leaf-preserving embedding $\eta_\infty:M \rightarrow S\times [0,1]$ is well defined by setting $\eta_\infty|B=h_{n}|B$ for large $n$.
 Since the rearranged $\eta_n$ maps $M_n$ into $S \times (0,1)$, so does $h_n$.
 Hence the image of $\eta_\infty$ lies in $S \times (0,1)$.
This completes the proof of (i).

If $B_j^{(m)}\in \ck_m$ is peripheral with respect to $h_n$ for all $n\geq m$, then 
either $\alpha_{j,n}=0$ or $\beta_{j,n}=1$ for all $n \geq m$, even after Rearrangement I.
It follows from our definition of $\eta_\infty$ that either $\alpha_{j,\infty}=0$ or $\beta_{j,\infty}=1$ holds.
This shows the part (iii).

Finally, we turn to the part (ii).
We consider the ends of the embedded image $\eta_\infty(M)$ instead of $M$ itself.
Fix a basepoint $x_0$ in $\eta_\infty(M)$.
For an end $e$ of $\eta_\infty(M)$, consider an arc $\alpha_e$ in $\eta_\infty(M)$ emanating from 
$x_0$ and tending to $e$ which meets each horizontal leaf of all bricks $\eta_\infty(B_j)$ $(B_j\in \ck)$ with $\alpha_e\cap \eta_\infty(B_j) \neq 
\eset$ transversely in a single point except for the one containing $x_0$.
This implies that $\alpha_e$ meets each $S \times \{c\}$ at most at $-\chi(S)$ points.
It follows that $\pr_{\mathrm{h}}(\alpha_e)$ converges to a point $b(e)$ of $T_\infty'$. 

Now, for $c \in T_\infty'$, 
suppose that $e_1,\dots,e_m$ are distinct $m$ ends of $\eta_\infty(M)$ with $b(e_1)=\cdots=b(e_m)=c$.
For a sufficiently large $n$,  these ends are contained in distinct components of 
$\eta_\infty(M \setminus M_n)$.
Therefore,  for each $j=1, \dots, m$,  we can choose a subarc $\beta_{e_j}$ of $\alpha_{e_j}$ tending to $e_j$ in such a way that  $\beta_{e_j}$ and $\beta_{e_{j'}}$ do not pass through the 
same bricks of $\eta_\infty(M)$ if $j \neq j'$.
If we take a sufficiently small $\delta>0$, then each $\beta_{e_j}$ passes through the $\delta$-region $S\times [c-\delta, c)\cup S \times (c,c+\delta]$ transversely to the horizontal leaves.
It follows that $m\leq -2\chi(S)$ since there are at most $-\chi(S)$ ends lying on $S \times \{c\}$ in each of $S \times [c-\delta, c)$ and $S \times (c, c+\delta]$.
Since $T_\infty'$ is a countable set as was seen before, this implies that  the ends of $\eta_\infty(M)$ are countable.
This completes the proof of the part (ii).
\end{proof}

\subsection{Conditions on labelled brick manifolds}\label{SS_Conditions}


%

A labelled brick manifold is a brick manifold $M$ in which every half-open brick has either a point in the Teichm\"{u}ller space or an ending lamination  attached to it  as follows.
Let $B$ be a half-open brick in $M$ which is homeomorphic to $F \times J$, where $J$ is either $[0,1)$ or $(0,1]$.
Half-open bricks are divided into two categories: geometrically finite bricks and simply degenerate bricks.
If $B$ is geometrically finite, then a point in $\teich(\Int F)$ is given to $B$, 
 otherwise an \emph{ending lamination} of $B$, which is contained in $\EL(F)$ is given.
For a geometrically finite brick $B$, the interior of the ideal front of $B$ is 
denoted by $\part_\infty B$, and the point in $\teich(\Int F)$ is regarded as a marked conformal structure on $\partial_\infty B$.
Also for a simply degenerate brick, the given ending lamination is regarded as attached to the end corresponding to its ideal front.

As in Theorem A, we shall consider  labelled brick manifolds $M$ satisfying the following conditions.
\begin{enumerate}[{A}-(1)]
\item Every component of $\partial M$ is either a torus or an open annulus.
\item
There is no properly embedded essential annulus whose boundary components lie in distinct boundary components of $M$.
\item
If there is an embedded, incompressible half-open annulus $S^1 \times [0,\infty)$ in $M$ such that $S^1 \times \{t\}$ tends to a wild end $e$, then its core curve is homotopic into an open-annulus component of $\partial M$ tending to $e$.
\item $M$ is embedded into $S \times (0,1)$ preserving the horizontal and the vertical leaves in such a way that the ends of geometrically finite bricks are peripheral.
\item
Every geometrically finite half-open brick has real front which is an inessential joint: \ie its real front is contained in the intersection with other bricks.

\end{enumerate}

We shall explain the meanings of  these conditions briefly.
We consider a model manifold $M$ of a geometric limit of Kleinian surface groups, whose corresponding hyperbolic 3-manifold we denote by $N$.
The boundary of $M$ corresponds to the frontier of the non-cuspidal part $N_0$.
This shows that the condition A-(1) must be satisfied.
Moreover by Margulis's lemma, no essential loops on two distinct components of $\Fr N_0$ can be homotopic to each other.
This implies the condition A-(2).

To illustrate the meaning of the condition A-(3), we consider the situation where $M$ is embedded in $S \times (0,1)$ preserving the horizontal and vertical leaves, which is required by A-(4).
A-(3) says that if $M$ has a wild end $e$, there must be a sequence of the complementary components of $M$ in $S \times (0,1)$ which tends to the image of $e$ in $S \times (0,1)$ in such a way that no closed curves can be homotoped to $e$ without obstructed by the complementary components except those lying on an annulus boundary component tending to $e$.
We note that model manifolds of  Kleinian surface groups (isomorphic to $\pi_1(S)$) constructed by Minsky can be regarded as labelled brick manifolds as will be explained later.
Such brick manifolds can be embedded in $S \times (0,1)$ preserving the horizontal and the vertical leaves.
Lemma \ref{l_21} implies that  model manifolds of geometric limits can also be embedded in $S \times (0,1)$ preserving the horizontal and the vertical leaves in such a way the geometrically finite ends are peripheral, which implies the condition A-(4).


The last condition A-(5) is just for convenience in defining a metric on a brick manifold later.
\subsection{Tight tube unions}\label{B_d}
To construct model manifolds of Kleinian surface groups, Minsky considered a hierarchy of tight geodesics.
In his construction, a tight geodesic is realised in the model manifold as a sequence of Margulis tubes.
We shall consider a similar realisation of a tight geodesic in the model manifold, which we call a tight tube union.

Consider a brick $B=F\times [0,1]$ with $\xi(F)>4$.
Suppose that we are given a pair of multi-curves  $\boldsymbol{I}\times \{0\}$ and $\boldsymbol{T}\times \{1\}$ lying on  $\Int \part_- B$ and $\Int \part_+ B$, which 
represent simplices in  $\cc(\Int F)$ by identifying $\partial_-B$ and $\partial_+B$ with $F$ naturally.
Let $g=\{v_i\}_{i=0}^n$ be a tight geodesic in $\cc(\Int F)$ with $I(g)=\boldsymbol{I}$ and 
$T(g)=\boldsymbol{T}$. 
Then $\bigcup_{i=0}^n v_i\times [i/(n+1),(i+1)/(n+1)]$ is a disjoint union $\ca_B$ of vertical annuli in $B$.
We call the union $\ca_B$ a \emph{tight annulus union} in $B$ connecting $\boldsymbol{I}\times \{0\}$ with 
$\boldsymbol{T}\times \{1\}$.

Next we consider the case when $B$ is a half-open brick $F\times [0,1)$ with $\xi(F)>4$.
Since we are not going to put an annulus union or a tube union for geometrically finite bricks, we assume that $B$ is simply degenerate.
Suppose then that $\boldsymbol{I}\times \{0\}$ is a multi-curve on $\Int \partial_-B=\Int F$, and that
$\boldsymbol{T}\times \{1\}$ is an element of $\mathcal{EL}(\Int \part_+ B)=\mathcal{EL}(\Int F)$, which is the ending lamination of $B$.
Let $g=\{v_i\}_{i=0}^\infty$ be a tight geodesic ray in $\cc(\Int F)$ with $I(g)=\boldsymbol{I}$ and 
$T(g)=\boldsymbol{T}$.
Then the union $\ca_B=\bigcup_{i=0}^\infty v_i\times [1-1/2^i,1-1/2^{i+1}]$ of vertical annuli in $B$ is called 
a \emph{tight annulus union} in $B$ connecting $\boldsymbol{I}\times \{0\}$ with $\boldsymbol{T}\times \{1\}$.
We can consider a similar construction for a half-open brick $F \times (0,1]$ when an ending lamination on $\Int \partial_+B$ and a multi-curve on $\Int \partial_+B$ are given, and define $\ca_B=\bigcup_{i=0}^\infty v_i\times [1/2^{i+1},1/2^i]$.

When $\xi(F)=4$, we need to modify our definition above to make annuli pairwise disjoint.
In this case, we define a tight annulus union $\ca_B$ by $\bigcup_{i=0}^n v_i\times [i/(n+1),(2i+1)/(2n+2)]$ if $B=F\times [0,1]$, by $\bigcup_{i=0}^\infty v_i\times [1-1/2^i,1-3/2^{i+2}]$ if $B=F\times [0,1)$, and $\ca_B=\bigcup_{i=0}^\infty v_i\times [ 3/2^{i+2},1/2^i]$ if $B=F\times (0,1]$.

Let $\ca_B=\bigcup_i v_i\times J_i$ be a tight annulus union in a brick $B$.
Take a sufficiently thin annular neighbourhood $R_i$ of $v_i$ on $F$ so that $R_i\times J_i$ are pairwise 
disjoint in $B$.
Then $\cv_B=\bigcup_i R_i\times J_i$ is called a \emph{tight tube union} in $B$ connecting $\boldsymbol{I}\times \{0\}$ with $\boldsymbol{T}\times \{1\}$.

\subsection{Block decompositions of labelled brick manifolds}\label{SS_block}

In this subsection, we shall show that a labelled brick manifold $M$ admits a decomposition into blocks in the sense of Minsky provided that its 
brick decomposition $\ck$ satisfies the conditions A-(1)-(5) and the following additional condition (EL), 
which corresponds to the 
assumption on ending laminations of  simply degenerate ends of $M$ given in Theorem \ref{thm_c}.

\bigskip

\noindent{\bf (EL)}
For any two simply degenerate bricks $B,B'$ in $\ck$, their ending laminations $\mu(B)$ and $\mu(B')$ are 
not homotopic in $M$.

\bigskip

 Under the conditions A-(1)-(5), this condition is automatically satisfied unless $M$ is  homeomorphic to $F\times (0,1)$ 
for a compact essential subsurface $F$ of $S$ as we can see in the following way.
Let $B_1$ and $B_2$ be two simply degenerate bricks with $B_1 =F_1 \times J_1$ and $B_2=F_2 \times J_2$, where $J_1$ and $J_2$ are half-open intervals.
Note that each  component of $\partial_\v B_1$ and $\partial_\v B_2$ lies in $\partial M$.
The condition A-(2) shows that $F_1 \times \{t\}$ and $F_2 \times \{t'\}$ cannot be homotopic in $M$ unless $M$ is homeomorphic to $F_1 \times (0,1)$.
Since $\mu(B_1)$ is contained in $\EL(\Int F_1)$ whereas $\mu(B_2)$ lies in $\EL(\Int F_2)$, which means that they are filling on non-homotopic surfaces, they cannot be homotopic in $M$ unless $F_1$ and $F_2$ are homotopic in $M$.
Therefore $M$ must be homeomorphic to $F_1 \times (0,1)$ if $B_1$ and $B_2$ have homotopic ending laminations.

\medskip

Let $\ck_{\mathrm{gf}}$ be the subset of $\ck$ consisting of geometrically finite bricks, 
and set $\ck_{\mathrm{int}}=\ck\setminus \ck_{\mathrm{gf}}$.
The union $\part_\infty M=\bigcup_{B\in \ck_{\mathrm{gf}}}\part_\infty B$ is called \emph{the boundary at infinity} of $M$ .
Bricks contained in  $\ck_{\mathrm{int}}$ are called \emph{internal bricks}. 

We modify a brick decomposition by performing the following two operations.
\begin{enumerate}
\item {\bf Removing inessential joints:}
Suppose that there is an inessential joint $F$ of two bricks $B,B'$ in $\ck_{\mathrm{int}}$.
Then 
 we replace  $B,B'$ with the single brick $B\cup B'$.
 In the exceptional case when $M$ is homeomorphic to $F \times (0,1)$ and has two simply degenerate bricks, this may generate a \lq \lq brick" homeomorphic to $F \times (0,1)$, which was not allowed in our definition.
 We still allow this operation and call thus obtained brick an {\em open brick}.
 \item {\bf Splitting bricks with non-overlapping annuli on the boundary:}
 Suppose that there is a brick $B=F \times [0,1]$ in $\ck_{\mathrm{int}}$ with a component $A$ of $\partial M \cap \partial_-B$ which does not overlap $\partial M \cap \partial_+B$.
  Here an annulus $A_1$ in $B$ is said to {\em overlap} a union of annuli $\mathcal A$ in $B$ when the vertical projections  of $A_1$ and $\mathcal A$ to $F$ intersect essentially.
Then we remove $A \times [0,1)$ from $B$ and split $B$ into two bricks $B_1, B_2$.
We can naturally identify $M$ with $M \setminus A \times [0,1)$ and regard $(\ck \setminus \{B\}) \cup \{B_1, B_2\}$ as a new brick decomposition of $M$.
We can perform the same operation also when there is an annulus in $\partial M \cap \partial_+B$ which does not overlap $\partial M \cap \partial_-B$.
\end{enumerate}
By repeating these two kinds of operations, we can assume 

\begin{assumption}
\label{modification}
(1)
that there is no inessential joint for two bricks in $\ck$, \\
(2)
and that for any brick $B$ both of whose fronts $\partial_- B$ and $\partial_+B$ are real, each component of $\partial_- B \cap \partial M$ overlaps  $\partial_+ B \cap \partial M$ and each component of $\partial_+ B \cap \partial M$ overlaps $\partial_- B \cap \partial M$.
\end{assumption}

By the condition A-(1), $\part M$ is a union of tori and open annuli.
Since $M$ is a brick manifold, each of such tori and annuli consists of horizontal annuli and vertical annuli whose interiors are pairwise disjoint, and contains at least one horizontal annulus except for the case when it is a vertical annulus corresponding to a component of $\partial S \times (0,1)$.
Let $H_A$ be the union of core curves of the horizontal annuli constituting the boundary components of $M$.
(We take one core curve from each horizontal annulus.)
For each geometrically finite brick $B_i$, we fix a multi-curve $s(B_i)$ on its real front $F_i$ which is the shortest pants decomposition of $F_i$ with respect to the hyperbolic structure given to $B_i$.
Note that although we gave a conformal structure on the ideal front, we put the pants decomposition on the real front.
Let $\boldsymbol{l}(\ck)$ be the union of $H_A$, the $s(B_i)$ for the geometrically finite bricks $B_i$,  and the ending laminations 
$\mu(B_j)$ for all simply degenerate brick $B_j$ in $\ck_{\mathrm{int}}$, which we regard as lying on the ideal fronts.
See Figure\ \ref{fig2_5}\,(a).
\begin{figure}[hbtp]
\centering
\scalebox{0.5}{\includegraphics[clip]{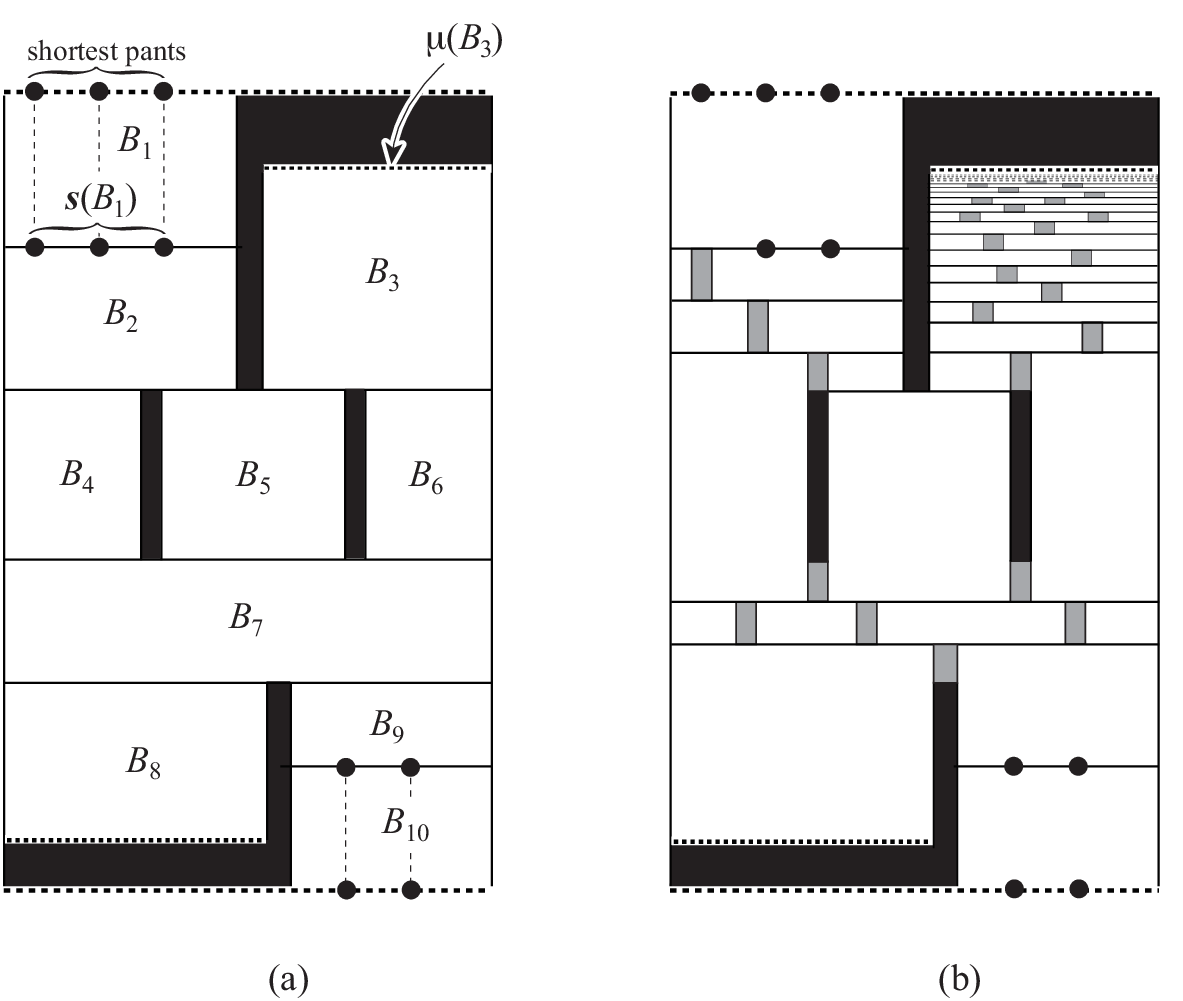}}
\caption{(a) $B_1,B_{10}$ is geometrically finite and $B_3,B_8$ are simply degenerate.
The real fronts of $B_1$ and $B_{10}$ are inessential joints.
$B_2,B_3,B_7$ are connectable.
(b) The union of shaded rectangles represents $\cv^{(1)}$.
The white rectangles are bricks in $\ck_{\mathrm{int}}^{(1)}\cup \ck_{\mathrm{gf}}$.
}
\label{fig2_5}
\end{figure}

We set $M_{\mathrm{int}}=\bigvee \ck_{\mathrm{int}}$.
A brick $B$ in $\ck_{\mathrm{int}}$ is said to be \emph{connectable} if 
neither $\boldsymbol{I}(B)=\part_- B\cap \boldsymbol{l}(\ck)$ nor $\boldsymbol{T}(B)=\part_+ B\cap \boldsymbol{l}(\ck)$ is empty.
Notice that if $B$ is a simply degenerate brick, although $\mu(B)$ does not lie inside $M$, either $\partial_-B$ or $\partial_+B$ intersects $\mu(B)$.
It should be also noted that any brick $B$ in $\ck_{\mathrm{int}}$ that has greatest $\xi(B)$ among the bricks in $\ck_{\mathrm{int}}$ 
is connectable unless $\xi(B)= 3$ since we removed inessential joints.
We denote by $\xi_0$  the greatest $\xi(B)$.

For any connectable brick $B$ of $\ck_{\mathrm{int}}$ with $\xi(B)\geq 5$, we take a tight 
tube union in $B$ connecting $\boldsymbol{I}(B)$ with $\boldsymbol{T}(B)$, and denote it by $\cv_B$.
In the case when $B$ is an open brick, the condition (EL) guarantees that there is a tight tube union connecting $\boldsymbol{I}(B)$ and $\boldsymbol{T}(B)$.
We set $\cv_B=\eset$ if either $B$ is not connectable or $\xi(B)\leq 4$, and 
define $\bar \cv^{(1)}=\bigcup_{B\in \ck_{\mathrm{int}}}\cv_B$.
See Figure\ \ref{fig2_5}\,(b).
Now, if there are two tubes $T_1, T_2$ in $\bar\cv^{(1)}$ which are homotopic in $M \setminus (\bar\cv^{(1)} \setminus (T_1 \cup T_2))$ we merge them into one tube: we can assume that they are vertically isotopic, and by putting a tube between them which is also a thicken annulus, we can make them parts of a larger tube.
Repeating this operation, we get a union of tubes $\cv^{(1)}$ in which no two tori are homotopic in the complement of the rest of the tubes.

Let $M_{\mathrm{int}}^{(1)}$ be the closure of $M_{\mathrm{int}}\setminus \cv^{(1)}$ in $M_{\mathrm{int}}$.
Since $\cv^{(1)}$ consists of tubes which are thicken vertical annuli in $M_{\mathrm{int}}$, 
the $3$-manifold $M_{\mathrm{int}}^{(1)}$ has the local product structure induced from that on $M_{\mathrm{int}}$.
Thus $M_{\mathrm{int}}^{(1)}$ has a brick decomposition $\ck_{\mathrm{int}}^{(1)}$ allowing a brick also to be an open one having a form $F \times (0,1)$  such that each brick is the closure of a maximal union of vertically parallel horizontal leaves in $M_{\mathrm{int}}^{(1)}$.
By our operation modifying $\bar \cv^{(1)}$ to $\cv^{(1)}$, the condition A-(2) for $M$, and the fact that two simplifies on a geodesic at the distance $2$ have essential intersection, the same condition A-(2) holds also for $M_{\mathrm{int}}^{(1)}$.

Let $B$ be a half-open  or open brick in $M_{\mathrm{int}}^{(1)}$.
Suppose that $B$ meets infinitely many original internal bricks $\hat B_p$ of $\ck_{\mathrm{int}}$. 
Then we can take an essential simple closed curve on the horizontal surface of $B$ which is not  homotopic into an annulus component of $\partial M$, and is vertically isotopic into each of the $\hat B_p$.
This gives rise to an incompressible half-open annulus with core curve not homotopic into  an annulus component of $\partial M$, which tends to a wild end of $M$ to which  the $\hat B_p$ tend, contradicting the condition A-(3) for $M$.
(This end cannot be simply degenerate since each simply degenerate end is contained in one brick of $\ck$.)
Therefore, any brick in $\ck_{\mathrm{int}}^{(1)}$ meets only finitely many bricks of $\ck_{\mathrm{int}}$.
Also, we can see that an ideal front $F$ of $B$ cannot be contained in  the ideal front $F'$ of some simply degenerate brick $B'=F' \times J$ of $\ck_\mathrm{int}$ since $\mu(B')$ is contained in $\EL(F')$, and hence there is no open annulus in $B'$ disjoint from the tight union of tubes which we extracted to construct $M_{\mathrm{int}}^{(1)}$.
Thus we have shown that $M_{\mathrm{int}}^{(1)}$ contains neither half-open nor open bricks.
We should note that the greatest $\xi(B)$ for the bricks $B$ in $M_{\mathrm{int}}^{(1)}$, which we denote by $\xi_1$, is less than $\xi_0$ since bricks in $M_\mathrm{int}$ with $\xi=\xi_0$ are all connectable.

Next we consider the union $\cv^{(2)}$ of tubes which we obtained by modifying the union of all tight tube unions $\cv_B$ for all $B\in \ck_{\mathrm{int}}^{(1)}$ in the same way as  we defined $\cv^{(1)}$ in $\ck$ merging homotopic tubes, and the closure $M_{\mathrm{int}}^{(2)}$ of $M_{\mathrm{int}}^{(1)} 
\setminus \cv^{(2)}$ in $M_{\mathrm{int}}^{(1)}$.
By the same reason as before, the greatest $\xi(B)$ for the bricks $B$ in $M_\mathrm{int}^{(2)}$ is less than $\xi_1$.
Therefore, repeating the same procedure  at most $\xi(S)-4$ times, we reach a brick decomposition 
$\ck_{\mathrm{int}}^{(k)}$ on $M_{\mathrm{int}}^{(k)}$ such that $\xi(B)$ is 
either $3$ or $4$ for every brick $B\in \ck_{\mathrm{int}}^{(k)}$.

Let $\cv^{(k+1)}$ be the union of tubes obtained by modifying in the same way as before the union of tight tube unions $\cv_B$ for bricks $B\in \ck_{\mathrm{int}}^{(k)}$ with 
$\xi(B)=4$, and let $\ck_{\mathrm{int}}^{(k+1)}$ be the brick decomposition on the closure 
$M_{\mathrm{int}}^{(k+1)}$ of $M_{\mathrm{int}}^{(k)}\setminus \cv^{(k+1)}$ such that each brick is a maximal union of  parallel leaves with respect to the horizontal foliation on $M^{(k+1)}_\mathrm{int}$.
Moving components of $\cv^{(k+1)}$ vertically by an ambient isotopy of $M_{\mathrm{int}}^{(k)}$ if necessary, we can assume that for every brick $B$ of $\ck_{\mathrm{int}}$, its fronts $\partial_\pm B$  does not go though the gaps of tubes of $\cv^{(k+1)}$, \ie the following holds.
\bigskip

\noindent{\bf (BB)}
For any $B\in \ck_{\mathrm{int}}$ and $B'\in \ck_{\mathrm{int}}^{(k)}$ with $H=(\part_+B\cup \part_-B)\cap 
B'\neq \eset$, each component of $H\setminus \Int \cv_{B'}$ is homeomorphic to $\Sg_{0,3}$.

\bigskip

We set $\cb_{\mathrm{int}}=\ck_{\mathrm{int}}^{(k+1)}$, $\cb=\ck_{\mathrm{int}}^{(k+1)}\cup \ck_{\mathrm{gf}}$, 
$M[0]_{\mathrm{int}}=M_{\mathrm{int}}^{(k+1)}$, $M[0]=M[0]_{\mathrm{int}}\cup (\bigvee \ck_{\mathrm{gf}})$, and
$\cv=\bigcup_{m=1}^{k+1} \cv^{(m)}$.
We call $\cb$  a \emph{block decomposition} of $M[0]$ and each element of $\cb$ a \emph{block}.
Note that each block in $\cb_{\mathrm{int}}$ is homeomorphic to either $\Sg_{0,3}\times J$ or $\Sg_{1,1}\times J$  
or $\Sg_{0,4}\times J$, where $J$ is a closed or half-open or open interval, since every brick in $M_\mathrm{int}^{(k+1)}$ has $\xi$ at most $4$.
Also by our definition of bricks for $M_{\mathrm{int}}^{(k+1)}$ no two blocks meet at inessential joint.

\begin{remark}
\label{two ways of block}
It may appear that our definition of blocks is slightly different from that of Minsky in \cite{mi2} as we allow blocks homeomorphic to $\Sg_{0,3} \times J$.
Still the difference is just a minor point since we can convert our block decomposition into that \`{a} la Minsky just by cutting  a block of the form $\Sg_{0,3} \times J$ into halves and paste one of them to the block above it and the other to the one below it.
\end{remark}

Each component of $\cv$ is a  solid torus which is foliated by vertically parallel horizontal annuli.
For each solid torus $V$ in $\cv$, its boundary $\partial V$ is contained in $\partial M[0] \cup \partial M$.
If $M[0]\cap V$ consists of two vertical annuli $A_1,A_2$ for some $V\in \cv$, then 
$\part V\setminus \Int (A_1\cup A_2)$ is a union of two horizontal annuli contained in $\part M$, and 
hence each of $A_1,A_2$ is a properly embedded essential annulus in $M$.
(These annuli cannot be boundary-parallel since a brick is not allowed to be a solid torus by definition.)
This contradicts the condition A-(2) saying that $M$ must be acylindrical.
Therefore, for any component $V$ of $\cv$, the intersection $M[0]\cap V$ is either a torus or an annulus.
 See Figure\ \ref{fig2_6}. 
\begin{figure}[hbtp]
\centering
\scalebox{0.5}{\includegraphics[clip]{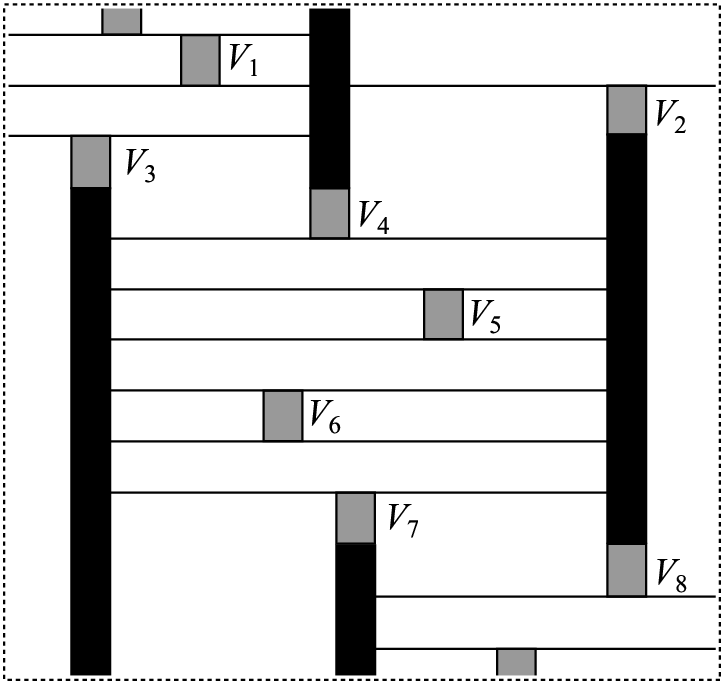}}
\caption{A local picture of $M$ in the case of $k=0$.
The white region is $M[0]$.
$V_1\cup V_5\cup V_6\subset \cv[0]$ and $V_2\cup V_3\cup V_4\cup V_7\cup V_8\subset \cv\setminus \cv[0]$.}
\label{fig2_6}
\end{figure}

Let $\cv[0]$ be the union of all components $V$ of $\cv$ such that $M[0]\cap V$ is a 
torus, and set $M^0 =M[0]\cup \cv[0]$.
Then $M^0$ is obviously a deformation retract of $M$ and  there exists a homeomorphism $\eta_M:M^0\rightarrow M$  homotopic to the inclusion such that 
the restriction $\eta_M|_{\cv[0]}$ is the identity.
We often identify the original brick manifold $M$ with $M^0$ via the map $\eta_M$.

\subsection{Model metrics on brick manifolds}\label{SS_block_metric}
Now we shall define a metric on a brick manifold induced from its decomposition into blocks.
We shall put a standard metric on each block as was done in  Minsky \cite{mi2}, which is slightly different from his for our convenience.
Fix  $\ve_1>0$ less than the three-dimensional Margulis constant, and a hyperbolic metric on the three-holed sphere $\Sg_{0,3}$ with respect to which  each component of $\part \Sg_{0,3}$ is a closed geodesic of length $\ve_1$.
Let $B_{0,3}$ be $\Sg_{0,3}\times [0,1]$ endowed with the product metric of the hyperbolic metric on $\Sg_{0,3}$ and the standard metric on $[0,1]$.

Consider two essential simple closed curves $l_0,l_1$ on $\Sg_{0,4}$ (resp.\ $\Sg_{1,1}$) with 
the geometric intersection number $i(l_0,l_1)=2$ (resp.\ $i(l_0,l_1)=1$) and set $B_{\alpha}$ to be a brick in the form $\Sg_{\alpha}\times [0,1]$ for $\alpha\in \{(0,4),(1,1)\}$.
Let $A_-$ and $A_+$ be annular neighbourhoods of $l_0 \times \{0\}$ and $l_1 \times \{1\}$ in $\partial_- B_\alpha$ and $\partial_+ B_\alpha$ respectively.
We define a piecewise Riemannian metric on $B_\alpha$ such that each component of $\partial_-B_\alpha\setminus  \Int A_- $ and $\partial_+B_\alpha\setminus \Int A_+$ is isometric to $\Sg_{0,3}$ with the hyperbolic metric given above, all of $A_-,A_+$ and $\part_{\v}B_\alpha$ 
are isometric to the product annulus $S^1(\ve_1)\times [0,1]$ and 
$\dist_{B_{\alpha}}(\part_-B_{\alpha},\part_+ B_{\alpha})=1$, where $S^1(\ve_1)$ is a round circle in the Euclidean plane of circumference $\ve_1$.

For any brick $B\in \cb_{\mathrm{int}}$ of type $\beta\in \{(0,3),(0,4),(1,1)\}$, consider a diffeomorphism  
$h_B:B_\beta\rightarrow B$ such that $h_B(\part_{\v}B_{\beta})=\part_{\v}B$ and 
moreover $h_B(A_\pm)=\part_\pm B\cap \cv[0]$ when $\xi(B)=4$.
We can choose these homeomorphisms so that  for any $B,B'$ of types $\beta, \beta' \in \{(0,3),(0,4), (1,1)\}$ in $\cb_{\mathrm{int}}$ with 
$F=\part_+ B\cap \part_- {B'}\neq \eset$, $((h_{B'}|_{F})^{-1}\circ h_B|_{h_B^{-1}(F)}) $ is 
an isometry with respect to the metrics on $B_\beta$ and $B_{\beta'}$ defined above.
Then $M[0]_{\mathrm{int}}$ has a piecewise Riemannian metric induced from those on $B_{0,3},B_{0,4},B_{1,1}$ via 
embeddings $h_B:B_\beta \rightarrow M[0]_{\mathrm{int}}$.

We shall next define metrics on  geometrically finite bricks. 
Each geometrically finite brick $B$ of $\cb$ is identified with $F\times [-1,\infty)$ preserving the horizontal and the vertical leaves  for a 
compact core $F$ of some open  essential subsurface $\mathring F$ of $S$ with $\xi(F) \geq 3$.
Since $\mathring F$ can be identified with $\Int F$, by our definition of geometrically finite bricks,  $\mathring F=\mathring F\times \{\infty\}$ is given a conformal structure.
Let $\sigma(B)$ a complete hyperbolic metric on $\mathring F$ which is compatible with the given conformal structure.
We regard $F$ as obtained from $\mathring F(\sg(B))$ by deleting the cusp neighbourhoods which are components of
$\mathring F(\sg)_{(0,\ve_1)}$.
Consider a piecewise Riemannian metric $\tau(B)$ on $\mathring F$ obtained by rescaling $\sg(B)$ on the points of $\mathring F$ in such a way that $\tau(B)/\sg(B)$ is  continuous and is equal 
to $1$ on $\mathring F(\sg(B))_{[\ve_1,\infty)}$, and each component of $\mathring F(\sg(B))_{(0,\ve_1]}$ is a Euclidean cylinder with 
respect to the $\tau(B)$-metric.
On the other hand, we put another piecewise Riemannian metric $\upsilon(B)$ on $F$ such that 
each component of $F(\upsilon(B))_{(0,\ve_1]}$ is a Euclidean cylinder, 
$F(\upsilon(B))_{(0,\ve_1]}\times \{-1\}$ coincides with $\part M[0]_{\mathrm{int}}\cap B$,
and each component of $F(\upsilon(B))_{[\ve_1, \infty)}$ is isometric to $\Sg_{0,3}$. 
We choose such a metric so that the identity $F(\tau(B))
\rightarrow F(\upsilon(B))$ is uniformly bi-Lipschitz (\ie the bi-Lipschitz constant is bounded by a constant independent of $B$ and $F$).
We call a metric constructed  as the latter metric $\upsilon(B)$ a {\em cylinder-$\Sigma_{0,3}$} metric on $F \times \{-1\}$.
We note that  our $\upsilon(B)$ corresponds to the metric $\sg^{m'}$ given in \cite[Subsection 8.3]{mi2}.

We put  a piecewise Riemannian metric on $F \times [-1,0]$ such 
that  its restriction to $F\times \{-1\}$ is equal to $F(\upsilon(B))$, its restriction to $F\times \{0\}$ 
is equal to $F(\tau(B))$, and the induced metric on $F \times \{t\}$ is uniformly bi-Lipschitz to $\tau(B)$ via the identification of $F \times \{t\}$ with $F$.
Recall that $F$ is a compact core of an open surface $\mathring F$.
We take a diffeomorphism $\eta:F\times [0,\infty)\rightarrow \mathring F\times [0,\infty)$ such that 
the restriction $\eta|F\times \{0\}$ is the identity and $\eta(\partial F \times [0,\infty))$ lies on $\mathring F \times \{0\}$.
We put on $F\times [0,\infty)$  the induced metric $\eta^*(ds^2)$, where $ds^2$ is a 
piecewise Riemannian metric on $\mathring F\times [0,\infty)$ defined by 
$$
ds^2=\tau(B) e^{2r}+dr^2\qquad (r\in [0,\infty)).
$$
We define a piecewise Riemannian metric on $B$ by pasting the metrics on $F \times [-1,0]$ and $F \times [0,\infty)$ along $F \times \{0\}$, which has the metric $\tau(B)$ on both sides.
We may assume that the metric on $M[0]_{\mathrm{int}}$ and that on $B$ are equal on $M[0]_{\mathrm{int}}\cap B=F\times \{-1\}$ 
 deforming the map attaching $B$ to $M[0]_{\mathrm{int}}$ by an ambient isotopy if necessary.
Thus we have obtained a piecewise Riemannian metric on $M[0]$, which we call the {\em model metric} on $M[0]$.
By our construction, each component $C$ of $\part M[0]$ is either a Euclidean torus or a Euclidean cylinder which 
has a foliation $\cf_C$ whose leaves consist of closed geodesics  of length $\ve_1$.

\subsection{Meridian coefficients}
For a complex number $z$ with $\mathrm{Im}(z)>0$ and a real number $\eta>0$, we denote the covering map 
$\mathbf{C}\rightarrow \mathbf{C}/\eta(\zz+z\zz)$ by $\pi_{z,\eta}$. 
For any component $V$ of $\cv[0]$, its boundary $\part V$ has a Euclidean metric induced from 
the model metric on $M[0]$ as above.
Then there is a unique $\omega\in \mathbf{C}$ with 
$\mathrm{Im}(\omega)>0$ for which we have an 
orientation-preserving isometry from the quotient space $\mathbf{C}/\ve_1(\zz+\omega\zz)$ to $\part V$ taking $\pi_{\omega,\ve_1}(\rr)$ (resp.\ $\pi_{\omega,\ve_1}(\omega\rr)$) to a longitude (resp.\ a meridian) of $V$.
(Here a longitude of $V$ is defined to be a horizontal essential simple closed curve on $\partial V$.)
We denote this $\omega$ by $\omega_M(V)$ and call it the \emph{meridian coefficient} of $\part V$.

For any integer $k>0$, consider the union $\cv[k]$ of components $V$ of $\cv[0]$ with $|\omega_M(V)|\geq k$ and 
set
$$M[k]=M[0]\cup (\cv[0]\setminus \cv[k]).$$
By definition, we have $M^0 =M[k] \cup \cv[k]$.
We put each component $V$ of $\cv[0]$ a hyperbolic metric induced from the Margulis tube whose boundary has  exactly the Euclidean metric  induced from  the model metric on $M[0]$.
Here abusing the terminology, we always call hyperbolic equidistance neighbourhoods of simple closed geodesics Margulis tubes even when the lengths of the core curves are not less than the Margulis constant.
See Lemma 8.5  in \cite{bcm} or Lemma 5.8 in \cite{bow3}.
In this way, we extend the model metric metric on $M[0]$ to a metric on $M^0$ whose restriction on 
$\cv[0]$ is hyperbolic.
The brick manifold $M$ has the metric induced from that on $M^0$ via $\eta_M$.
We also call these metrics on $M^0$ and $M$ the model metrics.

\section{The bi-Lipschitz model theorem for brick manifolds}\label{S_BLM}
Minsky constructed  in \cite{mi2} model manifolds for hyperbolic $3$-manifolds homeomorphic to $S \times (0,1)$ and proved that for any such hyperbolic manifold, there is a Lipschitz map, called a model map, from its model manifold, whose Lipschitz constant is uniformly bounded.
Furthermore, in Brock-Canary-Minsky \cite{bcm}, it was shown that such a model map can be taken to be a bi-Lipschitz homeomorphism with uniformly bounded bi-Lipschitz constant.
Using and generalising these results, we shall show that a homeomorphism from a labelled brick manifold satisfying the conditions A-(1)--(5) and (EL) to a hyperbolic $3$-manifold preserving end invariants can be homotoped to a bi-Lipschitz homeomorphism with uniformly bounded bi-Lipschitz constant.
Let us recall that for any hyperbolic 3-manifold $N$ and a constant $\ve_1>0$ less than the Margulis constant, 
$N_0=N_0^{\ve_1}$ denotes the $\ve_1$-non-cuspidal part, \ie the union 
of $N_{[\ve_1,\infty)}$ and all Margulis tube components of $N_{(0,\ve_1]}$ as defined in Subsection \ref{hyp}.

\begin{theorem}[Bi-Lipschitz Model Theorem]\label{blm}
Let $M$ be a labelled brick manifold satisfying the conditions A-(1)--(5) and (EL), 
and  $N$   a hyperbolic $3$-manifold with  a homeomorphism $f:M\rightarrow N_0$ preserving 
the end invariants.
Then $f$ is properly homotopic to a homeomorphism $g:M\rightarrow N_0=N_0^{\ve_1}$ satisfying 
the following conditions, where $k\in \nn$, $K\geq 1$ and $\ve_1$ less than the Margulis constant depend only on $\xi(S)$.
\begin{enumerate}[\rm (i)]
\item
The image $g(\cv[k])=\mathrm{T}[k]$ is a union of  $\varepsilon_1$-Margulis tubes of $N_0$.
\item$g(M[k])=N_0\setminus \Int \mathrm T[k]$.
\item
The restriction $g|_{M[k]}:M[k]\rightarrow N_0\setminus \Int \mathrm{T}[k]$ is a $K$-bi-Lipschitz map.
\item
The homeomorphism $g$ extends continuously to a conformal map from $\part_\infty M$ to $\part_\infty N$.
\end{enumerate}
\end{theorem}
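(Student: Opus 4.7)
\medskip

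My plan is to reduce to the Brock--Canary--Minsky bi-Lipschitz model theorem for Kleinian surface groups by exhausting $M$ by finite sub-brick-manifolds, applying the finite-case theorem after closing each one off to a quasi-Fuchsian situation, and then passing to a geometric limit while keeping uniform control of the bi-Lipschitz constant.

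First, I would write $M = \bigcup_n W_n$, an ascending exhaustion by finite sub-brick-manifolds $W_n$, each of which contains all simply degenerate and geometrically finite bricks whose labels have already appeared; by condition A-(4) and Lemma \ref{l_21}, these embed into $S\times (0,1)$ coherently. As in the outline of the proof of Theorem \ref{thm_c}, I would cap off the internal raw fronts of each $W_n$ by geometrically finite bricks with arbitrary but fixed conformal structure to obtain a labelled brick manifold $Z_n$ representing (via Thurston's uniformisation) a geometrically finite Kleinian surface group $\Gamma_n\cong \pi_1(S)$ preserving parabolicity. By construction, the end invariants of $Z_n$ on the bricks inherited from $W_n$ agree with those of $M$.

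Next, I would apply the original bi-Lipschitz model theorem of \cite{bcm} to each $\Gamma_n$: there is a $K_0$-bi-Lipschitz homeomorphism $\Phi_n$ from the model manifold of $Z_n$ (which equipped with the model metric restricts on $W_n$ to the metric on $W_n$ prescribed by its block decomposition, up to uniformly bounded distortion) to $(\hh^3/\Gamma_n)_0$, with $K_0$ depending only on $\xi(S)$; moreover $\Phi_n$ sends tubes of $\cv[k]\cap W_n$ to $\varepsilon_1$-Margulis tubes. Composing $\Phi_n|_{W_n}$ with a uniformly bi-Lipschitz identification of $(\hh^3/\Gamma_n)_0$ near $\Phi_n(W_n)$ with a region of $N_0$ (using the assumption that $f:M\to N_0$ is a homeomorphism preserving end invariants, together with algebraic/geometric-limit convergence $\Gamma_n\to G$), I obtain maps $g_n:W_n\to N_0$ that are $K$-bi-Lipschitz on $W_n\cap M[k]$ and send tubes in $\cv[k]\cap W_n$ to $\varepsilon_1$-Margulis tubes in $N_0$. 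Then I would extract a subsequential Arzel\`a--Ascoli limit $g:M\to N_0$ using the uniform Lipschitz bound; lower semicontinuity and the argument on each individual brick will give the matching bi-Lipschitz lower bound, so $g$ is $K$-bi-Lipschitz on $M[k]$.

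To finish, I would verify that $g$ is properly homotopic to $f$: both maps are homeomorphisms inducing the same action on relative ends (by the end-invariant condition), and because $M$ is irreducible and each brick is incompressible, one checks brick-by-brick that $g\circ f^{-1}$ is homotopic to the identity, assembling these into a proper homotopy using compatibility along joints. The continuous conformal extension of $g$ to $\partial_\infty M\to \partial_\infty N$ is inherited from the corresponding extensions of $\Phi_n$ supplied by Ahlfors--Bers theory in the geometrically finite case and the fact that the conformal structures on the capping-off bricks of $Z_n$ can be chosen to stabilise on the geometrically finite bricks of $M$.

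The main obstacle I expect is maintaining uniform bi-Lipschitz control throughout the limiting process, \emph{i.e.}\ ensuring that the constants produced by \cite{bcm} for the $\Gamma_n$ depend only on $\xi(S)$, and that the limit map $g$ does not degenerate on the approach to wild ends. Here condition A-(3) is the critical input: it prevents bricks from accumulating in ways that would create collapsing annuli in the limit, and combined with the uniform bound on injectivity radius inside the convex core (Corollary \ref{cor_b}), it will force the tube structure and block structure of $W_n$ to converge without collapse. Verifying this convergence brick-by-brick, and matching it with the Margulis tube structure of $N$ in a way consistent with $\cv[k]$ for a uniform $k$, is the technical heart of the argument.
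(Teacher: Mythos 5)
Your proposed reduction has a fundamental circularity that the paper deliberately avoids. Theorem~\ref{blm} is stated for an \emph{arbitrary} hyperbolic $3$-manifold $N$ equipped only with a homeomorphism $f:M\to N_0$ preserving end invariants; nothing a priori relates $N$ geometrically to the quasi-Fuchsian (or geometrically finite) manifolds $\hh^3/\Gamma_n$ that you build by capping off $W_n$ and uniformising. Your key step---``Composing $\Phi_n|_{W_n}$ with a uniformly bi-Lipschitz identification of $(\hh^3/\Gamma_n)_0$ near $\Phi_n(W_n)$ with a region of $N_0$''---presupposes exactly the sort of uniform bi-Lipschitz comparison between the geometry of $N$ and the model metric that the theorem is trying to establish. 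In fact Theorem~\ref{blm} is precisely the tool that Theorem~\ref{thm_d} uses to show that two manifolds with the same homeomorphism type and end invariants are bi-Lipschitz (and then conformally) equivalent; you cannot appeal to such a comparison without proving it. The capping-off-and-limit construction is the right idea for Theorem~\ref{thm_c}, where one wants to \emph{produce} some $N$ realising $M$, but it does not transfer to Theorem~\ref{blm}, where a specific $N$ is handed to you with only topological data. The paper instead works intrinsically with $f$: it follows Minsky's Steps 0--6 applied to the hierarchy structure on each brick (Lemma~\ref{hierarchy}), bounds tube core geodesic lengths via Bowditch's length-bound theorem (Lemma~\ref{upper bound}), proves a tube-coefficient/short-geodesic dictionary by a geometric-limit contradiction argument (Lemma~\ref{large k}), upgrades the Lipschitz model map to a homeomorphism via least-area surfaces and the Freedman--Hass--Scott theory (Proposition~\ref{disjoint embeddings}), controls the topological ordering of joints (Lemmas~\ref{non-homotopic joints}--\ref{long main geodesic}), and then runs the Brock--Canary--Minsky cut-system argument of \cite[\S 8]{bcm}.

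A secondary gap, independent of the circularity: even granting your identifications, an Arzel\`a--Ascoli limit $g$ of uniformly Lipschitz maps $g_n$ is only a Lipschitz map; you would still have to prove it is a proper degree-one map and then a homeomorphism. That is not a formality---promoting the Lipschitz model map to a bi-Lipschitz homeomorphism is the content of Proposition~\ref{disjoint embeddings} and the subsequent sections, and it requires the topological-ordering analysis that your outline does not address. ``Lower semicontinuity'' gives a Lipschitz bound on the limit, not injectivity or a lower bi-Lipschitz bound; these require the embeddedness and ordering results that are the technical heart of the paper's proof.
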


The whole of the present section is devoted to the proof of Theorem \ref{blm}.
We should note that by Lemma \ref{l_21}, there is a proper embedding $\iota_M$ of our model manifold into $S \times (0,1)$.
Accordingly, we have an embedding $\iota_N: N_0 \rightarrow S \times (0,1)$ such that $\iota_N \circ f= \iota_M$.
As in the previous section, we modify the brick decomposition of $M$ so that Assumption \ref{modification} holds.

Applying the argument in Minsky \cite[Subsections 3.4 and 8.3]{mi2}, we can deform $f$ to a map $f_1$ by a 
proper homotopy so that  for any 
geometrically finite half-open brick $B'\in \ck_{\mathrm{gf}}$, the restriction $f_1|_{B'}:B'\rightarrow f_1(B')$ is a uniformly bi-Lipschitz homeomorphism 
which extends continuously to a conformal map from $\part_\infty B'$ to $\part_\infty f_1(B')$ and its real front is mapped into the boundary of the convex core of $N$.

We shall first show that $f_1$ can be properly homotoped to a  $K$-Lipschitz map with a constant $K$ depending only on $\xi(S)$.
For that, we shall follow the line of  Minsky's  argument in \cite{mi2}.
Recall that we have a union of tubes $\cv$ in $M$ which we constructed in \S\ref{SS_block} inducing a decomposition of  $M$ into blocks, and that for each tube $V$ in $\cv$, its meridian coefficient $\omega_M(V)$ is defined.
The first step is to prove the following lemma.

\begin{lemma}
\label{upper bound}
There is a universal constant $L$ depending only on $\xi(S)$ such that for the core curve $v$  of each tube $V$ in $\cv[0]$, the length of the closed geodesic in $N$ homotopic to $f(v)$ is less than $L$.
\end{lemma}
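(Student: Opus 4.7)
The plan is to recognise the core curve $v$ of each tube $V \in \cv[0]$ as a vertex of a tight geodesic whose endpoints admit uniformly controlled realisations in $N$, and then to invoke the length bound for vertices of a hierarchy due to Minsky \cite{mi2} and refined in \cite{bcm}.

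First, I would trace the construction of $\cv[0]$ back through \S\ref{SS_block}--\S\ref{B_d}: each such tube $V$ is a thickening of an annulus $v \times J_v$ appearing in a tight tube union $\cv_B$ inside some brick $B$ at an intermediate stage of the iterated block decomposition. Hence $v$ is a vertex of a tight geodesic $g$ in $\cc(D(g))$, whose initial and terminal markings $I(g), T(g)$ are drawn from the marked data $\boldsymbol{l}(\ck)$, namely (a) shortest pants curves $s(B')$ on real fronts of geometrically finite bricks $B'$, (b) core curves in $H_A$ on annular boundary components of $M$, or (c) ending laminations $\mu(B_j)$ of simply degenerate bricks $B_j$.

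Second, I would realise $I(g)$ and $T(g)$ in $N$ with uniform geometric control. After the preliminary deformation to $f_1$, the restriction $f_1|_{B'}$ is uniformly bi-Lipschitz on every geometrically finite brick and sends $s(B')$ into the boundary of the convex core of $N$, so each component of $f_1(s(B'))$ has length bounded by a constant depending only on $\xi(S)$. Curves in $H_A$ are parabolic in $N$ and contribute nothing to the bound. Each ending lamination $\mu(B_j)$, by the hypothesis that $f$ preserves end invariants, coincides with the ending lamination of the corresponding simply degenerate end of $N$, and so is approximable by pleated surfaces of uniformly bounded area exiting that end \cite{th1,bon}. Hence both $I(g)$ and $T(g)$ admit pleated-surface realisations in $N$ on the support $D(g)$ with geometry controlled by $\xi(S)$.

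Third, I would apply the length-bound theorem for vertices of a tight geodesic with realisable endpoints, due to Minsky \cite{mi2} and refined in \cite{bcm}: every such vertex is represented in $N$ either by a closed geodesic of length at most a universal $L_0 = L_0(\xi(S))$ or by a curve inside a Margulis tube whose core has length at most $\ve_1$. Setting $L := \max\{L_0, \ve_1\}$ yields the desired bound for $v$. The principal obstacle is that this length bound is formulated for Kleinian surface groups, whereas $N = \hh^3/G$ may have $G$ infinitely generated; I would overcome this by passing to the cover of $N$ corresponding to the subgroup $f_*(\pi_1(D(g))) \subset G$, which is a Kleinian surface group on $D(g)$ whose end invariants are precisely the realisations built in the previous step. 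Minsky's theorem applies verbatim in this cover, and the length of the geodesic homotopic to $v$ is preserved under the covering because $v$ is a simple closed curve on $D(g)$. An exhaustion-and-geometric-limit alternative would also work, but the covering route avoids any logical dependence on Theorem \ref{thm_c}.
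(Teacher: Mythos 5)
Your plan correctly identifies the key tool (the length-bound theorem for tight geodesics with controlled endpoints, Minsky's Lemma 7.9, or equivalently Bowditch's Theorem 1.3) and the correct way to apply it (passing to the cover of $N$ associated to $f_\# \pi_1(B)$, as the paper also does). However, there is a genuine gap at the beginning of your argument: you assert that every tube $V \in \cv[0]$ sits on a tight geodesic $g$ whose initial and terminal markings $I(g), T(g)$ are drawn from the original data $\boldsymbol{l}(\ck)$ — shortest pants curves on geometrically finite fronts, boundary‑annulus curves, and ending laminations. That is true only at the \emph{first} stage of the iterated construction of $\cv$ in \S\ref{SS_block}. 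At the second stage, tight tube unions are put inside bricks of $\ck_{\mathrm{int}}^{(1)}$, and their endpoint data are taken from $\boldsymbol{l}_1$ (in the paper's notation), which now includes core curves of tubes created in $\cv^{(1)}$; the same happens at every subsequent stage up to stage $\xi(S)-3$. For these intermediate geodesics there is no a priori geometric control on the realisations of their endpoints in $N$: the lengths of those cores in $N$ must first be \emph{established} to be bounded before the length‑bound theorem can be applied to the next generation of geodesics.

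The paper repairs this exactly by an induction on the number of stages. One shows that if the geodesic lengths in $N$ of all curves in $\boldsymbol{l}_n$ are bounded by $L_n$, then Bowditch's Theorem 1.3 applied in the cover associated to each brick of stage $n$ yields a constant $L_{n+1}$ (depending only on $L_n$) bounding the lengths in $N$ of the cores in $\boldsymbol{l}_{n+1}$. Because the iterated construction terminates after at most $\xi(S)-3$ steps, this finite chain produces the uniform constant $L$. Your proposal, as written, applies the length‑bound theorem in one shot with endpoints in $\boldsymbol{l}(\ck)$, and therefore only bounds the curves produced at the first stage; it does not reach the tubes that appear only after several rounds of the decomposition. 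You should add the inductive bookkeeping (or otherwise explicitly control $\boldsymbol{l}_n$ in $N$ before invoking the Minsky/Bowditch bound at stage $n$) to close this gap.
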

\begin{proof}
This lemma corresponds to Lemma 7.9 in Minsky \cite{mi2}.
We shall use its generalisation by Bowditch, Theorem 1.3 in \cite{bow2}.

Recall that we constructed a block decomposition of $M$ repeating the process of putting tight tube unions in bricks, starting from the decomposition of $M$ into bricks of $\mathcal{K}$.
At the first stage, for each connectable internal brick $B=F\times J$, we connected  a component $\partial_- B \cap \boldsymbol{l}(\mathcal{K})$ with a component of $\partial_+ B \cap \boldsymbol{l}(\mathcal{K})$ by a tight geodesic.
Since $f$ takes $\boldsymbol{l}(\mathcal{K})$  to either an ending lamination or a parabolic element in $N$, by applying Lemma 7.9 in Minsky \cite{mi2} or Theorem 1.3 in Bowditch \cite{bow2} to the covering of $N$ associated to $f_\# \pi_1(B)$, we see that there is a constant $L_0$ depending only on $\xi(S)$ such that each curve in the simplices constituting the tight geodesic has length in $N$ bounded by $L_0$.

At the $n$-th stage, we have bricks $\mathcal{K}^{(n)}_\mathrm{int}$ constituting $M_{\mathrm{int}}^{(n)}$ which is the complement of $\cv_n=\bigcup_{m=1}^n \cv^{(m)}$ in $M_{\mathrm{int}}$.
Let $\boldsymbol{l}_n$ be the union of $\boldsymbol{l}(\mathcal{K})$ and the core curves of $\cv_n$ that are not homotopic to a simple closed curve in $\boldsymbol{l}(\mathcal{K})$.
In each brick $B^{(n)}$ of $\mathcal{K}^{(n)}_\mathrm{int}$, we constructed a tight tube union connecting $\partial_- B^{(n)} \cap \boldsymbol{l}_{n}$ and $\partial_+ B \cap \boldsymbol{l}_n$.
Therefore using Bowditch's Theorem 1.3 inductively, we see that if the geodesic lengths in $N$ of curves in $\boldsymbol{l}_{n}$ are  bounded by $L_{n}$, then there is $L_{n+1}$ depending only on $L_n$ bounding the lengths in $N$ of $\boldsymbol{l}_{n+1}$.
Since we reached the block decomposition within $\xi(S)-3$ steps, we see that there is a constant $L$ depending only on $\xi(S)$ which bounds the lengths of the closed geodesics corresponding to the core curves of $\cv$.
\end{proof}

\subsection{Homotoping $f$ to a Lipschitz map preserving the thin part}
Moving $\cv$  by an ambient isotopy of $M_{\mathrm{int}}$ without changing the structure of block decomposition, we may assume that  for any $B\in \ck_{\mathrm{int}}$, every component of  $\part_+ B\setminus \cv$ and $\part_- B\setminus \cv$ is homeomorphic to a thrice-punctured sphere.
Let $F$ be a compact essential subsurface of $S$ such that $B$ is homeomorphic to $F \times J$ for an interval $J$.
If $\partial_+ B$ is a real front, then $\partial_+ B \cap \cv$ determines a simplex  in $\mathcal C(F)$ inducing a pants decomposition of $F$.
We now homotope $f_1$ so that each core curve of $\cv[0]$ is mapped to a closed geodesic.
By Lemma \ref{upper bound}, all of such closed geodesics have length bounded by $L$.
In this situation, we can apply Minsky's construction in Steps 0-6 of \cite[Section 10]{mi2} to get a map $f_2: M\rightarrow N$ for which the following hold.
Recall that we have fixed a constant $\varepsilon_1$ less than the three-dimensional Margulis constant.
\begin{enumerate}
\label{conditions}
\item
We have $f_2|B'=f_1|B'$ for every $B' \in \mathcal{K}_\mathrm{gf}$.
\item
For each block $B$ of $M[0]_\mathrm{int}$, the $f_2| \partial_\pm B$  lies on a pleated surface with totally geodesic boundary each of whose components is a closed geodesic homotopic to $f_2(v)$ for a core curve $v$ of some $V \in \cv$.
\item
There exists a constant $\varepsilon_0>0$ depending only on $\xi(S)$ such that  for a core curve $v$ of a solid torus component $V$ of $\cv$, if the geodesic length of $f_2(v)$ is less than $\varepsilon_0$, then $f_2(V)$ is contained in the $\varepsilon_1$-Margulis tube with core curve $f_2(v)$.
\item
The image of $f_2$ is contained in the union of the $1$-neighbourhood of the convex core of $N$ and the $\varepsilon_1$-Margulis tubes of $N$.
\item
For any $k$, there exists a positive number $\epsilon(k) < \varepsilon_1$ such that $f_2(M[k])$ is disjoint from the $\varepsilon_1$-Margulis tubes of $N$ whose core curves have length less than $\epsilon(k)$.
\item For any $k$, there exists a constant $L(k)$ such that $f_2|M[k]$ is $L(k)$-Lipschitz.
\end{enumerate}

To modify $f_2$ further to get a Lipschitz map, we need the following lemma.

\begin{lemma}
\label{large k}
Let $V$ be a tube in $\cv[0]$, and $v$ its core curve.
For any $\delta>0$, there exists $k$ which depends on $\delta$ and $\xi(S)$ but is independent of $M$ and $N$ such that if $|\omega_M(V)|> k$ then the closed geodesic homotopic to $f(v)$ has length smaller than $\delta$.
\end{lemma}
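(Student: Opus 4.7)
The plan is to convert the hypothesis $|\omega_M(V)|>k$, which is a Euclidean/combinatorial feature of the model, into the geometric conclusion that the closed geodesic homotopic to $f(v)$ in $N$ has small length. The strategy is to show that a large meridian coefficient forces the curve $v$ to appear as the core of tight tube unions in many successive blocks, to lift this into the existence of many pleated surfaces in $N$ all having $f_2(v)^{\ast}$ as a boundary geodesic, and then to invoke a Minsky--Bowditch short-curve estimate.

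First I would decode $|\omega_M(V)|$ combinatorially. By the construction in Subsection \ref{SS_block}, the tube $V\in\cv[0]$ is assembled by merging the tight tube neighbourhoods inserted at the various levels of the block decomposition along a single core curve $v$. By the definition of the model metric (Subsection \ref{SS_block_metric}) and Meyerhoff's inequality applied to the hyperbolic Margulis tube used to define that metric on $V$, $|\omega_M(V)|$ is comparable --- up to multiplicative constants depending only on $\xi(S)$ --- to the number $n_V$ of blocks $B\in\cb_{\mathrm{int}}$ for which a component of $\part_{\pm}B\cap\cv[0]$ has core $v$, equivalently to the number of simplices along the tight geodesics of the ambient hierarchy at which $v$ is a boundary component of some component domain. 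Then, using the property of the intermediate map $f_2$ that each block boundary $\part_{\pm}B$ lies on a pleated surface $g_B:\Sigma_B\to N$ with totally geodesic boundary whose components are closed geodesics homotopic to images of tube cores, I extract from the $n_V$ blocks at $v$ that many pleated surfaces in $N$ each having $f_2(v)^{\ast}$ as a boundary geodesic, realizing simplices distributed along a length-$n_V$ tight geodesic in the curve graph $\cc(\Sigma_V)$ of an ambient subsurface $\Sigma_V\subset S$ containing $v$.

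With this configuration in hand I would apply the length inequality of Minsky (\cite[Lemma 7.9]{mi2}) as extended by Bowditch (\cite[Theorem 1.3]{bow2}): whenever a closed geodesic $\gamma^{\ast}$ in a hyperbolic $3$-manifold appears as a boundary component of pleated surfaces realizing simplices along a tight geodesic of length $n$ in a subsurface curve graph, the length of $\gamma^{\ast}$ is at most $C/n$ for a constant $C=C(\xi(S))$. Applied with $\gamma^{\ast}=f_2(v)^{\ast}$ and $n=n_V$, and combined with the comparison $n_V\asymp|\omega_M(V)|$ from Step 1, this yields $\length_N(f(v)^{\ast})\le C'/|\omega_M(V)|$ for a constant $C'=C'(\xi(S))$. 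Setting $k=\lceil C'/\delta\rceil$ then completes the proof.

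\textbf{Main obstacle.} The delicate point is the last length estimate: the cited Minsky--Bowditch statement is naturally stated for Kleinian surface groups, whereas in our setting $G$ may be infinitely generated. I would handle this by passing to the covering $\widetilde N_V$ of $N$ associated to $\pi_1(\Sigma_V)\subset\pi_1(N_0)$, the inclusion being afforded by the product embedding of $N_0$ into $S\times(0,1)$ coming (via $f$) from the embedding $\iota_M$ of Lemma \ref{l_21} composed with the inclusion $\Sigma_V\subset S$. The cover $\widetilde N_V$ has surface-group fundamental group, the pleated surfaces $g_B$ near $f_2(v)^{\ast}$ lift to $\widetilde N_V$, and the lifted closed geodesic has the same length as $f_2(v)^{\ast}$, so the quoted length estimate applies verbatim in $\widetilde N_V$. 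Conditions A-(2) and A-(4) on $M$ ensure that the embedding $\Sigma_V\hookrightarrow S$ is compatible with the hierarchy structure, so that the combinatorial data required by the estimate is preserved on passing to the cover.
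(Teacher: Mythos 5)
Your approach has a genuine gap, and it is precisely the obstruction the paper flags at the start of its proof. You want to read off from $|\omega_M(V)|$ a single tight geodesic of length $n_V\asymp|\omega_M(V)|$ in the curve graph of an ambient subsurface $\Sigma_V$ with $v$ in its frontier, and then apply a Minsky--Bowditch short-curve estimate. There are two problems. First, in the brick manifold $M$ the tube $V$ can be shared between blocks lying in \emph{different} bricks of $\ck$ (and indeed this is unavoidable: it is what allows wild ends). The blocks adjacent to $V$ then live in separate hierarchies $h(B)$ on separate bricks $B$ (Lemma \ref{hierarchy}), and there is no single ambient hierarchy on one surface in which they assemble into one tight geodesic of length $n_V$. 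Passing to the cover $\widetilde N_V$ associated to $\pi_1(\Sigma_V)$ does not fix this: the obstruction is combinatorial, not that $\pi_1(N)$ is infinitely generated, and the cover's end invariants will in general not encode the hierarchy data you need. (Also, the estimate you want --- length of the boundary geodesic bounded by $C/n$ from a length-$n$ tight geodesic --- is Minsky's Lemma~10.1/Short Curve Theorem, not Lemma~7.9 or Bowditch's Theorem~1.3, which give \emph{upper} bounds on the lengths of vertices \emph{in} a tight geodesic.) Second, your claimed comparison $|\omega_M(V)|\asymp n_V$ controls only $\Im\omega_M(V)$; if $|\omega_M(V)|\to\infty$ with $\Re\omega_M(V)\to\infty$ but $\Im\omega_M(V)$ bounded, the number of adjacent blocks stays bounded and no long geodesic is produced, so the argument cannot start.

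The paper sidesteps both issues by arguing by contradiction via geometric limits (following Soma~\cite{so}). One supposes $|\omega_M(V_j)|\to\infty$ while the corresponding geodesics $\gamma_j$ satisfy $\length(\gamma_j)\geq\delta$. In the case $\Im\omega_M(V_j)\to\infty$, each $V_j$ has $k_j\to\infty$ pairwise non-homotopic gluing surfaces ($\cong\Sigma_{0,3}$) whose pleated images all have $\gamma_j$ as a boundary geodesic; basing at $\gamma_j$ and passing to a geometric limit (which exists because $\delta\leq\length(\gamma_j)\leq L$ by Lemma \ref{upper bound}), one finds two limit pleated surfaces that are homotopic in a small neighbourhood while their preimages are not, a contradiction. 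In the case $\Re\omega_M(V_j)\to\infty$ with $\Im$ bounded, one uses a Euclidean curve $d_j\subset\partial V_j$ of uniformly bounded length representing $[m_j]+\alpha_j[c_j]$ with $|\alpha_j|\to\infty$; its image has bounded length by Lipschitzity of $f_2|M[0]$, but it is freely homotopic to $\gamma_j^{|\alpha_j|}$, forcing $\length(\gamma_j)\to 0$, a contradiction. Your proposal contains no analogue of either step.
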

\begin{proof}
This lemma corresponds to Lemma 10.1 in Minsky \cite{mi2}.
In our situation, $V$ may be shared by blocks contained in distinct bricks.
Therefore, we cannot apply Minsky's result directly.
Instead, we use an argument which can also be found in Soma \cite{so}.
Our argument is by contradiction.
Suppose that there exist $\delta >0$ and tubes $V_j$ with core curves $v_j$ such that $|\omega_M(V_j)| \rightarrow \infty$ whereas the closed geodesics homotopic to $f_2(v_j)$ have length greater than $\delta$.

Since $|\omega_M(V_j)| \rightarrow \infty$, by passing to a subsequence, we can assume that either $\Im \omega_M(V_j) \rightarrow \infty$ or $\Re \omega_M(V_j) \rightarrow \infty$ holds.
We shall first consider the case when $\Im \omega_M(V_j) \rightarrow \infty$.
By the definition of $\omega_M(V_j)$, there are $(\Im \omega_M(V_j)-2)$ blocks which intersect $\partial V_j$ along their vertical sides.
This implies that there are at least $\Im \omega_M(V_j)$ gluing surfaces, which are homeomorphic to $\pants$, having boundary components lying on $\partial V_j$.
We should also note that no two distinct gluing surfaces are homotopic in $M$.
Since we assumed that $\Im \omega_M(V_j)$ goes to $\infty$, there are $k_j$ pairwise non-homotopic gluing surfaces with boundary components on $\partial V_j$ with $k_j \rightarrow \infty$.
The image of each gluing surface lies on a pleated surface with totally geodesic boundary one of whose components is the closed geodesic $\gamma_j$ homotopic to $f(v_j)$.
Therefore, there are $k_j$ pairwise non-homotopic pleated surfaces from $\pants$ which have $\gamma_j$ as a boundary component.

Now, we put a basepoint $x_j$ on $\gamma_j$, and consider the geometric limit $(N_\infty, x_\infty)$ of $(N, x_j)$, passing to a subsequence if necessary.
Since the length of $\gamma_j$ is bounded from above by Lemma \ref{upper bound} and from below by $\delta>0$ by our assumption,  the geometric limit exists (as a hyperbolic $3$-manifold) if we take a subsequence, and does not depend on the choice of $x_j$ as long as it lies on $\gamma_j$ once we fix some geometrically convergent subsequence.
Let $\rho_i : B_{R_i}(N, x_j) \rightarrow B_{K_i R_i}(N_\infty, x_\infty)$ be an approximate isometry associated to the geometric convergence with $R_i \rightarrow \infty$ and $K_i \rightarrow 1$.
In the geometric limit $N_\infty$, we have  the limit $\gamma_\infty$ of $\gamma_j$, which is a closed geodesic since  the lengths of the $\gamma_j$ are bounded away from $0$.
The geometric limit of pleated surfaces with boundary components on $\gamma_j$ are pleated surfaces with a boundary component on $\gamma_\infty$.
We should also note that if we fix a positive constant $\epsilon$ smaller than $\delta$ and $\varepsilon_1$, then  all the pleated surfaces intersect the $\epsilon$-thin part of $N$  only at near their boundary components other than $\gamma_j$, and hence that the limit pleated surfaces can intersect the $\epsilon$-thin part only near their boundary components other than $\gamma_\infty$.
Since $k_j \rightarrow \infty$, we can find among the limit pleated surfaces,  two limit pleated surfaces $F_1, F_2$ such that $F_2$ is homotopic to $F_1$ in a small regular neighbourhood $F_1$ whereas $\rho_i^{-1}(F_1)$ and $\rho_i^{-1}(F_2)$ are not homotopic.
This is a contradiction.

It remains to deal with the case when $\Re \omega_M(V_j) \rightarrow \infty$ whereas $\Im \omega_M(V_j)$ is bounded.
Fix a horizontal simple closed curve $c_i$ on $\partial V_j$.
We let $d_i$ be a simple closed curve on $\partial V_j$ intersecting $c_i$ at one point and having shortest length among all  simple closed curves intersecting $c_i$ at one point.
Let $m_j$ be a meridian of $V_j$.
Since $d_j$ intersects $c_j$ at one point, as elements of the first homology group of $\partial V_j$, we have $[d_j]=[m_j]+\alpha_j[c_j]$ with $\alpha_j \in \integers$ if we fix orientations on $c_j, m_j$ and $d_j$.
Since we assumed that $\Re \omega_M(V_j) \rightarrow \infty$, we have $|\alpha_j| \rightarrow \infty$, and in particular, we can assume that $\alpha_j \neq 0$ by taking a subsequence.
Since the length of $d_j$ is shortest among the simple closed curves intersecting $c_j$ at one point, we have $\length_{\partial V_j}(d_j) \leq (\Im\omega_M(V_j)+1)\epsilon_1$.
Now, since $\partial V_j$ is contained in $M[0]$, by the condition (6) above, we have $\length(f_2(d_j)) \leq L(0) (\Im\omega_M(V_j)+1)\epsilon_1$.
The right hand side is bounded above since we have already proved $\Im \omega_M(V_j)$ is bounded as $j \rightarrow \infty$.
Since $[d_j]=[m_j]+\alpha_j[c_j]$, the curve $f_2(d_j)$ with an appropriate orientation is homotopic to the $|\alpha_j|$-time iteration of $\gamma_j$ in $N$.
This implies $\length f_2(d_j) \geq |\alpha_j| \length(\gamma_j)$.
The right hand side goes to $\infty$, whereas  the left hand side is bounded as we have seen above.
This is a contradiction.
\end{proof}

Having proved Lemma \ref{large k}, the rest of modification as in Minsky \cite{mi2} to get {\em a proper, degree-$1$ map} $f_3: M \rightarrow N_0$ such that  $f_3|M[k]$ is $K_3$-Lipschitz with $K_3$ depending only on $\xi(S)$ works without changes.

We state one more property of $f_3$. 
\begin{enumerate}
\setcounter{enumi}{6}
\item
Since $f_3$ has degree $1$,  there exist constants  $k_2$ and $\epsilon(k_2)$ as in the condition (5) depending only on $\xi(S)$ such that any $\varepsilon_1$-Margulis tube in $N$ whose core curve has length less than $\epsilon(k_2)$ is contained in the image of a component of $\cv[k_2]$.
\end{enumerate}

\subsection{Preliminary steps to homotope $f_3$ to a bi-Lipschitz map}
We now turn to modify $f_3$ to a bi-Lipschitz homeomorphism, which was done in Brock-Canary-Minsky \cite{bcm} for the case of surface Kleinian groups.
Recall that we moved $\cv$ so that for each brick $B$ in $\mathcal{K}_\mathrm{int}$,  if its upper or lower front $\partial_\pm B$ is real, then every component of $\partial_\pm B \setminus \cv$ is a thrice-punctured sphere.
We parametrise $B$ as $F \times J$ with a closed or half-open interval $J$.
We define $\boldsymbol{i}(B)$ to be a simplex in $\cc(F)$ with empty transversals such that $\boldsymbol{i}(B) \times \{\min J\}$ is homotopic to $\partial_- B \cap \cv$ if $\partial_-B$ is real, and to be the ending lamination of the end corresponding to $F \times \{\inf J\}$ if $\partial_-B$ is ideal.
Similarly we define  $\boldsymbol{t}(B)$ for the upper boundary of $B$.
We shall first show that  in this setting, the block decomposition of $B$ induced by $\cv$ corresponds to a hierarchy in the sense of Masur-Minsky \cite{mm2}.

\begin{lemma}
\label{hierarchy}
Let $B$ be a brick in $\mathcal{K}_\mathrm{int}$, which is homeomorphic to $F \times J$ with a closed or half-open interval $J$.
Then there is a $4$-complete hierarchy $h$ of tight geodesics on $F$ with $I(h)=\boldsymbol{i}(B)$ and $T(h)=\boldsymbol{t}(B)$ whose $4$-sub-hierarchy gives rise to the same  block decomposition of $B$ as the one induced by $\cv$ converted as in Remark \ref{two ways of block} to Minsky's decomposition.
\end{lemma}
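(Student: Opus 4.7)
The plan is to construct $h$ by reading it off, stage by stage, from the iterative construction of the block decomposition of $B$ carried out in \S\ref{SS_block}. Take the main geodesic $g_h$ to be the tight geodesic underlying the first tight tube union placed inside $B$ itself: this is the stage-$1$ tight geodesic in $B$ when $\xi(F)\geq 5$, and the stage-$(k+1)$ tight geodesic when $\xi(F)=4$. By the construction of $\boldsymbol{l}$ and of the tight tube union $\cv_B$, we have $I(g_h)=\boldsymbol{i}(B)$ and $T(g_h)=\boldsymbol{t}(B)$. For each simplex $v$ on $g_h$ and each non-annular component domain $Y$ of $v$ with $\xi(Y)\geq 4$, the surface $Y$ is the horizontal surface of some sub-brick $B_Y\in\ck_{\mathrm{int}}^{(1)}$, and the tight tube union eventually placed in $B_Y$ at a subsequent stage furnishes a tight geodesic $h_Y$ supported on $Y$. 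Iterating this procedure until the complexity drops to $\xi=4$ exhausts the stage-$(k+1)$ tight tube unions and produces the $4$-geodesics of $h$.

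I would then verify the axioms of a hierarchy for this collection. Tightness of each recorded geodesic is built into the definition of tight tube union. For the subordinacy axiom, the initial marking $\boldsymbol{I}(B_Y)=\partial_- B_Y\cap\boldsymbol{l}_m$ of a child geodesic $h_Y$ coincides with $\pre(v)|Y$ for the parent simplex $v$, by the inductive definition of the auxiliary sets $\boldsymbol{l}_m$, which are enriched precisely by the vertices of the new tight geodesics at each stage. An analogous identity holds for the terminal marking. Consequently every configuration $g\supord Y\subord g'$ with $\xi(Y)\geq 4$ is realised by the unique $h_Y$ associated to the sub-brick with horizontal surface $Y$; uniqueness uses the fact that in passing from $\bar\cv^{(m)}$ to $\cv^{(m)}$ homotopic tubes were merged, so no two recorded geodesics share the same support. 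That every non-main geodesic has parents is immediate from the construction, and $4$-completeness follows because at stage $k+1$ every remaining sub-brick with $\xi=4$ receives a tight tube union.

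The identification of the $4$-sub-hierarchy with the block decomposition of $B$ is then essentially tautological: each $4$-geodesic of $h$ is supported on a sub-brick $B'\in\ck_{\mathrm{int}}^{(k)}$ with $\xi(B')=4$, its simplices index the tubes of $\cv_{B'}$, and the resulting splitting into Minsky-style blocks and solid tori matches the one produced in \S\ref{SS_block} after the cosmetic conversion described in Remark \ref{two ways of block}.

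The principal obstacle I anticipate is the careful bookkeeping required to show that $\pre(v)|Y$ and $\suc(v)|Y$ genuinely equal $\boldsymbol{I}(B_Y)$ and $\boldsymbol{T}(B_Y)$. This demands tracking how the auxiliary sets $\boldsymbol{l}_m$ grow with each stage and invoking the compatibility condition (BB), which ensures that the fronts of a brick are decomposed by $\cv$ exactly along the simplex data dictated by the parent tight geodesics. Without (BB) one could have spurious curves on $\partial B_Y$ obstructing the desired marking identification.
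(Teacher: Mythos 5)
Your high‐level strategy — reading the hierarchy off, stage by stage, from the inductive construction of the block decomposition in \S\ref{SS_block} — is exactly what the paper does. But your argument presupposes a cleanliness of the picture that the construction does not actually provide, and the gap is not merely the bookkeeping you flag at the end.

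The central difficulty you miss is that the tight tube unions $\cv_{B'}$ are placed in bricks $B'\in\ck^{(m)}_{\mathrm{int}}$, which are bricks of $M^{(m)}_{\mathrm{int}}$, \emph{not} sub-bricks of $B$. A brick $B'$ whose horizontal surface is a component domain $Y$ of a simplex on $g_h$ need not be contained in $B$: one or both of $\partial_\pm B'$ can lie outside $B$, so the tube union $\cv_{B'}$ can spill past $\partial_- B$ or $\partial_+B$. Consequently ``the tight tube union eventually placed in $B_Y$'' is not generally a geodesic supported on $Y$ with markings $\boldsymbol{I}(B_Y)=\pre(v)|Y$ and $\boldsymbol{T}(B_Y)=\suc(v)|Y$ — it is a longer geodesic that must be \emph{truncated} at the fronts of $B$, with the markings redefined accordingly, and (when $\cv_{B'}\cap\partial_\pm B$ has several components) padded by short auxiliary geodesics of length $0$ or $1$ to record the extra curves. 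The paper's case analysis — $B'\subset B$; one front of $B'$ in $B$; neither front in $B$ — is precisely this truncation argument, and it is the substance of the proof. Invoking (BB) controls how the fronts of $B$ sit relative to $\cv$, but it does not make the truncation disappear.

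Two further omissions. First, you implicitly assume $B$ is connectable at stage $1$, so that ``the first tight tube union placed inside $B$ itself'' exists and is supported on $F$ with the right markings. When $B$ is not connectable, $B$ is either absorbed into a larger brick $\bar B$ of $M^{(1)}_{\mathrm{int}}$ (in which case $g_h$ must come from $\cv_{\bar B}\cap B$, again a truncation), or $B$ is split by a merged tube passing through it, which forces the introduction of length-$0$ geodesics before any tube union is placed. Second, the merging step $\bar\cv^{(m)}\to\cv^{(m)}$, which you cite only for uniqueness, also creates curves in $\boldsymbol{I}(B')$ or $\boldsymbol{T}(B')$ lying in the second or second-to-last simplex of $g_{B'}$, and these must be recorded as length-$0$ geodesics subordinate to $g_{B'}$ at its first or last vertex; without them the hierarchy axioms fail. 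You would need to supply all of these constructions before the ``essentially tautological'' identification with the block decomposition (which does also require a short resolution argument, as in the paper's final paragraph) can be asserted.
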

\begin{proof}
In the construction of  $\cv$ in the previous section, we began with putting tight tube unions in all connectable bricks in $M_\mathrm{int}$ whose initial and terminal vertices are  in $\boldsymbol{l}(\mathcal{K})$.
After that, we merged homotopic tubes into one and let the obtained tube union be $\cv^{(1)}$.
Then we considered the brick manifold $M^{(1)}$ which is the complement of $\cv^{(1)}$ and repeated the same procedure until we got a block decomposition.
Now, we shall look more closely how tubes are put (and merged) in $B$ during this construction and define  tight geodesics which constitute $h$.
We define $I(B)=\boldsymbol{i}(B) \times \inf J$ and $T(B)=\boldsymbol{t}(B) \times \sup J$.
(These may be larger than $\boldsymbol{I}(B)$ and $\boldsymbol{T}(B)$ defined in the previous section.)

If $B$ is connectable in the first step of the construction of $\cv$, then we get a tube union $\cv_B$ on $B$ in the first step, which corresponds to a tight geodesic $g_B$ in $\cc(F)$ connecting a component of $\boldsymbol{l}(\ck) \cap \partial_- B$ with a component of $\boldsymbol{l}(\ck) \cap \partial_+ B$.
(If one of them is an ending lamination, the geodesic $g_B$ refers to a tight geodesic ray tending to it.) 
Since $\boldsymbol{l}(\ck) \cap \partial_- B \subset \boldsymbol{i}(B)$ and $\boldsymbol{l}(\ck) \cap \partial_+ B \subset \boldsymbol{t}(B)$, we can assume that $g_B$ has $\boldsymbol{i}(B)$ as the initial marking, and $\boldsymbol{t}(B)$ as the terminal marking.

We next consider how  the merging of tubes is reflected in the construction of geodesics in the hierarchy still under the assumption that $B$ is connectable at the first step.
If there is a tube $V$ in $B$ which is merged with another homotopic tube $V'$ in another brick $B'$, then a core curve $v$ of $V$ must be in either $\boldsymbol{i}(B)$ or $\boldsymbol{t}(B)$ since $\partial_\pm B \setminus \cv$ consists of thrice-punctured spheres.
This can occur only when the core curve is contained in either the first, or the second, or the second but last, or the last simplex of the geodesic $g_B$, for its core curve regarded as a curve on $\partial_\pm B$ cannot have non-zero intersection number with $\mathbf I(B)$ or $\mathbf T(B)$.
If $v$ is contained in the first or the last vertex of $g_B$, this procedure of merging does not affect tubes in $B$.
Otherwise, $v$ is  contained in either the second or the second to the last simplex of $g_B$.
In this case, we regard the procedure as corresponding to putting a geodesic consisting of only one vertex, \ie of length $0$, which is subordinate to $g_B$ at the first or the last vertex.

Next, we shall consider the case when $B$ is not connectable at the first step.
In the second step, either $B$ is contained in another brick $\bar B$ constituting $M^{(1)}$ or $B$ is split into two (or more) in the process of merging two homotopic tubes of $\cv^{(1)}$, one lying above $B$ and the other below $B$.
In the latter case, let $V_1, \dots , V_p$ be tubes in $\cv^{(1)}$ which split $B$.
We should note that these tubes have core curves which are homotopic to curves both in $I(B)$ and $T(B)$ then.
Let $v_1, \dots , v_p$ be curves on $F$ corresponding to their core curves.
Then we define geodesics $g_1, \dots, g_p$  each of which consists of only one vertex, such that $D(g_1)=F$, $D(g_j)$ is a component of $F \setminus \cup_{s=1}^{j-1} v_s$ for $j=2, \dots p$, $I(g_j)= \boldsymbol{i}(B) \cap D(g_j), T(g_j)=\boldsymbol{t}(B) \cap D(g_j)$, and $g_{j-1} \supord g_j \subord g_{j-1}$, and let them be geodesics contained in $h$.

In the former case, if $\bar B$ is connectable in $M^{(1)}$, then we consider $\cv_{\bar B} \cap B$, where as explained above $B$ is assumed to be in a position such that $\cv \cap \partial_-B $ is a regular neighbourhood of $I(B)$ and $\cv \cap \partial_+ B$ is that of $T(B)$, and define $g_B$ to be the tight geodesic in $\cc(F)$ corresponding to $\cv_{\bar B} \cap B$.
As before, we define $I(g_B)=\boldsymbol{i}(B)$ and $T(g_B)=\boldsymbol{t}(B)$.
If $\bar B$ is not connectable, we proceed to the following step and repeat the same procedure depending on either there is a brick containing $\bar B$ or $\bar B$ is split by merging of homotopic tubes.
Thus we have defined $g_B$, together with some more geodesics in $h$ in the case when $B$ is split.
We shall now turn to subsequent steps.

In subsequent steps, we put a tight tube union $\cv_{B'}$ into a brick $B'$ constituting a  brick decomposition of $M_\mathrm{int} \setminus \cv^{(k)}$.
We shall show that the intersection with $B$ of  each tube union in a connectable brick $B'$ in the $(k+1)$-th step gives rise to a tight geodesic on $F$ which is subordinate to the ones obtained up to the $k$-th step.
This implies that at the final step, we shall get a hierarchy on $F$ connecting $\boldsymbol{i}(B)$ and $\boldsymbol{t}(B)$.
To show  that, we shall  analyse what a tube union in $B'$ brings about to $B$, dividing the argument into subcases depending on the location of $B'$ with regard to $B$.
(Again, $B$ is in a position where $\partial_- B \cap \cv $ is a regular neighbourhood of $I(B)$ and $\partial_+ B \cap \cv $ is that of $T(B)$.)
We parametrise $B'$ as $F' \times J'$ with $F' \subset F$, in such a way that horizontal leaves and vertical leaves are contained in those of bricks in $\ck_\mathrm{int}$.
Since $F' \times \{x\}$ for $x \in \Int J'$ is a horizontal leaf whose boundary lies on $\partial \cv_k$, the surface $F'$ is a component domain of a geodesic corresponding to a tube union which was already put into $M$ up to the $k$-th step.
Now we divide our argument into three, depending on the inclusive relation between $J$ and $J'$.

First, suppose that $B'$ is contained in $B$, which means that both $\partial_-B'$ and $\partial_+ B'$ lie in $B$ and $J'$ is contained in $J$.
We define $I(B')= \partial_- B' \cap (\cv^{(k)} \cup (\boldsymbol{i}(B) \times \inf J))$ and $T(B')= \partial_+ B' \cap (\cv^{(k)} \cup (\boldsymbol{t}(B) \times \sup J))$.
In this definition, we need to add $\boldsymbol{i}(B) \times \inf J$ and $\boldsymbol{t}(B) \times \sup J$ to deal with the case when $\inf B'=\inf B$ or $\sup B'= \sup B$.
Note that $\boldsymbol{I}(B')$ is the union of core curves in $\cv^{(k)} \cap \partial_-B'$ and $\boldsymbol{T}(B')$ is that of $\cv^{(k)}\cap \partial_+B'$, which are contained in $I(B')$ and $T(B')$ respectively.
By our construction of $\cv^{(k+1)}$, the tube union $\cv_{B'}$ in $B'$ connects a component of $\boldsymbol{I}(B')$ to that of $\boldsymbol{T}(B')$.
We define $g_{B'}$ to be the tight geodesic corresponding to $\cv_{B'}$ whose initial and terminal markings are simplices corresponding to $I(B')$ and $T(B')$ respectively.
The geodesic $g_{B'}$ corresponds to a tube union connecting a tube in $\boldsymbol{I}(B')$ to that  in $\boldsymbol{T}(B')$, which are contained in $(\cv^{(k)} \cap B) \cup (\boldsymbol{i}(B) \times \inf J)$ and $(\cv^{(k)} \cap B) \cup (\boldsymbol{t}(B) \times \sup J)$ respectively.
This shows that the tight geodesic $g_{B'}$  is both forward and backward subordinate to a geodesic in $h$ which was obtained up to the $k$-th step.

Next suppose that, one of $\partial_-B'$ and $\partial_+B'$ is contained in $B$ whereas the other is not.
This means that one of the endpoints of $J'$ lies in $J$ whereas the other does not.
Now, we  assume that $\partial_-B'$ is the one contained in $B$: for the other case,  we can argue in the same way, just interchanging the directions.
In this situation, $\boldsymbol{I}(B')$ consists of core curves of   $\partial_-B  \cap \cv^{(k)}$ which is contained in $\cv^{(k)} \cap B$.
On the other hand, $\boldsymbol{T}(B')$ may not lie in $B$.
Now, by our definition of $\cv$, the tube union $\cv_{B'}$ is contained in $\cv$.
Therefore, $\cv_{B'}$ intersects $\partial_+ B$ by components of $\partial_+ B\cap \cv$ since we moved $\cv$ so that every component of $\partial_+B \setminus \cv$ is a thrice-punctured sphere.
(Recall that unless $\xi(B') = 4$, the upper front of each tube of $\cv_{B'}$ lies on the same horizontal level as the lower front of the subsequent tube.
In the case when $\xi(B') =4$, there is a gap between them, but we moved $\cv$ so that $\partial_\pm B$ avoid such gaps.)
Therefore, if we consider a sub-tube union $\bar \cv_{B'}$ of $\cv_{B'}$ starting from the first tube and ending at a tube in $\partial_+B \cap \cv$, then it is exactly what $\cv_{B'}$ brings about to $B$.
If $\cv_{B'} \cap \partial _+ B$ consists of only one component, then we let $g_{\bar B'}$ be the tight geodesic corresponding to $\bar \cv_{B'}$ defining $I(g_{\bar B'})$  to be a simplex consisting of curves corresponding to core curves of $\partial_-B' \cap (\cv^{(k)} \cup (\boldsymbol{i}(B) \times \inf J)) $ and $T(g_{\bar B'})$ to be $\boldsymbol{t}(B) \cap F'$.
Otherwise, we choose one component  of $\cv_{B'}$, denoted by $V_{B'}^0$ and remove the others, denoted by $V_{B'}^1, \dots, V_{B'}^u$, from $\bar \cv_{B'}$, and then define $g_{\bar B'}$ in the same way.
Since the last tube of $\bar \cv_{B'}$ intersects $\partial_+ B$, it has a core curve contained in $T(B)$.
This implies that $g_{\bar B'}$ is forward subordinate to one of the geodesics obtained up to the $k$-th step.
Since we  know that $g_{\bar B'}$ is also backward subordinate to such a geodesic by the argument in the previous case.

In the case when $\cv_{B'} \cap \partial _+ B$ is not connected, we further define tight geodesics $g_{\bar B'}^1, \dots , g_{B'}^u$ inductively as follows.
Let $v_{B'}^j$ be a core curve of $V_{B'}^j$ for $j=0, \dots , u$.
Let $D$ be a component of $F \setminus v_{B'}^0$ containing $v_{B'}^1$, and let $v_{B'}^{-1}$ be the simplex of $g_{\bar B'}$ which precedes $v_{B'}^0$.
Then we define $g_{B'}^1$ to be a tight geodesic of length $1$ supported on $D$ with $I(g_{B'}^1)$ equal to $v_{B'}^{-1} \cap D$ and $T(g_{B'}^1)$ equal to $\boldsymbol{t}(B) \cap D$.
In the same way, we define $D^k$ to be the component of $F \setminus (v_{B'}^0 \cup \dots \cup v_{B'}^{k-1})$ containing $v_{B'}^k$ and $g_{B'}^k$ to be a tight geodesic of length $1$ supported on $D^k$ with $I(g_{B'}^k)$ equal to $v_{B'} \cap D^k$ and $T(g_{b'}^k)$ equal to $\boldsymbol{t}(B) \cap D^k$.
Then all these geodesics $g_{B'}^0, \dots , g_{B'}^u$ are subordinate to $g_{\bar B'}$ in both directions.
Thus in either case, we get tight geodesics which are both backward and forward subordinate to geodesics in $h$ obtained up to the previous step.

Finally suppose that neither $\partial_-B'$ nor $\partial_+B'$ is contained in $B$.
Then  $\cv_{B'}$ intersects $\partial_-B$ by a union of  solid tori $V_1$ contained in $\partial_-B \cap \cv$ and $\partial_+B$ by a union of solid tori $V_2$ contained  in $\partial_+B \cap \cv$.
We define $\bar \cv_{B'}$ to be the sub-tube union of $\cv_{B'}$ starting from a component of $V_1$ and ending at  a component of $V_2$.
Then $\bar \cv_{B'}$ is the union of tubes which $\cv_{B'}$ brings about to $B$. 
We define $g_{\bar B'}$ to be the corresponding tight geodesic supported on $F'$, setting its initial and terminal markings to be $\boldsymbol{i}(B) \cap F'$ and $\boldsymbol{t}(B)\cap F'$ respectively.
In the same way as in the previous paragraph, we define geodesics of length $1$ corresponding to the components of $V_1$ and $V_2$ which are not chosen.
These geodesics are both forward and backward subordinate to geodesics which are obtained up to the $k$-th step by the same reason as in the previous case.

Now, recall that in each step, we also merge homotopic tubes into one.
As was analysed in the first step, this procedure corresponds to putting a geodesic consisting of only one vertex which is subordinate to a geodesic which was already constructed in the previous steps.
Thus we have shown that at each step we get a geodesic subordinate to those which we have already had and at the final step, we get non-annular geodesics in  $h$.

We shall next define annular geodesics of $h$.
Let $V$ be a tube in $\cv$ intersecting $B$ along its entire boundary.
We parametrise $V$ as $A \times [0,1]$ preserving leaves.
Since $A \times \{0\}$ lies on $\partial_+ b$ for some block $b$ of the form $\Sg_{0,4} \times J$ or $\Sg_{1,1} \times J$, the core curve of the annulus on $\partial_-b$ which is the complement of the other blocs intersecting $\partial_- b$ defines an arc on $A \times \{0\}$ which is regarded as a vertex $v_-$ in $\cc(A)$.
Similarly we can define an arc on $A \times \{1\}$ regarded as a vertex $v_+$ in $\cc(A)$.
We define a geodesic  $g_B$ supported on $A$ connecting $v_-$ and $v_+$, and let it be contained in $h$.
By our construction, this geodesic is both forward and backward subordinate to $4$-geodesics.

It remains to show that the block decomposition of $B$ induced from $\cv$ is compatible with $h$.
This means that we have a resolution of the $4$-sub-hierarchy of $h$ which gives rise to the block decomposition induced from $\cv$.
We consider the family of horizontal surfaces $F \times \{t\}$ in $B$.
Then outside countably many intervals corresponding to gaps which we introduced for $4$-geodesics,  $F \times \{t\} \cap \cv$ induces a pants decomposition of $F$.
We should also note that if $t$ is contained in a gap interval, then $F \times \{t\}$ passes a block of the form either $\Sg_{0,4} \times J$ or $\Sg_{1,1} \times J$.
Passing each interval of gap, the configuration of pants decomposition changes by elementary moves which may take place at finitely many disjoint places at the same time.
This must come from stepping forward on $4$-geodesics which we defined above.
Therefore, we can define a resolution of the $4$-sub-hierarchy of $h$.
Since each elementary move also corresponds to a block of the form $\Sg_{0,4} \times J$ or $\Sg_{1,1} \times J$ in our decomposition, the block decomposition induced from this resolution is obtained by converting the one induced from $\cv$ as in Remark \ref{two ways of block}.
\end{proof}

In a hierarchy, a curve can appear at most once.
Since our tube union $\cv$ itself does not correspond to a hierarchy, (only its restriction to a brick corresponds to a hierarchy) we need to show the same kind of property for $\cv$.

\begin{lemma}
\label{no homotopic tubes}
There are no two distinct tubes in $\cv$ which are homotopic in $M$.
\end{lemma}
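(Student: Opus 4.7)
The plan is to argue by contradiction, assuming there exist distinct $V_1, V_2 \in \cv$ whose core curves are freely homotopic in $M$, and to derive a violation of either the merging step in the inductive definition of $\cv = \bigcup_m \cv^{(m)}$ or of the hierarchy structure on each brick provided by Lemma \ref{hierarchy}. The main tool will be the leaf-preserving embedding $\iota_M \colon M \hookrightarrow S \times (0,1)$ guaranteed by Lemma \ref{l_21}.

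First I would push the free homotopy forward via $\iota_M$ into $S \times (0,1)$; since tubes map to vertical thickenings of essential simple closed curves of $S$, this forces the cores of $V_1$ and $V_2$ both to represent the free homotopy class of a common curve $c \subset S$, and since $\iota_M$ is an embedding, the tubes $\iota_M(V_1)$ and $\iota_M(V_2)$ are disjointly stacked over a common annular neighborhood $N(c)$. Among all tubes of $\cv$ whose projection to $S$ is $N(c)$, I will replace $V_1, V_2$ by a consecutive pair $W$ (below) and $W'$ (above) in this stack, writing $\iota_M(W) = N(c) \times [a_W, b_W]$ and $\iota_M(W') = N(c) \times [a_{W'}, b_{W'}]$ with $b_W \le a_{W'}$, and consider the vertical annulus $A := c \times [b_W, a_{W'}]$ joining the top of $\iota_M(W)$ to the bottom of $\iota_M(W')$. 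The heart of the proof will be to establish that $A \subset \iota_M(M)$; once this is known, pulling back yields a vertical annulus in $M$ connecting $W$ to $W'$ which by consecutiveness is disjoint from the rest of $\cv$, giving a homotopy of $W$ to $W'$ in $M \setminus (\cv \setminus (W \cup W'))$. A final bookkeeping step on when $W$ and $W'$ enter the inductive construction will yield a contradiction: if both lie in a common $\cv^{(m)}$ they would have been fused by the merging subroutine, whereas if $W \in \cv^{(m)}$ and $W' \in \cv^{(m')}$ with $m < m'$, then Lemma \ref{hierarchy} places the core of $W$ as an initial or terminal marking of the geodesic carrying $W'$, and an iterated merging at stage $m'$ would still absorb $W'$ into $W$.

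The principal obstacle will be showing $A \subset \iota_M(M)$. If instead the slit $S \times \{t^*\} \setminus \iota_M(M)$ at some intermediate height $t^* \in (b_W, a_{W'})$ meets $c$, then a frontier component of $\iota_M(M)$ separates $\iota_M(W)$ from $\iota_M(W')$ along the $c$-column, and by condition A-(1) this frontier must be a union of annular (rather than toroidal) boundary components of $M$. I plan to use A-(3) to rule out a wild end appearing between $W$ and $W'$ flanked by such an open annulus boundary of $M$, and A-(2) to rule out the case that $W$ and $W'$ are adjacent along vertical annuli of their boundaries to \emph{distinct} open annulus components of $\partial M$, since this would produce two properly embedded incompressible annuli in $M$ whose boundaries lie on distinct boundary components. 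The remaining possibility, that $W$ and $W'$ abut the same annular boundary component of $M$, will be ruled out by observing that a single $\integers$-cusp of $M$ does not allow two distinct Margulis tubes to be placed along its frontier in the block decomposition produced in \S\ref{SS_block}. These three exclusions together will force $A \subset \iota_M(M)$ and complete the argument.
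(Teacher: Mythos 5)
Your proposal has a genuine gap at the step ``once this is known, pulling back yields a vertical annulus in $M$ connecting $W$ to $W'$ which by consecutiveness is disjoint from the rest of $\cv$.'' Choosing $W, W'$ to be consecutive in the stack over $N(c)$ only guarantees that no tube of $\cv$ whose vertical projection to $S$ equals $N(c)$ lies between them. But tubes of $\cv$ whose core curves \emph{cross $c$ essentially} can, and in general do, intersect the vertical annulus $A = c \times [b_W, a_{W'}]$ at intermediate heights. Establishing $A \subset \iota_M(M)$ (your three exclusions via A-(1), A-(2), A-(3)) rules out complementary regions of $M$ between the two tubes, but says nothing about other components of $\cv$ inside $M$ meeting $A$. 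The presence of such an obstructing tube is precisely why the merging subroutine does not fuse $W$ and $W'$: they are homotopic in $M$ but not in the complement of the intervening tube. Your argument never confronts this case, which is in fact the heart of the lemma.

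This is exactly the scenario the paper's proof is built around. After exhibiting the annulus $A$, the paper takes the set $\cu$ of tubes that intersect $A$ essentially, picks the earliest-stage such tube $U$, locates the brick $B$ carrying it, and identifies the associated vertices $u_1, u_2, u, v$ on a tight geodesic $g_B$ of $\cc(F)$. The contradiction then comes from Masur--Minsky's Lemma 4.10 in \cite{mm2}, which asserts that the set $\phi_{g_B}(v)$ of simplices of $g_B$ disjoint from $v$ is contiguous; having $u_1, u_2 \in \phi_{g_B}(v)$ bracketing $u \notin \phi_{g_B}(v)$ violates contiguity. Your proposal never invokes any structural property of tight geodesics and therefore has no mechanism to exclude intermediate obstructing tubes. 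The final ``bookkeeping'' step for $m < m'$ is also too vague to close the gap: the merging operation is performed stage by stage, and Lemma \ref{hierarchy} does not imply that a tube at a later stage gets absorbed by an earlier one simply because their cores appear as initial or terminal markings.
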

\begin{proof}
Suppose, seeking a contradiction, that there are tubes $V_1, V_2$ in $\cv$ which are homotopic to each other in $M$.
Let $k_1, k_2$ be the numbers such that $V_1 \in \cv^{(k_1)}$ and $V_2 \in \cv^{(k_2)}$, and set $k= \max\{k_1, k_2\}$.
(When we say $V_s \in \cv^{(k_s)}$ for $s=1,2$, we take the smallest $k_s$ such that $\cv^{(k_s)}$ contains $V_s$.
We follow the same convention throughout the proof.)
Then longitudes of $V_1$ and $V_2$ pushed out to their boundaries are not homotopic in $M^{(k)}$, for otherwise they should have been merged into one in our construction.
Let $\cu$ be the union of  tubes in $\cup_{j=1}^k \cv_k$ which intersects essentially an embedded annulus $A$ bounded by the pushed-out longitudes of $V_1$ and $V_2$ in $M$.
(These are determined independently of the choice of an annulus since $M$ is atoroidal.)

Let $U \in \cu$ be a tube which appears in the earliest step among the tubes in $\cu$, and suppose that $U \in \cv^{(l)}$.
Note that we have $l \leq k$ by our definition of $\cu$.
Let $B \cong F \times J$ be a brick in $\ck^{(l-1)}$ where $U$ appears as a tube in the tight tube union.
If either  $V_1$ or $V_2$, say $V_1$,  intersects a front of $B$, then by replacing $V_1 \cap B$ with $V_1$, we can assume that both $V_1$ and $V_2$ are disjoint from the fronts of $B$.
First suppose that both $V_1$ and $V_2$ are contained in $B$.
In the following argument, for two tubes $U,V \in \cv$, we write $U \approx V$ if $U=A_1 \times J_1$, $V=A_2 \times J_2$ and $\Int J_1 \cap \Int J_2 \neq \emptyset$ for the parametrisation on $S \times (0,1)$ in which $M$ is embedded by $\iota_M$.
In the $k$-th step, a tight tube union $\cv_B$ corresponding to a tight geodesic $g_B$ on $\cc(F)$ is given.
Then there are tubes $U_1, U_2$ in the tight tube union of $B$ such that $V_1 \approx U_1$ and $V_2 \approx U_2$.
Since $U$ intersects $A$, we have $U_1 < U < U_2$ or $U_2 < U < U_1$ with respect to the ordering on the simplices of $g_B$.
Let $u_1, u_2, u, v$ be vertices of $\cc(F)$, which correspond to $U_1, U_2, U, V_1$.
Then, we have $u_1, u_2\in \phi_{g_B}(v)$ whereas $u\not\in \phi_{g_B}(v)$.
This contradicts the fact that $\phi_{g_B}(u)$ consists of contiguous, which was proved in Lemma 4.10 in Masur-Minsky \cite{mm2}.

 Next suppose that one of $V_1, V_2$, say $V_1$, lies outside $B$ whereas $V_2$ is contained in $B$.
 In this case, $A$ passes through a joint contained in the upper or the lower front of $B$.
 We only consider the case when $A$ passes through a joint in the upper front.
 The other case can be dealt with in the same way just by turning the figure upside down.
 Since $A$ passes through a joint in the upper front, we have the vertical projection of the core curve of $\boldsymbol{T}(B)$ is disjoint from that of the core curve of $V_1$, which implies that the last vertex $u_\infty$ of $g_B$  is contained in $\phi_{g_B}(v)$.
As in the previous paragraph, we have $u_1 < u < u_\infty$ and $u_1, u_\infty \in \phi_{g_B}(v)$ whereas $u \not\in \phi_{g_B}(v)$, which contradicts Lemma 4.10 of \cite{mm2} as before.
Also in the case when both $V_1$ and $V_2$ lie outside $B$, we can argue in the same way considering joints which $A$ passes contained in the upper and the lower fronts.
Then we see that the first and the last vertices are contained in $\phi_{g_B}(v)$ whereas $u$ is not.
This again contradicts Lemma 4.10 in \cite{mm2}, which completes the proof.
\end{proof}

The next lemma is obtained from Otal \cite{otal} for  hyperbolic 3-manifolds  homeomorphic to $S \times \reals$ for a hyperbolic surface $S$.
Since the only condition that is necessary for the proof is the fact that the manifold can be filled up by incompressible pleated surfaces (with bounded genus), his argument also works in our settings.
\begin{lemma}
\label{Otal}
There is a constant $k_0$ depending only on $\chi(S)$ such that for any $k \geq k_0$ and tubes $V \in \cv[k]$, the core curves $c$ of the $V$ are mapped by $f_3$ to   unknotted and unlinked closed geodesics, \ie there is a isotopy of $S \times(0,1)$ which takes $\iota_N(c)$ to disjoint collection of simple closed curves lying on horizontal surfaces.
\end{lemma}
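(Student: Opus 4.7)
The plan is to choose $k_0$ so large that, by Lemma \ref{large k}, the geodesic in $N$ homotopic to $f_3(v)$ has length below Otal's threshold depending only on $\xi(S)$, for every core curve $v$ of a tube $V \in \cv[k_0]$. Taking $k_0 \geq k_2$ in addition, property (7) of $f_3$ ensures that every $\varepsilon_1$-Margulis tube in $N$ whose core is that short is already exhausted by the $f_3$-image of a single component of $\cv[k_0]$; in particular the family of such short geodesics is a disjoint collection indexed by the components of $\cv[k_0]$.

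To reduce to the situation of Otal \cite{otal}, I will produce a countable family of incompressible pleated surfaces in $N_0$ of uniformly bounded genus that fills the complement of these short geodesics and whose combinatorial ordering is compatible with the vertical ordering of $M$ induced by the embedding $\iota_M : M \hookrightarrow S\times(0,1)$ supplied by Lemma \ref{l_21}. For each brick $B\in\mathcal{K}_{\mathrm{int}}$ parametrised as $F\times J$, Lemma \ref{hierarchy} provides a hierarchy whose main geodesic, together with the markings $\boldsymbol{i}(B)$, $\boldsymbol{t}(B)$ and the ending data at the ideal fronts, can be realised as a sequence of pleated maps $F\to N$ through $f_3$. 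These pleated surfaces are $\pi_1$-injective because each $F$ is an essential subsurface of $S$ and each horizontal leaf $F\times\{t\}$ of $B$ is incompressible in $M$ by the brick manifold structure, and their genus is bounded in terms of $\xi(S)$.

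With this filling in hand, the core step follows Otal's original strategy: each short geodesic corresponding to a core curve of $V \in \cv[k_0]$ lies in a thin region of $N_0$ bounded above and below by consecutive pleated surfaces from the filling that share a boundary curve with $V$ by Lemma \ref{no homotopic tubes}, and within such a region Otal's argument isotopes the geodesic to a simple closed curve inside a horizontal surface. Performing these isotopies simultaneously and transporting the result into $S\times(0,1)$ via $\iota_N$ yields an ambient isotopy to a disjoint collection of horizontal simple closed curves, giving both unknottedness of each geodesic and unlinkedness of the whole family. The main obstacle will be that our pleated surfaces live on proper subsurfaces $F\subsetneq S$, so they do not globally separate $N_0$ into horizontal slabs; one has to use condition A-(2) (acylindricity), Lemma \ref{no homotopic tubes}, and the embedding $\iota_M$ to glue the local brick-by-brick straightenings into a single ambient isotopy of $S\times(0,1)$, with the bound on $\xi(S)$ limiting how many pleated surfaces can accumulate around any one Margulis tube.
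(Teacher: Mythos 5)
Your proposal takes essentially the same approach as the paper, which simply invokes Otal's theorem with the one-sentence justification that the only hypothesis needed is a filling of $N_0$ by incompressible pleated surfaces of uniformly bounded genus, and you correctly fill in the relevant supporting facts (choosing $k_0$ via Lemma \ref{large k} and condition (7), producing pleated surfaces from the hierarchies of Lemma \ref{hierarchy}, and identifying the obstacle that these surfaces live on proper subsurfaces of $S$). The one imprecision is your attribution to Lemma \ref{no homotopic tubes} of the claim that consecutive filling surfaces share a boundary curve with $V$ — that lemma only rules out homotopic tubes and does not directly give this — but since the paper itself offers no argument beyond the one-sentence remark, this does not constitute a gap relative to what the authors provide.
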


Take $k_2$ in the condition (7) so that $\epsilon(k_2)$ is less than our fixed $\varepsilon_1$ (less than the Margulis constant).
By Lemma \ref{large k}. there exists $k_1$ such that if $|\omega(V)| \geq k_1$, then $f(v)$ has length less than $\epsilon(k_2)$.
We define $k_u = \max\{k_0, k_1, k_2\}$ for $k_0$ in the above lemma, and let $\epsilon_u$ be $\epsilon(k_u)$.



Now, we recall the following definition of topological order introduced in Brock-Canary-Minsky \cite{bcm}, which we shall apply for  surfaces in $M$ or $N_0$.

\begin{definition}[Brock-Canary-Minsky \cite{bcm}]
\label{def:topological order}
Let $f_1: F_1 \rightarrow M$ and $f_2 : F_2 \rightarrow M$ be maps from essential subsurfaces $F_1, F_2 \subset S$ such that $\iota_M \circ F_j$ is homotopic to the inclusion $F_j \rightarrow F_j \times \{t\}$.
We say $f_1 \prec_\mathrm{top} f_2$ if and only if $\iota_M \circ f_1$ can be homotoped to $S \times \{0\}$ without touching $\iota_M \circ f_2(F_2)$ and $\iota_M \circ f_2$ can be homotoped to $S \times \{1\}$ without touching $\iota_M \circ f_1(F_1)$.
We define the topological order on maps from surfaces to $N_0$ in the same way just replacing $M$ with $N_0$ and $\iota_M$ with $\iota_N$.
\end{definition}

We should also recall that two embedded surfaces $F_1, F_2$ in $S\times (0,1)$ are said to overlap if their projections to $S$ have essential intersection.
We use this term also for surfaces in $M$ or $N_0$, for they can be embedded in $S\times (0,1)$ by $\iota_M$ and $\iota_N$.

\subsection{Homotoping $f_3$ to a homeomorphism}
\label{check defined}
We shall next consider to homotope $f_3$ so that its restriction to the union of the joints of the bricks is an embedding.
Let $F$ be a joint of $B$ with another brick.
Recall that $F$  intersects $\cv$ in such  a way that each component of $F \setminus \cv$ is a thrice-punctured sphere.
We define $\check F[k]$ to be an embedded surface in $B \cap M[k]$ obtained from $F$ by isotoping annuli in $F \cap \cv[k]$ to those on  $\partial \cv[k]$.
There are two choices for an annulus for each component of $F \cap \cv[k]$.
We take an annulus on $\partial \cv[k]$ which is situated lower than the other with respect to the embedding $\iota_M$.

Recall that  the images of $\cv[k_u]$ are unknotted and unlinked $\varepsilon_1$-Margulis tubes whose core curves have lengths less than $\epsilon(k_2)$.
Conversely, every $\varepsilon_1$-Margulis tube whose core curve has length less than $\epsilon(k_2)$ is the image of a component of $\cv[k_2]$ by $f_3$.
Recall that we denote the union of the  Margulis tubes which are images of tubes in $\cv[k_u]$ by $T[k_u]$.
We denote $N_0 \setminus \Int T[k_u]$ by $N[k_u]$.
By Lemma \ref{no homotopic tubes}, $f_3$ induces a bijection between the components of $\cv[k_u]$ and those of $T[k_u]$.
Moreover, the image of $M[k_2]$ is disjoint from $T[k_u]$ by the condition (5).
Again by Lemma \ref{no homotopic tubes}, no tubes in $\cv[k_2] \setminus \cv[k_u]$ are mapped to $T[k_u]$.
Therefore $f_3$ induces a Lipschitz map $f_3: M[k_u] \rightarrow N[k_u]$.

\begin{prop}
\label{disjoint embeddings}
The Lipschitz map $f_3 : M[k_u] \rightarrow N[k_u]$ can be properly homotoped to a homeomorphism $f_4: M[k_u] \rightarrow N[k_u]$, which extends to a homeomorphism between $M$ and $N_0$.
This map $f_4$ may not be Lipschitz.
\end{prop}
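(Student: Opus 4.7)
The plan is to refine $f_3$ one joint at a time, converting it first into an embedding on a cofinal family of ``split-level'' surfaces drawn from the joints of $\ck$, and then to fill in between consecutive such surfaces brick by brick. Concretely, for each joint $F$ of $\ck$ I will use $\check F[k_u]$ (the surface obtained by pushing the $F\cap\cv[k_u]$ components onto $\partial\cv[k_u]$ as in the definition above Proposition~\ref{disjoint embeddings}); these are uniformly bounded topological type subsurfaces of $M[k_u]$ with $\xi\leq\xi(S)$. The standing inputs are: $f_3$ is $K_3$-Lipschitz on $M[k_u]$, proper, of degree $1$, carries $\cv[k_u]$ bijectively onto the $\varepsilon_1$-Margulis tube union $\mathrm T[k_u]$, and preserves parabolics.

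The first step is to make $f_3|_{\check F[k_u]}$ a uniform-genus embedded incompressible surface in $N[k_u]$ for each joint $F$. Here I would homotope $f_3$ in a collar of $\check F[k_u]$ so that its image lies on a simplicial hyperbolic/pleated surface realizing the pants curves $\partial\check F[k_u]\cap\partial\mathrm T[k_u]$ (as in \cite{mi2}, Subsection 8.3). Incompressibility of $\check F[k_u]$ in $M$ follows from conditions A-(1)--(2) together with the fact that no two tubes in $\cv$ are homotopic (Lemma~\ref{no homotopic tubes}), so standard tower/Freedman--Hass--Scott arguments upgrade the pleated map to an embedding after a proper homotopy, with control depending only on $\xi(S)$.

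Next I will verify that the topological order $\pretop$ of Definition~\ref{def:topological order} is preserved by $f_3$ on the collection $\{\check F[k_u]\}$. Using the fixed leaf-preserving embeddings $\iota_M:M\hookrightarrow S\times(0,1)$ and $\iota_N:N_0\hookrightarrow S\times(0,1)$, together with $\iota_N\circ f_3$ being degree-one and proper, one verifies that if $\check F_1[k_u]\pretop\check F_2[k_u]$ in $M[k_u]$ then their images satisfy the corresponding topological order in $N[k_u]$. The argument is the one used in Section~8 of \cite{bcm}: any reversal of order would contradict degree one together with the bijection $\cv[k_u]\leftrightarrow\mathrm T[k_u]$, because each joint is sandwiched between tubes of $\cv[k_u]$ whose images are the sandwiching components of $\mathrm T[k_u]$. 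Once the order is preserved, successive embedded joint images bound a product region in $N[k_u]$ homeomorphic to the block region they bound in $M[k_u]$ (the latter being a union of finitely many Minsky blocks of three fixed homeomorphism types). By an Alexander-type isotopy inside each such product region I can homotope $f_3$ to a homeomorphism on the block region, agreeing with the already-chosen embeddings on the boundary joints. Geometrically finite bricks are handled directly by $f_1|_{B'}$, which is already a bi-Lipschitz homeomorphism, and simply degenerate bricks are handled by exhausting them with finite unions of blocks and passing to the limit (ending laminations being realised by the limits of pleated surfaces).

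The main obstacle, and the point where the generalization beyond \cite{bcm} requires real care, is properness near wild ends, since $M$ may have countably many wild ends accumulated by infinitely many bricks. Here I will use condition A-(3): every essential half-open annulus tending to a wild end is peripheral, so for any compact set $C\subset N[k_u]$ only finitely many of the joint surfaces $\check F[k_u]$ have $f_3$-image meeting $C$, because an infinite family tending to a wild end would have essentially nested projections to $S$ and (by Lemma~\ref{upper bound} and Margulis) would necessarily exit $N[k_u]$. This provides the local finiteness needed to assemble the brick-by-brick homeomorphisms into a single proper homeomorphism $f_4:M[k_u]\to N[k_u]$ properly homotopic to $f_3$. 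Finally $f_4$ extends across $\cv[k_u]\to\mathrm T[k_u]$ using the already-fixed bijection of tubes and the standard radial extension between solid tori matching longitudes and meridians, yielding the desired homeomorphism $M\to N_0$.
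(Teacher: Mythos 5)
Your broad strategy (embed the joint surfaces, check that topological order is preserved, fill in brick by brick) is the right one and matches the paper in outline, but there are two significant gaps where the proposal asserts the key technical points rather than proving them, and these are precisely the places where the paper has to do real work.

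First, the step ``successive embedded joint images bound a product region in $N[k_u]$ homeomorphic to the block region they bound in $M[k_u]$'' is stated but not justified, and it is not automatic: two disjoint embedded incompressible surfaces in a (non-product) $3$-manifold need not cobound a product, and even if they do, the product need not contain the Margulis tubes in the correct pattern. The paper handles this (Claim \ref{extended embedding}) by passing to the covering $N_B$ of $N_0$ associated to $\pi_1(B)$, where tameness of the Kleinian surface group gives an explicit identification of $N_B$ with $\partial_-B\times(0,1)$ minus $\integers$-cusp regions; inside this product the two embedded fronts plus annuli on $\partial\tilde T[k_u]$ do bound a product, and Assumption \ref{modification} is what forces the overlap of tubes needed to pin down the topological order. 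Your sketch cites ``Section 8 of \cite{bcm}'' for the ordering, but the ordering argument there is tied to the global $S\times(0,1)$ product structure, which the brick covering $N_B$ restores; without invoking that covering you have not shown why the region between the joints is a product at all.

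Second, the assembly of the per-brick homeomorphisms into a single homeomorphism $f_4$ requires exact agreement, not just agreement up to homotopy, on each shared joint $F$ between adjacent bricks $B^1$ and $B^2$, and also that the two product regions lie on opposite sides of the chosen embedded $F$. The paper obtains this consistency essentially for free by introducing a new complete hyperbolic metric $m_N$ on $\Int N[k_u]$ that turns each component of $T[k_u]$ into a rank-two cusp and then realizing each joint as the least-area surface in its proper homotopy class relative to this metric. Because least-area representatives in $(\Int N[k_u],m_N)$ are canonical, the realization of $F$ seen from $B^1$ coincides with the one seen from $B^2$, and the two product regions automatically sit on opposite sides; this is what makes the infinite gluing work. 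Your proposal instead relies on a single tower/FHS pass per joint plus an Alexander isotopy per brick, but you never explain how the countably many per-brick isotopies are compatible across shared joints. Finally, the properness of the assembled map, which you argue via a quick appeal to A-(3), is in the paper a separate and delicate step: one shows the local isometry $f':N'[k_u]\to(\Int N[k_u],m_N)$ is proper by contradiction, using a geometric limit of pleated surfaces and the fact that no two horizontal surfaces of distinct bricks are properly homotopic. The idea that ``only finitely many joint images meet a compact set'' is the right intuition, but it is not obtained directly from A-(3); it is the conclusion of that geometric-limit argument, not its input.
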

\begin{proof}
Let $B$ be a brick of $M_\mathrm{int}$.
We denote by $F^+_1, \dots , F^+_\mu$ its joints contained in the upper front, and by $F^-_1, \dots , F^-_\nu$ those contained in the lower front.
(One of the fronts may be ideal; hence one of these families may be empty.)
We consider $\check F^+_1[k_u], \dots , \check F^+_\mu[k_u]$ and $\check F^-_1[k_u], \dots , \check F^-_\nu[k_u]$ as defined above, and denote their unions by $\check F_+$ and $\check F_-$.
Note that both $\check F_+$ and $\check F_-$ are incompressible in $M$.
By changing each joint $F$ to $\check F$, we get a brick decomposition of $M$ which is isotopic to the original one.
From now on until the end of the proof of this proposition, when we refer to a brick $B$, we mean the one in this new decomposition, which is isotopic to the original $B$.
Let $p_B: M_B \rightarrow M$ be the covering  associated to the image of $\pi_1(B)$ in $\pi_1(M)$.
Similarly, we consider the covering $N_B$ of $N_0$ associated to $(f_3)_\# \pi_1(B)$.
Let $\tilde f_3 : M_B \rightarrow N_B$ be the lift of $f_3$ which is uniformly Lipschitz outside the preimages of $\cv[k_u]$, and $\tilde{f}: M_B \rightarrow N_B$ that of $f$, which is a homeomorphism.
The surfaces $\check F_+, \check F_-$ lift homeomorphically to surfaces $\tilde F_+, \tilde F_-$ lying on the boundary of a submanifold $\tilde B$ homeomorphic to $B$ under $p_B$.
We use the symbols $\partial_- \tilde B$ and $\partial_+ \tilde B$ to denote the fronts of $\tilde B$ corresponding to $\partial_- B$ and $\partial_+B$ respectively.
Let $\tilde \cv[k_2]$ and $\tilde T[k_u]$ be the preimages of $\cv[k_2]$ and $T[k_u]$ respectively.
We denote  by $M_B[k_u]$ the complement of $\Int \tilde \cv[k_u]$ in $M_B$, and by $N_B[k_u]$ the complement of $\Int \tilde T[k_u]$ in $N_B$.

Note that  $\tilde f_3|(\tilde F_+ \sqcup \tilde F_-)$ is properly homotopic to $\tilde f|(\tilde F_+ \sqcup \tilde F_-)$ which is an embedding.
We can assume that $\tilde f_3|(\tilde F_+ \sqcup \tilde F_-)$ is an immersion from the start by perturbing it.
Then, as was shown in Freedman-Hass-Scott \cite{fhs}, $\tilde f_3|(\tilde F_+ \sqcup \tilde F_-)$ can be properly homotoped to an embedding by a homotopy which passes through only relatively compact components of $N_B \setminus \tilde f_3(\tilde F_+ \sqcup \tilde F_-)$.
We note that each of such relatively compact components is homeomorphic to a trivial $I$-bundle whose  associated $\partial I$-bundle can be identified with a compact subsurface of $\tilde F_+ \sqcup \tilde F_-$.

Suppose that  a component $W$ of $N_B \setminus \tilde f_3(\tilde F_+ \sqcup \tilde F_-)$ intersects a component $T$ of $\tilde T[k_u]$.
This means that $W$ contains $T$ since $f_3(\check F)$ is disjoint from $T[k_u]$.
We shall now prove the following claim.

\begin{claim}
The surfaces $\tilde f_3(\tilde F_+)$ and $\tilde f_3(\tilde F_-)$ are homotopic to disjoint embeddings by proper homotopies which do not touch $T$.
\end{claim}
\begin{proof}
Because $f_3$ satisfies the conditions (3), (5) and (7), there is a unique component $V$ of $\tilde \cv[k_u]$, which is  a solid torus, such that $\tilde f_3(V)=T$.
Since $M[k_u]$ is mapped to $N[k_u]$ and $\cv[k_u]$ bijects to $T[k_u]$, we see that  $\tilde f_3(M_B \setminus V) \subset N_B \setminus T$.
Since every Kleinian surface group is tame, the interior of  $N_B$ is homeomorphic to $\partial_-  B \times (0,1)$, and hence so is $M_B$.
Let $V_1, \dots, V_p$ be all components of $\partial M$, which are open annuli or tori, whose longitudes or core curves are homotopic into $\check F^+_1 \cup \dots \cup \check F^+_\mu$ in $M \setminus \Int B$.
We renumber them in such a way that $V_1, \dots , V_r$ are disjoint from $B$ whereas $V_{r+1}, \dots , V_p$ intersect $B$ along annuli.
By the annulus theorem and a standard cut-and-paste technique, we see that there are disjoint embedded annuli $\alpha_1, \dots , \alpha_r$ realising homotopies between longitudes or core curves on $V_1, \dots , V_p$ and simple closed curves on $\check F^+_1 \cup \dots \cup \check F^+_p$ with  $\partial \alpha_j \subset V_j \cup \check F^+_1 \cup \dots \cup \check F^+_p$.
We can lift $V_1, \dots , V_p$ and $\alpha_1, \dots , \alpha_r$ to open annuli  $A_1, \dots , A_p$ on $\partial M_B$ and annuli $\tilde \alpha_1, \dots , \tilde \alpha_r$ such that    $A_j$ and $\tilde \alpha_j$ intersect at  a core curve of $A_j$ for $j =1, \dots , r$. 
Similarly, we consider  components $V_1', \dots , V'_q$ of $\partial M$ whose longitudes or core curves are homotopic into $\check F^-_1 \cup \dots \cup \check F^-_\nu$ in $M \setminus \Int B$, among which $V_1', \dots , V'_s$ lie outside $B$, and take annuli $\alpha'_1, \dots , \alpha'_s$ realising homotopies between longitudes or core curves to $\check F^-_1 \cup \dots \cup \check F^-_\nu$.
We lift $V_1', \dots , V_q'$  to  open annuli $A'_1, \dots , A'_q$ and $\alpha'_1, \dots , \alpha'_s$ to annuli $\tilde \alpha_1', \dots , \tilde \alpha_s'$ in the same way as before.

Let $\bar A_1, \dots , \bar A_p$ and $\bar A'_1, \dots , \bar A'_q$ be core annuli of $A_1, \dots , A_p$ and $A'_1, \dots , A'_q$ such that $\bar A_j$ contains $ \tilde \alpha_j \cap A_j$ for $j\leq r$ whereas $\bar A_j= \tilde B \cap A_j$ for $j >r$, and $\bar A'_j$ contains $\alpha'_j \cap A'_j$ for $j \leq s$ whereas $\bar A_j'=\tilde B \cap A'_j$ for $j > s$.
By identifying $\partial_-\tilde B$ and $\partial_+\tilde B$ by vertical translation and $\partial_- \tilde B$ with $\partial_- B$ by $p_B$, we regard $\bar A_1, \dots , \bar A_p; \bar A'_1, \dots, \bar A'_q$ as  lying on $\partial_-  B$.
To construct parts corresponding to the $\integers$-cusps in $\partial_-  B \times (0,1)$,  we set $U_+=(\bar A_1 \cup \dots \cup \bar A_r) \times (7/8,1)$,  $U'_+=(\bar A_{r+1} \cup \dots \cup \bar A_p) \times (3/4,1)$, $U_-=(\bar A'_1 \cup \dots \cup \bar A'_s) \times (0,1/8)$, and $U'_-=(\bar A'_{s+1} \cup \dots \cup \bar A'_q) \times (0,1/4)$ and denote the union of these four parts by $U$.
We parametrise $M_B$ by a proper homeomorphism $I_M: M_B \rightarrow \partial_- B \times (0,1) \setminus U$, in such a way that $\check F_-$ is identified with the horizontal surface $\partial_- B\times \{1/4\} \setminus \Int U'_-$ whereas $\tilde F_+$ is identified with the horizontal surface $\partial_+ B\times \{3/4\} \setminus \Int U'_+$.

Similarly, we parametrise $N_B$ by a homeomorphism $I_N : N_B \rightarrow \partial_- B \times (0,1) \setminus U$ in such a way that  $I_N(\tilde f_3(\tilde B))$ lies in $\partial_- B \times [1/4,3/4]$ and $I_N(W)$ lies in $\partial_- B \times [1/8, 7/8]$.
Note that each component of $\partial U$ corresponds to the boundary  of a $\integers$-cusp neighbourhood of  $N_B$.
Since $N_B$ is the covering of the non-cuspidal part $N_0$, we can extend $N_B$ to a hyperbolic $3$-manifold $\hat N_B$ which is the covering of $N$ associated to $\pi_1(B)$ by attaching cusp neighbourhoods.
Therefore, the parametrisation $I_N$ extends to a homeomorphism $\hat I_N : \hat N_B \rightarrow \partial_- B \times (0,1)$.

Since both $\check F_+$ and $\check F_-$ are disjoint from $\tilde \cv[k_u]$, the solid torus $I_M(V)$ is contained in either $\partial_- B \times (0, 1/4)$ or $\partial_- B \times (1/4,3/4)$ or $\partial_- B \times (3/4,1)$.
We shall first consider the case when $I_M(V)$ is contained in $\partial_- B \times (1/4,3/4)$.
Take a sufficiently small number $s_0$ so that both $\hat I_N^{-1}(\partial_- B \times (1-s_0, 1))$ and $\hat I_N^{-1}(\partial_- B \times (0,s_0))$ are disjoint from the $1$-neighbourhood of $W$.
Since $f_3$ is proper and has degree $1$, for sufficiently small $t_0>0$, the surfaces $I_N \circ\tilde f_3 \circ I_M^{-1}(\partial_- B \times \{ t_0\} \setminus U)$ and $I_N \circ\tilde f_3 \circ I_M^{-1}(\partial_- B \times \{1-t_0\} \setminus U)$ are contained in $\partial_- B \times (0,s_0)$ and  $\partial_-B \times (1-s_0,1)$ respectively.
Denote $I_M^{-1}(\tilde F_+ \times \{1-t_0\} \setminus U)$ by $F'_+$, and $I_M^{-1}(\tilde F_-  \times \{t_0\} \setminus U)$ by $F'_-$.

We can enlarge $F'_-$ and $F'_+$ to surfaces $\check F'_-$ and $\check F'_+$ homeomorphic to $\check F_-$ and $\check F_+$ respectively by joining pairs of parallel boundary components of $F'_-$ lying on $\partial U_-$ by annuli on $\partial U_-$ bounded by them, and those of $F'_+$ lying on $\partial U_+$  by annuli on  $\partial U_+$ bounded by them.
On the other hand, since $\tilde f_3(F'_-)$ and $\tilde f_3(F'_+)$ are disjoint from the $1$-neighbourhood of $W$, we can enlarge $\tilde f_3(F'_-)$ and $\tilde f_3(F'_+)$ by joining each pair of parallel boundary component on $I_N \circ \tilde f_3 (\partial U_- \cup \partial U_+) \subset \partial N_0$ by an annulus embedded in the closure of an $\ve$-cusp neighbourhood which is a component of $N \setminus \Int N_0$ so that their images under $\hat I_N$ are contained in $\partial_- B \times (0, s_0)$ and $\partial_- B \times (1-s_0, 1)$ respectively.
These surfaces, which are homeomorphic to $\check F_-$ and $\check F_+$, are homotopic to embeddings $\bar F_-$ and $\bar F_+$  respectively outside the $1$-neighbourhood of $W$ by our choice of $s_0$, again using the result of Freedman-Hass-Scott.
Then by our choice of $t_0$, we see that  $\tilde f_3 (\check F'_+)$ and $\tilde f_3 (\check F'_-)$ are homotopic to $\bar F_-$ and $\bar F_+$ respectively by homotopies disjoint from $W \supset T$.
Since $\tilde f_3(\tilde F_-)$ is homotopic to $\tilde f_3 (\check F'_-)$ outside $T$ and $\tilde f_3(\tilde F_+)$ is homotopic to $\tilde f_3 (\check F'_+)$ outside $T$ (for $I_M(V)$ is contained in $\partial_-B \times (1/4,3/4)$ and $\tilde f_3(M_B \setminus V) \subset N_B \setminus T$), the surfaces $\tilde f_3(\tilde F_-)$ and $\tilde f_3(\tilde F_+)$ are homotopic to disjoint embeddings by homotopies disjoint from $T$.

Next suppose that $I_M(V)$ is contained in $\partial_-B \times (0,1/4)$.
In this case, we shall consider to move both $\tilde F_-$ and $\tilde F_+ $ in the +-direction.
As in the previous case, there are sufficiently small  $s_0, t_0>0$ such that $\hat I_N^{-1}(\partial_-B \times (1-s_0))$ is disjoint from the $1$-neighbourhood of $W$, and such that $I_N \circ \tilde f_3\circ I_M^{-1}(\partial_- B \times \{1-t_0\})$ is contained in $\partial_- B \times (1-s_0, 1)$.
Then, by the same argument as in the previous case, we can see that both $\tilde f_3(\tilde F_-)$ and $\tilde f_3 (\tilde F_+)$ are homotopic to an embedding contained in $\hat I_N^{-1}(\partial_- B \times (1-s_0, 1))$ by a homotopy outside $T$.
They can be homotoped to disjoint embeddings just by considering parallel copies of the embedding.
Thus we are done also in this case.
The argument for the case when $I_M(V)$ is contained in $\partial_- B \times (3/4,1)$ is the same way just by changing the $+$-direction to the $-$-direction.
\end{proof}

The above claim says that a homotopy from $f_3(\tilde  F_+ \sqcup \tilde F_-)$ can be taken to be disjoint from $W$ since any homotopy passing through $W$ must intersect $T$.
We can repeat the same argument for every relatively compact component of $N_0 \setminus \tilde f_3(\tilde F_+ \sqcup \tilde F_-)$ containing a component of $\tilde T[k_u]$ and show that $\tilde f_3(\tilde F_+ \sqcup \tilde F_-)$ can be homotoped to an embedding by a homotopy within $N_B[k_u]$.

Now, we consider a new  hyperbolic metric $m_N$ on $\Int N[k_u]$ which makes every component of $T[k_u]$ a torus cusp preserving the original cusps of $N$.
Pull back this metric to $\Int N_B[k_u]$ and denote it by $m_B$.
We consider a least area map $h_3: \tilde F_- \sqcup \tilde F_+ \rightarrow (\Int N_B[k_u], m_B)$ homotopic to $\tilde f_3| \tilde F_- \sqcup \tilde F_+$.
By the main result of Freedman-Hass-Scott \cite{fhs}, $h_3$ is an embedding.

In the following argument, we shall use the notion of topological order due to Brock-Canary-Minsky \cite{bcm} which we explained in Definition \ref{def:topological order}.

\begin{claim}
\label{extended embedding}
Let $B$ be a brick in $M_\mathrm{int}$ neither of whose fronts lies on the boundary of $M_\mathrm{int}$.
Then the embedding $h_3$ can be extended to an orientation-preserving embedding of $\tilde B \cap \Int M_B[k_u]$ to $(\Int N_B[k_u], m_B)$ taking $\tilde B \cap \tilde \cv[k_u]$ to cusps corresponding to $\tilde T[k_u]$ and the homotopy classes of meridians of tube components of $\tilde B \cap \tilde \cv[k_u]$  to those of $\tilde T[k_u]$.
\end{claim} 
\begin{proof}
Recall that there is a homeomorphism $I_N: N_B \rightarrow \partial_- B \times (0,1) \setminus U$.
By our definition of $k_u$, the images of the tube components of $\tilde T[k_u]$ under $I_N$ are unknotted and unlinked in $\partial_- B \times (0,1)$.
Since  ends of $h_3(\tilde F_- \sqcup \tilde F_+)$ other than those tending to cusps of $N_B$ tend to $I_N(\partial \tilde T[k_u])$, the surfaces $I_N \circ h_3(\tilde F_-) \sqcup I_N \circ h_3(\tilde F_+)$ together with annuli on $I_N(\partial \tilde T[k_u])$ bound a submanifold  homeomorphic to $\partial_-B \times [1/4,3/4] \cong \tilde B$.
We shall first prove that $I_N \circ h_3(\tilde F_+)$ is situated above $I_N \circ h_3(\tilde F_-)$.
This trivially holds when one of $\tilde F_+$ and $\tilde F_-$ is empty.
Therefore, we assume that neither of them is empty.
Since we assumed that neither $\partial_- B$ nor $\partial_+B$ lie on the boundary of $M_\mathrm{int}$, both $\partial_- \tilde B \cap \partial M_B$ and $\partial_+  \tilde B \cap \partial M_B$ are non-empty.

By Assumption \ref{modification}, every component of $\partial_-  B \cap \partial M$ overlaps some component of $ \partial_+  B \cap \partial M$.
Therefore, we can take components $X$ and $X'$ of  $\tilde \cv[k_u]$ on which boundary components of $\tilde F_+$ and of $\tilde F_-$ lie respectively such that $X \cap \tilde B$ and $X' \cap \tilde B$ overlap.
It follows that we have $X \cap \tilde B \prec_\mathrm{top} X' \cap \tilde B$.
Since $\tilde f_3$ is a proper degree $1$-map and $\tilde f_3|\tilde \cv[k_u]$ is a homeomorphism to its image, this implies that $\tilde f_3(X \cap \tilde B) \pretop \tilde f_3(X \cap \tilde B)$.
On the other hand, if $I_N \circ h_3(\tilde F_+)$ is situated under $I_N \circ h_3(\tilde F_-)$, then we should have $\tilde f_3(X'  \cap \tilde B) \pretop \tilde f_3(X \cap \tilde B)$, which is a contradiction.
Thus we have proved that $I_N \circ h_3(\tilde F_+)$ is situated above $I_N \circ h_3(\tilde F_-)$ and $h_3$ extends to an orientation-preserving homeomorphism from $\tilde B$ to a submanifold $B_N$ bounded by $h_3 (\tilde F_- \cup \tilde F_+)$.

We shall next show that this homeomorphism induces one between $\tilde B \cap \Int M_B[k_u]$ to $B_N \cap \Int N_B[k_u]$.
For that, it suffices to show that for the components of $\tilde \cv[k_u]$ in $\tilde B$, the corresponding components of $\tilde T[k_u]$ are contained in $B_N$ preserving the topological order since all such components in $B_N$ are unknotted and unlinked. 
Let $V$ be a component of $\tilde \cv[k_u]$ contained in $\tilde B$.
Then we have $\tilde F_- \pretop V \pretop \tilde F_+$.
Let $T$ be a component of $\tilde T[k_u]$ with $T=\tilde f_3(V)$.
Since $\tilde f_3$ is a proper degree-$1$ map and takes $M_B \setminus V$ to $N_B  \setminus T$, we see that $\tilde f_3(\tilde F_-) \pretop T \pretop \tilde f_3(\tilde F_+)$.
Since $h_3$, defined on $\tilde F_- \sqcup \tilde F_+$, is homotopic to $f_3|(\tilde F_- \sqcup \tilde F_+)$ in $N_B[k_u]$, we also have $h_3(\tilde F_-) \pretop T \pretop h_3(\tilde F_+)$.
Therefore any tube component of $\cv[k_u]$ in $\tilde B$ has its corresponding Margulis tube in $B_N$.
Now suppose that we have two such tube components $V_1, V_2$ with $V_1 \pretop V_2$.
Let $T_1, T_2$ be the components of $\tilde T[k_u]$ with $\tilde f_3(V_1)=T_1$ and $\tilde f_3(V_2)=T_2$.
Then by the same argument as above using the bijective correspondence between the components of $\tilde \cv[k_u]$ and $\tilde T[k_u]$, we have $T_1 \pretop T_2$.
Thus we have shown that we have a homeomorphism $\bar h_3$ from $\tilde B \cap \Int M_B[k_u]$ to $B_N \cap \Int N_B[k_u]$ which is an extension of $h_3$.

It remains to show that a meridian of a solid torus component of $\tilde \cv[k_u]$ contained in $\tilde B$ is taken to a  meridian of $\tilde T[k_u]$ by $\bar h_3$.
This is rather obvious from our construction: for $\tilde f_3$ takes  meridians of   solid torus components of $\tilde \cv[k_u]$ to those of $\tilde T[k_u]$.
%
%
\end{proof}

Now, for each brick $B$ of $\ck_\mathrm{int}$ neither of whose fronts lies on the boundary of $M_\mathrm{int}$, we consider $B \cap \Int M[k_u]$,  its lift $\tilde B$ in $\Int M_B[k_u]$, and its  embedding of  $\Int N_B[k_u]$ by an extension of $h_3$ as above, which we denote by $B_N$ as above.
We denote the map taking $B \cap \Int M[k_u]$ to $B_N$ in this way by $f_B$.
We regard $B_N$ as a hyperbolic $3$-manifold with boundary by restricting the metric $m_B$, and call $B_N$ with this metric the {\em least-area realisation of B}.
When $B$ is a brick one of whose front lies on the boundary of $M_\mathrm{int}$, we define $B_N$ to be a submanifold of $\Int N_B[k_u]$ homeomorphic to $\partial_- B \times (0,1)$ obtained by cutting $\Int N_B[k_u]$ along the embedding of one of the boundary components of $\tilde B$ whose projection in $M$ does not lie on the boundary of $M_\mathrm{int}$, which is defined using the least area map in the same way as above.

Suppose that two bricks $B^1$ and $B^2$ share a joint $F$.
We can assume $F$ is a component of $\partial_+B^1$ and $\partial _- B^2$ by interchanging $B^1$ and $B^2$ if necessary.
Construct the least-area realisations $B^1_N$ and $B^2_N$ as above.
Then both of their boundaries contain a least area surface corresponding to $F$ as components.
We denote by $F^j$ the one contained in $\partial B^j_N$ for $j=1,2$.
Since the projections of $F^1$ and $F^2$ in $(\Int N[k_u], m_N)$ are least-area surfaces homotopic to $f_3(F)$ (which might not be embeddings), they must coincide.
Therefore, $F^1$ is isometric to $F^2$.
Then we can consider the hyperbolic $3$-manifold homeomorphic to $(\Int B^1 \cup \Int B^2 \cup F) \cap \Int M[k_u]$ by pasting $B^1_N$ and $B^2_N$ along $F^1$ and $F^2$ by an isometry.

Repeating this procedure for every joint on $B^1$ and $B^2$, then again for all the  bricks, we get a hyperbolic 3-manifold $N'[k_u]$ homeomorphic to $\Int M[k_u]$.
We denote the homeomorphism obtained by identifying $B \cap \Int M[k_u]$ with $B_N$ in $N'[k_u]$ by $h: \Int M[k_u] \rightarrow N'[k_u]$.
We shall show that this manifold is isometric to $(\Int N[k_u], m_N)$.

\begin{claim}
\label{isometric N}
There is an isometry $f' : N'[k_u] \rightarrow (\Int N[k_u], m_N)$, whose restriction to $B_N$ for each brick $B$ is an isometric embedding homotopic to $f_3 \circ f_B^{-1}$ in $N_0$.
\end{claim}
\begin{proof}
For each brick $B$, by Claim \ref{extended embedding}, there is an embedding $h_3: \tilde B \cap \Int M_B[k_u]\rightarrow \Int N_B[k_u]$ homotopic to $\tilde f_3|B \cap \Int M_B[k_u]$.
If we  lift $f_B^{-1}(B_N)$ to $\Int M_B[k_u]$, and embed it by $h_3$ into $\Int N_B[k_u]$, then the map is isometric by our definition of the metric on $N'[k_u]$.
By projecting it to $N[k_u]$, we get a locally isometric map from $B_N$, which was defined above and is bounded by least area surfaces, into $\Int N[k_u]$.
Since for two bricks sharing a joint, such maps induce the same map on the joint, we can glue this map at joints and get a local isometry $f' : N'[k_u] \rightarrow \Int N[k_u]$.
(Note that if two bricks share a joint, then their images by $h_3$ lie on the opposite sides of the image of the joint by our way of extending $h_3$ in Claim \ref{extended embedding}, which guarantees that the map is also local isometry at joints.)
Since $h_3$ is homotopic to $f_3|B$, we see that $f' \circ h_3$ is homotopic to $f_3$.

Since $f'$ induces an isomorphism between the fundamental groups, to show that it is an isometry, it is sufficient to show that $f'$ is proper.
Suppose, seeking a contradiction, that $f'$ is not proper.
Then, there exists a sequence of distinct bricks $B^i$ and points $x_i \in B^i_N$ such that $f'(x_i)$ converges in $\Int N[k_u]$.
Since $f'(x_i)$ converges, the injectivity radius at $f'(x_i)$ is bounded below by a positive constant independent of $i$, hence so is the injectivity radius at $x_i$.
We divide our argument depending on the distance between $x_i$ and $\partial B^i_N$ is bounded or not.

First we consider the case when the distance from $x_i$ and $\partial B^i_N$ is bounded as $i \rightarrow \infty$.
Let $F^i$ be a least-area boundary component of $B^i_N$ from which $x_i$ is within uniformly bounded distance.
Since $\xi(F^i) \leq \xi(S)$, the diameter of the thick part of $F^i$ is uniformly bounded.
Since $x_i$ lies in the thick part, it is within uniformly bounded distance from either an $\varepsilon_1$-Margulis tube or an $\varepsilon$-cusp neighbourhood touching $F^i$ which corresponds to a component $\tilde V^i$ of $\tilde T[k_u]$.
We denote by $V^i$   a component of $T[k_u]$ which is the projection of $\tilde V^i$.

We can show that in $(\Int N[k_u], m_N)$, for each component $V$ of $T[k_u]$ there are only finitely many images of joints by $f'$ touching $V$ as follows.
For any $R>0$, there is a finitely many components of $T[k_u]$ and original cusp neighbourhoods of $N$ which can be reached from $V$ within the distance $R$ modulo the $\epsilon_0$-thin part.
Since joints are subsurfaces of $S$ and the boundaries of their images in $N[k_u]$ lie in $T[k_u] \cup \partial N_0$, there are only finitely many possibilities for the boundaries of their images in $N[k_u]$.
This implies there are only finitely many joints up to homotopy whose images can touch $V$ since there are at most two kinds of homotopy classes of horizontal surfaces if we fix a boundary.
Since no two distinct joints are homotopic as we removed inessential joints, it follows that there are only finitely many joints whose images touch $V$.

Since our joints $F^i$ are all distinct, we can assume that all the $V^i$ are distinct by taking a subsequence.
Since $f_3$ takes the components of $\cv[k_u]$ to those of $T[k_u]$ one-to-one, and no other part is mapped to $T[k_u]$, we see that $f'$ takes the $V^i$ to distinct components of $T[k_u]$.
Therefore $f'(x_i)$ is within bounded distance from infinitely many distinct components of $T[k_u]$.
Since the $f'(x_i)$ are assumed to  converge, this contradicts the fact that there are only finitely many components of $T[k_u]$ within a bounded distance.

Thus, it only remains to consider the case when the distance from $x_i$ to the boundary of $B^i_N$ goes to $\infty$ as $i \rightarrow \infty$.
Recall that $B^i_N$ was originally a submanifold in $\Int N_{B^i}[k_u]$.
Therefore, we can regard  $x_i$  also  as a point in $\Int N_{B^i}[k_u]$.
Since $B^i_N$ is bounded by least-area surfaces, it is contained in the convex core of $(\Int N_{B^i}[k_u], m_{B^i})$.
Therefore, there is a pleated surface $k_i : \partial_- B^i \rightarrow \Int N_{B^i}[k_u]$ which is within bounded distance from $x_i$ and is homotopic to the inclusion of $\partial_-B^i$ as $\partial_- B^i \times \{t\}$ with respect the parametrisation $N_{B^i} \cong \partial_- B^i \times (0,1)$.
Since the distance from $x_i$ to $\partial B^i_N$ goes to $\infty$, we can assume that the image of $k_i$ is contained in $B^i_N$.
Hence we can regard $k_i$ as a pleated surface in $N'[k_u]$.
Also since the cuspidal part of $N'[k_u]$ consists of those of $N_0$ and rank-$2$ cusps corresponding to $T[k_u]$, we can take cusp neighbourhoods which are disjoint from all the images of $k_i$.

We consider the pleated surfaces $f' \circ k_i$.
Since $f'(x_i)$ converges and $f'\circ k_i$ is disjoint from the  cusp neighbourhoods which are images of those taken above, the pleated surface $f' \circ k_i$ converges geometrically, passing to a subsequence.
This implies in particular that there are distinct $i_1, i_2$ such that $f' \circ k_{i_1}$ and $f' \circ k_{i_2}$ are properly homotopic.
Since $f'$ induces an isomorphism between the fundamental groups, it follows that $k_{i_1}$ and $k_{i_2}$ are properly homotopic.
This is a contradiction since no two horizontal surfaces of distinct bricks are properly homotopic.
(Recall that $N'[k_u]$ and $\Int M[k_u]$ are homeomorphic.)
Thus we have established that $f'$ is an isometry.
By our construction, it is evident that $f'|B_N$ is homotopic to $f_3 \circ f_B^{-1}$ in $N_0$.
\end{proof}

Thus $\Int N[k_u]$ is isometric to $N'[k_u]$ which is the union of the $B_N$ each of which is homeomorphic to $B \cap \Int M[k_u]$.
This shows that there is a homeomorphism $h: \Int M[k_u] \rightarrow N'[k_u]$ such that $f' \circ h$ is homotopic to $f_3|\Int M[k_u]$.
By setting $f_4$ to be the natural extension of $f' \circ h$ to $M[k_u]$, we get a homeomorphism as we wanted.

It only remains to show that $f_4$ extends to a homeomorphism between $M$ and $N_0$.
To show this, it suffices to show that for each component $V$ of $\cv[k_n]$, its meridian is sent to a meridian of a component of $T[k_u]$.
If $V$ is contained in some brick $B$, then this follows from Claim \ref{extended embedding}.
Since we isotoped the original brick decomposition to a new one by moving each joint $F$ to $\check F$, we see that every component of $\cv[k_u]$ is contained in some brick.

This completes the proof of Proposition \ref{disjoint embeddings}.
\end{proof}

Having proved that $M[k_u]$ and $N[k_u]$ are homeomorphic, we shall next show that the Lipschitz map $f_3$ can be homotoped so as to embed the joints preserving the Lipschitzity.
For that, it is more convenient to consider a brick decomposition of $M[k_u]$ rather than that of $M$.
As in \S \ref{SS_block}, we define a brick of $M[k_u]$ to be a maximal union of vertically parallel horizontal leaves which are inherited from the horizontal foliation of $M$.
By the same argument as in \S \ref{SS_block}, we can check the conditions A-(1)-(5) are satisfied.
(In reality, only A-(2) and A-(3) need to be checked.)
{\em We denote this brick decomposition of $M[k_u]$ by $\ck[k_u]$.}

Before that, we shall first move $f_3$ so that it preserves the order of joints on the boundary except for parallel ones.
 Let $\mathcal F$ be the union of joints of pairs of bricks in $\ck[k_u]$.
 We introduce an equivalence relation $\sim$ in the set of  components of $\mathcal F$ such that $F_1 \sim F_2$ if they are parallel.
 By our definition of brick decomposition, there are not three distinct joints in $\mathcal F$ which are all parallel.
 Therefore each equivalence class consists of at most two joints.
 We define the reduced union of joints to be the union of joints taken one from each equivalence class, and denote it by $\hat{ \mathcal F}$.
\begin{lemma}
\label{homeo on boundary}
There is a uniform constant $K_3'$ 
 as follows.
We can homotope $f_3$ to a proper, degree-$1$  map $f_3' : M[k_u] \rightarrow N[k_u]$ with the following properties.
\begin{enumerate}[\rm(i)]
\item $f_3'$ coincides with $f_3$ outside small pairwise disjoint neighbourhoods of $\partial M[k_u]$.
\item $f_3'$ is $K_3'$-Lipschitz.
\item On each component $T$ of $\partial M[k_u]$,  distinct components of $\mathcal F \cap T$ have disjoint images under $f_3'$.
\item On each component $T$ of $\partial M[k_u]$, the restriction $f_3'|T$ maps the components of $\hat{\mathcal F} \cap T$ disjointly preserving the orientation of $\hat{\mathcal F}\cap T$ and the order of $\{F \cap T \mid F \text{ is a component of  } \hat{\mathcal F}\}$.
(When $T$ is a torus the order means the cyclic order.)
\item
For a component $F$ of $\mathcal F \setminus \hat{\mathcal F}$, let $\hat F$ be the other component of $\mathcal F$ equivalent to $F$ and contained in $\hat{\mathcal F}$.
Then $f_3'$ also preserves the order of $((\hat{\mathcal F} \setminus \hat F) \cup F)\cap T$ for any component $F$ of $\mathcal F \setminus \hat{\mathcal F}$.
\item
The order of $F\cap T$ and $\hat F \cap T$ may be reversed only when $f_3'(F)\cap f_3'(\hat F)=\emptyset$.
\item For each small $\delta>0$, there is an universal number $n_0$ such that for any component $F$ of $\mathcal F$, there are at most $n_0$ joints $F_i$ such that $f_3'(F_i \cap T)$ are within the distance $\delta$ from $f_3'(F \cap T)$.
\end{enumerate}
\end{lemma}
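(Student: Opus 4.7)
My plan is to work on each component $T$ of $\partial M[k_u]$ one at a time, modifying $f_3$ only inside a small collar neighbourhood $U_T$ of $T$ inside $M[k_u]$, with the $U_T$ chosen pairwise disjoint. This gives condition (i), and ensures condition (ii) is controlled outside the $U_T$, so it remains to build the modification within each collar. Each $T$ carries the Euclidean structure coming from the model metric, with a canonical foliation by horizontal longitudinal geodesics of length $\varepsilon_1$. Every joint $F$ of two bricks meeting $T$ does so in an annulus whose core curve is isotopic to such a longitude, so on $T$ the annuli $\{F\cap T\mid F\in\mathcal F\}$ inherit a cyclic (on a torus) or linear (on an open annulus) order along the meridian direction, which is the one read off from the product embedding $\iota_M$.

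First, I would establish the bounded multiplicity (vii). The image $f_3(T)$ lies in a uniformly bi-Lipschitz copy of the Euclidean boundary of a Margulis tube or cusp, whose geometry is controlled by Lemma \ref{upper bound} and the choice of $k_u$. Each $f_3(F\cap T)$ is an annulus of core length close to $\varepsilon_1$; because Assumption \ref{modification} rules out inessential joints and Lemma \ref{no homotopic tubes} rules out homotopic tubes in $\cv$, distinct joints correspond to pairwise non-homotopic pleated-surface boundaries in the cover of $N$ associated to $\pi_1$ of the appropriate subsurface. A geometric-limit argument along the lines of Lemma \ref{large k} then bounds by a universal $n_0(\delta)$ the number of joints whose $f_3$-images on $T$ can cluster within Euclidean distance $\delta$ of any given one.

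Next, I would use the homeomorphism $f_4$ from Proposition \ref{disjoint embeddings} as a template. Since $f_4$ is properly homotopic to $f_3$ inside $N[k_u]$, the restriction $f_4|T$ provides a disjoint, correctly-ordered family of annuli whose cyclic order agrees, modulo transpositions of parallel pairs in $\mathcal F\setminus\hat{\mathcal F}$, with the $\iota_M$-order on $T$. Inside $U_T$ I would interpolate between $f_3|\partial U_T$, kept unchanged, and a new map on $T$ obtained from $f_3|T$ by a bounded sequence of adjacent transpositions of annuli realising this target order. Each such elementary transposition is an isotopy of $T$ of bounded Euclidean support, and conditions (iii), (iv), (v), (vi) are built in because only parallel pairs are allowed to be interchanged, and only when their image annuli on $f_3(T)$ are already disjoint.

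The main obstacle is keeping the Lipschitz constant uniform. The interpolation across $U_T$ must agree with $f_3$ on $\partial U_T$, itself $K_3$-Lipschitz, while performing possibly many transpositions of annuli on $T$. The bounded-multiplicity (vii) guarantees that only $n_0(\delta)$ annuli take part in any local rearrangement, so each transposition contributes a factor depending only on $\xi(S)$ and $\varepsilon_1$, and these combine to a uniform estimate. The subtle subsidiary point is that the radial collar direction must also be Lipschitz-controlled; this reduces to the fact that $|\omega_M(V)|\le k_u$ for every $V$ whose boundary contributes to $\partial M[k_u]$, giving uniformly bounded Euclidean diameter of $T$ and hence uniformly bounded total displacement over which we interpolate, yielding the required constant $K_3'$ depending only on $\xi(S)$.
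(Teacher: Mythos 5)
Your overall architecture matches the paper's: work collar-by-collar near each boundary component $T$, use the homeomorphism $f_4$ from Proposition \ref{disjoint embeddings} to certify the target order, and interpolate radially to get (i). But there are two genuine gaps, one of them a factual error that the argument leans on.

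First, the claim that $|\omega_M(V)|\le k_u$ for tubes $V$ whose boundary contributes to $\partial M[k_u]$ is backwards: by definition $\cv[k_u]$ consists of the tubes with $|\omega_M(V)|\ge k_u$, and $\partial M[k_u]$ is precisely $\partial M\cup\partial\cv[k_u]$. So the torus components of $\partial M[k_u]$ have \emph{unbounded} Euclidean diameter, and the open-annulus components are non-compact. Your final step --- ``uniformly bounded Euclidean diameter of $T$, hence uniformly bounded total displacement over which we interpolate'' --- has no foundation, and since $T$ can carry infinitely many joints, ``a bounded sequence of adjacent transpositions'' is not available either.

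Second, what actually controls the displacement in the paper's proof is a point your proposal never establishes: if the $\bar f_3$-order on $T'$ of two non-parallel joints $F_j,F_k$ is \emph{reversed} relative to the $\iota_M$-order, then $\bar f_3(F_j)$ and $\bar f_3(F_k)$ must \emph{intersect} in their interiors. This is proved by a Freedman--Hass--Scott least-area comparison against the homeomorphism $f_4$: one builds boundary annuli $A_j',A_k'$ interpolating between $\bar f_3(F_\bullet)$ and the disjoint embeddings $f_4(F_\bullet)$, and the order reversal forces $A_j'\cap A_k'\neq\emptyset$, hence the surfaces meet. Because the joints have uniformly bounded diameter and $\bar f_3$ is uniformly Lipschitz, intersecting images are within distance $4\lambda$ of each other, and only then does one get a uniform displacement bound $\lambda_0$. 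Your invocation of (vii) cannot substitute for this: bounded multiplicity counts joints that happen to cluster within $\delta$ of a given one, but gives no a priori reason that the joints whose order must be corrected are close in $T'$. (Condition (vii) is also established in the paper by a direct combinatorial count on components of $\partial N[k_u]$ within a bounded radius, not by a geometric-limit compactness argument as you sketch, though both are plausible; the real problem is the displacement bound.) Finally, a small but structural point you omit: each joint gets moved at most twice in the paper's inductive rearrangement, which is what keeps the Lipschitz constant from accumulating along an infinite family of joints.
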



\begin{proof}
Let $T$ be a component of  $\partial M[k_u]$, which is either a torus or an open annulus.
As was shown before, $T$ consists of horizontal annuli and vertical annuli, and the joints can intersect only vertical annuli.
We shall show that we can homotope $f_3|T$ to a uniformly Lipschitz map by a homotopy moving each point at a uniformly bounded distance.
We should note that $f_3|T$ is a degree-$1$ map to a boundary component $T'$ of $N[k_u]$.
The foliation of $M$ by horizontal leaves induces  a foliation on $T$ whose leaves are parallel horizontal circles.
By our definition of the model metric, each leaf has length $\varepsilon_1$.
We can extend this foliation also to horizontal annuli so that they are also foliated by parallel circles with length $\varepsilon_1$.
We let $\gamma$ be a  simple closed geodesic with respect to the induced metric intersecting each leaf at one point when $T$ is a torus, and a geodesic ray intersecting each leaf at one point when $T$ is an open annulus.

Since $f_3$ is $K_3$-Lipschitz, the homotopy classes in $T'$ of the images of the leaves have (Euclidean) geodesic lengths bounded by $K_3 \varepsilon_1$.
We also note the their lengths are also bounded below by $\varepsilon_1$ since $T'$ lies on the boundary of an $\varepsilon_1$-Margulis tube.
We first homotope $f_3|A$ to $\bar f_3$ fixing $f_3|\gamma$ so that for each leaf $l$ of the foliation on $A$, the simple closed curve $\bar f_3(l)$ is a closed geodesic with respect to the Euclidean metric on $T'$.
If there are distinct components of $\mathcal F \cap T$ which have the same images, we can perturb them to be disjoint moving them within a very small distance.
We can take a homotopy $H_3: A \times [0,1] \rightarrow T'$ from $f_3$ to $\bar f_3$ as a $\bar K_3$-Lipschitz map, where $\bar K_3$ depends only on $\varepsilon_1$ and $K$, since the length of each closed curve $f_3(l)$ is between $\varepsilon_1$ and $K \varepsilon_1$ and the perturbation moves the images at uniformly bounded distances.

Now, the map from $\gamma$ to $f_3(\gamma)=\bar f_3(\gamma)$ may not proceed in the positive direction monotonously.
(As we shall see below, since $f_3|T$ has degree $1$, the orientations of $T$ and $T'$ determine the positive direction to which $\bar f_3(\gamma)$ should proceed.)
This may cause a permutation of the order of $\hat{\mathcal F} \cap T$ by $\bar f_3$.
We fix an orientation of the foliation on $T$, which, together with the orientation of $T$, induces a transverse orientation of the leaves and an orientation of $\gamma$.
This also defines a transverse orientation of the foliation on $T'$ induced by the closed geodesics which are images of the leaves on $T$, since $f_3'|T$ has degree $1$.
We number the simple closed curves constituting $\hat{\mathcal F} \cap T$  as $F_1, F_2, \dots $ in accordance with the order determined by the orientation of $\gamma$.
In the case when $T$ is a torus, we fix a leaf on the lower horizontal annulus, and let its intersection  with $\gamma$, which we denote by $a_0$, be the starting point.
The transverse orientation of the foliation on $T'$ gives an order on the images $\bar f_3(F_1 \cap T), \dots $, which may be different from the order on $T$.
(We allow some of them go beyond $\bar f_3(a_0)$ in the negative direction.
As long as $\bar f_3(\gamma)$ moves in the negative direction, we regard it as receding with respect to the order on $T'$.)
Let $\sigma$ be a permutation such that $\bar f_3(F_{\sigma(1)}), \dots$ is the right order on $T'$, in other words $F_i$ is mapped to the $\sigma^{-1}(i)$-th curve with respect to the order on $T'$.
Now, we first look at $\bar f_3(F_1 \cap T)$ which is the $\sigma^{-1}(1)$-th curve on $T'$, and consider the curves $\bar f_3(F_{\sigma(1)} \cap T), \dots , \bar f_3(F_{\sigma(\sigma^{-1}(1)-1)} \cap T)$ which are those situated before $\bar f_3(F_1 \cap T)$ on $T'$.
Set $j=\max\{\sigma(1), \dots , \sigma(\sigma^{-1}(1)-1)\}$.
We shall consider to move $\bar f_3(F_1 \cap T) , \dots , \bar f_3(F_j \cap T)$ to correct their order.
The point in the following argument is that this can be done by a homotopy with  bounded displacement.


Using the theory of Freedman-Hass-Scott \cite{fhs}, we shall bound uniformly the distance between any two of $\bar f_3(F_{1} \cap T), \dots , \bar f_3(F_j \cap T)$.
Let $k$ be a number among $2, \dots , j$.
First consider the case when $\bar f_3(F_k\cap T)$ comes before $\bar f_3(F_1 \cap T)$ on $T'$. 
Recall that $f_3$ is homotopic in $M[k_u]$ to a homeomorphism $f_4: M[k_u] \rightarrow N[k_u]$.
By the same procedure as we used to construct $\bar f_3$ from $f_3$, we can assume that $f_4$ also maps each leaf on $T$ to a closed geodesic with respect to the induced Euclidean metric on $T'$.
Then, since both $F_1$ and $F_k$ are incompressible, by the theory of  Freedman-Hass-Scott, we can homotope $\bar f_3|F_1$ and $\bar f_3| F_k$ fixing the boundaries to embeddings $g_3^1$ and $g_3^k$ in $N[k_u]$ which are contained in  small regular neighbourhoods of $\bar f_3(F_1)$ and $\bar f_3(F_k)$ respectively.
By perturbing $g_3^1$ and $g_3^k$, we can assume that they are transverse to $f_4( F_1)$ and $f_4(F_k)$ at their interiors.
Then $(g_3^1(F_1) \cup f_4( F_1)) \cap T'$ bounds an annulus $A'_1$ which may degenerate to a circle.
When $T$ is a torus, there are two choices for $A'_1$.
We choose one which bounds a compact region with $g_3^1(F_1)$ and $f_4(F_1)$ (possibly together with other components of $\partial N[k_u]$) which is disjoint from the components which $f_4(F_1)$ does not touch.
Similarly, we define an annulus $A'_k$ for $g_3^k(F_k)$ and $f_4( F_k)$.
Since  $g_3^1(F_1 \cap T)$ comes after $g_3^k(F_k \cap T)$ whereas $f_4(F_1\cap T)$ is situated before $f_4(F_k \cap T)$, we see that $A'_1$ and $A'_k$ must intersect.
Since  $f_4(F_1)\cap f_4( F_k)=\emptyset$, both $F_1$ and $F_k$  are connected, and $F_1$ and $F_k$ are not parallel by our definition of $\hat{\mathcal F}$, we see that $g_3^1(F_1)$ and $g_3^k(F_k)$ must intersect at their interiors.
By our construction of $g_3^1$ and $g_3^k$, this implies that $\bar f_3( F_1)$ and $\bar f_3( F_k)$ also intersect at their interiors.
Next suppose that $\bar f_3(F_k\cap T)$ comes after $\bar f_3(F_1\cap T)$.
By our definition of $j$, we see that $\bar f_3(F_j\cap T)$ comes before $\bar f_3(F_1\cap T)$, hence also before $\bar f_3(F_k\cap T)$.
Since $k < j$,  the order of $F_j \cap T$ and $F_k \cap T$ is reversed under $\bar f_3$, and we can argue in the same way as above to conclude that $\bar f_3( F_j)$ and $\bar f_3( F_k)$ intersect at their interiors.

Recall that the diameters of the joints $F_1, \dots $ are uniformly bounded from above by a constant depending only on $\xi(S)$.
Since $\bar f_3$ is uniformly Lipschitz, their images $\bar f_3(F_1), \dots $ also have diameters bounded from above by a constant $\lambda$ depending only on $\xi(S)$.
This implies that for any $k =2, \dots , j$, the distance between   $\bar f_3(F_k \cap T)$ and either $\bar f_3(F_1 \cap T)$ 
or $\bar f_3(F_j \cap T)$ is  bounded by $2\lambda$.
Therefore the distance between any two of $\bar f_3(F_1 \cap T) , \dots , \bar f_3(F_j \cap T)$ is bounded by $4\lambda$.
Recall that $\bar f_3(F_1 \cap T), \dots , \bar f_3(F_p \cap T)$ are parallel closed geodesics on $T'$ with respect to the induced Euclidean metric.
By the uniform quasi-convexity of horoballs, we see that there is a number $\lambda_0$ depending only on $\chi(S)$ which bounds the distance between any two of $\bar f_3(F_1 \cap T), \dots , \bar f_3(F_j \cap T)$ with respect to the Euclidean metric on $T'$.
Then we can homotope $\bar f_3|T$ so that $\bar f_3(F_1 \cap T), \dots ,\bar f_3(F_j \cap T)$ lie in the right order on $T'$ and near the original position of $\bar f_3(F_{\sigma(1)} \cap T)$ so that all $\bar f_3(F_i \cap T)$ with $i >j$ come after them, without changing  the condition that every leaf is mapped to a closed geodesic preserved, by moving the image by $\bar f_3$ of  thin neighbourhoods of $F_1\cap T, \dots , F_j\cap T$ only at the distance at most $\lambda_0+1$.
The map which we get after this homotopy is also uniformly Lipschitz since the displacement of the points by the homotopy is uniformly bounded.

%



We now forget $F_1, \dots , F_j$ and only consider $F_{j+1}, \dots $.
If $\sigma(j+1)=j+1$, we also forget $F_{j+1}$ and proceed to the first $j_0 >j$ with $\sigma(j_0) \neq j_0$.
Otherwise we let $j_0$ be $j+1$.
Regarding $\bar f_3(F_{j_0} \cap T)$ instead of $\bar f_3(F_1 \cap T)$ as the first one, we repeat the same argument.
Then we can correct the order of $\bar f_3(F_{j_0} \cap T), \dots , \bar f_3(F_{j_1}\cap T)$ for some $j_1 > j_0$ and make them come after $\bar f_3(F_{j_0-1})$ by moving $\bar f_3$ in thin neighbourhoods of $F_{j_0} \cap T, \dots , F_{j_1} \cap T$ only at the distance less than $\lambda_0+1$.
We note that we do not touch the components $F_k \cap T$ with $k < j_0$ at this stage.
We repeat the same process, and eventually we can homotope $\bar f_3|T$ to a uniformly Lipschitz map $f_3^T: T \rightarrow T'$ which preserves the order of $F_1 \cap T, \dots$ by a homotopy moving every point within the distance $\lambda_0+1$.
(To be more precise, we need to define the homotopy inductively in the case when there are infinitely many components of $\mathcal F \cap T$.)

Having moved $\bar f_3|T$ to $f_3^T$ which preserves the order of $\hat{\mathcal F} \cap T$, we now turn to consider a component $F$ of $\mathcal F \setminus \hat{\mathcal F}$.
Suppose that  $f_3^T$ does not preserve the order of $\hat{\mathcal F} \cap T$ with $\hat F$ replaced with $F$.
Then for each component $F'$ of $\hat{\mathcal F} \setminus \hat F$ such that the order between $F$ and $F'$ is reversed by $f_3^T$, we see that $F$ must intersect $F'$ by the same argument as above, and we can move $\bar f_3$ in a thin neighbourhood of $F \cap T$ within the distance $\lambda_0+1$ to correct the order.
Moreover, in the same way as above, we can correct the order of the images $F\cap T$ and $\hat F \cap T$ under $f_3^T$ by moving $f_3^T$ within the distance $\lambda_0+1$ if $\bar f_3(F)$ and $\bar f_3(\hat F)$ intersect.
We note that during this homotopy, each component of $\mathcal F$ is moved at most twice; hence the displacement is bounded independently of the number of the components of $\mathcal F$.
Thus we have shown that if we construct a uniform Lipschitz map whose restriction to $T$ is $f_3^T$, then the conditions (iii), (iv) and (v) in the statement are satisfied.
We denote  a homotopy on $T$ by $H_3'$.
This homotopy $H_3'$ is uniformly Lipschitz since the homotopy only passes through uniformly Lipschitz maps and its displacement function is uniformly bounded.

We shall next show that thus homotoped $f_3^T$ satisfies the condition (iv) (with $f_3'$ in the statement replaced by $f_3^T$).
Fix some component $F$ of $\mathcal F$ and we shall bound the number of components $F'$ such that $f_3^T(F \cap T)$ and $f_3^T(F'\cap T)$ are within the distance $\delta$.
By our construction of brick decomposition of $M[k_u]$, there are at most two joints whose boundary lie on exactly the same boundary components of $M[k_u]$.
Therefore, for any natural number $\nu$ there exists $n$ such that if there are $n$ distinct joints, then there are at least  $\nu$ boundary components of $M[k_u]$ which these joints intersect.
As was shown above, the diameter of the image of each component of $\mathcal F$ under $\bar f_3$ is bounded by $\lambda$. 
Now, since there is a bound for the number of components of $\partial N[k_u]$ which can be reached from $F$ within the distance $\delta + 2\lambda$, we get $n_0$ bounding the number of components of $\mathcal F \cap T$ whose images by $\bar f_3$ are within the distance $\delta$ from $\bar f_3(F \cap T)$.

We shall finally show that $f_3^T$ as defined above can be extended to a uniform Lipschitz map $f_3'$.
We can take $r>0$ depending only on $\varepsilon_1$ and $\chi(S)$ such that the boundary components of $M[k_u]$ have product $r$-neighbourhoods in $M[k_u]$ which are pairwise disjoint.
Let $\mathcal N_r(T)$ denote the $r$-neighbourhood in $M[k_u]$ of a boundary component $T$ of $M[k_u]$, and we parametrise $\mathcal N_r(T)$ by $T \times [0,r]$ so that $T \times \{t\}$ is at the distance $t$ from $T$.
We modify $f_3$ only inside $\cup \mathcal N_r(T)$ to get $f_3'$.
We first define $f_3'| T\times [2r/3, r]$ to be rescaled $f_3|\mathcal N_r(T)$ so that $f_3'|\partial V \times \{2r/3\}$ is naturally identified with $f_3|T$.
Next we define $f_3'| T\times [r/3, 2r/3]$ to realise the homotopy $H_3$ so that  $f_3'|T \times \{t\}$ corresponds to $H_3(\ ,2-3t/r)$.
Finally we define $f_3'|T \times [0,r/3]$ to realise the homotopy $H_3'$, so that $f_3'| T \times \{t\}$ corresponds to $H_3'(\ , 1-3t/r)$.
Since  $H_3$ and $H_3'$ are  uniformly Lipschitz, we see that there is a uniform constant $K_3'$ such that $f_3'$ is $K_3'$-Lipschitz.
\end{proof}

\begin{lemma}
\label{uniformly homotoped}
There exists a constant  $K'$ depending only on $\xi(S)$ as follows.
Let $\mathcal F$ be the union of the joints of pairs of bricks in $\ck[k_u]$ as defined above.
Then, there exists a $K'$-Lipschitz homotopy $H: \mathcal{F} \times [0,1] \rightarrow N[k_u]$ fixing the boundary of $\mathcal F$ as follows.
\begin{enumerate}[\rm(i)]
\item  $H|\mathcal F \times \{0\}$ coincides with $f_3'|\mathcal F$.
\item $H(x,t)=f_3'(x)$ for every $x \in \partial M[k_u]\cap \mathcal F$.
\item $H|\mathcal F \times [1/2,1]$ is a  $C^2$-embedding.
\item For each component $F$ of $\mathcal F$, the restriction $H| F \times [1/2,1]$ is $K'$-bi-Lipschitz.
\end{enumerate}
\end{lemma}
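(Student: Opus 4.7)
The plan is to replace $f_3'|\mathcal F$ by a mutually disjoint collection of smoothly embedded representatives of the joints and then to control the two stages of the homotopy separately. The output of the first stage will be an embedding at time $t = 1/2$; the second stage then realises a short normal tube around each such embedded joint, giving the required $C^2$-embedding of $\mathcal F \times [1/2,1]$.

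First I would produce, for each component $F$ of $\mathcal F$, a smooth embedding $\phi_F \colon F \to N[k_u]$ homotopic to $f_3'|F$ rel $\partial F$, with the images $\phi_F(F)$ pairwise disjoint in their interiors. The construction mimics the least-area argument of Proposition \ref{disjoint embeddings}: lift to the cover $N_B$ associated to a brick $B$ carrying $F$, endow $\Int N_B[k_u]$ with the hyperbolic metric $m_B$ under which the components of $\tilde T[k_u]$ become rank-two cusps, take the least-area map homotopic to the lift of $f_3'|F$, and project back down. By Freedman--Hass--Scott \cite{fhs} this map is an embedding, and the orderings on $\partial M[k_u]$ provided by Lemma \ref{homeo on boundary}(iii)--(vi) ensure that the projected surfaces are pairwise disjoint in the interior. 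Because $\xi(F) \le \xi(S)$ and $N[k_u]$ has a uniform positive lower bound on injectivity radius depending only on $\ve_1$, standard curvature estimates for minimal surfaces yield that $\phi_F$ is smooth with uniformly bounded second fundamental form, and is uniformly bi-Lipschitz with respect to some finite-area hyperbolic structure on $F$.

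Next, for $t \in [0,1/2]$ I would define $H|F \times \{t\}$ as a geodesic-interpolation homotopy from $f_3'|F$ to $\phi_F$ rel $\partial F$. The two maps are $K_3'$-Lipschitz, respectively uniformly bi-Lipschitz, their images have bounded diameter in $N[k_u]$ modulo cusps, and they represent the same relative homotopy class. Straight-line interpolation in the universal cover between lifts that agree on $\partial F$ produces a Lipschitz homotopy with uniformly bounded displacement; after a linear reparametrisation in $t$ this gives the $K'$-Lipschitz homotopy required, and condition (ii) holds automatically because the interpolation fixes the boundary. For $t \in [1/2,1]$ I would take a normal tubular parametrisation. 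The uniform bound on the second fundamental form of $\phi_F$, together with the injectivity radius bound on $N[k_u]$, implies that the normal exponential map of $\phi_F(F)$ is a $C^2$-diffeomorphism on a normal bundle of some uniform radius $\rho_0 > 0$. Choosing a unit normal field along $\phi_F(F)$ and pushing along it by a distance that increases linearly from $0$ at $t=1/2$ to some small $\rho < \rho_0$ at $t=1$ — tapered linearly to $0$ near $\partial F$ so that (ii) is preserved — defines a $C^2$-embedding $F \times [1/2,1] \hookrightarrow N[k_u]$ which is uniformly bi-Lipschitz with constant depending only on $\rho_0$, hence only on $\xi(S)$ and $\ve_1$.

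The main obstacle is quantitative mutual disjointness of these tubular neighbourhoods: while the $\phi_F(F)$ are disjoint, they may accumulate, so no single small $\rho$ automatically works for all $F$ at once. Here I would exploit Lemma \ref{homeo on boundary}(vii), which bounds the number of joints whose boundaries cluster within a fixed distance on a component of $\partial N[k_u]$; combined with the bounded-topology diameter bound for each $\phi_F(F)$, this gives a universal bound on the number of embedded joints contained in any metric ball of fixed radius in the thick part of $N[k_u]$. A pigeonhole argument then orders the accumulating surfaces transversally and nests their tube neighbourhoods along the common normal direction — choosing $\rho$ smaller proportionally to this universal clustering bound — so that all tubes remain disjoint while retaining a universal bi-Lipschitz constant $K'$.
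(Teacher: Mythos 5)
Your broad strategy (least-area embeddings via Freedman--Hass--Scott in the cover, geodesic interpolation for $t\in[0,1/2]$, normal tubular neighbourhood for $t\in[1/2,1]$) parallels the paper's, but two of the quantitative steps that carry the real burden of the lemma are handled differently, and in both places there is a gap.

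First, the uniform bi-Lipschitz constant for the embedded representative of each joint. You derive it from ``standard curvature estimates for minimal surfaces'', concluding that $\phi_F$ has uniformly bounded second fundamental form and hence is uniformly bi-Lipschitz. Even granting the interior curvature estimate (which requires care near the boundary of $F$, where the least-area surface meets the non-geodesic tori $\partial N[k_u]$, and near the cusps of $m_B$), a bound on the second fundamental form does not by itself yield a bi-Lipschitz bound on $\phi_F$ relative to the model metric on $F$: one needs to rule out thin necks, long fingers, and folding near the boundary. The paper does not attempt a direct geometric estimate at all; it proves the uniform constant by a compactness-and-contradiction argument, taking a geometric limit of a putative sequence of counterexamples $(M^i,N^i,F^i)$ and using the fact that $N^i[k_u]$ has a uniform positive lower bound on injectivity radius (so the relatively compact complementary regions that Freedman--Hass--Scott homotope through have uniformly bounded diameter) to extract a contradiction. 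That geometric-limit mechanism is the content you would need to supply, and it is absent from your argument.

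Second, and more seriously, the mutual disjointness of the $\rho$-tubes. You bound the \emph{number} of nearby joints via Lemma \ref{homeo on boundary}(vii) and then propose a pigeonhole argument that ``orders the accumulating surfaces transversally and nests their tube neighbourhoods, choosing $\rho$ proportional to the clustering bound.'' But bounding the number of surfaces in a ball gives no lower bound on their pairwise \emph{separation}: two disjoint surfaces could still pass arbitrarily close to tangency at an interior point, and then no positive $\rho$ chosen from a counting bound makes their normal tubes disjoint. What is actually needed is a uniform positive lower bound on the interior distance between distinct least-area images. The paper obtains this by invoking the strong maximum principle for minimal surfaces: if, along a sequence, the distance between two disjoint least-area images tended to zero, a geometric limit would produce two minimal surfaces tangent at an interior point, contradicting the maximum principle. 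Your proposal omits this maximum-principle step, and the pigeonhole/nesting substitute does not close the gap.
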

\begin{proof}
Let $F$ be a component of $\mathcal F$.
Since the geodesic lengths of core curves in $\cv[0] \setminus \cv[k_u]$  are bounded below by $\epsilon_u$ by the condition (5) in \S \ref{conditions}, and $F \setminus \cv$ consists of thrice-punctured spheres, the modulus of $F$ is uniformly bounded.
By the condition (7) in \S \ref{conditions} and our choice of $k_u$, we see that there is no essential closed curve with length less than $\epsilon_u$ in $N[k_u]$.
This shows that the map $f_3'|F$ is a uniformly bi-Lipschitz map to its image.
(We should note that $f_3'|F$ may not be injective.
The bi-Lipschitzity here means that the metric on $F$ induced from $M[k_u]$ and the one induced from $N[k_u]$ by $f_3'$ are bi-Lipschitz equivalent.)
We can approximate $f_3'|\mathcal F$ by an immersion fixing the boundary and preserving the uniform bi-Lipschitzity.
Now, by Proposition \ref{disjoint embeddings}, $f_3'|\mathcal F$ is properly homotopic to an embedding in $N[k_u]$ (without fixing the boundary).
   
We shall first show that  each component $ F$ of $\mathcal F$ can be homotoped fixing the boundary to a uniformly bi-Lipschitz embedding.
Suppose, seeking a contradiction, that this is not the case.
Then there exist   sequences of labelled brick manifolds $M^i$, homeomorphisms $f^i: M^i \rightarrow N^i$,  Lipschitz maps $f^i_3: M^i[k_u] \rightarrow N^i[k_u]$ corresponding to $f_3'$ constructed above, and joints $F^i$ in $M^i[k_u]$ such that an embedding $g_3^i$ as above within the  $\delta$-neighbourhood of $f^i_3(F^i)$ cannot be made $K_i$-bi-Lipschitz, with $K_i \rightarrow \infty$.
We put the superscript $i$ for all the symbols related to $M^i$ and $N^i$.
By taking a subsequence we can assume that all the $\hat F^i$ are homeomorphic to some fixed surface $F$.
As was shown before, by our definition of the model metric, the moduli of the $F^i$  are bounded.
Therefore, we can choose a homeomorphism $\kappa_i : F \rightarrow F^i$ so that the pullback of the metric on $F^i$ by $\kappa_i$ converges as $i \rightarrow \infty$.
Take a basepoint $x$ on $F$, 
and consider geometric limits of $(F_i, \kappa_i(x))$, $(M^i[k_u], \kappa_i(x))$, and $(N^i[k_u], f^i_3 \circ \kappa_i(x))$.
Since $f^i_3$ is uniformly Lipschitz, it converges to a Lipschitz map $f^\infty_3: M^\infty[k_u] \rightarrow N^\infty[k_u]$, where $M^\infty[k_u]$ and $N^\infty[k_u]$ are the geometric limits of $(M^i[k_u], \kappa_i(x))$ and $(N^i[k_u], f_3 \circ \kappa_i(x))$ respectively.
Since the metrics induced from  the $F^i$ on $F$ are bounded, the homeomorphism $k_i$ converges to a homeomorphism $\kappa_\infty: F \rightarrow F^\infty$, where $F^\infty$ is embedded in $M^\infty[k_u]$.

As before, we can assume that both $f^i_3 \circ \kappa_i$ and $f^\infty_3 \circ \kappa_\infty$ are immersions.
By a result of Freedman-Hass-Scott as was used in the proof of Proposition \ref{disjoint embeddings}, $f^i_3 \circ \kappa_i$ is homotopic to an embedding relative to the boundary by a homotopy passing through only relatively compact components of $N^i[k_u] \setminus f_3^i \circ \kappa_i(F)$.
Since $N[k_u]$ contains no Margulis tubes whose core curves have lengths less than $\epsilon_u$, these components have uniformly bounded diameters and   converge geometrically to relatively compact components of $N^\infty[k_u] \setminus f_3^\infty \circ \kappa_\infty(F)$ through which $f_3^\infty \circ \kappa_\infty$ can be homotoped to an embedding (after a perturbation if necessary).
Therefore, the geometric limit $f_3^\infty \circ \kappa_\infty$ can be homotoped to a bi-Lipschitz embedding in $N^\infty[k_u]$.
By pulling back this embedding and a homotopy, we can homotope $f_3^i \circ \kappa_i$ to a uniformly bi-Lipschitz embedding, contradicting our assumption.
Thus we have shown that $f_3'|F$ can be homotoped to a uniformly bi-Lipschitz embedding, which we shall let be $H(\ ,3/4)$.
The above argument also shows that we can choose a homotopy $H$ between $H(\ , 3/4)$ to $H(\ , 0)=f_3'|F$ to be uniformly Lipschitz.

Since $f_3'$ preserves the order of $\hat{\mathcal F} \cap T$ as was shown in Lemma \ref{homeo on boundary}-(iii), $f_3'$ is homotopic to a homeomorphism from $M[k_u]$ to $N[k_u]$ fixing $\hat{\mathcal F} \cap T$. 
Therefore  the least area surfaces homotopic to the restrictions of $f_3'$ to  the components of $\hat{\mathcal F}$ fixing the boundary must be pairwise disjoint.
The same holds even if we put  $F$ of $\mathcal{F} \setminus \hat{\mathcal F}$ into $\hat{\mathcal F}$ removing its counterpart  $\hat F$ instead.
Therefore, to show the disjointness of  the least-area images of the components of $\mathcal F$, it suffices to show that the least area surfaces homotopic to $f_3'(F)$ and $f_3'(\hat F)$ are disjoint for each component $F$ of $\mathcal F \setminus \hat{\mathcal F}$.
This follows immediately from Freedman-Hass-Scott when $f_3'(F)$ and $f_3'(\hat F)$ are already disjoint.
If $f_3'(F)$ and $f_3'(\hat F)$ intersect, then the condition (vi) of Lemma \ref{homeo on boundary} implies that the order of $F \cap T$ and $\hat F \cap T$ is preserved under $f_3'|T$.
Therefore, by considering $\hat{\mathcal F} \cup F$ instead of $\hat{\mathcal F}$, we see that the least area surfaces are disjoint.

It remains to show that we can take disjoint regular neighbourhoods of the components.
(Since the restriction of $H(\ ,3/4)$ to each component of $\mathcal F$ is uniformly bi-Lipschitz, the uniform bi-Lipschitzity on $\mathcal F \times [1/2,1]$ follows immediately once we prove that we can take regular neighbourhoods to be disjoint.
Combined with the fact shown above that a homotopy between $f_3'$ and $H(\ , 3/4)$ can be made uniformly Lipschitz, the uniform Lipschitzity of $H$ also follows.)
Recall that by Lemma \ref{homeo on boundary}-(vii),  we can assume that there is a uniform positive lower bound for the distances between the images of distinct boundary components of $\mathcal F$ under $f_3'$, hence also under $H(\ , 3/4)$.
To get disjoint regular neighbourhoods, what we need is a lower bound for the distances between the images of distinct components of $\mathcal F$ under $H(\ , 3/4)$, not only for their boundaries but for the entire surfaces.
Suppose that such a lower bound does not exist.
Then by taking a geometric limit, we get two minimal surfaces which are tangent to each other at their interiors.
This contradicts the maximal principle of minimal surfaces.
Thus, we have shown that there is a lower bound, and that we can take disjoint regular neighbourhoods.
\end{proof}

\subsection{Topological ordering of joints}
\label{topological order}
Next we shall show thus obtained embedding $H(\ , 1): \mathcal F \rightarrow N[k_u]$ preserves the topological order of joints.

\begin{lemma}
\label{non-homotopic joints}
Let $F_1$ and $F_2$ be joints in $\mathcal F$ such that $\iota_M(F_1)$ and $\iota_M(F_2)$ are not homotopic in $S \times (0,1)$.
If $F_1 \pretop F_2$,
 then $H(F_1,1) \pretop H(F_2,1)$.
\end{lemma}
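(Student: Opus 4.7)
The plan is to transfer the topological order from $M$ to $N_0$ by going through the ambient product $S \times (0,1)$ via the embeddings $\iota_M$ and $\iota_N$, and then to invoke rigidity of disjoint incompressible surfaces in the Haken $3$-manifold $S \times (0,1)$.

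First, I will observe that $H(F_i,1)$ is properly homotopic in $N_0$ to the image $f(F_i)$ of the original homeomorphism $f : M \to N_0$. This follows from the chain $f \simeq f_1 \simeq f_2 \simeq f_3 \simeq f_3'$ of proper homotopies constructed earlier in the section, together with the fact that $H$ itself is a proper homotopy from $f_3'|\mathcal F$ to $H(\ ,1)$. Composing with $\iota_N$ and using the identity $\iota_N \circ f = \iota_M$, I conclude that the disjoint embedded incompressible surfaces $G_i := \iota_N \circ H(F_i,1)$ are properly homotopic to $\iota_M(F_i)$ in $S \times (0,1)$. The non-homotopy hypothesis on $\iota_M(F_1)$ and $\iota_M(F_2)$ then forces $G_1$ and $G_2$ to lie in distinct proper-homotopy classes in $S \times (0,1)$.

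Next, I would invoke a Waldhausen type theorem for incompressible surfaces in the Haken manifold $S \times (0,1)$: any embedded $\pi_1$-injective surface properly homotopic to a horizontal subsurface $F \times \{t\}$ is properly ambient isotopic to it. The main obstacle I anticipate is upgrading this one-at-a-time rigidity to a simultaneous ambient isotopy of the pair $(G_1,G_2)$ onto the pair $(\iota_M(F_1),\iota_M(F_2))$. My plan is to proceed in two stages: work first in the Haken submanifold $S \times (0,1) \setminus G_2$, in which $G_1$ remains incompressible and properly homotopic to $\iota_M(F_1)$, and isotope $G_1$ onto $\iota_M(F_1)$ there. The non-homotopy hypothesis is essential at this step, since it prevents $G_1$ from being isotopable across $G_2$ to the class of $\iota_M(F_2)$ and thereby prevents an order swap. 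A second isotopy, this time in the complement of $\iota_M(F_1)$, then carries $G_2$ onto $\iota_M(F_2)$.

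Once the simultaneous ambient isotopy carrying $(G_1,G_2)$ to $(\iota_M(F_1),\iota_M(F_2))$ is in hand, the topological order transfers directly, because the separation property defining $\pretop$, namely the existence of a homotopy of one surface to $S \times \{0\}$ or $S \times \{1\}$ avoiding the other, depends only on the ambient isotopy class of the labelled pair of disjoint embedded incompressible surfaces in $S \times (0,1)$. The hypothesis $F_1 \pretop F_2$ in $M$ yields $\iota_M(F_1) \pretop \iota_M(F_2)$ in $S \times (0,1)$; pulling this back through the ambient isotopy gives $G_1 \pretop G_2$, which by the definition of $\pretop$ on $N_0$ is exactly $H(F_1,1) \pretop H(F_2,1)$.
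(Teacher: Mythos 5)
There is a genuine gap at the ``simultaneous ambient isotopy'' step, and it is not a technical detail to be filled in but a circularity. Your claim that $G_1$ ``remains properly homotopic to $\iota_M(F_1)$'' inside $S \times (0,1) \setminus G_2$ is unjustified: whether a proper homotopy from $G_1$ to a representative of the class of $\iota_M(F_1)$ can be arranged to avoid $G_2$ is precisely the content of the topological order you are trying to establish, so assuming it begs the question. More fundamentally, the individual proper homotopy classes of two disjoint embedded incompressible surfaces in $S\times(0,1)$ do \emph{not} determine the topological order of the pair, even under the overlap and non-homotopy hypotheses. For example, take essential simple closed curves $a,b$ on $S$ that intersect essentially and are not homotopic, with annular neighbourhoods $A_a,A_b$. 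Then $\bigl(A_a\times\{1/3\},\,A_b\times\{2/3\}\bigr)$ and $\bigl(A_a\times\{2/3\},\,A_b\times\{1/3\}\bigr)$ are both disjoint pairs of overlapping, non-homotopic, embedded, incompressible surfaces with the same individual proper homotopy classes, yet with opposite topological orders. No Waldhausen-type rigidity applied to the surfaces one at a time can distinguish them; some datum pinning the pair together is needed.

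The paper supplies that datum by working at the level of boundary curves. The homotopy $H$ is held fixed on $\partial\mathcal F\subset\partial M[k_u]$ by Lemma~\ref{uniformly homotoped}~(ii), and $f_3'$ was arranged in Lemma~\ref{homeo on boundary} to respect the order of joint boundary circles on each component of $\partial M[k_u]$; thus the boundary of $H(F_i,1)$ is anchored in $\partial N[k_u]$ throughout the deformation. The paper then establishes the \emph{curve--surface} orders $F_1\pretop c$ for boundary curves $c$ of $F_2$ overlapping $F_1$ (Lemma 3.3 of \cite{bcm}), transfers them along $f\simeq\hat f_4\simeq f_3'\simeq H(\,\cdot\,,1)$ using the fact that $c$ never leaves $\partial N[k_u]$ (together with Lemma 3.18 of \cite{bcm}), argues symmetrically for boundary curves of $F_1$, and finally promotes the anchored curve--surface orders to the surface--surface order $H(F_1,1)\pretop H(F_2,1)$ via Lemma 3.17 of \cite{bcm}. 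This boundary control is exactly what your proposal omits, and without it the order cannot be recovered from proper homotopy classes alone.
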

\begin{proof}
Suppose that $F_1 \pretop F_2$ whereas $\iota_M(F_1)$ is not homotopic to $\iota_M(F_2)$.
Let $c$ be a boundary component of $F_2$ which overlaps $F_1$ if there are any.
There is a component $T$ of $\partial M[k_u]$ on which $c$ lies.
Then, Lemma 3.3 in Brock-Canary-Minsky \cite{bcm} implies that $F_1 \pretop c$.
Recall from Proposition \ref{disjoint embeddings} that $f_4$ extends to a homeomorphism $\hat f_4: M \rightarrow N_0$ properly homotopic to $f$.
Since $ F_1 \pretop c$, the surface $\iota_M(F_1)$ can be homotoped to $S \times \{0\}$ without touching $\iota_M(c)$.
Since $\iota_M=\iota_N \circ f$, we see that $\iota_N \circ f ( F_1)$ can be homotoped to $S \times \{1\}$ without touching $\iota_N \circ f(c)$, which implies $\iota_N \circ f (F_1) \pretop \iota_N \circ f(c)$ by Lemma 3.18 in \cite{bcm}.
Because $f$ is properly homotopic to $\hat f_4$,
we also have $f_4(F_1) \pretop  f_4(c)$.
Since $c$ lies on a component $T$ of $\partial M[k_u]$,  the homeomorphism $f_4$ is homotopic to $f_3'$ in $N[k_u]$, and $H$ is a proper homotopy in $N[k_u]$,  this topological order is preserved by $H(\ ,1)$, and we have $H(F_1, 1) \pretop H(c,1)$.
By the same argument by changing the direction of order, we see that for any boundary component $c'$ of $ F_1$ that overlaps $F_2$, we have $H(c',1) \pretop H(F_2,1)$.
Since $F_1$ and $F_2$ are assumed not to be homotopic, by Lemma 3.17 in \cite{bcm}, this implies that $H(F_1, 1) \pretop H(F_2,1)$. 
%
\end{proof}

We next consider the case when two joints $F_1$ and $F_2$ are properly homotopic.

\begin{lemma}
\label{homotopic joints}
Suppose that $F_1$ and $F_2$ are joints in $\mathcal F$ such that $\iota_M(F_1)$ is homotopic to $\iota_M(F_2)$.
We further assume that  $F_1 \cup F_2$ does not bound a brick in $M[k_u]$.
If $F_1 \pretop  F_2$, then we have $H(F_1, 1) \pretop H(F_2, 1)$.
\end{lemma}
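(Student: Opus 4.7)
The plan is to mimic the strategy of Lemma \ref{non-homotopic joints} but insert an auxiliary annular subsurface between $F_1$ and $F_2$ so that Lemma 3.17 of \cite{bcm}, which requires non-homotopic pairs, can still be invoked. The guiding observation is that, because $F_1\cup F_2$ does not bound a brick while $\iota_M(F_1)$ and $\iota_M(F_2)$ are homotopic, some essential feature of $\partial M[k_u]$ must intervene between them.

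First I would isotope the embedding so that $\iota_M(F_i)\subset F\times\{t_i\}$ for a common essential subsurface $F\subseteq S$ and heights $t_1<t_2$. Since bricks of $\ck[k_u]$ are maximal unions of vertically parallel horizontal leaves in $M[k_u]$ and $F_1\cup F_2$ does not bound a brick, the product slab $F\times[t_1,t_2]$ cannot lie entirely in $\iota_M(M[k_u])$. Therefore either a component $V$ of $\cv[k_u]$ or an annulus/torus component of $\partial M$ meets the interior of this slab. In either case one extracts a simple closed curve $c\subset\partial M[k_u]$ (a longitude of $V$ in the first case, or a core of the boundary component in the second) whose image $\iota_M(c)$ is a horizontal essential simple closed curve in $F\times\{t\}$ for some $t\in(t_1,t_2)$. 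Taking an annular neighbourhood $A(c)\subset\partial M[k_u]$, the definition of $\pretop$ directly gives $F_1\pretop A(c)\pretop F_2$ in $M$, and since $\xi(F_i)\geq 3$ whereas $A(c)$ is annular, $A(c)$ is not properly homotopic to $F_1$ or $F_2$.

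Next I would transfer these orderings to $N_0$ in the same manner as Lemma \ref{non-homotopic joints}: using $\iota_M=\iota_N\circ f$, the proper homotopy $f\simeq\hat f_4$, and Lemma 3.18 of \cite{bcm}, we obtain $f_4(F_1)\pretop f_4(A(c))$ and $f_4(A(c))\pretop f_4(F_2)$ in $N_0$. Because $c\subset\partial M[k_u]$ is not a joint of $\mathcal{F}$, the map $f_4$ on $A(c)$ is untouched by the homotopy $H$, while $f_4|F_i$ is properly homotopic to $H(F_i,1)$ in $N[k_u]$. Combining these with the fact that proper homotopies inside $N[k_u]$ preserve topological order against a fixed surface on $\partial N[k_u]$ yields $H(F_1,1)\pretop f_4(A(c))$ and $f_4(A(c))\pretop H(F_2,1)$. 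Two applications of Lemma 3.17 of \cite{bcm} (each valid since $f_4(A(c))$ is non-homotopic to $H(F_i,1)$) then conclude $H(F_1,1)\pretop H(F_2,1)$.

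The main obstacle is Step 1: rigorously producing the intermediate curve $c$ from the hypothesis that $F_1\cup F_2$ does not bound a brick. One must carefully unpack how this hypothesis interacts with the maximal-product definition of $\ck[k_u]$ on $M[k_u]=M\setminus\Int\cv[k_u]$, and confirm that the resulting separator on $\partial M[k_u]$ actually sits strictly between $F_1$ and $F_2$ in the topological order rather than, say, lying to one side of both. The verification relies essentially on condition A-(4) that $\iota_M$ preserves the horizontal and vertical foliations, so that \lq\lq above'' and \lq\lq below'' in the slab $F\times[t_1,t_2]$ agree with the order in $\ck[k_u]$.
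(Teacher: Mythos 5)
Your overall plan mirrors the paper's actual argument: locate a curve or annulus on $\partial M[k_u]$ that sits strictly between $F_1$ and $F_2$ in the topological order, push the resulting ordering into $N_0$ via $\iota_M=\iota_N\circ f$ and the homotopy $H$, and then close with a lemma from \cite{bcm}. But the closing step as written does not go through, and this is a genuine gap rather than a matter of phrasing.

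The lemma you cite, Lemma 3.17 of \cite{bcm}, is the tool for \emph{non-homotopic} surfaces: it lets one upgrade ordering information on overlapping boundary components to an ordering of the two surfaces, and it is exactly what the paper uses in Lemma~\ref{non-homotopic joints}, where the hypothesis $\iota_M(F_1)\not\simeq\iota_M(F_2)$ is in force. Here $F_1$ and $F_2$ are homotopic, so Lemma~3.17 cannot be applied to that pair. Nor does ``two applications'' across the intermediate annulus work: the relation $\pretop$ is not transitive, so $H(F_1,1)\pretop f_4(A(c))$ together with $f_4(A(c))\pretop H(F_2,1)$ does not by itself yield $H(F_1,1)\pretop H(F_2,1)$. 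What is actually needed is Lemma 3.16 of \cite{bcm}, which is specifically designed for a pair of \emph{homotopic} embedded surfaces and an intermediate curve $c$ with $F_1\pretop c\pretop F_2$, \emph{provided} the boundary curves are unknotted and unlinked. Your proof never establishes or even invokes the unknotted/unlinked property of $\partial(\iota_N\circ f_4(F_i))$ on $\cv[k_u]$ (which the paper gets from Lemma~\ref{Otal}); without it the conclusion is not justified. In addition, you rightly flag Step~1 as incomplete: the paper resolves it by a two-case analysis (either $F_1\cup F_2$ bounds a non-product submanifold of $M[k_u]$, producing an interior boundary component $T$ and a horizontal curve on it, or some boundary component $c$ of $F_1$ is not $M[k_u]$-homotopic to its counterpart $c'$, producing an annulus in $S\times(0,1)$ that some component of $\partial M[k_u]$ must cut essentially), and your ``slab not contained in $\iota_M(M[k_u])$'' observation is suggestive of the first case only, not yet a proof covering both.
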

\begin{proof}
%
Since $\iota_M(F_1)$ is homotopic to $\iota_M(F_1)$, for each component $c$ of $\partial  F_1$, there is a unique component of $c'$  of $\partial  F_2$ such that $\iota_M(c)\simeq \iota_M(c')$ in $S \times (0,1)$.
Suppose first that $c$ and $c'$ are homotopic in $M[k_u]$ for all components $c$ of $\partial F_1$.
Then $c$ and $c'$ lie on the same boundary component of $\partial M[k_u]$ by the condition A-(2) in \S\ref{SS_Conditions} and the definition of $\cv[k_u]$.
Since this holds for every boundary component of $F_1$, we see that  $F_1 \cup F_2$ bounds a submanifold $W$ in $M[k_u]$.
If $W$ is homeomorphic to $F_1 \times [0,1]$, then by our definition of the brick decomposition of $M[k_u]$, we see that $W$ consists of only one brick.
This contradicts our assumption that $F_1 \cup F_2$ does not bound a brick.

Therefore, $F_1 \cup F_2$ bounds a submanifold $W$ in $M[k_u]$, which  is not homeomorphic to $F_1 \times [0,1]$.
Then there is  a component $T$ of $\partial M[k_u]$ which is contained in $W$.
We take a horizontal curve $c$ contained in $T$.
Then $c$ overlaps both $F_1$ and $F_2$ and $F_1 \pretop c \pretop F_2$.
This implies that $f_4(F_1) \pretop f_4(c) \pretop  f_4(F_2)$.
Since $\iota_N \circ \hat f_4$ is homotopic to $\iota_M$, we see that $\iota_N \circ f_4(F_1)$ is homotopic to $\iota_N \circ f_4(F_2)$, and the boundary of $\iota_N \circ (F_1)$, which lies on $\cv[k_u]$,  is unknotted and unlinked.
Therefore, applying  Lemma 3.16 in \cite{bcm}, we have $f_4(F_1) \pretop f_4(F_2)$, which implies that $H(F_1, 1) \pretop H(F_2,1)$ as before.
Thus it only remains to consider the case when there is a component $c$ of $\partial F_1$ which is not homotopic to $c'$ in $M[k_u]$.

Since $\iota_M(c)$ and $\iota_M(c')$ are homotopic, and $\iota_M(c)$ and $\iota_M(c')$ are horizontal, there is an embedded annulus $A$ bounded by $\iota(c) \cup \iota(c')$ in $S \times (0,1)$.
Since $c$ and $c'$ are not homotopic, it follows that there is a boundary component $T$ of $M[k_u]$ such that $\iota_M(T)$ intersects $A$ essentially.
Take a longitude or a core curve $c''$ of $T$. 
Then we have  $F_1 \pretop c'' \pretop  F_2$.
Now since $f_4$ is a homeomorphism from $M$ to $N_0$, we see that $f_4(F_1) \pretop f_4(c'') \pretop f_4(F_2)$, and as before, we have $H(F_1, 1) \pretop H(c'', 1) \pretop H(F_2,1)$.
Since $\iota_N \circ H(F_1,1)$ and $\iota_N \circ H(F_2,1)$ are homotopic, Lemma 3.16 in \cite{bcm} again implies that $H(F_1,1) \pretop H(F_2,1)$.
Thus we have completed the proof.
%
\end{proof}

The remaining case is when $F_1$ and $F_2$ are homotopic in $M[k_u]$ and cobound a brick in $M[k_u]$.
Let $B$ be a brick bounded by $F_1 \cup F_2$, and $h(B)$ the hierarchy on $B$ which we obtain by applying  Lemma \ref{hierarchy} to $M[k_u]$.
We say that a tight geodesic $g \in h(B)$ is {\em deep-seated} if there is a component of $\Fr D(g)$ whose corresponding tube in $\cv$ is disjoint from either $\partial_+B$ or $\partial_-B$.
In the case when $D(g)$ is an annulus, we regard a core curve of $D(g)$ as a component of $\Fr D(g)$.

We shall first show that $h(B)$ cannot have a long deep-seated geodesic.

\begin{lemma}
\label{no long deep-seated}
There exists a constant  $A$ depending only on $\xi(S)$ as follows.
Let $B$ be a brick in $M[k_u]_\mathrm{int}$.
Then every deep-seated geodesic in $h(B)$ has length less than $A$.
\end{lemma}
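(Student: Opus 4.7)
The argument will proceed by contradiction. Suppose no such constant $A$ exists. Then for each $n\in\nn$ we can find a labelled brick manifold $M^n$ satisfying the hypotheses of Theorem~\ref{blm}, a brick $B^n\in \ck^n[k_u]_{\mathrm{int}}$, and a deep-seated geodesic $g^n\in h(B^n)$ with $\mathrm{length}(g^n)\geq n$. By the deep-seated hypothesis, after passing to a subsequence we may fix a frontier component $c^n$ of $D(g^n)$ whose corresponding tube $V^n\in \cv^n$ is disjoint from $\part_- B^n$; the other case is symmetric. Since $B^n$ is a brick of $M^n[k_u]$, every tube of $\cv^n$ lying in its interior belongs to $\cv^n[0]\setminus \cv^n[k_u]$, and in particular
$$|\omega_{M^n}(V^n)|<k_u.$$

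The heart of the proof is a combinatorial estimate relating $\mathrm{length}(g^n)$ to $|\omega_{M^n}(V^n)|$. Retracing the construction in the proof of Lemma~\ref{hierarchy}, the geodesic $g^n$ was produced by a single tight tube union $\cv_{g^n}$ consisting of roughly $\mathrm{length}(g^n)+1$ tubes, inserted at some step of the iterative block construction inside a sub-brick of the form $D(g^n)\times J'^n\subset B^n$. These tubes are stacked vertically, each occupying extent of order one along the vertical direction of $B^n$ in the model metric, so the vertical size $|J'^n|$ of the sub-brick is comparable to $\mathrm{length}(g^n)$. Because $c^n\in\Fr D(g^n)$, the tube $V^n$ is vertically adjacent, on the side facing $D(g^n)$, to the entire vertical span $c^n\times J'^n$; hence the vertical extent of $V^n$ along this side is at least $|J'^n|$. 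Since $V^n$ does not touch $\part_- B^n$, this whole stack of adjacencies fits inside $B^n$ on one side of $V^n$, and each horizontal cut at a level $t_i$ between consecutive tubes of $\cv_{g^n}$ produces a horizontal annulus on $\part V^n$ contributing one unit to $\mathrm{Im}\,\omega_{M^n}(V^n)$. This yields a uniform lower bound
$$|\omega_{M^n}(V^n)|\;\geq\;\alpha\cdot\mathrm{length}(g^n)-\beta$$
for constants $\alpha,\beta>0$ depending only on $\xi(S)$.

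Combining this with $|\omega_{M^n}(V^n)|<k_u$ forces $\mathrm{length}(g^n)\leq (k_u+\beta)/\alpha$, contradicting $\mathrm{length}(g^n)\to\infty$. The main technical obstacle is the combinatorial estimate of the preceding paragraph. In particular, one must keep track of (i) the merging of homotopic tubes in the construction of $\cv^n$ (Lemma~\ref{no homotopic tubes}), which could a priori absorb tubes from different levels into $V^n$ and thereby alter the count of adjacent annuli on $\partial V^n$, and (ii) the gap blocks that arise when $\xi(D(g^n))=4$, which break the perfect one-to-one correspondence between simplices of $g^n$ and horizontal annuli on $\partial V^n$. Each of these introduces only a bounded multiplicative factor that depends on $\xi(S)$, and these factors are absorbed into the constants $\alpha$ and $\beta$.
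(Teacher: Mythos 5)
The paper's proof is short: it cites Theorem~9.1 of Minsky~\cite{mi2} to conclude that if $g\in h(B)$ has length $\geq A$, then every frontier component $c$ of $D(g)$ lies on $\partial M$ or on $\partial V$ with $|\omega_M(V)|>k_u$, hence on $\partial M[k_u]$; combined with the deep-seated hypothesis this produces a component of $\partial M[k_u]$ meeting $B$ but disjoint from one of $\partial_\pm B$, which the paper then rules out using the standing assumption (from the surrounding text, not in the lemma statement) that $\partial_-B$ and $\partial_+B$ are homotopic and cobound $B$. You attempt instead to re-derive the content of Minsky's Theorem~9.1 from scratch via a direct block count, and to run the argument contrapositively (small $|\omega|\Rightarrow$ short geodesic). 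This is a genuinely different route, and the two steps where it is vulnerable are exactly the places where you are substituting a heuristic for a citation.

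The first issue is your assertion that the deep-seated tube $V^n$ lies in the \emph{interior} of $B^n$ so that $|\omega_{M^n}(V^n)|<k_u$. The deep-seated condition only says $V^n$ is disjoint from one front; it does not place $V^n$ in $\Int B^n$. To reach the conclusion $V^n\in\cv^n[0]\setminus\cv^n[k_u]$ you would need a separate topological argument: that a tube in $\cv[k_u]$ whose boundary torus meets $B^n$ must appear on $\partial_\v B^n$, hence span the whole vertical range of $B^n$ and meet both fronts (along the boundary curves), ruling it out by deep-seatedness; and that the case $V^n\in\cv\setminus\cv[0]$ (where $\omega_M$ is not even defined) cannot occur. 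You have not supplied this; it is not a trivial omission since the brick structure of $M[k_u]$ and the way the hierarchy $h(B)$ was built from $\cv\cap B$ in Lemma~\ref{hierarchy} have to be invoked.

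The more serious issue is the estimate $|\omega_{M^n}(V^n)|\geq \alpha\cdot\mathrm{length}(g^n)-\beta$, which is the whole content of the lemma. Your justification—``each horizontal cut between consecutive tubes of $\cv_{g^n}$ produces a horizontal annulus on $\partial V^n$ contributing one unit to $\mathrm{Im}\,\omega_{M^n}(V^n)$''—does not hold when $\xi(D(g^n))>4$. In that case $g^n$ is not a $4$-geodesic and its simplices are multi-curves; the blocks that contribute gluing annuli to $\partial V^n$ come from $4$-geodesics (and their subordinates) whose $\xi=4$ or $\xi=3$ domains have $c^n$ on their boundary, and the pair of pants adjacent to $c^n$ need not change at each step of $g^n$. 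Whether each simplex of $g^n$ forces a new gluing annulus on $\partial V^n$ is precisely the delicate combinatorial point that Masur--Minsky subordinacy and Minsky's Theorem~9.1 control. As written, the estimate is asserted, not proved, and the hand-wave that ``merging of homotopic tubes'' and ``gap blocks'' only change constants is not a substitute for that analysis. Unless you are prepared to reprove the relevant part of Minsky's §9, the cleaner route is the paper's: cite Theorem~9.1, then use the acylindricity of $M[k_u]$ and the fact that the two fronts cobound $B$ to exclude a component of $\partial M[k_u]$ that meets $B$ but misses a front.
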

\begin{proof}
By Theorem 9.1 in \cite{mi2}, we can take a constant $A$ such that if $g \in h(B)$ has length at least $A$, then for every component $c$ of $\Fr D(g)$, either $c$ lies on $\partial M$ or a boundary component $\partial V$ for $V \in \cv$ such that  $|\omega_M(V)|> k_u$.
(Since we are considering geodesics in $h(B)$ whose geodesics consist of simplices on the curve complex of $\cc(\partial_-B)$, we can apply Minsky's result on  Kleinian surface groups.)
Therefore every component of $\Fr D(g)$ lies on $\partial M[k_u]$ in this situation.
If  $g$ is deep-seated, then some component $c$ of  $\Fr D(g)$ must lie on $\partial V$ which is disjoint either from $\partial_+B$ or $\partial_- B$.
Thus we see that if $h(B)$ has deep-seated geodesic with length at least $A$, then there is a component of $\partial M[k_u]$ which intersects $B$ but not at least one of $\partial_-B$ and $\partial_+B$.
This contradicts the assumption that $\partial_-B$ and $\partial_+B$ are homotopic and bound $B$ in $M[k_u]$.
\end{proof}

Suppose that all the deep-seated geodesics in $h(B)$ have length less than $A$.
We further divide our argument into two cases: the first is when the number of blocks constituting $B$ is large and the other is when it is small.
It will turn out later that we do not need to show that the topological order is preserved in the latter case for the proof of Theorem \ref{blm}.

\begin{lemma}
\label{long main geodesic}
Fix a constant $A$ as in Lemma \ref{no long deep-seated}.
There exists a constant $C$ depending only on $\xi(S)$ (and $A$) as follows.
If $|h(B)|> C$, then we have $H(\partial_- B, 1) \pretop H(\partial_+ B,1)$.
(Recall that $|h(B)|$ denotes the sum of the lengths of all geodesics constituting $h(B)$.)
\end{lemma}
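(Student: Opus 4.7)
Plan: The strategy is to locate an intermediate ``witness'' curve $c$ inside $B$ whose image under $f_4$ separates $f_4(\partial_- B)$ from $f_4(\partial_+ B)$ in topological order, and then transfer this separation to the embedding $H(\cdot,1)$ exactly as in the proof of Lemma \ref{homotopic joints}. By Lemma \ref{no long deep-seated}, every deep-seated geodesic in $h(B)$ has length less than $A$. Since the subordination tree of $h(B)$ has depth bounded in terms of $\xi(S)$ (each subordination strictly decreases $\xi$ of the support), a pigeonhole argument yields the following: if $|h(B)|>C$ for a suitably large $C=C(\xi(S),A)$, then there exists a non-deep-seated geodesic $g\in h(B)$ of length greater than a prescribed threshold $L$.

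Pick a vertex $v$ on $g$ at $g$-distance at least $L/2$ from both endpoints of $g$. By the block-decomposition correspondence established in Lemma \ref{hierarchy}, $v$ corresponds to a tube $V_v\in\cv$ sitting at an intermediate horizontal level of $B$; its core curve $c$ is a non-peripheral simple closed curve on $F$ supported on a cross-section $F\times\{t_0\}$ with $\min J<t_0<\max J$, where $B\cong F\times J$. Because $\iota_M$ is leaf-preserving, we have $\partial_- B\pretop c\pretop \partial_+ B$ in $M$, and $c$ overlaps both $\partial_\pm B$ since it is non-peripheral in $F$. Via the homeomorphism $\hat f_4\colon M\to N_0$ from Proposition \ref{disjoint embeddings}, which is properly homotopic to $f$ (so $\iota_N\circ\hat f_4\simeq \iota_M$), the topological order transfers to $f_4(\partial_- B)\pretop f_4(c)\pretop f_4(\partial_+ B)$ in $N_0$.

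Arguing as in Lemma \ref{homotopic joints}, since $H$ is a proper homotopy inside $N[k_u]$ from $f_3'|\mathcal F$ to the embedded $H(\cdot,1)$ and $f_4\simeq f_3'$ properly in $N[k_u]$, this order is preserved by $H(\cdot,1)$, yielding $H(\partial_- B,1)\pretop f_4(c)\pretop H(\partial_+ B,1)$. Since $H(\partial_\pm B,1)$ are properly homotopic in $N_0$ (both being homotopic to $f_4(\partial_\pm B)$ and $\partial_\pm B$ being homotopic in $M$), Lemma 3.16 of Brock--Canary--Minsky \cite{bcm} then gives the desired conclusion $H(\partial_- B,1)\pretop H(\partial_+ B,1)$. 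The principal obstacle will be making the pigeonhole step precise: one must track how the lengths of deep-seated and non-deep-seated geodesics distribute in the subordination tree so that a large total length $|h(B)|$ forces a single non-deep-seated geodesic to be long, and then verify that the chosen middle vertex $v$ provides a tube $V_v$ that survives the merging operations used in constructing $\cv^{(1)},\cv^{(2)},\dots$ and therefore yields a genuine topological witness; a secondary subtlety is handling separately the cases $V_v\in\cv[k_u]$ and $V_v\in\cv\setminus\cv[k_u]$ when identifying the witness inside $N[k_u]$ or on its boundary.
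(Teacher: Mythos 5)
Your overall structure---extract a long non-deep-seated geodesic by pigeonhole, locate an intermediate-level witness, push the topological order through $f_4$ and then through the homotopy $H$---is the right heuristic, and the pigeonhole step is sound (a hierarchy all of whose geodesics have length $<A$ has $|h(B)|$ bounded in terms of $A$ and $\xi(S)$, since the subordination depth is $\leq \xi(S)$ and at each depth the number of geodesics is controlled by the lengths one level up). But the ``secondary subtlety'' you flag at the end is a genuine gap, and your proposal leaves it open. The order-transfer step modeled on Lemma \ref{homotopic joints} requires the witness curve to lie on $\partial M[k_u]$, so that its $f_4$-image lies on $\partial N[k_u]$ and a proper homotopy in $N[k_u]$ cannot cross it transversely; only then is the order of the homotoped surfaces against the witness preserved. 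Nothing forces the middle vertex $v$ of the long geodesic to satisfy $V_v\in\cv[k_u]$: the meridian coefficient $\omega_M(V_v)$ records the complexity of $h(B)$ at the annular domain of $v$ (subsurface projections onto that annulus), not the distance of $v$ from the endpoints of its geodesic. If $V_v\in\cv\setminus\cv[k_u]$, the core curve $c$ lies in the interior of $M[k_u]$, $f_4(c)$ lies in the interior of $N[k_u]$, the homotopy $H$ is free to cross it, and your step $H(\partial_-B,1)\pretop f_4(c)\pretop H(\partial_+B,1)$ is unjustified.

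The paper sidesteps this by first observing that a non-deep-seated geodesic of length $>A$ must be the \emph{main} geodesic of $h(B)$ (by Theorem 9.1 of \cite{mi2}, every component of $\Fr D(g)$ then lies on $\partial M[k_u]$, and since $g$ is not deep-seated these meet both fronts of $B$, forcing $D(g)=F$), and then delegating to Theorem 7.1 of \cite{bcm} and the argument in case 1b of the proof of Lemma 8.4 in \cite{bcm}. That machinery works with the cut system $C_B$ and a resolution of $h(B)$ rather than a single vertex; the witnesses it produces are the split-level surfaces $F_\tau$ for slices $\tau\in C_B$, whose boundary components lie on tubes of $\cv[k_u]$ precisely by the choice of the spacing constant $d_1$ (this is checked in \S\ref{deforming to bi-Lipschitz}). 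Repairing your argument means replacing the single middle-vertex tube with such a slice, which is essentially the content of what is delegated to \cite{bcm}.
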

\begin{proof}
We can take a constant $C$ so that if $|h(B)|>C$, then there must be a geodesic $g$ in $h(B)$ whose length is greater than $A$.
By Lemma \ref{no long deep-seated}, then $g$ cannot be deep-seated.
If $g$ is not deep-seated, then since every frontier  component of $D(g)$ lies in $\partial M[k_u]$, the only possibility is $g$ is the main geodesic of $h(B)$.
Then we can apply Theorem 7.1 in \cite{bcm} to our hierarchy $h(B)$.
The same argument  as in the case 1b of the proof of Lemma 8.4 in \cite{bcm} implies that $H(\partial_-B,1) \pretop H(\partial_+B,1)$.
\end{proof}

For the remaining case, we make the following definition.
We say that a brick $B$ in $M[k_u]_\mathrm{int}$ is {\em short} in the remaining case: \ie if 
$|h(B)| \leq C$.

\subsection{Deformation to a bi-Lipschitz map}
\label{deforming to bi-Lipschitz}
Having obtained the results in the previous subsection, we are now in a position to show that we can further homotope $H(\ ,1)$ to make it bi-Lipschitz on the region between joints, applying  arguments of \S\S 8.2-8.4 in Brock-Canary-Minsky \cite{bcm}.
For a brick $B$ of $M[k_u]_\mathrm{int}$ which is not short, we shall construct a cut system,
following \S 4 and \S 8.2 in Brock-Canary-Minsky \cite{bcm}.
Our cut system $C_B$ is a set of slices of $h(B)$ having the following properties with a constant $d_1>5$ which will be specified later.
\begin{enumerate}
\item For a geodesic $g \in h(B)$, let $C_B|g$ denote the subset of $C_B$ consisting of slices with bottom geodesic $g$.
Then, for any geodesic $g \in h(B)$, the bottom simplices $\{v_\tau \mid \tau \in C_B|g\}$ cut $g$ into intervals whose lengths are between $d_1$ and $3d_1$.
\item Two distinct slices in $C_B|g$ cannot have the same bottom simplex.
\item For each $\tau \in C_B$ and any $(k,v)$ in $\tau$ other than the bottom one, $v$ is the first vertex of $k$.
\item For non-annular $g$, any slice in $C_B|g$ is a non-annular saturated slice.
\item For annular $g$, there is at most one slice in $C_B|g$.
\end{enumerate}


\medskip
%

We take a constant $d_1$ so that 
for any geodesic $g$ in the hierarchy $h(B)$, if $g$ has length at least $d_1$,  the geodesic length of each component of $f_3(\partial D(g))$ in $N$ is less than $\epsilon_u$. 
(Such  $d_1$ exists by Lemma \ref{large k}.)
Furthermore, we consider the constant $C$ which appeared in Lemma \ref{long main geodesic} depending only on $S$.
We can take $d_1$ which is greater than $C$.

For each slice $\tau$ in $C_B$, we define  {\em extended split level surfaces} as below following \cite{bcm}.
Suppose that the bottom pair $(g_\tau, v_\tau)$ of $\tau$ is not supported in an annulus.
Since $\tau$ is a non-annular saturated slice and $h(B)$ is $4$-complete,  $\base(\tau)$ defines a pants decomposition of $D(g_\tau)$.
For each pair of pants $Y$ constituting the pants decomposition, there is horizontal boundary of two adjacent blocks \`{a} la Minsky in the form of $Y \times \{t\}$ with respect to the parametrisation of $S \times (0,1)$, along which the two are glued.
(This lies at the middle of a block of the form $\Sg_{0,3} \times J$ in our block decomposition in \S 3.)
This horizontal surface is denoted by $F_Y$.
We consider the union $F_\tau=\cup F_Y$ for all $Y$ constituting the pants decomposition, and call it the split level surface corresponding to $\tau$.
For a cut system $C_B$ as above, the split level surface $F_\tau$ for $\tau \in C_B$ is called a {\em cut} in $B$.
\smallskip
Let $\mathcal F_B$ be the union of $F_\tau$ for all $\tau \in C_B$ and $\mathcal F_b$ the union of $\mathcal F_B$ for all bricks $B \in \ck[k_u]_\mathrm{int}$.
Let $V$ be a component of $\cv$ on which a boundary component of $F_\tau$ lies.
By the condition (1) of the definitions of $C_B$ and $d_1$, we see that $\omega(V) > k_u$, hence $V \in \cv[k_u]$.
Therefore, by adding $\mathcal F_b$ to the joints of $M[k_u]$, we get a subdivision of $M[k_u]$ into smaller bricks, which may have inessential joints.
We denote this refined brick manifold by $M'[k_u]$.
(Note $M[k_u]$ and $M'[k_u]$ are the same as manifolds, only their brick structures differ.)

We shall show that $H(\ ,1)$ can be homotoped so that each $F_\tau$ for $\tau \in C_B$ is a  uniform bi-Lipschitz embedding.
\begin{lemma}
\label{embedding cuts}
There exists a constant $K''$ depending only on $\xi(S)$ as follows.
There exists a $K''$-Lipschitz homotopy $H': (\mathcal F_b \cup \mathcal F) \times [0,1] \rightarrow N[k_u]$, such that 
\begin{enumerate}[\rm(i)]
\item $H'|(\mathcal F_b \cup \mathcal F) \times \{0\}$ coincides with $H(\mathcal F_b \cup \mathcal F,1)$.
\item $H' | (\mathcal F_b \cup \mathcal F) \times [1/2,1]$ is a $K''$-bi-Lipschitz $C^2$-embedding.
\end{enumerate}
\end{lemma}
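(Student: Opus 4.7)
The plan is to mimic closely the argument of Lemma \ref{uniformly homotoped}, treating each cut $F_\tau$ as an additional \emph{joint-like} surface with bounded combinatorial type, and then upgrading the already-embedded $H(\,\cdot\,,1)|\mathcal F$ together with the cuts to a system of pairwise disjoint uniformly bi-Lipschitz embeddings by a controlled homotopy. The cuts $F_\tau$ are split-level surfaces of the form $F_\tau=\bigcup_Y F_Y$, where each $F_Y$ is a horizontal thrice-punctured sphere whose boundary components lie on $\partial\cv[k_u]$, so $F_\tau$ is an incompressible properly embedded surface in $M[k_u]$ whose topological type and modulus are bounded solely in terms of $\xi(S)$.

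First I would extend $H(\,\cdot\,,1)|\mathcal F$ to $\mathcal F_b$ by setting $H'|F_\tau\times\{0\}=f_3|F_\tau$, slightly perturbed to an immersion; this is already uniformly Lipschitz since $f_3$ is and each $F_Y$ has bounded area. Next, repeating verbatim the geometric-limit / Freedman--Hass--Scott argument used in Lemma \ref{uniformly homotoped}, one shows that, relative to the boundary $\partial F_\tau\subset\partial\cv[k_u]$, the restriction $f_3|F_\tau$ is properly homotopic through relatively compact complementary regions to a uniformly bi-Lipschitz embedding $H'(\,\cdot\,,3/4)$, and the homotopy itself is uniformly Lipschitz; this uses only the lower bound $\epsilon_u$ on the injectivity radius away from $T[k_u]$ in $N[k_u]$ and the bounded topology of $F_\tau$.

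The main obstacle is pairwise disjointness of the resulting embeddings: for two surfaces $F,F'\in\mathcal F_b\cup\mathcal F$ we must know that the least-area representatives $H'(F,1)$ and $H'(F',1)$ are disjoint. When $F$ and $F'$ belong to different bricks, or one is a joint and the other a cut inside a brick that is not short, Lemmas \ref{non-homotopic joints}, \ref{homotopic joints} and \ref{long main geodesic} already show that the topological order $\pretop$ is preserved by $H(\,\cdot\,,1)$ and hence by $f_3$; this implies that any intersection of least-area representatives is ruled out by Freedman--Hass--Scott in the same way as in the proof of Lemma \ref{uniformly homotoped}. The delicate situation is for two cuts $F_{\tau_1},F_{\tau_2}\in C_B|g$ within the same non-short brick $B$, whose bottom simplices are adjacent on a common geodesic $g$. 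Here I would argue combinatorially from the cut-system conditions (1)--(5): since the bottom simplices are separated by distance between $d_1$ and $3d_1$ on $g$ and $d_1$ was chosen larger than the constant $C$ of Lemma \ref{long main geodesic}, the sub-hierarchy of $h(B)$ between $\tau_1$ and $\tau_2$ is not short; applying Lemma \ref{long main geodesic} to the portion of $B$ bounded by $F_{\tau_1}$ and $F_{\tau_2}$ yields $H(F_{\tau_1},1)\pretop H(F_{\tau_2},1)$, which again rules out intersections of the least-area representatives.

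Finally, having obtained pairwise disjointness of all least-area images, the maximum principle for minimal surfaces provides a uniform lower bound on the distance between distinct components, so we may take pairwise disjoint uniform product neighbourhoods. The homotopy $H'$ is then defined on $(\mathcal F_b\cup\mathcal F)\times[0,1/2]$ to carry $H(\,\cdot\,,1)\cup f_3|\mathcal F_b$ to the bi-Lipschitz embedding of $H'(\,\cdot\,,3/4)$ inside these neighbourhoods, and to be the constant embedding on $[1/2,1]$; all the intermediate maps are uniformly Lipschitz with constant $K''$ depending only on $\xi(S)$, giving (i) and (ii).
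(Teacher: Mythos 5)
Your proposal follows essentially the same route as the paper: treat the cuts $F_\tau$ as additional joint-like surfaces in a refined brick decomposition, homotope to least-area embeddings by the Freedman--Hass--Scott / geometric-limit argument, and handle disjointness of the resulting minimal surfaces by showing that the topological order $\pretop$ is preserved outside short bricks via Lemma~\ref{long main geodesic} (the observation that $d_1>C$ guarantees that consecutive cuts bound non-short bricks matches the paper's remark that no new short bricks arise in the refinement). However, there is a genuine gap: you claim the argument of Lemma~\ref{uniformly homotoped} can be repeated ``verbatim,'' but that argument requires the analogue of condition~(vii) in Lemma~\ref{homeo on boundary} — a uniform bound, for each small $\delta>0$, on the number of components of $(\mathcal F_b\cup\mathcal F)\cap T$ whose images are within distance $\delta$ of a given one. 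For $\mathcal F$ this followed because at most two joints share the same boundary component of $M[k_u]$, but for the cuts $\mathcal F_b$ this is far from automatic: a single non-short brick contains many cuts, and without a finiteness property they could a priori accumulate in $N[k_u]$. The paper devotes a separate paragraph to establishing the (vii)-analogue for $\mathcal F_b$, using Lemma~\ref{no homotopic tubes} (no two tubes of $\cv$ homotopic in $M$), Lemma~\ref{upper bound} (the core curves of $\cv$ have geodesic length $<L$ in $N$), and a geometric-limit compactness argument against having unboundedly many distinct homotopy classes of bounded-length curves in a metric ball.

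Without this finiteness, two steps in your outline are unsupported. First, the preliminary homotopy that reorders the images of $(\mathcal F_b\cup\mathcal F)\cap T$ on $\partial N[k_u]$ can only be shown to be uniformly Lipschitz if each component is displaced a uniformly bounded number of times, which is exactly what the (vii)-type bound controls. Second, the maximum-principle argument you invoke at the end — taking a geometric limit to produce two tangent minimal surfaces and deriving a contradiction — tacitly assumes that only finitely many of the least-area images enter any fixed ball; otherwise the limit could degenerate (e.g.\ converge with multiplicity) rather than yielding two distinct tangent minimal surfaces, and no uniform lower bound on separation would follow. You should add a lemma establishing, for the cuts $\mathcal F_b$, the uniform bound on the number of components landing within distance $\delta$; the paper's argument via Lemmas~\ref{no homotopic tubes} and~\ref{upper bound} is the natural way to do this. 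With that in place, the rest of your outline goes through.

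Two smaller points. Your starting map is $f_3|F_\tau$ rather than $H(F_\tau,1)$ as required by condition~(i); this is fixable by concatenating homotopies, but should be said. And Lemmas~\ref{non-homotopic joints} and~\ref{homotopic joints} are stated only for joints in $\mathcal F$; to invoke them for cuts you must first observe that their proofs depend only on the topological-order machinery of \cite{bcm} applied to surfaces that become joints of the refined brick decomposition $M'[k_u]$, which is what the paper does implicitly by applying Lemma~\ref{long main geodesic} directly to $M'[k_u]$ rather than citing the two joint-specific lemmas.
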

\begin{proof}
Our argument is similar to the proof of Lemma \ref{uniformly homotoped}.
Let $T$ be a component of $\partial M[k_u]$ intersecting $B$ and $T'$ its image in $N[k_u]$ under $f_4$.
We first need to show that the $H(\ ,1)$ can be moved to a uniformly Lipschitz map which preserves the order of $T \cap (\mathcal F_b \cup \mathcal F)$  except for the fronts of short bricks by a homotopy whose displacement function is bounded from above by a uniform constant.
Our situation is a little different from that of Lemma \ref{homeo on boundary} since among our surfaces in $\mathcal F_b \cup \mathcal F$, there might be more than two components which are all homotopic to each other.
Still, we can argue as in the proof of (vi) in Lemma \ref{homeo on boundary}, and see that the order of components $F, F'$ of $\mathcal F_b \cup \mathcal F$ can be reversed only when they are homotopic in $M[k_u]$ and  $H(F,1)\cap H(F',1) =\emptyset$.
Now, by applying Lemma \ref{long main geodesic} to our refined brick manifold $M'[k_u]$, we see that the order between $F \cap T$ and $F' \cap T$ is preserved by $H(\ ,1)$ unless $F\cup F'$ bounds a short brick $B$ in $M'[k_u]$, which must be also a short brick of $M[k_u]$ since we did not introduce new short bricks in our subdivision of bricks.
Thus we have shown that $H(\ , 1)$ can be homotoped with  uniformly bounded displacement of points so that the order of $(\mathcal F_b \cup \mathcal F) \cap T$ is preserved for any component $T$ of $\partial M[k_u]$ except for the order between the two fronts of the same short bricks.
Let $f_3''$ be thus obtained uniform Lipschitz map from $M'[k_u]$ to $N[k_u]$.

Next we shall show that the same property as (vii) in Lemma \ref{homeo on boundary} holds for $\mathcal F_b$ and $f_3''$; that is, 
for any $\delta$, there is a number $n_0$ bounding the number of components of $f_3''(\mathcal F_b \cap T)$ which are within distance $\delta$ from $f_3''(F \cap T)$ for any component $F$ of $\mathcal F_b$.
Let $F_1, \dots ,F_n$ be distinct components of $\mathcal F_b$ such that $f_3''(F_1 \cap T), \dots , f_3''(F_n \cap T)$ are within the distance $\delta$ from $f_3''(F \cap T)$.
Then $H(F_1,1), \dots , H(F_n,1)$ are within distance $3\lambda_0+\delta$ from $H(F,1)$, where $\lambda_0$ is the constant which we defined in the proof of Lemma \ref{homeo on boundary}.
Recall that for each slice $\tau$ of $C_B$, each component of $F_\tau \setminus \cv$ is a thrice-punctured sphere.
By Lemma \ref{no homotopic tubes}, for distinct slices $F_{\tau_1}, \dots, F_{\tau_n}$, there are at least $\nu$ non-homotopic tubes in $\cv$ which at least one of $F_{\tau_1}, \dots , F_{\tau_n}$ intersects with $\nu$ going to $\infty$ as $n \rightarrow \infty$.
By Lemma \ref{upper bound}, each tube has a core curve with length less than $L$.
Since $H(\ , 1)$ is uniformly Lipschitz, the lengths of the images of the core curves are universally  bounded.
Suppose that there is not a universal bound for $n$.
By the usual argument using a geometric limit of model maps for which there are at least $i$ slices as above, we are lead to a contradiction to the fact that for a hyperbolic 3-manifold, a constant $R$ and a base point $x$, there are only finitely many homotopy classes which are represented by a closed curve of length less than $L$ contained in the $R$-ball centred at $x$.
Thus we have shown that $f_3''|\mathcal F_B$ has the same property as (vii) in Lemma \ref{homeo on boundary}.
Combining this with Lemma \ref{homeo on boundary}, we see that for any $\delta$, there exists $n_0$ bounding the number of components of $f_3''((\mathcal F_b \cup \mathcal F)\cap T)$  within the distance $\delta$ from $f_3''(F\cap T)$ for any component of $F$ of $\mathcal F_b \cup \mathcal F$.


By the same argument as  Lemma \ref{uniformly homotoped}, we see that $H(\ ,1)$ can be homotoped to a uniform Lipschitz map which embeds $\mathcal F_b \cup \mathcal F$ in such a way two distinct components have disjoint $\delta$-regular neighbourhoods.
%
%
   \end{proof}



By this homotopy $H'$, we can homotope $f_3'$ to $f_5$ which  is a uniform bi-Lipschitz map on each component of $\mathcal{F}_b \cup \mathcal{F}$ and embeds  its regular neighbourhood.
Recall that $f_5$ preserves the topological order of $\mathcal F_b \cup \mathcal{F}$ except for the fronts of short brick by the results in \S \ref{topological order} and Lemmata \ref{no long deep-seated}, \ref{long main geodesic}.
If $B$ is short, then $B$ consists of less than $C$ blocks, hence the diameter of $B$, which can be bounded by a linear function of the number of blocks, is bounded a constant depending only on $\xi(S)$. 
Therefore,  we can isotope $f_5(\partial_- B)$ into a regular neighbourhood of $f_5(\partial_+B)$ so that $f_5(\partial_-B) \pretop f_5(\partial_+B)$ preserving the condition on the bi-Lipschitzity.
We should note that short bricks of $M'[k_u]$ come from those of $M[k_u]$ and that by Assumption \ref{modification}, two short bricks cannot be adjacent to each other.
Therefore, we can perform this deformation for all short bricks so that $f_5(\partial_-B)$ and $f_5(\partial_+B)$ have regular neighbourhoods with uniform width.
Since the embedding of each cut by $f_5$ has a regular neighbourhood with uniform width, $f_5$ is bi-Lipschitz not only on each cut or joint but also with respect to the induced metric on the entire $\mathcal F_b \cup \mathcal F$.

To complete the proof of Theorem \ref{blm}, it remains to deform $f_5$ in the complement of $\mathcal F_b \cup \mathcal F$ in $M[k_u]_\mathrm{int}$ to make it bi-Lipschitz without changing the map on the geometrically finite bricks.
This can be done by the same argument as \S8.4 in \cite{bcm} without any modification.
Thus we have completed the proof of Theorem \ref{blm} by setting $k$ to be $k_u$.

\section{Proofs of Theorems}\label{S_Proofs}

\subsection{Geometric limits of geometrically finite bricks}\label{geom_finite}
Let $G_n$ be a Kleinian surface group, and set $N_n$ to be $\hyperbolic^3/G_n$.
Let $g_n: M_n \rightarrow (N_n)_0$ be a model map constructed in \cite{bcm} which induces a bi-Lipschitz homeomorphism $g_n[k_u]: M_n[k_u] \rightarrow N_n[k_u]$.
Suppose that $M_n$ has a geometrically finite brick $B_n \cong F_n \times [0,1)$ or $F_n \times (0,1]$.
We shall consider only the case when $B_n \cong F_n \times [0,1)$, for the other case can be dealt with in the same way just by turning everything upside down.

\begin{lemma}
\label{geometrically finite end}
Let $x_n$ be a point in $B_n$ in the above situation.
Suppose that with respect to the metric $d_{B_n}$ on $B_n$ defined in \S\ref{SS_block_metric}, we have $d_{B_n}(F_n \times \{0\}, x_n) \rightarrow \infty$.
Then the geometric limit of (a subsequence of) $\{(M_n, g_n(x_n))\}$ is elementary: \ie isomorphic to $\hyperbolic^3/\Gamma$ for an elementary Kleinian group $\Gamma$.
\end{lemma}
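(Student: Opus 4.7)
The plan is to show that the injectivity radius of $N_n$ at the basepoint $g_n(x_n)$ diverges as $n\to\infty$. Once this is established, any pointed geometric limit of the sequence of hyperbolic manifolds $\{(N_n, g_n(x_n))\}$ must be isometric to $\hh^3 = \hh^3/\{1\}$, the quotient by the trivial (hence elementary) Kleinian group, and the conclusion for $(M_n, g_n(x_n))$ follows because $g_n$ provides a uniformly bi-Lipschitz identification of $M_n$ with $(N_n)_0$.

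First I would unpack the model metric on $B_n$ built in \S\ref{SS_block_metric}. With the identification $B_n \cong F_n \times [-1, \infty)$ used there, the real front $F_n \times \{0\}$ of the lemma statement corresponds to $F_n \times \{-1\}$, and on the deep piece $F_n \times [0, \infty)$ the metric is the pullback of $\tau(B_n)\, e^{2r} + dr^2$, which---as one sees in upper-half-space coordinates via $z = e^{-r}$---is locally uniformly bi-Lipschitz to the hyperbolic metric of $\hh^3$ near a patch of the boundary at infinity. Since the $r$-curves are unit-speed geodesics and both $F_n$ in the $\tau(B_n)$-metric and the transition region $F_n \times [-1,0]$ have bounded diameter, there is a constant $C$ with
\[
r - C \;\leq\; d_{B_n}\bigl(F_n \times \{-1\},\, (y, r)\bigr) \;\leq\; r + C,
\]
so the hypothesis forces the $r$-coordinate of $x_n$ to tend to $\infty$.

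Next I would estimate the injectivity radius. In the non-cuspidal part of a geometrically finite end of a hyperbolic $3$-manifold, the injectivity radius at a point of depth $r$ grows roughly as $\cosh(r)$: any short essential loop through such a point projects to an essential loop on the truncated surface $F_n$, whose $\sigma(B_n)$-length is bounded below by a universal multiple of the Margulis constant $\varepsilon_1$ (since the cusp neighbourhoods have been removed in passing to the non-cuspidal part), while the slice metric at depth $r$ scales the base metric by $\cosh(r)$. Transferring through the bi-Lipschitz model map $g_n$, whose constant depends only on $\chi(S)$, one obtains divergence of $\mathrm{inj}_{N_n}(g_n(x_n))$, and a standard Gromov--Hausdorff argument then identifies the geometric limit with $\hh^3$.

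The main subtlety I anticipate is that the injectivity-radius estimate must be read off in the ambient manifold $N_n$, not merely inside the end: a priori, a short loop through $g_n(x_n)$ could wander far away before closing up. This is resolved precisely by the bi-Lipschitz model of Brock--Canary--Minsky, together with the control on the geometry of $B_n$ coming from \S\ref{SS_block_metric}: any such loop in $N_n$ would pull back under $g_n^{-1}$ to a short loop in $M_n$ based at $x_n$, but since the geometry of $B_n$ near $x_n$ is uniformly bi-Lipschitz to the standard geometrically finite end model in $\hh^3$, no such loop can exist once $x_n$ is deep enough.
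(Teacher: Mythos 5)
Your strategy — show that the injectivity radius at $g_n(x_n)$ diverges and conclude the limit is $\hh^3$ — proves a statement that is in general \emph{false}, and the error is at the heart of the argument. The hypothesis only controls $d_{B_n}(F_n\times\{0\},x_n)$, and this quantity can tend to infinity while $x_n$ remains at \emph{bounded} injectivity radius. Concretely, the brick $B_n\cong F_n\times J_n$ has vertical boundary $\partial F_n\times J_n\subset\partial M_n$, which corresponds under $g_n$ to the boundary of a $\integers$-cusp neighbourhood of $N_n$. In the metric $\eta^*(\tau(B_n)e^{2r}+dr^2)$ defined in \S\ref{SS_block_metric}, the diffeomorphism $\eta$ sends $\partial F_n\times[0,\infty)$ into the Euclidean-cylinder region of $\mathring F_n\times\{0\}$, so at a point $x_n$ close to $\partial F_n\times\{s_n\}$ with $s_n\to\infty$ the scaling factor $e^{2r}$ stays near $1$, the loop around the cusp has length essentially $\varepsilon_1$, and the injectivity radius is pinned near $\varepsilon_1/2$ even though the distance to the real front goes to infinity. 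Your estimate $r-C\le d_{B_n}(F_n\times\{-1\},(y,r))\le r+C$ fails precisely in this cuspidal collar: the distance can be large while the $r$-coordinate stays near $0$. For such basepoints the geometric limit is a rank-one cusp $\hh^3/\integers$, which is elementary but not $\hh^3$; this is exactly why the lemma claims only that the limit is elementary.

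The paper's proof avoids the injectivity-radius estimate entirely. It observes that by the model map, $d_{M_n}(C(M_n),g_n(x_n))\to\infty$, and then argues directly on group elements: if the limit group $\Gamma$ had two non-commuting elements $g=\lim g_n$, $h=\lim h_n$, the quantity $t(g_n,h_n)(x)=\max\{d(x,g_nx),d(x,h_nx)\}$ is minimised at a point $c_n$ on a geodesic $l_n$ in the Nielsen convex hull of $G_n$, and $|t(g_n,h_n)(y)-2d(y,c_n)|$ is bounded; since $d(\tilde x_n,c_n)\to\infty$ this forces $t(g_n,h_n)(\tilde x_n)\to\infty$, contradicting the convergence of $g_n$ and $h_n$. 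This argument is indifferent to whether there is a short parabolic loop at the basepoint, which is exactly the case your approach cannot handle. If you want to salvage a geometric version, you would have to show that every element of $\Gamma$ shares a common fixed point at infinity, not that $\Gamma$ is trivial.
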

\begin{proof}
Let $C(N_n)$ be the convex core of $N_n=\hyperbolic^3/G_n$.
By the definition of the model map, we see that $d_{M_n}(C(M_n), g_n(x_n))\rightarrow \infty$.
Let $\Gamma$ be a Kleinian group such that $(\hyperbolic^3/\Gamma, x_\infty)$ is the Gromov limit of  $\{(M_n, g_n(x_n))\}$ after passing to a subsequence.
Suppose, seeking a contradiction, that there are non-commuting elements $g, h$ in $\Gamma$.
Then, there exist elements $g_n, h_n$ in $G_n$ such that $\lim g_n=g$ and $\lim h_n=h$.
Consider the action of $G_n$ on $\hyperbolic^3$.
Then $g_n$ and $h_n$ act on $\hyperbolic^3$ as loxodromic or parabolic transformations.
Let $l_n$ be a geodesic  in $\hyperbolic^3$ which is a common perpendicular of the axes of $g_n$ and $h_n$ if they are loxodromic, or a geodesic ray perpendicular to the axis of the loxodromic one to tending to the fixed point at infinity of the parabolic one when only one of them is loxodromic, or a geodesic connecting the fixed points at infinity of the two elements if both of them are parabolic.

We claim that the function $t(g_n, h_n)(x)=\max\{d(x, g_n(x)), d(x,h_n(x))\}$ takes minimum at a point $c_n$ on $l_n$.
This can be seen by considering sets $V_{g_n}(r)$ and $V_{h_n}(r)$ consisting of points whose translation distances are less than or equal to $r$ under $g_n$ and  $h_n$ respectively.
The smallest $r$ for which $V_{g_n}(r) \cap V_{h_n}(r) \neq \emptyset$ realises the minimum of $t(g_n, h_n)$.
(If $V_{g_n}(r)$ (resp. $V_{h_n}(r)$) reaches the axis of $h_n$ (resp. $g_n$) while $V_{h_n}(r)$ (resp. $V_{g_n}(r)$) is empty, we take such $r$ as the smallest.)
By the convexity of these sets, we see that the intersection consists of one point $c_n$, and that it lies on $l_n$.
Since $\{g_n\}$ and $\{h_n\}$ converge, the smallest $r$ is bounded from above independently of $n$.
Since the configurations of $V_{g_n}(r)$, $V_{h_n}(r)$ up to isometries are compact, we see that $|t(g_n, h_n)(y)- 2d(y,c_n)|$ is bounded from above independently of $n$.
(This follows from the fact that the displacement of a point can be approximated by twice the distance from the point to the axis if the translation length on the axis is bounded above.)

Obviously, $l_n$ is contained in the Nielsen convex hull of $G_n$.
Take a lift $\tilde x_n$ of $x_n$ which converges to a lift $\tilde x_\infty$ of $x_\infty$.
Since $d_{M_n}(C(M_n), g_n(x_n)) \rightarrow \infty$,  the distance of $l_n$ from a lift $\tilde x_n$ of $x_n$ in $\hyperbolic^3$ goes to $\infty$ as $n\rightarrow \infty$; hence $d(\tilde x_n, c_n) \rightarrow \infty$.
From the above observation, this implies that $t(g_n, h_n)(\tilde x_n) \rightarrow \infty$.
This contradicts the facts that $g=\lim g_n$ and $h=\lim h_n$ translate $\tilde x_\infty$ within a finite distance.
\end{proof}

We now consider  geometric limits of geometrically finite bricks.
Since we are only interested in non-elementary geometric limits, by virtue of the previous lemma, we have only to consider the case when the basepoint lies on the real front along which the brick is pasted to other bricks.
Let $x_m$ be a point in $B_m$ lying on $F_m\times \{0\}$.
Since each $\hat F_m=\hat F_m\times \{0\}$ has the cylinder-$\Sg_{0,3}$ metric $\tau_m$, 
if $\{(\hat F_m,x_m)\}$ converges geometrically to $(\hat F_\infty,x_\infty)$ passing to a subsequence, then $\hat F_\infty$ also has such a metric $\tau_\infty$.
Moreover, since $B_m$ is uniformly bi-Lipschitz to the brick $\hat F_m\times [0,\infty)$ with metric $e^{2r}\tau_m +dr^2$ $(r\in [0,\infty))$, 
$\{B_m\}$ converges geometrically to a brick $B_\infty$ uniformly bi-Lipschitz to $F_\infty\times [0,\infty)$ 
with metric of the form $ e^{2r}\tau_\infty+dr^2$ $(r\in [0,\infty))$ passing to a subsequence.
In particular, $B_\infty$ is also a brick homeomorphic to $F_\infty \times [0,1)$.

\subsection{Proofs of Theorem \ref{thm_a} and Corollary \ref{cor_b}}

\begin{proof}[Proof of Theorem \ref{thm_a}]
Let $\{G_n\}$ be a sequence of Kleinian surface groups which 
converges geometrically to a non-elementary Kleinian group $G$.
Since $\{G_n\}$ converges geometrically to $G$, fixing a basepoint in $\hyperbolic^3$, and projecting it to $\hyperbolic^3/G_n$ and $\hyperbolic^3/G$ as basepoints $y_n$ and $y_\infty$, we get a geometric convergence $(\hyperbolic^3/G_n, y_n) \rightarrow (\hyperbolic^3/G, y_\infty)$.
By the original bi-Lipschitz model theorem \cite{bcm}, 
for each $n\in \nn$, there exist a model manifold $M_n$ and a model map $g_n: M_n \rightarrow (N_n)_0$ inducing 
a $K$-bi-Lipschitz homeomorphism  $g_n:M_n[k_u]\rightarrow N_n[k_u]$, where $N_n=\hh^3/G_n$.
We let $x_n$ be a point in $M_n$ which is taken to $y_n$ by $g_n$.
The model manifold $M_n$ consists of $M_n[0]$, which is decomposed into internal blocks and boundary blocks, and Margulis tubes.
Since we assumed that $G$ is non-elementary, $x_n$ cannot go deeper and deeper into Margulis tubes as $n \rightarrow \infty$.
Therefore, moving $x_n$ and $y_n$ within uniformly bounded distance without changing $G$ up to conjugacy, we can assume that $x_n$ lies in $M_n[0]$.

Since $G_n$ is a Kleinian surface group, $M_n$ is embedded in $S_0 \times (0,1)$ for a compact core $S_0$ of $S$ so that the boundary of a cusp neighbourhood which does not correspond to a boundary component of $S_0$  is a properly embedded  open annulus both of whose ends go to the same end of $S_0 \times (0,1)$, either to the $+$-direction or to the $-$-direction.
We   put $M_n$ the structure of a brick manifold compatible with the block decomposition as follows.
We first consider a proper embedding $\eta_n : M_n \rightarrow S \times (0,1)$ with the following properties, which is obtained by isotoping blocks within $S \times (0,1)$.
\begin{enumerate}
\item 
The embedding $\eta_n$ preserves the horizontal and the vertical leaves of each block.
(Here for a block with the form $\Sigma \times J$, each $\Sigma \times \{t\}$ is a horizonal leaf and $\{x\} \times J$ is a vertical leaf.)
\item Each Margulis tube in $M_n$  is mapped to $A \times [t_1,t_2]$  for some essential annulus $A$ on $S$ and $t_1 < t_2$, and each torus boundary of $M_n$ is mapped to the boundary of $A \times [t_1, t_2]$.
\item 
Each open annulus boundary component of $M_n$ is mapped to the boundary of either $A\times [t, 1)$ or $A \times (0,t]$ for an essential annulus $A$ on $S$ and $t \in (0,1)$.
\item The geometrically finite ends of $M_n$ are peripheral, \ie lie on $S \times \{0,1\}$.
\end{enumerate}
This is exactly the situation as in the construction of brick decomposition for $M_\mathrm{int}^{(1)}$ in \S \ref{SS_block}.
Therefore, we can endow a brick decomposition with $M_n$ by defining  each to be   a maximal family of parallel leaves.

We now consider the geometric limit $(M[0], x_\infty)$ of $(M_n[0], x_n)$, possibly passing to a subsequence.
Note that any internal block of  $M_n[0]$ is isometric to either $\Sg_{(0,4)}\times [0,1]$ or 
$\Sg_{(1,1)}\times [0,1]$, or $\Sg_{(0,3)}\times [0,1]$, each with a standard metric.
(We can consider a block decomposition in our sense or Minsky's. Either will do.)
Therefore a geometric limit of internal blocks can also regarded as blocks.
On the other hand, as was seen in Subsection \ref{geom_finite}, any sequence of geometrically finite bricks in $M_n[0]$ converges geometrically to a geometrically finite brick in after taking a subsequence if we put a basepoint on the real front.
Since $G$ is non-elementary, by Lemma \ref{geometrically finite end}, if the $x_n$ lie in geometrically finite bricks, we can assume that they lie on the real fronts of the bricks.
These imply  that the geometric limit $M[0]$  consists of geometrically finite bricks and the remaining part which is decomposed into blocks.
(Here we are not considering the brick decomposition of $M[0]$ yet.)

We denote by $M[0]_\mathrm{int}$ the part of $M[0]$ consisting of the limits of internal bricks.
The complement of $M[0]_\mathrm{int}$ in $M[0]$ consists of geometrically finite bricks as was seen above.
As before, we denote by $\cv_n$ the union of tubes in the tight tube unions giving a block decomposition of $M_n^0$.
(Recall that $M_n^0$ is the complement of tubes in $\cv_n$ intersecting $M_n$ along annuli and is naturally identified with $M_n$.)
For any $k$, we denote by $\cv_n[k]$ the subset of $\cv_n$ consisting of tubes $V$ with $|\omega_{M_n}(V)| \geq k$.
Recall that $M_n[k]=(M_n)^0 \setminus \cv_n[k]$.
We denote by $T_n[k]$ the union of Margulis tubes which is  the image of $\cv_n[k]$ by $g_n$. 

Each torus component $T$ of $\part M[0]$ is a geometric limit of torus components $T_n$ of $\part M_n[0]$.
Since $T_n$ converges geometrically,  either $\{\omega_{M_n}(T_n)\}$ converges or goes to $\infty$.
If it converges, then $T_n$ bounds a hyperbolic tube $V_n$ converging geometrically to a hyperbolic tube $V$ bounded by $T$.
We denote by $\cv_\infty$ the union of such tubes $V$.
The gluing map of $V_n$ to $M_n[0]$ converges to a gluing map of $V$ to $M[0]$.
We define the union of $M_n[0]$ and such tubes glued by the limit gluing maps to be $M$.
Then it follows immediately that the geometric convergence of $(M_n[0], x_n)$ to $(M[0], x_\infty)$ extends that of $(M_n, x_n)$ to $(M, \infty)$.
We denote by $M[k]$ the union of $M[0]$ and tubes in $\cv_\infty$ for which $\lim_{n \rightarrow \infty} |\omega_{M_n}(T_n)| \leq k$.
The argument above also implies in particular that $g_n$ with base point $x_n$ converges to a $K$-bi-Lipschitz homeomorphism $g: M[k_u] \rightarrow N[k_u]$.
Since we put the metric on $V_n$ inherited from a Margulis tube determined by $\omega_{M_n}(V_n)$, each $g_n$ is extended to a $K$-bi-Lipschitz map in $V_n$.
Therefore $g$ is also extended to a $K$-bi-Lipschitz homeomorphism from $M$ to $N_0$.
We use the symbol $M_\mathrm{int}$ to denote the union of $M[0]_\mathrm{int}$ and $\cv_\infty$.

If $\lim_{n\to \infty}{\omega_{M_n}(V_n)}=\infty$, then $g(T)$ is the boundary of a torus cusp neighbourhood of $N$ not contained in $N_0$.
If we put a basepoint on $\partial V_n$, then the geometric limit of $V_n$ is also a $\integers \times \integers$-cusp which is $K$-bi-Lipschitz to the cusp neighbourhood bounded by $g(T)$ since $\omega_{M_n}(V_n)$ controls the modulus of the Margulis tube bounded by $g_n(T_n)$.
(See \S8.5 in \cite{bcm}.)
Note that by our definition of $M$, the limit cusp neighbourhood is not contained in our model manifold $M$.

The properties (ii) that $M$ is acylindrical and (i) that $\partial M$ consists of tori and annuli in the statement of Theorem A are derived from the same properties for $N_0$.
We shall next show that $M$ is a brick manifold.
Recall that $M[0]_\mathrm{int}$ admits  a decomposition into blocks.
Let $\rho_n: B_{r_n}(M_n ,x_n) \rightarrow B_{K_nr_n}(M, x_\infty)$ be a $(K_n,r_n)$-approximate isometry associated to the geometric convergence of $\{(M_n, x_n)\}$ to $(M,x_\infty)$.
We can arrange $\rho_n$ so that for each block $b$ of $M[0]_\mathrm{int}$, its pull-back $\rho_n^{-1}(b)$ is also a block with respect to the block decomposition of the brick manifold $M_n$, and $\rho_n^{-1}|b$ preserves the vertical and horizontal leaves of $b$.
Recall that the embedding $\eta_n$ of $M_n$ into $S \times (0,1)$ preserves the vertical and the horizontal leaves of blocks.
Therefore, at each point of $M$ the (two-dimensional) horizontal directions and the vertical direction are well defined.
The horizontal directions in $M$ constitute a foliation whose leaves are incompressible in $M$ and  homeomorphic to essential subsurface of $S$ (including $S$ itself) as we can see by considering their image under $\rho_n^{-1}$ for large $n$.
We define a leaf of this foliation to be a horizontal leaf of $M$.
Horizontal leaves are transversely oriented, by defining the $+$-direction of the second factor of  $S \times (0,1)$ to be the positive direction.

Now, we define a brick in $M$ to be a closed submanifold which is the closure of a maximal union of  parallel horizontal leaves in $M$ if it has non-empty interior.
It is evident that the bricks defined in this way are pairwise disjoint.
We can further show the following, which implies that $M$ is a brick manifold.
\begin{lemma}
\label{bricks}
Every point in $M$ is contained in a brick.
The bricks are locally finite.
\end{lemma}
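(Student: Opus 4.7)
The plan is to use the approximate isometries $\rho_n : B_{r_n}(M_n, x_n) \to B_{K_n r_n}(M, x_\infty)$, arranged as in the paragraph preceding the statement so that for each block $b$ of $M_n$, the inverse image $\rho_n^{-1}(b)$ is also a block and $\rho_n|_b$ preserves horizontal and vertical leaves. This transports the brick/block combinatorics of the $M_n$ into bounded regions of $M$, and both parts of the lemma should follow by taking geometric limits.

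First, to see every $x \in M$ lies in a brick, let $F_x$ denote the horizontal leaf through $x$; just before the statement it is observed that $F_x$ is an incompressible embedded surface homeomorphic to an essential subsurface of $S$. I would define $J_x \subset \reals$ to be the maximal interval containing $0$ such that the vertical flow starting at $F_x$ embeds $F_x \times J_x$ in $M$ preserving the product structure, and let $B_x$ be this embedded submanifold. Then $B_x$ is the closure of a maximal union of parallel horizontal leaves containing $x$; that it has non-empty interior would follow by pulling back $x$ to $y_n = \rho_n^{-1}(x) \in M_n$, which lies in the interior or on a joint of some brick of $M_n$, and transporting the local product structure back via $\rho_n$. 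The endpoints of $J_x$ are of closed type when $B_x$ is glued to another brick or carries a real geometrically finite front (seen by pulling back and applying Subsection \ref{geom_finite}), and are of ideal/open type when they arise as geometric limits of half-open bricks carrying Teichm\"{u}ller or ending-lamination data.

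Second, for local finiteness, I would take a compact set $K \subset M$, pull it back to $K_n := \rho_n^{-1}(K) \subset M_n$ for large $n$, and use that the model manifold $M_n$ of the Kleinian surface group $G_n$ has a locally finite brick decomposition, so $K_n$ meets only finitely many bricks of $M_n$. The number of such bricks is bounded independently of $n$ because $\xi$ of each brick is at most $\xi(S)$ and the minimal vertical thickness of a brick in the model metric is bounded below. Each brick of $M$ meeting $K$ then corresponds to a sequence of bricks of the $M_n$ meeting $K_n$, and distinct bricks of $M$ yield eventually-disjoint such sequences because horizontal leaves from distinct limit bricks are not parallel, giving the required bound.

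The main obstacle I expect is the bookkeeping for how bricks of $M_n$ pass to bricks of $M$: under the geometric convergence, bricks of $M_n$ may merge (a joint becoming inessential), split (a new horizontal surface appearing in the limit as a joint between two limit bricks), or escape to infinity (when $|\omega_{M_n}(V_n)| \to \infty$ at a torus boundary and the corresponding brick slides away from the basepoint). Ensuring that the maximal parallel union defining $B_x$ does not secretly collapse to a single leaf or extend past an accumulation of joint surfaces will require an accumulation-point analysis in the vertical coordinate analogous to the rearrangement procedure in Lemma \ref{l_21}.
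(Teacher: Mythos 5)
Your high-level picture is right (the geometric limit supplies the foliation, and one works with maximal unions of parallel leaves), but the proof as proposed has a gap exactly at the point you flag as the "main obstacle," and your workaround for local finiteness relies on a claim the paper neither uses nor supports. Two concrete issues.

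First, for local finiteness you assert that "the minimal vertical thickness of a brick in the model metric is bounded below." This is not justified, and it is not how the paper proceeds. Bricks of $M_n$ are cut out at every level where the topological type of the horizontal slice changes, and nothing prevents two such levels from being arbitrarily close (in the model metric) when two tube boundaries start or stop at nearby heights. What \emph{is} uniformly bounded below is the height $\zeta$ of each component of $\partial M_n$, not the height of a brick. The paper's proof exploits this: it fixes $\theta<\zeta$, looks at the leaf $F'$ at distance $\theta$ above $F$, and concludes that only finitely many components of $\partial M$ meet $F\cup F'$. From there it extracts quantities $\bar h_1,\bar h_2>0$ that depend on $x$ (not on $M$ globally) and shows that all leaves within $\bar h_1$ above $F$ and within $\bar h_2$ below $F$ are mutually parallel; local finiteness then follows because the bricks touching $F$ have height $\geq \min\{\bar h_1,\bar h_2\}$ and one of them contains $x$. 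Your global-thickness claim would make the argument simpler but is false in general; the pointwise bound is what is available and what is needed.

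Second, and more structurally, your plan pulls back to $M_n$ via $\rho_n^{-1}$, counts bricks there, and then transfers the count to $M$. This requires a controlled correspondence between bricks of $M_n$ and bricks of $M$, which is precisely the merging/splitting/escaping bookkeeping you acknowledge at the end without resolving. The paper avoids this entirely: instead of matching bricks across $n$, it first uses the convergence $\rho_n$ only to establish \emph{intrinsic} properties of $M$ (that $\partial M$ consists of tori and open annuli of height $\geq\zeta$, that $M$ is acylindrical, that at most one annulus of a given $\partial M$ component meets a leaf), and then reasons purely inside $M$ about which components of $\partial M$ touch $F$ from above or from below. A key step you do not address is that no component of $\partial M$ can touch $F$ from both sides simultaneously; this is what guarantees that $x$ belongs to a brick even when $x$ lies on the annulus where $\partial M$ meets $F$. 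Your definition of $B_x$ via a maximal vertical-flow interval $J_x$ is in the right spirit, but verifying that $J_x$ has nonempty interior is exactly the content of the $\bar h_1,\bar h_2>0$ argument, and deferring it to "transporting the local product structure back via $\rho_n$" leaves the proof incomplete at the very step that requires work.
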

\begin{proof}
Let $x$ be a point in $M$, and $F$ a horizontal leaf of $M$ on which $x$ lies.
We say that a boundary component $T$ of $M$ touches $F$ from above  if $T \cap F \neq \emptyset$ and if any leaf near $F$ and above $F$ intersects $T$ whereas any leaf below $F$ is disjoint from $T$.
Similarly, we define touching from below.
Every component of $\partial M$ is either a torus or an open annulus, and they can intersect a horizontal leaf along annuli.
Since  an annulus component of $\partial M_n$ has only one horizontal annulus, and torus component has only two horizontal annulus situated at  different horizontal levels.
This property is preserved by taking geometric limit, and hence if a component of $\partial M$ intersects a horizontal leaf along annuli, the intersection consists of a single annulus.
Moreover, since $M$ is acylindrical, there are no two annuli on $\partial M \cap F$ which are parallel on $F$ and are contained in distinct components of $\partial M$.
Therefore, the number of the components of $F \cap \partial M$ is uniformly bounded by a constant depending only on $\xi(S)$.

Now, recall that the height (with respect to the metric determined by blocks) of each component of $\partial M_n$ is uniformly bounded from below by a positive constant $\zeta$.
We take a positive number $\theta < \zeta$, and let $F'$ be a horizontal leaf of $M$ above $F$ at the distance $\theta$ with respect to the metric determined by blocks.
Then each component of $\partial M$ within distance $\theta$ from $F$ which does not lie below $F$ must intersect either $F$ or $F'$.
Therefore, the number of such components is bounded by a constant depending only on $\xi(S)$.
The same holds for components of $\partial M$ within distance $\theta$ from $F$ not lying above $F$.

Let $h_1$ be the minimum of the heights above $F$ (with respect to the metric on $M$ determined by blocks) of components of $\partial M$ intersecting $F$ but not touching from below, which we allow to be $\infty$.
Since there are finitely many components of $\partial M$ intersecting $F$ as was shown above, we have $h_1 >0$.
Next let $h_1'$ be the minimal distance from $F$ to the components of $\partial M$ lying above $F$, which is defined to be $\infty$ if there are no such components.
By the observation in the previous paragraph, there are only finitely many components of $\partial M$ within a fixed distance from $F$, we have $h_1' >0$.
We set $\bar h_1$ to be $\min \{h_1, h_1'\}$.
Then, if we move $F$ in the vertical direction to the positive side within the distance $\bar h_1$, then the surface $F$ may lose the interior of annuli which are intersection with components of $\partial M$ touching from above, but the intersection with $\partial M$ does not change in other ways.
Therefore all the horizontal leaves above $F$ within distance $\bar h_1$ are parallel to each other.
It follows that if $x$ lies outside the intersection with components of $\partial M$ touching $F$ from above, then $x$ is contained in a brick which passes through $F$ or is situated above $F$ and touches $F$ at the boundary.
Similarly by defining $h_2$ and $h_2'$ turning everything upside down and setting $\bar h_2$ to be $\min\{h_2, h_2'\}$, we see that all the horizontal surface below $F$ within distance $\bar h_2$ are parallel to each other.
Also, if $x$ lies outside the intersection with components of $\partial M$ touching $F$ from below, then $x$ is contained in a brick which passes through $F$ or is situated below $F$ and touches $F$ at the boundary.
Since no components of $\partial M$ can touch $F$ from both above and below, this shows that $x$ is always contained in a brick.

Furthermore, there are only finitely many bricks at distance less than $\min\{\bar h_1, \bar h_2\}$ since $F$ is contained in  the (non-empty) union of finitely many  bricks whose heights are at least $\min\{\bar h_1, \bar h_2\}$ and one of which contains $x$.
This shows the local finiteness of the bricks.
\end{proof}

By our definition of bricks in $M$ and that for $M_n$, for any brick $B$ in $M$ its pull-back $\rho_n^{-1}(B)$ is contained in one brick in $M_n$ for large $n$.
Now, we are in a position to use Lemma \ref{l_21} to verify the condition (iv) in Theorem \ref{thm_a}.
For any $r \in \naturals$, let $M(r)$ be the submanifold of $M$ consisting of bricks intersecting the $r$-ball centred at $x_\infty$ with respect to the metric induced from those on blocks.
Then $M(r)$ contains only finitely many bricks by Lemma \ref{bricks}.
If we take a sufficiently large $n$, then we can pull back $M(r)$ to $M_n$ by $\rho_n^{-1}$.
Since the pull-back of each brick is contained in a brick of $M_n$, we can embed $M(r)$ to $S \times (0,1)$ by $\eta_n \circ \rho_n^{-1}$ preserving the vertical and the horizontal leaves.
Since $M= \cup_{r=1}^\infty M(r)$, by Lemma \ref{l_21}, we can embed $M$ into $S \times (0,1)$ in such a way that every brick is mapped to a submanifold of the form $F \times J$.
Since the geometrically finite ends of $M_n$ are peripheral, we see that the same holds for $M$ by Lemma \ref{l_21}.
This completes the proof of (iv).
Finally, we shall show (iii), that there is no incompressible half-open annulus tending to a wild end $e$ with core curve not homotopic to an annulus component of $\partial M$ tending to $e$.
Suppose, seeking a contradiction, that $M$ has such an end $e$ to which an incompressible half-open annulus $A$ tends, and that the core curve of $A$ is not homotopic into an annulus component of $\partial M$ tending to $e$.
Let $\{H_n\}$ be a sequence of properly embedded connected horizontal surfaces in $M$ meeting $A$ transversely and tending to $e$.
(Since every point lies on some horizontal leaf, such a sequence of horizontal surfaces exist.)
See Fig.\ \ref{fig4_1}\,(a).
\begin{figure}[hbtp]
\centering
\scalebox{0.5}{\includegraphics[clip]{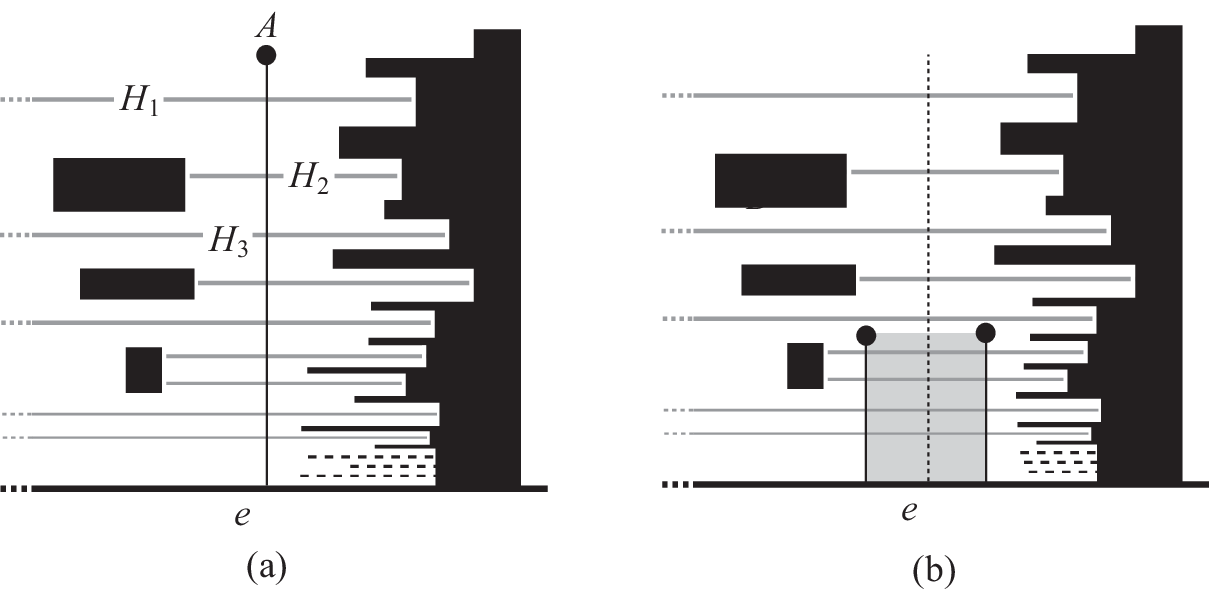}}
\caption{}
\label{fig4_1}
\end{figure}
For each $n$, the intersection $A \cap H_n$ is an essential simple closed curve, which we denote by $l_n$.
By our assumption, $l_n$ is not homotopic into an annulus component of $\partial M$ tending to $e$.
Therefore, $g(l_n)$ either represents a loxodromic element or is homotopic into a cusp which is disjoint from a small neighbourhood of $e$.

We first assume that $g(l_n)$ represents a loxodromic element.
Let $h_n: H_n \rightarrow N_0$ be a pleated surface properly homotopic to $g|H_n$ realising $l_n$ as a closed geodesic, which we denote by $l^*$.
We should note that $H_n$ is homeomorphic to an essential subsurface of $S$.
For any $\delta >0$, the pleated surfaces $h_n$ have an upper bound depending only on $\chi(S)$ and $\delta$ for the diameters modulo their $\delta$-thin parts.
Since there are only finitely many $\varepsilon_1$-cusp neighbourhood within a bounded distance modulo the $\delta$-thin part of $N$ from $l^*$ and the images of $h_n$ contain $l^*$, by taking a subsequence we can assume that the homotopy class of $\partial H_n$ does not depend on $n$.
By the condition (ii) which we have already proved, this implies that the boundary components of $M$ on which $H_n$ lies does not depend on $n$.
It follows that there is an essential subsurface $R$ of $S$ such that all the $H_n$ are vertically parallel to $R \times \{1/2\}$ in $S \times (0,1)$.
(Notice that they may not be parallel in $M$.
To be more precise, we are claiming that the $\iota_M(H_n)$ are vertically parallel to $R \times \{1/2\}$ for the embedding $\iota_M$ of $M$ into $S \times (0,1)$ obtained above.
We omit to write $\iota_M$ here.)

Let $i_n: R \rightarrow H_n$ be a homeomorphism compatible with a homotopy from $R \times \{1/2\}$ within $S \times \{1/2\}$ to $H_n$ in $S \times (0,1)$.
Since the $l_n$ are homotopic to each other in $S \times (0,1)$, we can arrange the $i_n$ so that there is a simple closed curve $l$ on $R$ such that $i_n(l)=l_n$ for all $n$.
Recall that there are only finitely many $\varepsilon_1$-cusp neighbourhoods which $h_n(H_n)$ can touch.
We extend $l$ to a pants decomposition $P$ of $R$ so that no curve is mapped to a curve freely homotopic into a cusp which $h_n(H_n)$ can touch.
We now consider the sequence of pleated surfaces $h_n \circ i_n$, which realise $P$ as closed geodesics.
Since there are only finitely many cusps which we must take into account, by applying  the compactness of marked pleated surfaces without accidental parabolics (5.2.18 in Canary-Epstein-Green \cite{ceg}), we see that passing to a subsequence, $h_n \circ i_n$ converges to a pleated surface from a component $R'$ of $R \setminus \alpha$ containing $l$, where $\alpha$ is a possibly empty union of disjoint non-parallel essential annuli in $R$, uniformly on every compact subset of $R'$.
%
It follows that there exists $n_0\in  \nn$, such that all $h_n\circ i_n|R'$ $(n\geq n_0)$ are properly homotopic in $N_0$.
Pulling back this to $M$, we see that there is no component of $S \times (0,1) \setminus M$ which obstructs homotopies between the $i_n|R'$.
Hence, the subsurfaces $i_n(R')$ of $H_n$ are vertically parallel in $M$ for all large $n$.
Therefore, there exists a submanifold $R' \times [0,1)$ embedded in $M$ preserving the horizontal and vertical leaves,  which contains a neighbourhood of the end of $A$ such that $R' \times \{t\}$ tends to $e$ as $t \rightarrow 1$.
See Fig.\ \ref{fig4_1}\,(b).

We shall next show that we have the same kind product region even when $g(l_n)$ represents a parabolic class.
Let $c$ be a cusp of $N$ homotopic to $g(l_n)$.
Then, we consider a pleated surface $h_n: H_n \setminus \Int N( l_n) \rightarrow N_0$ taking $\partial N(l_n)$ to the boundary of the cusp neighbourhood  of $c$ instead of the one realising $l_n$ as a closed geodesic, where $N(l_n)$ denotes an annular neighbourhood of $l_n$.
Even in this case, we have the finiteness of pleated surfaces which can be reached from the $\delta$-cusp neighbourhood $U_c$ of $c$ within a bounded distance modulo the thin part.
Therefore, as before, we can show that the $H_n$ are parallel in $S \times (0,1)$ after taking a subsequence.

As before, we can consider a homeomorphism $i_n : R \rightarrow H_n$ compatible with the inclusion of $R$ to $S$, and a sequence of pleated surfaces $h_n \circ i_n: R\setminus \Int N(l) \rightarrow N$ realising a pants decomposition $P$ containing $l$ none of whose curves except for $l$ is mapped to a cusp which can be reached by $h_n(H_n)$.
Then as in the previous case, there is a possibly empty union $\alpha$ of non-parallel disjoint essential annuli on $R$, and for components $R_1, R_2$ of $R \setminus (N(l) \cup \alpha)$ adjacent to $N(l)$, which may coincide if $l$ is non-separating, the pleated surfaces $h_n \circ i_n|R_1 \cup R_2$ converge uniformly on every compact set of $R_1 \cup R_2$.
Let $R'$ be $R_1 \cup R_2 \cup N(l)$.
Since the $h_n \circ i_n|R'$ are homotopic to each other for large $n$, we see that the subsurfaces $i_n(R')$ on $H_n$ are vertically parallel to each other.
This shows that there is a region $R' \times [0,1)$ embedded in $M$ preserving the horizontal and vertical leaves which contains a neighbourhood of the end of $A$ such that $R' \times \{t\}$ tends to $e$ as $t \rightarrow 1$.

In both  cases, if every sequence of properly embedded connected horizontal surfaces tending to $e$ is eventually contained in $R' \times [0,1)$, then $R' \times [0,1)$ constitutes a neighbourhood of $e$, contradicting the assumption that $e$ is wild.
Suppose that this is not the case.
Then some component $c$ of  $\Fr R'$ is not homotopic to a core curve of an annulus component of $\partial M$ tending to $e$.
Therefore, we can repeat the above argument replacing $A$ with $c\times [0,1) \subset R' \times [0,1)$ and get a larger subsurface $R''$ properly containing $R'$ and an leaf-preserving embedding $R'' \times [0,1)$ such that $R'' \times \{t\}$ tends to $e$ as $t \rightarrow 1$.
Since the topological type of $S$ is fixed, in finite steps, this process terminates, and we get a neighbourhood of $e$ in the form $R_0 \times [0,1)$ for some essential subsurface $R_0$ of $S$ (which might be $S$ itself) such that $\Fr R_0 \times [0,1)$ lies on $\partial M$.
By our definition of brick decomposition of $M$ this $R_0 \times [0,1)$ is contained in one brick and $e$ must be simply degenerate.
This contradicts the assumption that $e$ is wild. 
\end{proof}

\begin{proof}[Proof of Corollary \ref{cor_b}]
By Theorem \ref{thm_a}, there is a brick manifold $M$ having the properties listed in the theorem with a bi-Lipschitz homeomorphism $g: M \rightarrow N_0$.
By Lemma \ref{l_21}, $M$ has at most countably many ends; hence so does $N_0$.

Now, we turn to the second statement of our corollary.
Let $x$ be a point in the convex core of $N_0$ and $y$ a point in $M$ with $g(y)=x$.
We can assume that $y$ does not lie in a geometrically finite brick since $x$ is contained in  the convex core.
Let $H_x$ be a properly embedded connected horizontal surface containing $x$.
If $y$ is contained in $\cv_\infty[k_u]$ for a constant $k_u$ as in the proof of Theorem \ref{thm_a}, then $x$ is contained in the $\varepsilon_1$-Margulis tube and we are done.
Otherwise, take a shortest loop $c_x$ on $H_x \setminus \cv_\infty[k_u]$ passing through $x$.
Recall that the moduli of the horizontal surfaces outside $\cv_\infty[k_u]$ are uniformly bounded.
Therefore, the length of $c_x$ is bounded uniformly from above by a constant depending only on $\chi(S)$.
Since $g$ is a $K$-Lipschitz map, the length of $g(c_x)$ is also bounded uniformly from above.
This shows that the injectivity radius at $x$ is uniformly bounded from above by a constant depending only on $\chi(S)$.
%
%
%
\end{proof}

\subsection{Proof of Theorem \ref{thm_c}}

\begin{proof}[Proof of Theorem \ref{thm_c}]
Let  $M$ be a labelled brick manifold satisfying the conditions (i)-(iv) in Theorem \ref{thm_a} with end invariants given so that the condition (EL) is satisfied.
Let $\ck$ be a brick complex with $\bigvee \ck=M$.
By Subsections \ref{SS_Conditions} and \ref{SS_block}, 
we know that $M$ admits a decomposition into blocks.
We use the symbols $\cv$ and $\cv[k]$ etc. to denote the unions of tubes inducing the decomposition into blocks as before.
This implies that  the condition (BB) also holds.
Since $M$ is assumed to be embedded in $S \times (0,1)$, we often identify $M$ and its image in $S\times (0,1)$.

For a simply degenerate brick $B=F\times [s,t)$ in $\ck$, we consider a monotone increasing sequence $\{p_n\}$ of 
positive numbers tending to $t$ such that, for any $n\in \nn$, every component of $F\times \{p_n\}\setminus \Int \cv$ 
is homeomorphic to $\Sg_{0,3}$ and $B(p_n)=F\times [s,p_n]$ contains at least $n$ components of $\cv[0]$.
We construct $B(p_n)$ in the same way when $B=F \times (t,s]$, just turning everything upside down.
Let $\{\ck_n\}$ be an ascending sequence of finite brick complexes with $\bigcup_n\ck_n=\ck$.
We may choose such $\ck_n$ so that $M_n=\bigvee \ck_n$ is connected for any $n\in \nn$.
Since all geometrically finite bricks in $\ck$ are peripheral in $S\times (0,1)$, the number of them 
is at most $-2\chi(S)$.
Hence we can choose $\{\ck_n\}$ so that $\ck_1$ contains $\ck_{\mathrm{gf}}$.

Consider a brick complex $\ck_n^-$ obtained from $\ck_n$ by replacing every simply degenerate bricks
 $B$ in 
$\ck_n$ with $B(p_n)$, and set $M_n^-=\bigvee \ck_n^-$.
For a simply degenerate brick $B$ in $\ck_n$ and for all $i \geq n$, the brick $B$ is contained in $\ck_i$ since $\{\ck_i\}$ is ascending.
Since $B=\bigcup_{i\geq n}B(p_i)$ by our definition of $B(p_n)$, we have $B \subset \bigcup_i M_i^-$.
Therefore we see that $M=\bigcup_n M_n^-$.

We fix a base point $x_0$ in $M_1^- \cap M[0]$.
Let $W_n[0]$ be the component of $M_n^-\cap M[0]$ containing  $x_0$, 
and  $W_n$ the union of $W_n[0]$  and 
the components  $\cv[0]$ whose boundaries lie on $\part W_n[0]$.
By the definition of $W_n$, we have $W_n\subset M_n^-\cap M^0$.
For any $n\in \nn$, there exists $m\geq n$ such that every component of $\cv[0]$ intersecting $M_n^-$ is contained in the component of $M_m^-$ containing $x_0$ since there are only finitely many components of $\cv[0]$ intersecting $M_n^-$.
This means in particular that $M_n^-\cap M^0$ is contained in $W_m$, and hence that $\bigcup_m W_m=\bigcup_n (M_n^-\cap 
M^0)=M^0$.
Taking a subsequence if necessary, we may assume that $W_1$ contains all of the geometrically finite bricks in $\ck$.

Let $\cv_{n}^{\mathrm{ext}}$ be the union of components of $\cv\setminus \Int W_n$ intersecting $\part W_n$.
It should be noted that  $\cv_n^{\mathrm{ext}}$ might contain a component of $\cv\setminus \cv[0]$.
By the definition of $W_n$, each component of $W_n\cap \cv_n^{\mathrm{ext}}$ is an annulus.
Since $M[0]$ is acylindrical, there is no essential annulus $A$ in $W_n$ with $\part A\subset W_n\cap \cv_n^{\mathrm{ext}}$. 
Still there might be an annulus $A$ in $S\times (0,1)$ with $\partial A \subset \cv_n^{\mathrm{ext}}$.
Figure \ref{fig4_2} illustrates such a situation.
By the acylindricity of $M[0]$, for such an annulus $A$,  either there is a tube $V_A$ in $\cv$ obstructing $A$, or $A$ goes out of $M$ (\ie $A$ cannot be homotoped into $M$).
In the latter case, $A$ must go out from a simply degenerate end $B$.
Since the core curves of $\cv$ converges to an ending lamination, which is filling, we see that also in this case there is a tube $V_A$ in $\cv$ obstructing $A$.
Since there are only finitely many homotopy classes of such annuli,
we can choose finitely many pairwise disjoint tubes $V_1',\dots,V_m'$ in $S\times (0,1)\setminus W_n$ which obstruct all of such annuli. 
We note that all of these tubes $V_1', \dots, V_m'$ are chosen from $\cv$.
Then setting $\cv_n'=\cv_n^{\mathrm{ext}}\cup V_1'\cup\cdots\cup V_m'$ and $Z_n=S\times (0,1)\setminus \cv_n'$ in $S\times (0,1)$, we see that $Z_n$ is an acylindrical finite brick 
manifold with a brick decomposition $\mathcal L_n$ extending $\ck_n|_{W_n}$.
(Figure \ref{fig4_2} is an example of $Z_n$.)
Note that $Z_n$ is not necessarily a subset of $Z_{n+1}$ although $W_n\subset W_{n+1}$.

Since $W_n$ contains all geometrically finite bricks of $\ck$ and they are peripheral, we have $\part_\infty M\subset \part_\infty Z_n$.
Using the conformal structure given on $\partial_\infty M$, we regard $Z_n$ as a labelled brick manifold.
We can take tight tube unions so that their restriction to $W_n$ coincide with $\cv \cap W_n$.
As was shown in \S\ref{SS_block}, these tubes induce a decomposition of $Z_n[0]$  into blocks.
By the condition (BB) in \S \ref{SS_block}, the closure of each component of 
$\part W_n\setminus \cv_n^{\mathrm{ext}}$ is homeomorphic to $\Sg_{0,3}$.
For any $B$ in $\mathcal{L}_n$ with $\partial_\pm B\cap \cv_n^{\mathrm{ext}}\neq \eset$, each component of $\part_\pm B\setminus \Int \cv_n^{\mathrm{ext}}$ is homeomorphic to 
$\Sg_{0,3}$.
Therefore, this block decomposition of $Z_n[0]$ can be taken so that is restriction to $W_n$ is equal to 
the original block decomposition on $W_n[0]$.
As in \S \ref{SS_block_metric}, we  define a model metric on $Z_n[0]$ using the blocks and the conformal structure on $\partial_\infty Z_n$,
and the model metric on $Z_n[0]$ is extended to the one on $Z_n$ so that each component of 
$Z_n\setminus Z_n[0]$ is a Margulis tube.
Since $d_{Z_n}(x_0,Z_n\setminus W_n)$ goes to $\infty$ as $n\sto \infty$ with respect to the model
metric $d_{Z_n}$ on $Z_n$, the geometric limit of $\{Z_n\}$ is equal to the geometric limit $M^0$ of 
$\{W_n\}$.
\begin{figure}[hbtp]
\centering
\scalebox{0.5}{\includegraphics[clip]{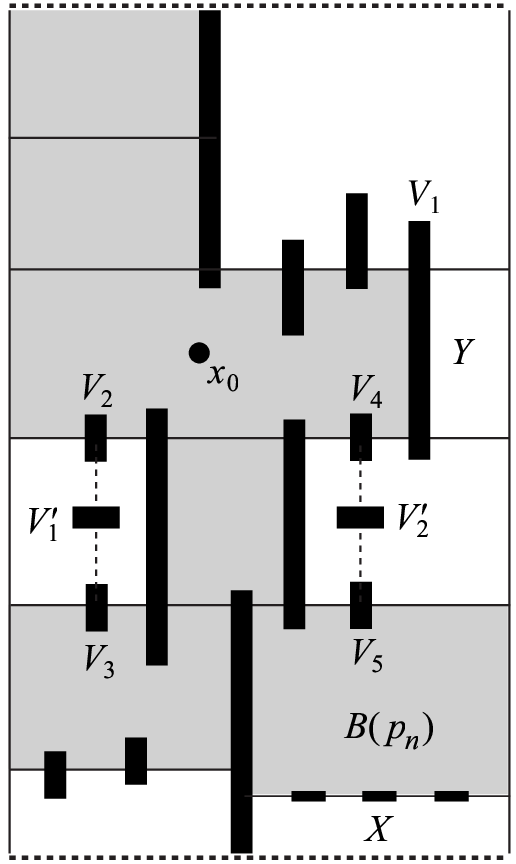}}
\caption{This figure illustrates $Z_n$.
The shaded region represents $W_n$ and the union of black rectangles is $\cv_n'$.
$B(p_n)\in \ck_n^-$ is contained in a simply degenerate brick $B$ in $\ck_n$ with $B= B(p_n)\cup X$. 
$V_1$ splits $M_n^-$ into $W_n$ and $Y= M_n^-\setminus W_n$.
$V_2$ and $V_3$ (resp.\ $V_4$ and $V_5$) are components of $\cv_n^{\mathrm{ext}}$ parallel to each other in 
$S\times (0,1)$.
$V_1'$ (resp.\ $V_2'$) obstructs an annulus between $V_2$ and $V_3$ (resp. $V_4$ and $V_5$).
}
\label{fig4_2}
\end{figure}
It is easy to check that $Z_n$ is irreducible and atoroidal.
By Thurston's uniformisation theorem for atoroidal Haken manifolds \cite{th2} 
(see Morgan \cite{Mo} and Kapovich \cite{ka} for the proof), there exists a geometrically finite hyperbolic 3-manifold $N_n$ 
with a homeomorphism $f_n: Z_n\rightarrow (N_n)_0$ which can be extended to 
the conformal map from $\part_\infty Z_n$ to $\part_\infty N_n$.
By Theorem \ref{blm} (or the original bi-Lipschitz theorem by Brock-Canary-Minsky), we may assume that $f_n$ is a $K$-bi-Lipschitz map.
Since the geometric limit of $Z_n$ based at $x_0$ is $M^0$, by the Ascoli-Arzel\`{a} theorem, $\{f_n\}$ 
converges uniformly on any compact  set of $M^0$ to a $K$-bi-Lipschitz map $f:M^0 \rightarrow N_0$, 
where $N$ is a geometric limit of $N_n$.
By our construction of block decomposition of $M[0]$,  each simply degenerate brick $F\times J$ has a sequence of tubes whose longitudes $l_n$ regarded as simple closed curves on  $F$ converge to the ending lamination $\nu(e)$ given on the end $e$ contained in $F \times J$.
By our definition of the metric on $M^0$, and the Lipschitzity of $f$, the lengths of the $l_n$ with respect to the model metric on $M^0$ are uniformly bounded.
Since $f$ is bi-Lipschitz, the closed geodesics $l_n^*$ in $N$ freely homotopic to $f(l_n)$ have also uniformly bounded 
lengths.
This shows that $l_n^*$ must  tend to the end $f(e)$ by the argument of \S\S6.3-6.4 of Bonahon \cite{bon}.
Therefore, the end $f(e)$ of $N_0$ has the ending lamination $f_*(\nu(e))$.

Let $G_n$ be a Kleinian group with $\hyperbolic^3/G_n=N_n$.
By the main theorem of \cite{OhE}, there is a sequence of geometrically finite hyperbolic $3$-manifolds $N^k_n=\hyperbolic^3/G^k_n$ without $\integers$-cusps such that $G_n^k$ converging algebraically to $G_n$.
We can choose $N^k_n$ so that the domain of discontinuity of $G^k_n$ converges to that of $G_n$ by defining $G_n^k$ to be obtained by pinching the conformal structure at infinity along curves corresponding to the $\integers$-cusps of $N_n$ and using Lemma 3 of Abikoff \cite{Ab}.
By Proposition 4.2 of J{\o}rgensen-Marden \cite{jm}, this implies that $G^k_n$ converges strongly to $G_n$ as $k \rightarrow \infty$.
By performing hyperbolic Dehn surgeries along the torus cusps of $N_n^{k}$ of type $(1,u_n)$ with sufficiently large $u_n\in\nn$, we have quasi-Fuchsian manifolds $N_n^{'k}$ geometrically approximating $N_n^{k}$ closer and closer  as $k \rightarrow \infty$ as was shown in Bonahon-Otal \cite{BO} and Ohshika \cite{oh1}.
This gives rise to a sequence of  quasi-Fuchsian manifolds $N_n^{'k}$ converging geometrically to $N_n$ as $k \rightarrow \infty$.
By the diagonal argument, we have a sequence of quasi-Fuchsian manifolds $N_n'$ converging geometrically to $N$.
This completes the proof of Theorem \ref{thm_c}.
\end{proof}

\subsection{Proof of Theorem \ref{thm_d}}

\begin{proof}[Proof of Theorem \ref{thm_d}]
Let $G_1$ and $G_2$ be non-elementary geometric limits of Kleinian surface groups isomorphic to $\pi_1(S)$ preserving the parabolicity, and $f:N_1=\hyperbolic^3/G_1\rightarrow N_2=\hyperbolic^3/G_2$ a homeomorphism preserving their end invariants.
We may assume that $f((N_1)_0)=(N_2)_0$.
By Theorem \ref{thm_a}, there exists a brick manifold $M$ and a homeomorphism $\eta_1:M \rightarrow (N_1)_0$ preserving  the end invariants.
Then the composition $\eta_2=f\circ \eta_1:M\rightarrow (N_2)_0$ is also a homeomorphism preserving the end invariants.
By Theorem \ref{blm},  we can properly homotope $\eta_1$ and $\eta_2$ to $K$-bi-Lipschitz homeomorphisms, which we denote by the same symbols $\eta_1, \eta_2$.
Therefore  $\eta_2\circ \eta_1^{-1}:(N_1)_0 \rightarrow (N_2)_0$ is a bi-Lipschitz homeomorphism preserving the  end invariants, 
which can be extended to a bi-Lipschitz map $\Phi:N_1\rightarrow N_2$.
This $\Phi$ can be lifted to a bi-Lipschitz homeomorphism $\widetilde \Phi:\hh^3\rightarrow \hh^3$  between the universal coverings, 
which is equivariant with respect to the covering translations.
Furthermore $\widetilde \Phi$ is extended to a quasi-conformal homeomorphism $\wt \Phi_{\part}$ 
on the Riemann sphere $\wh{\mathbf{C}}$ such that $\wt\Phi_{\part}|_{\Omega_{G_1}}$ is a conformal homeomorphism 
from $\Omega_{G1}$ to $\Omega_{G_2}$.
On the other hand, by Corollary \ref{cor_b}, the injectivity radii in the convex cores of our manifolds $N_1$ and $N_2$ are bounded above.
This makes it possible to apply McMullen's generalisation (Theorem 2.9 in \cite{Mc}) of Sullivan's rigidity theorem, and we see that $\eta_2 \circ \eta_1^{-1}$, which is properly homotopic to $f$, is properly homotopic to an isometry.
\end{proof}

\end{document}